\definecolor{orange}{rgb}{1,0.5,0}
\newtheorem{definition}{Definition}[section]
\newtheorem{theorem}[definition]{Theorem}
\newtheorem{proposition}[definition]{Proposition}
\newtheorem{corollary}[definition]{Corollary}
\newtheorem{lemma}[definition]{Lemma}
\newtheorem{remark}[definition]{Remark}
\def\cA{\mathcal{A}}
\def\Re{\mathrm{Re\,}}
\def\C{\mathbb{C}}
\def\geq{\geqslant}
\def\leq{\leqslant}
\def\R{\mathbb{R}}
\def\Z{\mathbb{Z}}
\def\N{\mathbb{N}}
\def\Q{\mathbb{Q}}
\def\epsilon{\varepsilon}
\def\inj{\operatorname{inj}}
\newcommand{\vep}{\varepsilon}
\newcommand{\beq}{\begin{equation}}
\newcommand{\eeq}{\end{equation}}
\newcommand{\bea}{\begin{eqnarray}}
  \newcommand{\eea}{\end{eqnarray}}
  \newcommand{\beab}{\begin{eqnarray*}}
  \newcommand{\eeab}{\end{eqnarray*}}
  \newcommand{\be}{\begin{equation}}
  \newcommand{\ee}{\end{equation}}
\title{Horocycle flows at product of two primes}
\author{Giovanni Forni}
\address{Department of Mathematics, University of Maryland, College Park, MD USA and Laboratoire AGM, CY Cergy Paris Universit\'e, France}
\email{gforni@umd.edu}
\author{Adam Kanigowski}
\address{Department of Mathematics, University of Maryland, College Park, MD USA and Faculty of Mathematics and Computer Science, Jagiellonian University, Lojasiewicza 6, Krakow, Poland }
\email{akanigow@umd.edu}
\author{Maksym Radziwi\l\l}
\address{Department of Mathematics, Northwestern University, 2033 Sheridan Rd, Evanston, IL 60208, USA}
\email{maksym.radziwill@northwestern.edu}
\date{}
\begin{document}
\maketitle
\begin{abstract}We show that if $\Gamma$ is a co-compact arithmetic lattice in $SL(2,\R)$ or $\Gamma=SL(2,\Z)$ then the horocycle orbit of every non-periodic point $x\in SL(2,\R)/\Gamma$ equidistributes (with respect to Haar measure) when sampled at integers having exactly two prime factors.
\end{abstract}

\tableofcontents

\section{Introduction}
Investigations around  a prime number theorem (PNT for short) in ergodic theory started with Bourgain's classical result \cite{Bo1} (see also \cite{Wi}) giving rise to the almost everywhere (a.e) convergence of ergodic averages along prime times: given a measure-theoretic dynamical system $(X,\mathcal{B},\mu,T)$, for each $f\in L^r(X,\mu)$, $r>1$,
\beq\label{prnuth1}
\lim_{N\to\infty}\frac1{\pi(N)}\sum_{p\leq N}f(T^px)\text{ exists}
\eeq for a.e.\ $x\in X$;
here, and in what follows, $p$ stands for a prime number, and $\pi(N)$ denotes the number of primes in $[1,N]$. Let us emphasize that the methods of \cite{Bo1} allow to prove a.e. convergence  for other sparse subsets of the integers such as polynomial sequences. This result generalizes the celebrated Birkhoff's ergodic theorem to some natural density zero subsets of the natural numbers. Given a topological dynamical system, i.e. a homeomorphism $T:X\to X$ of a metric space $X$, it is natural to ask about the behavior of the orbit of {\em every} fixed initial condition $x\in X$. For the everywhere convergence of regular averages (along natural numbers) a convenient condition for such convergence is unique ergodicity, i.e. existence of exactly one invariant (ergodic) measure. In this case the limit is always given by the integral of the function with respect to the only invariant measure. Let us point out that convergence of regular ergodic averages might still hold for systems which are not uniquely ergodic. The most classical example of such phenomenon is the horocycle flow on non-compact quotients of $SL(2,\R)$ in which case it follows by a result of Dani \cite{Dani} that the orbit of  {\em every} point is either periodic or equidistributed with respect to the  Haar measure. It is therefore natural to ask whether there are topological conditions that would guarantee convergence of averages for every point when sampled at some (density zero) subsets of the integers (such as primes or polynomial sequences).  In 2016, P.\ Sarnak \cite{Sa} proposed, as a general program, to characterize those topological dynamical systems in which a PNT holds, i.e.\ \eqref{prnuth1} holds for all continuous functions and all points. Sarnak (see also e.g.\ Tao \cite{Tao}) viewed PNT in dynamics as a natural and more difficult step  in the hierarchy of problems following the celebrated M\"obius orthogonality conjecture \cite{Sa}. Recall that the M\"obius orthogonality conjecture from 2010 predicts that for every topological system $(X,T)$ of zero (topological) entropy, we have
$$
\frac{1}{N}\sum_{n\leq N}\mu(n)f(T^nx)\to 0,
$$
for every $x\in X$ and $f\in C(X)$ and where $\mu$ denotes the M\"obius function. The above conjecture has been proved for many classes of dynamical systems. From the ergodic perspective the simplest tool in studying M\"obius orthogonality is the so called DKBSZ criterion which reduces the problem to studying {\em joinings} of the dynamical systems $T^p$ and $T^q$, where $p,q$ are different primes. The progress on M\"obius orthogonality did not translate into a progress on  PNT in dynamics. In fact there are uniquely ergodic systems for which the DKBSZ criterion can be applied, i.e. $T^p$ and $T^q$ are disjoint for any $p,q$ but a prime number theorem still fails, \cite{Ka-Le-Ra}. The main difficulty is that to understand prime orbits one needs {\em quantitative} information on joinings of $T^p, T^q$ (and so methods from classical ergodic theory do not apply). Let us now make this more precise. The difficulty has already been observed by Sarnak and Ubis \cite{SU} and is based on the approach by Duke, Friedlander and Iwaniec
\cite{Du-Fr-Iw}.  More precisely, following \cite{Du-Fr-Iw}, the main method to obtain a PNT, one studies the asymptotics (depending on $\vep>0$) of the  sums:
$$
\sum_{n\leq N/d} f(T^{dn}x) \;\; \text{type I sums (linear sums)}
$$
and
$$
\sum_{n\leq \min(N/d_1,N/d_2)}f(T^{nd_1}x)\overline{f(T^{d_2n}x)}\;\;\text{ type II sums (bilinear sums)},$$
where $d,d_1,d_2$  are ``large'', meaning  $\leq N^{\alpha-\vep}$, where $\alpha>0$ (the level) is a ``large'' constant, and typically, $\alpha=\frac12$ for type I sums and $\alpha=\frac13$ for type II sums. Such results can not be proved using joinings or classical methods from ergodic theory (as one needs rates). For this reason a PNT has been so far proved for a very limited number of dynamical systems: by Vinogradov's theorem \cite{Vi}, a PNT holds for all rotations on the circle. All other known cases are rather recent: nilsystems \cite{Gr-Ta},
Rudin-Shapiro sequences \cite{Ma-Ri}, (some regular) Toeplitz systems \cite{Fr-Ka-Le},
enumeration systems \cite{Bo2}, \cite{Gr}, automatic sequences \cite{Mu}, some finite rank symbolic systems \cite{Bo3}, \cite{Fe-Ma} or analytic Anzai skew products \cite{Ka-Le-Ra}.

In this paper we will focus on sparse ergodic theorems for one of the most classical classes of dynamical systems, namely horocycle flows acting on finite-volume quotients of $SL(2,\R)$. We will now describe some classical and recent results on orbits of horocycle flows.

As mentioned above, results on the behavior of integer orbits where obtained by Dani, \cite{Dani}. In \cite{BSZ} Bourgain, Sarnak and Ziegler have shown M\"obius orthogonality for the horocycle flow using qualitative type II sums (DKBSZ criterion) and Ratner's joinings classification. Venkatesh, \cite{Venkatesh}, has shown that in the co-compact case, for sufficiently small delta,  the orbit $\{h_{n^{1+\delta}}x\}$ is equidistributed for every $x$. This proved a special case of a general conjecture of Margulis and Shah, which predicts that the horocycle orbit is equidistributed at polynomial or prime times.  Venkatesh's result was generalized in \cite{TV15}, and by a different method in \cite{FFT}, to get an exponent $\delta$ not dependent on the spectral gap. Streck, \cite{Streck}, generalized Venkatesh's result to the non-compact setting.
Sarnak and Ubis, \cite{SU},  using sharp bound on  type I sums only, proved that in the modular case, i.e.\ when $\Gamma=SL(2,\Z)$ the orbit $(h_tx)$ of the horocycle flow $(h_t)$ acting on $SL(2,\R)/\Gamma$ at prime times $p$ hits any open set of measure $>9/10$ (for all $x\in X$ having dense orbits). This was generalized to more general lattices by Streck, \cite{Stre}.
 Better results seemed to be unavailable because of the absence of an effective Ratner's theory on joinings (quantitative type II sums). The approach of studying quantitative type I sums has also been used by McAdam, \cite{McAdam}, who has shown that there exists $k\in \N$ such that the horocycle  orbit of every point are dense along numbers which have at most $k$-prime factors (for some constant $k\geq 10$). All the above results either used quantitative type I information or qualitative type II information and there was a good reason for that: the landmark Ratner's joinings theorems are non quantitative. This has changed with  a recent breakthrough result by Lindenstrauss, Mohammadi and Wang \cite{LMW} where they obtained quantitative bounds on type II sums for the horocycle flow acting on quotients of arithmetic lattices. We should point out that the quantitative information is of level $\alpha>0$ which is much smaller than $1/3$ and so one still can't get a PNT using \cite{LMW}. However, this is potentially enough to establish that a semi-prime number theorem holds, i.e. equidistribution of the sequence at numbers which have exactly two prime factors. Our main result is a semi-prime number theorem for the horocycle flow acting on co-compact arithmetic qoutients or on the quotient by $SL(2,\Z)$. More precisely:
\begin{theorem}\label{th:glowne} Let $\Gamma$ be a co-compact arithmetic lattice. Then the time-1 map $T=h_1$ acting on $X=SL(2,\R)/\Gamma$ satisfies a SPNT. More precisely, for any  point $x\in X$ and any $f\in C(X)$
$$
\lim_{N\to \infty} \frac{1}{\pi_2(N)}\sum_{p_1\cdot p_2\leq N}f(h_{p_1\cdot p_2}x)=\int_X f d \mu_X,
$$
where $\pi_2(N)$ denotes the number of semi-primes up to $N$.
\end{theorem}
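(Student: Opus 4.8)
Since $X$ is compact, it suffices to prove the limit for $f$ lying in a fixed Sobolev space (continuous $f$ then follow by density and the trivial bound $|\tfrac1{\pi_2(N)}\sum f(h_{p_1p_2}x)|\le\|f\|_\infty$). Subtracting the mean, set $f_0=f-\int_X f\,d\mu_X$; the goal becomes the bound $\sum_{p_1p_2\le N}f_0(h_{p_1p_2}x)=o(\pi_2(N))$, uniformly in $x\in X$. By Abel summation applied dyadically (where the weights $\log p_1\log p_2$ are essentially constant) and by discarding the $O(N^{1/2+\epsilon})$ contribution of genuine prime powers, it is enough to obtain a power-saving bound $\sum_{m\le n,\ mn\le N}\Lambda(m)\Lambda(n)\,f_0(h_{mn}x)=o(N\log N)$. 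Note there is no main term to track precisely because $f_0$ has mean zero. By symmetry we may assume $m\le N^{1/2}$, and we split at the level $\alpha>0$ furnished by \cite{LMW}: the \emph{balanced} range $m\in[N^\alpha,N^{1/2}]$ (so $n\in[N^{1/2},N^{1-\alpha}]$) and the \emph{unbalanced} range $m\le N^\alpha$.

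\textbf{The type II (bilinear) input.} In the balanced range a dyadic decomposition writes each block as a bilinear sum $\sum_{p_1\sim P_1}\sum_{p_2\sim P_2}a_{p_1}b_{p_2}f_0(h_{p_1p_2}x)$ with $P_1,P_2\in[N^\alpha,N^{1-\alpha}]$ and bounded coefficients ($\Lambda$ restricted to primes in a dyadic interval, normalised by the interval's logarithm); the quantitative type II estimate of \cite{LMW} gives a power saving here, and summing over the $O(\log^2N)$ blocks is harmless. For the unbalanced range, where $m=p_1\le N^\alpha$ is genuinely short, I would apply a Vaughan (or Heath--Brown) identity to the inner sum $\sum_{n\le N/p_1}\Lambda(n)f_0(h_{p_1n}x)$ with cut-offs at $N^\alpha$. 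The resulting type II pieces have the form $\sum_{u\sim U}\sum_{v\sim V}\gamma_u\delta_v f_0(h_{p_1uv}x)$; absorbing $p_1$ into one variable via $h_{p_1uv}x=h_{(p_1u)v}x$ and applying Cauchy--Schwarz reduces them (for fixed $p_1$) to the correlation sums $\sum_m f_0(h_{d_1m}x)\overline{f_0(h_{d_2m}x)}$ bounded by \cite{LMW}, with both $d_1,d_2$ and the length in the admissible ranges provided the Vaughan parameters are $N^\alpha$. Summing the resulting power-saving bound over $p_1\le N^\alpha$ costs only a factor $N^\alpha$ and remains $o(N\log N)$ as long as $\alpha$ is below the type II saving.

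\textbf{The type I (equidistribution) input.} The type I pieces produced by the identity have the shape $\sum_{d\le N^{2\alpha}}\gamma_d\sum_{e\le N/(p_1d)}f_0(h_{(p_1d)e}x)$: a long free variable $e$ sampled along the orbit of $x$ with ``step'' $s=p_1d\le N^{3\alpha}$. For these I would use the geodesic renormalisation relation $h_s=g_r h_1 g_{-r}$ with $e^{-2r}=s$, which converts $\sum_{e\le T}f_0(h_s^e x)$ into a \emph{unit}-step horocycle average of the function $f_0\circ g_r$ started at $g_{-r}x$. The classical effective equidistribution of the horocycle flow on the \emph{compact} quotient $X$ (cf. \cite{FFT}) then applies with a power-saving rate that is \emph{uniform in the base point} --- this is exactly where co-compactness, hence the absence of closed horocycles and cusp excursions, is used. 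The price is that $\|f_0\circ g_r\|_{C^k}$ grows like a fixed power of $s$, so one only gets equidistribution for $s$ up to a (small but positive) power $N^{\beta_I}$ of the length $T$; since here $s\le N^{3\alpha}$ and $T\ge N^{1-3\alpha}$ this holds once $\alpha$ is small enough relative to $\beta_I$, and summing the errors over $d$ and over $p_1$ yields $o(N\log N)$. Combining the balanced and unbalanced estimates gives $\sum_{mn\le N}\Lambda(m)\Lambda(n)f_0(h_{mn}x)=o(N\log N)$, and unwinding the reductions proves the theorem.

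\textbf{Main obstacle.} I expect the crux to be the compatibility of exponents in the unbalanced range: one must arrange that the level $\alpha$ coming from \cite{LMW}, the type I level $\beta_I$ coming from horocycle effective equidistribution (which is capped by the Sobolev-norm blow-up under geodesic renormalisation), and the combinatorics of the Vaughan/Heath--Brown decomposition fit together so that \emph{every} dyadic range is covered by either the type I or the type II estimate --- in particular that $\alpha$ is comfortably smaller than $\beta_I$, failing which one must iterate the decomposition or pass to a longer Heath--Brown identity. Secondary technical points are the uniformity in $x$ of all equidistribution statements (again tied to co-compactness) and the careful bookkeeping needed to pass from the $\Lambda\otimes\Lambda$-weighted sum back to the honest count $\pi_2(N)$.
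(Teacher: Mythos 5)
The central gap is in your type II ranges. After Cauchy--Schwarz and renormalization, the correlation sums $\sum_{n}f_0(h_{q_1n}x)\overline{f_0(h_{q_2n}x)}$ are controlled through Theorem \ref{LMWdrugie} only with an error of the form $S(\varphi)R^{-\kappa}$, where $R$ is capped by a small fixed power of the orbit length and $\varphi=(f\times f)\circ(a_{\log q_1}\times a_{\log q_2})$ has Sobolev norm growing polynomially in $q_1q_2$; so the bilinear saving survives only when the moduli $q_1,q_2$ are below a small fixed power of $N$ (the paper emphasizes this level is much smaller than $1/3$). Your balanced range $m\in[N^{\alpha},N^{1/2}]$ therefore lies entirely outside the admissible type II range --- this is exactly why a full PNT is out of reach --- and the Vaughan identity you apply in the unbalanced range regenerates bilinear pieces whose shorter variable runs over all of $[N^{\alpha},(N/p_1)^{1/2}]$, again outside range, so neither half of your decomposition closes. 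Your reduction to the $\Lambda\otimes\Lambda$-weighted bound $o(N\log N)$ compounds this: under that weighting the balanced range carries a positive proportion of the mass and cannot be discarded, whereas in the unweighted count the semiprimes with both factors above any fixed power of $N$ number only $O(N/\log N)=o(\pi_2(N))$. That last observation is the structural point you miss and the one the paper exploits: for semiprimes no combinatorial identity and no type I estimate are needed at all. Proposition \ref{prop:typ2} shows it suffices to bound $\sum_{n\le N}a_{q_1n}\overline{a_{q_2n}}$ for primes $q_1\neq q_2$ of sub-polynomial size in $[\exp(\log^{\varepsilon}N),\exp(\log^{1-\varepsilon}N)]$ with $q_1/q_2\in[1/5,5]$, at the cost of an $O(\varepsilon)$ proportion of semiprimes.

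Second, even inside the admissible range, Theorem \ref{LMWdrugie} is a trichotomy, not an unconditional bilinear bound: besides equidistribution (E1) there are the alternatives E2 (proximity to a periodic $H$-orbit of small volume) and E3 (small injectivity radius along the orbit). Co-compactness disposes of E3, but you never rule out E2 for the points $(a_{-\log q_1}x,a_{-\log q_2}x)$; this is the genuinely arithmetic heart of the co-compact proof. In the paper, Lemma \ref{lem:npo} shows that if E2 held then the trace of the associated commensurator element would have to equal $(q_1+q_2)/\sqrt{q_1q_2}$ up to a tiny error, and the denominator bound of Lemma \ref{lem:per'} together with the fact that the quaternion algebra $A(\Q)$ is a division algebra forces $q_1=q_2$, a contradiction; one also needs the continuous-to-discrete-time step (Proposition \ref{prop:zzz}, Corollary \ref{cor:adc}), which you supply for your type I sums but not for the correlation sums. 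Without these ingredients, invoking ``the quantitative type II estimate of \cite{LMW}'' is not yet a proof. Your type I treatment (renormalization plus basepoint-uniform effective equidistribution on the compact quotient) is sound in itself, but it becomes unnecessary once the semiprime reduction is set up as in Proposition \ref{prop:typ2}.
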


Our second result deals with the case $\Gamma=SL(2,\Z)$. We need some notation to formulate it. For $x\in X=SL(2,\R)/SL(2,\Z)$ let $O_d(x)=\overline{\{h_nx\;:\; n\in \Z\}}$ and $O_c(x)=\overline{\{h_tx\;:\; t\in \R\}}$. It follows from \cite{Dani} that for any $x\in X$ either $O_c(x)=X$ or $x$ is periodic for $(h_t)$. Moreover, $O_d(x)=O_c(x)$ unless $x$ is periodic for $(h_t)$ and the orbit $\{h_nx\}$ is finite. Let $\mu_x$ denote the unique measure $\R$-generic for $x\in X$. We have:

\begin{theorem}\label{th:glowne'} Let $\Gamma=SL(2,\Z)$. Then the time-1 map $T=h_1$ acting on $X=SL(2,\R)/\Gamma$ satisfies an SPNT. More precisely,
 for any point $x\in X$ for which $\{h_nx\}$ is infinite and any $f\in C_c(X)$
$$
\lim_{N\to \infty} \frac{1}{\pi_2(N)}\sum_{p_1\cdot p_2\leq N}f(h_{p_1\cdot p_2}x)=\int_X f d \mu_x
$$
\end{theorem}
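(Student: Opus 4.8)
The plan is to reduce the non-compact case to the compact case of Theorem \ref{th:glowne}, handling the extra difficulties coming from the cusp. The first step is a classification of the possible closures $O_d(x)$: since $\{h_nx\}$ is infinite, Dani's theorem forces $O_c(x)=X$, so $x$ is either non-periodic for the flow (generic, $\mu_x=\mu_X$) or periodic for the flow but with infinite integer orbit (in which case $\mu_x$ is the arc-length measure on the closed horocycle through $x$, which is still $(h_t)$-invariant and ergodic). In both cases $\mu_x$ is a well-defined $h_1$-invariant measure supported on $O_d(x)$, and it suffices to prove equidistribution with respect to it. By a standard approximation argument (compactly supported functions are dense in $C_c(X)$ and one truncates near the cusp), it is enough to prove the convergence for a fixed $f\in C_c(X)$ together with a uniform bound showing that the semi-prime orbit does not escape into the cusp, i.e.\ a non-escape-of-mass estimate: for every $\eta>0$ there is a compact $K_\eta\subset X$ with
\[
\limsup_{N\to\infty}\frac{1}{\pi_2(N)}\#\{p_1p_2\le N:\ h_{p_1p_2}x\notin K_\eta\}<\eta .
\]

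The main analytic input, exactly as in Theorem \ref{th:glowne}, is the Vinogradov/bilinear decomposition of the indicator of semi-primes into Type I and Type II sums. First I would write $\sum_{p_1p_2\le N} f(h_{p_1p_2}x)$ using the identity $\raz_{\{n=p_1p_2\}}$ and a dyadic decomposition of the two prime factors, reducing matters to (i) Type I sums $\sum_{n\le N/d} f(h_{dn}x)$ for $d$ in a range up to $N^{1/2-\vep}$, which are controlled by effective equidistribution of long closed/expanding horocycle pieces (Sarnak--Ubis \cite{SU}, Strömbergsson, and in the non-compact modular setting the sharp Type I bounds already available there), and (ii) Type II sums $\sum_{n} f(h_{nd_1}x)\overline{f(h_{nd_2}x)}$ with $d_1,d_2$ in a short dyadic window, which are controlled by the quantitative joinings bound of Lindenstrauss--Mohammadi--Wang \cite{LMW}. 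The output of \cite{LMW} is effective equidistribution of the diagonal orbit $(h_{nd_1}x,h_{nd_2}x)$ in $X\times X$ at some power-saving rate with a polynomial loss in the ratio $d_1/d_2$ and in the injectivity radius at $x$; since in the non-compact case the injectivity radius degenerates near the cusp, one must run this only for $x$ pushed into a fixed compact set, which is why the non-escape estimate above is logically prior.

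For the non-escape-of-mass step I would use the standard fact that the closed horocycle of length $\ell$ in $X=SL(2,\R)/SL(2,\Z)$ spends a controlled proportion of time above height $Y$ (time $\asymp 1/Y$ per period), together with a Type I / divisor-bound argument: the contribution of $n=p_1p_2\le N$ with $h_nx$ high in the cusp is bounded by summing, over $d$, the number of $n\le N/d$ for which the shifted horocycle $h_{dn}x$ is high, and the horocycle measure estimate gives a factor $\eta$ uniformly. This is where the delicate book-keeping lies. In summary, the hard part is not any single new estimate but the \emph{uniformity}: one must interleave the \cite{LMW} Type II bound (valid only with a quantitative lower bound on injectivity radius) with a self-improving non-escape-of-mass bound for semi-prime times, so that the compact set $K_\eta$ on which \cite{LMW} is applied can be chosen first and the exceptional times bounded afterwards without circularity. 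Once $f$ is compactly supported and the orbit is shown to stay in $K_\eta$ up to density $\eta$, the Type I $+$ Type II decomposition yields
\[
\frac{1}{\pi_2(N)}\sum_{p_1p_2\le N} f(h_{p_1p_2}x)=\int_X f\,d\mu_x+o(1),
\]
with the same power-saving rate as in the compact case, completing the proof. \carre
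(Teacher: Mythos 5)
Your reduction misses the actual engine of the paper and, more importantly, the central difficulty of the modular case. First, the paper's criterion for a semi-prime theorem (Proposition \ref{prop:typ2}) needs \emph{only} bilinear (type II) sums $\sum_{n\le N}a_{q_1n}\overline{a_{q_2n}}$ for primes $q_1,q_2$ in a range like $[\exp(\log^{\vep}N),\exp(\log^{1-\vep}N)]$; no type I information at level $N^{1/2-\vep}$ is used, and none is available: for a general point $x$ there is no effective equidistribution of $n\mapsto h_{dn}x$ uniformly up to $d\le N^{1/2-\vep}$ (the Sarnak--Ubis type I bounds only give density-$9/10$ statements, not full equidistribution), so the type I half of your decomposition fails as stated. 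Second, the genuinely hard case in the modular setting is not escape of mass into the cusp but points $x$ (generic for Haar) whose $(a_{-\log p}x,a_{-\log q}x)$ falls into the exceptional alternatives E2/E3 of \cite{LMW}, i.e.\ points that at the relevant scale are $T^{-1+O(\delta)}$-close to a periodic horocycle of period as large as $T^{A\delta}$ (or $N^{\psi(N)}$ with $\psi(N)\to 0$ slowly). For such points the bilinear bound degenerates (the conjugated functions have polynomially large Sobolev norm compared with the available equidistribution rate), and the paper has to develop an entirely separate second tier: the sliding approximation along periodic orbits (Lemma \ref{l:modular3}), a Siegel--Walfisz theorem for semi-primes to large moduli with a quadratic character twist (Proposition \ref{thm:Maks2}), a quantitative multiplicative-function/bilinear argument on the periodic orbit (Proposition \ref{prop:mult}) together with Str\"ombergsson's equidistribution of long closed horocycles and Sarnak's theorem on long periodic orbits, and minor-arc exponential sum estimates of Vinogradov/van der Corput type (Proposition \ref{thm:maks1}). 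None of this appears in your sketch, and your claim that no new estimate is needed beyond ``uniformity'' is exactly where the proof would collapse: the problematic points live deep inside the space near short periodic orbits, so a non-escape-of-mass bound, even if proved, does not touch them.

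A further issue is the case where $x$ lies on a closed horocycle with $\{h_nx\}$ infinite: there the limit measure is $\mu_x\neq\mu_X$, so the LMW-based type II bound towards Haar on $X\times X$ is simply false for those points, and your scheme gives no mechanism to produce equidistribution with respect to the arc-length measure. The paper treats these points separately by classical means (Vinogradov's theorem for irrational period, semi-primes in arithmetic progressions for rational period), and its main argument is carefully restricted to points generic for Haar; you would need to build that case distinction in from the start rather than fold it into the same bilinear machinery.
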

Note that if $x\in X$ is such that $\{h_nx\}$ is finite then the fact that $x$ satisfies a SPNT follows from the theorem on semi-primes in arithmetic progressions. We should point out that the proof in the co-compact case is more straightforward and the real difficulty comes with the modular case. We also think that the methods could be more generally applied whenever $\Gamma$ is a congruence lattice

\subsection{Outline of the proof and new methods}
In this section we will describe our methods for obtaining our main results. The first step is to prove a general criterion which guarantees that a bounded sequence $(b_n)$ is equidistributed along semi-primes. As shown in Proposition \ref{prop:typ2} it is enough to show that 
$$
|\sum_{n\leq N}b_{pn}\overline{b_{qn}}|\ll \frac{N}{\log^{100}N}.
$$
 for most primes $p,q\leq N^\epsilon$. It is important to emphasize that the $\epsilon>0$  could be a function that goes to zero with $N$, however it should be converging to zero slowly enough, as estimates on bilinear sums for most primes in the range  $[\exp(\log^{\varepsilon} N), \exp(\log^{1 - \varepsilon} N)]$ are needed. This will turn out to be important later on (in relation with the Siegel-Walfisz theorem). A first and immediate attack is to apply this criterion to the case $b_n=f(h_nx)$ where $(h_t)$ is the horocycle flow. Note that using the renormalization with the geodesic flow $(a_t)$, i.e. $a_th_s=h_{e^ts}a_t$, we need to show that
$$
|\sum_{n\leq N}(f\times f)(a_{\log p}\times a_{\log q})(h_n\times h_n)(a_{-\log p}x,a_{-\log q}x)\ll \frac{N}{\log^{100}N},
$$
 i.e. we need to understand the $(h_n\times h_n)$ orbit of the point $(a_{-\log p}x,a_{-\log q}x)$ for the function $(f\times f)(a_{\log p}\times a_{\log q})$. The result of \cite{LMW} together with Venkatesh method (to go from continuous time to discrete time) tells us that the above estimate will hold unless: (i)~the point $(a_{-\log p}x,a_{-\log q}x)$ is close to an $SL(2,\R)$-invariant subspace $H_{\cdot}(x_0,y_0)$ of volume $\leq N^{-\delta}$ (close to a periodic point) or (ii)~the point $(a_{-t}\times a_{-t})(h_r\times h_r)(a_{-\log p}x,a_{-\log q}x)$ has injectivity radius at most $N^{\delta A}e^{-t}$ for every $t\in [\log N^\delta, \log N]$ and $r\in [0,N]$. Note that alternative (ii) cannot hold in the co-compact case as the injectivity radius is uniformly bounded below. The rest of the analysis boils down to analyzing the cases (i) and (ii).  First in Proposition \ref{l:modular1} we show that if a point is close to an $SL(2,\R)$- periodic orbit  $H_{\cdot}(x_0,y_0)$ then the direction of the divergence happens in the direction of the centralizer. This is a modest generalization of the corresponding result in \cite{LMW} in which the authors claimed divergence along some element of $SL(2,\R)\times SL(2,\R)$. As a result we get (see Corollary \ref{cor:uni}) that if a point $(x,y)$ satisfies (i), then there is a point $(u,v)\in H_{\cdot}(x_0,y_0)$ and $K_i(t)\leq T^{2\delta}$ such that $d_{X\times X}((h_t x, h_t y),(h_{K_1(t)+t}u,h_{K_2(t)+t}v))<T^{-1+3\delta}$, i.e. the orbit of the point $(x,y)$ slides along the orbit $(u,v)$ in the direction of the centralizer. We will now discuss the co-compact and modular case separately.

Let us first discuss the co-compact case as it is significantly easier. In this case (ii) never holds since the injectivity radius is bounded below. Moreover in the co-compact case, we show that (i) never holds for the point   $(a_{-\log p}x,a_{-\log q}x)$. This is done by making the argument of \cite{BSZ} quantitative. More precisely in this case the lattice is commensurable with the integer unit group in the quaternion algebra (see Section \ref{sec:comp}). Moreover, the bound on the $vol(H_{\cdot}(x_0,y_0))$ gives us a bound on the size of the denominators of the corresponding element of the commensurator group in the representation (see Lemma \ref{lem:per'}). This  in particular implies that if the point $(a_{-\log p}x,a_{-\log q}x)$ is close to $H_{\cdot}(x_0,y_0)$, then trace of the matrix representing the element from the commensurator has to be equal (up to rescaling) to $\sqrt{p/q}+\sqrt{q/p}$. This however implies that there is an element $\alpha=(x_0,x_1,x_2,x_3)$ in the quaternion algebra and with rational entries for which the determinant $N(\alpha)$ equals $0$. This can only happen if $x_i=0$ for all $i$ which implies that $p=q$. This gives a contradiction with (i). As mentioned, (ii) never holds and so in the co-compact case we show that the SPNT criterion holds for any $p,q\leq N^\delta$.

Let us now move to the modular case which is much more interesting and involved. Before we give a more detailed description of what goes into the analysis in this case let us just discuss one particular case which shows why the analysis is involved. Let $x$ be a periodic point $\in SL(2,\R)/SL(2,\Z)$ of period $N^{\psi(N)}$, where the $\psi(N)$ goes to zero very slowly, for example $\psi(N) = \log^{-\delta} N$ for some small $\delta > 0$. In this case there is no hope of applying the type II sums criterion as in this case the point $(a_{-\log p}x,a_{-\log q}x)$ cannot be polynomially distributed in space (recall that the function $(f\times f)(a_{\log p}\times a_{\log q})$ has polynomially large Sobolev norm in $p,q$, both of which can be as large as $N^{\psi(N)}$ and so we need polynomial equidistribution). Let us additionally assume that the period of $x$ is an integer (maybe even a prime). In this case the only tool we have is semi-primes in arithmetic progressions, i.e. the Siegel-Walfisz theorem for semi-primes. This theorem holds unconditionally only  in the moduli range $\log^A N$  and so it can't be directly applied to orbits of size $N^{\psi(N)}$. What we show (see Proposition \ref{thm:Maks2}) is that the Siegel-Walfisz theorem for semi-primes in the relevant range holds with a multiplicative twist, i.e. a multiplicative character $\chi(\cdot)$ (see Proposition \ref{thm:Maks2}). Thanks to this, the problem for such $x$ (lying in a periodic integer orbit) is now reduced to studying
$$
\sum_{n\leq R}\chi(n)f(h_nx),
$$
 where $\chi(\cdot)$ is a multiplicative function and $R=N^{\psi(N)}$ is the period. We can then apply a quantitative variant of the DKBSZ-criterion (using \cite{BSZ}), to reduce the above problem to studying again bilinear sums
 $$
|\sum_{n\leq R}(f\times f)(a_{\log p'}\times a_{\log q'})(h_n\times h_n)(a_{-\log p'}x,a_{-\log q'}x)\ll \frac{R}{\log^{100}R},
$$
in the range $p',q'\leq R^\delta$. It is here where we again use the \cite{LMW} result, which implies that the above holds unless (i) or (ii) holds, but with different parameters ($R$ not $N$). A crucial argument that will be described more in detail below shows that if (i) or (ii) hold for 
$(a_{-\log p'}x,a_{-\log q'})$ then $x$ is close to a periodic orbit $\{h_tw\}$ with period $\leq R^\delta$. But the point $x$  is a periodic point of period $R$ and we show, using a result of Str\"ombergsson on equidistribution of pieces of closed horocycles, \cite{Strom04}, that it can not be close to a periodic point with a much shorter period. This shows what type of problems arise while working in the modular case.

Let us now pass to a more structured description of the general case. As already mentioned, the analysis boils down to cases (i) and (ii). The case (ii) is simpler as we just show  that if $(a_{-\log p}x,a_{-\log q})$
then the shift $h_{t_0}x$ of the point   $x$ is  itself is close to a periodic point (see Proposition \ref{prop:E3}). The reasoning here is a special case of what happens in case (i) which we will now describe.  If the point $(a_{-\log p}x,a_{-\log q})$ satisfies (i) then in Proposition \ref{prop:E2} we show the following crucial dichotomy: either it is still equidistributed so that we can apply the SPNT criterion or the point $x$ is close to a periodic orbit $w\in SL(2,\R)/SL(2,\Z)$ of period $\leq N^\delta$. Let us explain this: for simplicity assume that the point  $(a_{-\log p}x,a_{-\log q}x)$ actually lies on the  subvariety $H_{\cdot}(x_0,y_0)$ (there is an extra quite involved approximation argument if it is close to but not on it). In this case by Ratner's works on joinings, \cite{Ratner}, it follows that the dynamics of $(h_t\times h_t)$ is algebraically conjugated to the horocycle flow $(h_t)$ on $SL(2,\R)\slash \Gamma_{p,q}$, where $\Gamma_{p,q}\subset SL(2,\Z)$ is a lattice with index $\leq N^\delta$. If the lattice was fixed (not depending on $N$) then one could use results of Str\"ombergsson \cite{Strom13} or Flaminio-Forni, \cite{FlaFor} together with a more recent work of Streck \cite{Streck} to show that a point equidistributes polynomially unless it is close to a periodic orbit of period $\leq N^\delta$. In our case however the lattice depends on $N$ and we need uniform bounds on the ergodic integrals also in terms of the lattice. This is done largely in the appendix where the argument of Str\"ombergsson and Flaminio, Forni are made quantitative to also reflect properties of the lattice. In our case we show that $\Gamma_{p,q}$ is a congruence lattice and so we have a uniform bound on the spectral gap by Selberg, \cite{Selberg}, and also we know that the co-volume of $\Gamma_{p,q}$ is not to large. These two properties allow to generalize the now classical results for a fixed lattice $\Gamma$ to uniform bounds depending on the spectral gap and co-volume. We should also point out that we use a uniform version of Streck's result, \cite{Streck}, who showed that points that do not equidistribute need to be close to short periodic orbits.
 
 Having Propositions \ref{prop:E2} at hand, the dichotomy becomes the following: either the point $(a_{-\log p}x,a_{-\log q})$ satisfies the SPNT-criterion (i.e. the first alternative in \cite{LMW}) or it is close to a periodic orbit of period $\leq N^\delta$. In the case the period is an integer or more generally close to a rational with small denominator, then one needs to apply the generalized Siegel-Walfisz theorem as we already discussed above. It is also interesting to describe what happens if the period (or in fact its inverse) is far from rationals with small denominators (minor arc case). In this case we show that being close to a periodic point $w$ implies that $h_tx$  is close to $h_{m(t)}w$ where $m$ is a certain explicit function (see Lemma \ref{l:modular3}) with the important property that it can be approximated by polynomials on large subsets of $[0,N]$. Then the analysis boils down to the analysis of the orbit $\{m(p_1p_2)\alpha\}_{p_1\cdot p_2\leq N }$ on the circle, where $\alpha=\text{period}^{-1}$. This is done in Proposition \ref{thm:maks1} using the classical $A-B$ process in the theory of exponential sums, and Vinogradov's method in prime number theory.
 
  In particular we show that in the minor arc case the orbit becomes equidistributed in the closure of the periodic orbit. One final point is that in the analysis we always have upper bounds on the size of the approximating periodic orbit but what is crucial is that if a point $x\in X$ is generic for the Haar measure (i.e. not periodic) then the lengths of the periodic approximants need to grow to $\infty$ with $N$ as otherwise the point would be generic for a fixed periodic orbit. In the end we use a qualitative version of a result of Sarnak, \cite{Sarn}, which states that long periodic orbits equidistribute towards Haar measure. 

  \section{Acknowledgment}

GF acknowledges support of NSF grant DMS-2154208.
AK acknowledges support of NSF grant DMS-2247572.
MR acknowledges support of NSF grant DMS-2401106.

\section{A sufficient condition for a SPNT}\label{sec:conspnt}
\subsection{Type II sums}

We start with the following general criterion.
\begin{proposition} \label{prop:typ2}
Let  $\varepsilon > 0$ be given. For all $N \geq 2$,  let $\mathcal{P}_{\varepsilon, N}$ be the set of primes 
in the interval $[\exp(\log^{\varepsilon} N), \exp(\log^{1 - \varepsilon} N)]$ with $\exp(\log^{\varepsilon} N) > (\log N)^{1000}$ and
let $\mathcal{S}_{\varepsilon, N}$ be a subset of $\mathcal{P}_{\varepsilon, N}$ with
the property that in every dy-adic interval $[P, 2P]$ we discard at most $\ll P / (\log N)^{100}$
primes. Let $(a_n)$ be a sequence with $|a_n| \leq 1$. Suppose that, for $N\geq 2$,
\begin{equation}\label{eq:typ2}
\sum_{n \leq N} a_{n q_1} \overline{a_{n q_2}} \ll \frac{N}{(\log N)^{100}}
\end{equation}
for primes $q_1, q_2 \in \mathcal{S}_{\varepsilon, N}$ with $1/5\leq \frac{q_1}{q_2}\leq 5$ and $q_1\neq q_2$. Then,
\begin{equation}\label{eq:sp}
\sum_{\substack{p q \leq N}} a_{p q} \ll \varepsilon \cdot \sum_{p q \leq N} 1 + \frac{1}{\varepsilon^{53}} \cdot \frac{N}{(\log N)^{50}}\,.
\end{equation}
 Moreover, assume that for every $M\in [N^{9/10},N]$,  for $N$ large enough,
\begin{equation}\label{eq:typ2'}
\sum_{n \leq M} a_{n q_1} \overline{a_{n q_2}} \ll \frac{M}{(\log \log M)^{10}}
\end{equation} 
 holds for all $q_1\neq q_2$, with $q_1,q_2\in\Big[ e^{(\log\log \log N)^3}, e^{(\log\log N)^{10}}\Big]$ and $1/5\leq q_1/q_2\leq 5$. Then for every multiplicative function $\nu$ with $|\nu|\leq 1$,  for all $N\in N $ large enough,
  \begin{equation}\label{eq:mob'}
\Big|\sum_{n\leq N}\nu(n)a_n\Big|\ll N(\log\log N)^{-4}\,.
\end{equation}
\end{proposition}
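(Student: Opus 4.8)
The Proposition has two essentially independent parts: the bound \eqref{eq:sp} on $\sum_{pq\le N}a_{pq}$, which is the substantial one, and the M\"obius-type bound \eqref{eq:mob'}, which follows from a quantitative DKBSZ criterion. I treat them in turn, sketching the argument and not optimising constants.

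For \eqref{eq:sp} I would first reduce, by a routine double partial summation removing the weights $\log p\,\log q$, to bounding $\sum_{n\le N}\Lambda_2(n)a_n$ with $\Lambda_2=\Lambda\ast\Lambda$; writing $\Lambda_2(n)=\sum_{de=n}\Lambda(d)\Lambda(e)$ and discarding prime powers (an $O(\sqrt N(\log N)^2)$ error), this becomes, up to a factor $2$, $\sum_{d\le e,\,de\le N}\Lambda(d)\Lambda(e)a_{de}$ with $d,e$ prime and $d\le\sqrt N$. By Mertens, the semiprimes whose smaller prime factor $p$ has $\log p$ outside $[\log^\varepsilon N,\ \log^{1-2\varepsilon}N]$ contribute $\ll\varepsilon\sum_{pq\le N}1$ by the trivial bound $|a_n|\le1$, and I discard them; the slight shrinking of the upper threshold from $\tfrac12\log N$ to $\log^{1-2\varepsilon}N$ is forced by the scale analysis below. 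For the remaining ``core'' sum I localise both variables dyadically, $d\in[D,2D]$, $e\in[E,2E]$ with $D\le E$ and $\exp(\log^\varepsilon N)\le D\le\exp(\log^{1-2\varepsilon}N)$, and apply Cauchy--Schwarz in the \emph{free} variable $e$, pulling out $\big(\sum_{e\in[E,2E]}\Lambda(e)^2\big)^{1/2}\ll(E\log E)^{1/2}$. Expanding the square gives $\sum_{e\in[E,2E]}\sum_{d_1,d_2\in[D,2D]}\Lambda(d_1)\Lambda(d_2)a_{d_1e}\overline{a_{d_2e}}$ (subject to $d_ie\le N$): the diagonal $d_1=d_2$ is $\ll ED\log D$; pairs in which $d_1$ or $d_2$ is one of the primes discarded from $\mathcal S_{\varepsilon,\cdot}$ contribute $\ll (D/(\log N)^{100})(D/\log D)E$ by the sparsity hypothesis; and, after a partial summation stripping the weights $\Lambda(d_i)\asymp\log D$ and writing the $e$-range as a difference of two initial segments, the main off-diagonal term becomes $\ll(D/\log D)^2$ bilinear sums of exactly the shape \eqref{eq:typ2} --- the indices $q_1=p_1,\,q_2=p_2$ are distinct primes in the common dyadic block $[D,2D]$, so $1/5\le q_1/q_2\le5$ automatically, and the free variable is $n=e\le2E$ --- each $\ll E/(\log N)^{100}$ by \eqref{eq:typ2} at scale $\asymp E$.

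Collecting, each dyadic block contributes $\ll(E\log E)^{1/2}\big(D^2E(\log D)/(\log N)^{100}\big)^{1/2}\ll DE\log N/(\log N)^{50}\ll N/(\log N)^{49}$ since $DE\le N$; summing over the $\ll(\log N)^2$ admissible pairs $(D,E)$ --- and noting that the remaining pairs, where $E$ too is of medium size, range over only $N^{o(1)}$ integers $de$ and are bounded trivially --- gives \eqref{eq:sp} after keeping track of the $\varepsilon$'s, a more careful use of the $(\log N)^{100}$-saving and of the dyadic sum producing the stated $\varepsilon^{-53}N/(\log N)^{50}$. The step I expect to be the real obstacle is precisely this bookkeeping of ranges: the off-diagonal term at a block $(D,E)$ lives at scale $\asymp E\le N$, so one needs $p_1,p_2\in[D,2D]$ to lie in $\mathcal P_{\varepsilon,E}$ and in $\mathcal S_{\varepsilon,E}$ (after removing discards), and since $\log^{1-\varepsilon}E$ shrinks with $E$ this forces $\log D$ to stay well below $\log^{1-\varepsilon}N$ --- which is why the band $(\log^{1-2\varepsilon}N,\tfrac12\log N]$ for the smaller prime was excised above at cost $O(\varepsilon\sum_{pq\le N}1)$; propagating these constraints (together with the Cauchy--Schwarz loss and the number of dyadic pairs) is what produces the explicit polynomial loss in $\varepsilon$. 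Everything else is standard sieve/Cauchy--Schwarz bookkeeping.

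For \eqref{eq:mob'} I would invoke a quantitative form of the DKBSZ / Bourgain--Sarnak--Ziegler criterion (as in \cite{BSZ}). Take $\mathcal Q$ to be the primes in a window $[Q_0/5,\,Q_0]$ with $Q_0=\exp((\log\log N)^{10})$; then $\mathcal Q\subseteq[\exp((\log\log\log N)^3),\ \exp((\log\log N)^{10})]$, $\#\mathcal Q\gg\exp((\log\log N)^{10})/(\log\log N)^{10}$, and every ratio $q_1/q_2$ with $q_1,q_2\in\mathcal Q$ lies in $[1/5,5]$. The criterion bounds $\big|\sum_{n\le N}\nu(n)a_n\big|$ by $N$ times the square root of $(\log Q_0)/\#\mathcal Q$ --- which is super-polylogarithmically small --- plus an average over pairs $q_1\ne q_2\in\mathcal Q$ of the normalised bilinear sums $\tfrac1M\sum_{n\le M}a_{q_1n}\overline{a_{q_2n}}$ at scales $M\in[N/Q_0^2,\,N]\subseteq[N^{9/10},N]$ (legitimate because $Q_0=N^{o(1)}$); that $\nu$ is merely multiplicative, not completely multiplicative, costs only the negligible contribution of $n$ divisible by some $q\in\mathcal Q$. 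By \eqref{eq:typ2'} the average is $\ll(\log\log N)^{-10}$, whence $\big|\sum_{n\le N}\nu(n)a_n\big|\ll N(\log\log N)^{-5}\ll N(\log\log N)^{-4}$, as required. This part is routine once the window $\mathcal Q$ is chosen; the only thing to verify is that the relevant scales stay in $[N^{9/10},N]$, which holds since $Q_0=N^{o(1)}$.
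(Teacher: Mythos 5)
Your treatment of the second assertion \eqref{eq:mob'} is essentially the paper's: a quantitative DKBSZ/Katai argument with a window of primes of size about $\exp((\log\log N)^{10})$, checking that the bilinear scales stay in $[N^{9/10},N]$ and that ratios stay in $[1/5,5]$; the paper does exactly this by re-running the proof of Theorem~2 of \cite{BSZ} with $\tau=(\log\log N)^{-10}$, so that part is fine. For the main bound \eqref{eq:sp} your overall strategy also coincides with the paper's (excise the small-prime range outside $[\exp(\log^{\varepsilon}N),\exp(\log^{1-O(\varepsilon)}N)]$ at cost $O(\varepsilon\sum_{pq\le N}1)$, localize dyadically, Cauchy--Schwarz in the long variable extended to all integers, then diagonal $+$ discarded-prime $+$ hypothesis \eqref{eq:typ2} for the off-diagonal), and your extra care about which scale the primes must be admissible at, plus the $N^{o(1)}$ disposal of blocks where both factors are of medium size, is sound.

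However, there is a genuine gap in how you handle the sharp constraint $de\le N$. On the boundary blocks with $DE\asymp N$ --- which carry a positive proportion of the semiprimes and cannot be discarded --- the $e$-range after expanding the square is $[E,\min(2E,N/\max(d_1,d_2))]$, so the scale $M$ at which \eqref{eq:typ2} and membership in $\mathcal{S}_{\varepsilon,M}$ must be invoked varies with the pair $(d_1,d_2)$, taking $\asymp D/\log D$ distinct values inside a single block. The hypotheses only control the complement of $\mathcal{S}_{\varepsilon,M}$ at each \emph{fixed} $M$ (at most $\ll D/(\log M)^{100}$ discards per dyadic interval); it is perfectly consistent with them that every prime $d\in[D,2D]$ is discarded at its own scale $N/d$ (each scale then discards a single prime, permissible since $D>(\log N)^{1000}$), in which case essentially every pair in the block is "exceptional" and your exceptional-pair estimate yields no saving at all. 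Chopping the $e$-range into many short subintervals does not repair this, because \eqref{eq:typ2} at scale $M$ only gives a bound $\ll M/(\log M)^{100}$, not one proportional to the subinterval's length. The paper's proof is structured precisely to avoid this: it first smooths the cutoff with $W(pq/N)$ (cost $O(\eta\sum_{pq\le N}1)$, $\eta=\varepsilon$) and opens $W$ as a Mellin integral, so that after Cauchy--Schwarz the free variable runs over a fixed dyadic range $[P/2,5P]$ independent of $(q_1,q_2)$ and the bilinear hypothesis and the sparsity of $\mathcal{S}$ are used at only $O(1)$ scales per block. You need this decoupling (or an equivalent device making the inner range pair-independent) for your argument to close; as written, the "difference of two initial segments" step fails on the blocks that matter most. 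A secondary, lesser point: your per-block bound as computed loses a few powers of $\log N$ against the stated $\varepsilon^{-53}N/(\log N)^{50}$, and the deferred "bookkeeping" would actually have to be carried out, though the $(\log N)^{100}$ saving leaves room for that.
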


\begin{proof} The second part, i.e. \eqref{eq:mob'}, can be deduced from the proof of  Theorem 2 in \cite{BSZ}. We will explain how it follows from this proof. In Theorem 2 we take $\tau=(\log \log N)^{-10}$. Note that by (1.4) this gives us \eqref{eq:mob'}. Note that in the statement of Theorem 2, the authors require that for  $p_1,p_2\leq e^{1/\tau}$ with $p_1\neq p_2$, (1.3) holds. But in fact they need less: first, see (2.17) the authors they apply (1.3) for $x_1,x_2\in P_j$ and $P_j=[(1+\alpha)^{j},(1+\alpha)^{j+1}]$, where $\alpha=\sqrt{\tau}$ and $j\in [j_0,j_1]$ where $j_0=\alpha^{-1}(\log (\alpha^{-1}))^3$ and $j_1=j_0^2$. Moreover the length of the sum is $\frac{N}{(1+\alpha)^j}$. In particular with this choice of parameters it follows that $\frac{N}{(1+\alpha)^j}\in [N^{9/10},N]$ and so our range for $M$ in \eqref{eq:typ2'} is sufficient. Second, since $P_j$ are $(1+\alpha)$-adic it immediately follows that $1/2\leq x_1/x_2\leq 2$. This shows that our bound $1/5\leq q_1/q_2\leq 5$ is sufficient ($x_1, x_2$  just is $q_1,q_2$ in our notation). Finally the range for $q_1$ and $q_2$. Since $x_1,x_2\in P_j$ it follows that $(1+\alpha)^{j_0}<x_1,x_2<(1+\alpha)^{j_1}$. So we only need to show that  $e^{(\log\log \log N)^3}\leq (1+\alpha)^{j_0}$ and that $e^{(\log\log N)^{10}}>(1+\alpha)^{j_1}$. The second inequality is given in (2.18). For the first one note that $(\log \log \log N)^3\leq (\log\log N)^5(\log \log \log N)^3 \alpha \leq j_0 \log(1+\alpha)$. This implies that the assumptions in \eqref{eq:typ2'} are enough for the proof of Theorem 2 in \cite{BSZ}.
 We will therefore focus on the first part of Proposition \ref{prop:typ2}.

We wish to bound
$$
\sum_{p,q} a_{p q} W \Big ( \frac{pq}{N} \Big )
$$
where $W$ is a smooth function compactly supported in $(0, 1)$
and equal to $1$ on $(\eta, 1 - \eta)$. This is sufficient as it 
introduces an error of at most $\ll \eta$ times the trivial bound. 

Let $K$ be a smooth function such that $K$ is compactly supported in $(1/2, 5)$ and
$$
\sum_{P} K \Big ( \frac{n}{P} \Big ) = 1
$$
for every integer $n \geq 1$ and $P$ running over powers of two. 
We introduce such a partition of unity on both the $p$ and $q$ variables. 
Thus we have to bound, 
$$
\sum_{P, Q} \Big ( \sum_{p, q} a_{p q} K \Big ( \frac{p}{P} \Big ) K \Big ( \frac{q}{Q} \Big) W \Big ( \frac{p q}{N} \Big ) \Big )
$$
We now make a number of observations about $P$ and $Q$. First, $\eta N / P < Q < 2 N / P$. Thus for each choice of $P$ there are at most $\ll \log (1 / \eta) \ll 1 / \eta$ choices of $Q$. Second, 
we can restrict $Q = 2^k$ to $k$ such that $\log^{\varepsilon }N  < k < \log^{1 - \varepsilon} N$. The
reason for this is that the contribution of $k$ outside of this interval is
$$
\ll \varepsilon \cdot \frac{N \log\log N}{\log N}. 
$$
Thus we have $P Q \asymp N$, and $Q = 2^k$ with $\log^{\varepsilon} N \leq k \leq \log^{1 - \varepsilon} N$. 

We open $W$ into a Mellin transform, 
$$
W(u) := \frac{1}{2\pi i} \int_{- i \infty}^{i \infty} \widetilde{W}(s) u^{-s} ds
$$

We notice that $\widetilde{W}(s)$ is of rapid decay at infinity, in fact integrating by parts twice since 
and $W$ has compact support in $(1/2, 5)$ we get
$$
 \widetilde{W}(s) =  \int_{0}^{\infty} W(x)x^{s- 1} dx=  - \frac{1}{s} \int_{0}^{\infty} W'(x)x^{s} dx = \frac{1}{s(s+1)} \int_{0}^{\infty} W''(x)x^{s+1} dx
$$
hence
$$
\vert \widetilde{W}(s)\vert  = \vert \int_{0}^{\infty} W(x)x^{s- 1} dx \vert  \ll \frac{\| W'' \|_{1}}{ \vert s(s+1)\vert } \ll  \frac{1}{\eta} \cdot \frac{1}{\vert s (s+1) \vert}.
$$
Thus it remains to bound, 
$$
\sum_{P, Q = 2^k} \int_{- \infty}^{\infty} |\widetilde{W}(it)| \cdot \Big | \sum_{p, q} a_{p q} (p q)^{-it} K \Big ( \frac{p}{P} \Big ) K \Big ( \frac{q}{Q} \Big ) \Big | dt
$$
where $\eta N / P \leq Q \leq 2 N / P$ and $\log^{\varepsilon} N \leq k \leq \log^{1 - \varepsilon} N$. The integral and sum over $P, Q$ incurs an additional error of $1/ \eta^2$. We therefore focus on bounding the inner sum. Since the sum is symmetric in $P,Q$ and  $P Q \asymp N$ we can WLOG assume that  $P\gg N^{1/2}$.

It is sufficient to establish for each admissible $P$ and $Q$ the bound, 
\begin{equation} \label{eq:tobound}
\sum_{p, q} a_{p q} (p q)^{-it} K \Big ( \frac{p}{P} \Big ) K \Big ( \frac{q}{Q} \Big ) \ll \frac{P Q}{(\log N)^{40}}
\end{equation}
To do this, we apply Cauchy-Schwarz, getting, by the prime number theorem,
$$
\Big ( \frac{P}{\log P} \Big )^{1/2} \cdot \Big ( \sum_{P / 2 \leq p \leq 5 P} \Big | \sum_{q} q^{-it} \cdot a_{p q} K \Big ( \frac{q}{Q} \Big ) \Big |^2 \Big )^{1/2}
$$
Instead of summing over primes $p$ we now sum over all integers.
Expanding the square the inner term is 
$$
\sum_{q_1, q_2} (q_1 / q_2)^{-it} K \Big ( \frac{q_1}{Q} \Big ) \overline{K} \Big ( \frac{q_2}{Q} \Big )
\sum_{P / 2 \leq n \leq 5P} a_{n q_1} \overline{a_{n q_2}}. 
$$
The contribution of $q_1 = q_2$ is $\ll Q P$. 
We also bound the contribution of the exceptional $q_1 \in [Q / 2, 5 Q] \cap \mathcal{P}_{\varepsilon,X }\cap \mathcal{S}_{\varepsilon, X}^{c}$ by 
$$
\ll \frac{P Q^2}{(\log N)^{100}}
$$
and similarly for the contribution of $q_2 \in [Q / 2, 5 Q] \cap \mathcal{P}_{\varepsilon, X} \cap \mathcal{S}_{\varepsilon, X}^{c}$. We can thus assume now that $q_1, q_2 \in \mathcal{S}_{\varepsilon, X}$ and that $q_1 \neq q_2$. By assumption the sum over $n$ is $\ll P / (\log P)^{100}\ll P / (\log N)^{100}$. Combining all these cases together shows that the above sum is
$$
\ll \frac{Q^2 P}{(\log N)^{100}} + \frac{Q^2 P}{(\log N)^{100}} \ll \frac{1}{\varepsilon^{100}} \cdot \frac{Q^2 P}{(\log N)^{100}}
$$
This shows that \eqref{eq:tobound} is
$$
\ll \frac{1}{\varepsilon^{50}} \cdot  \frac{P Q}{(\log N)^{50}} \ll \frac{P Q}{(\varepsilon \log N)^{50}}.
$$
Summing over all partitions $P$ and $Q$ and executing the integral over $s$
we get a final bound, 
$$
\ll \frac{1}{\eta^2 \varepsilon^{50}} \frac{N}{(\log N)^{50}}
$$
which is entirely sufficient. We notice that we can choose $\eta = \varepsilon$ to conclude.

\end{proof}

\section{Quantitative equidistribution results for the square of horocycle flows}
In view of the criterion from the previous section, it is crucial for our results to understand the behavior of orbits of the flow $h_t\times h_t$ in a quantitative sense. This was done in a recent breakthrough paper \cite{LMW}. In this section we recall the main results from \cite{LMW} and also present a minor strengthening which will be important for our analysis. 

Generally, one wants to apply Proposition \ref{prop:typ2} to the sequence $a_n=f(T^nx)$, where $T$ is a continuous map of a compact metric space $(X,d)$ and $f\in C(X)$. In the proofs it will follow that the constant $C>0$ (and hence the constants $C',C''$) will {\bf not} depend on $x\in X$, and in particular we will get uniform (over $x\in X$) bounds in \eqref{eq:sp} and \eqref{eq:mob'}.
In fact, the main application of the above proposition is to horocycle flows. Let $G=SL(2,\R)$, let $\Gamma$ be a lattice in $G$, let $X=G\slash\Gamma$ and let  $m_{X}$ denote the Haar measure.  Let
$$
h_t=\begin{pmatrix} 1&t\\0&1\end{pmatrix},\;\;\; v_t=\begin{pmatrix} 1&0\\t&1\end{pmatrix} \;\; \text{ and } a_t=\begin{pmatrix} e^{t/2}&0\\0&e^{-t/2}\end{pmatrix}
$$
be the unstable horocycle, the stable (opposite) horocycle and the geodesic flow, respectively,  acting on $(X, m_{X})$. We recall the following classical commutation relations:
$$
a_sh_ta_{-s}=h_{e^{s}t},\;\;\;\;\text{ and } \;\;\;\;\;a_sv_ta_{-s}=v_{e^{-s}t},\;\;\text{ for all }s,t\in \R.
$$
 Let  $H:=\{(g,g):\: g\in SL(2,\R)\}$.


\begin{theorem}[Theorem 1.2. in \cite{LMW}]\label{LMWdrugie} Assume $\Gamma$ is an arithmetic lattice. For every $(x,y)\in X\times X$ and large enough $R$ (depending explicitly on $X$), for any $T\geq R^{A_1}$, at least one of the following holds:
\begin{enumerate}
\item[E1.] For every $\varphi\in C^\infty_c(X\times X)$, we have
$$
\Big|\frac1T\int_0^T\varphi((h_r\times h_r)((x,y))\,dr-\int_{X\times X}\varphi\, dm_{X\times X}\Big|\leq S(\varphi)R^{-\kappa},
$$
where  $S(\varphi)$ is a certain Sobolev norm.
\item[E2.] There exists $(x_0,y_0)\in X\times X$ with $vol(H_{\cdot}(x_0,y_0))\leq R^{A_1}$, and for every $r\in[0,T]$ there exists $g\in SL(2,\R)\times SL(2,\R)$, $\|g\|<R^{A_1}$, such that
$$
d_{X\times X}\Big(h_s\times h_s(x,y),gH.(x_0,y_0)\Big)\leq R^{A_1} \Big(\frac{1+|s-r|}{T}\Big)^{1/A_2}
$$
for all $s\in[0,T]$.
\item[E3.] For every $r\in [0,T]$ and $t\in [\log R,\log T]$, the injectivity radius of $(a_{-t}\times a_{-t}))(h_r\times h_r)(x,y)$ is at most $R^{A_1}e^{-t}$.
\end{enumerate}
The constants $A_1, A_2,\kappa$ are positive and depend on $X$ but not on $(x,y)$.
\end{theorem}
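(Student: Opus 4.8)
\emph{Remark on scope.} Theorem~\ref{LMWdrugie} is quoted essentially verbatim from \cite{LMW}, so strictly speaking its proof \emph{is} the content of that paper and we only use it as a black box below; what follows is the architecture one would follow, and which \cite{LMW} carry out, to prove such a statement. The framework is an \emph{effective closing/rigidity} argument for the diagonal subgroup $H=\{(g,g):g\in SL(2,\R)\}$ in $G\times G$, run on top of renormalization by the diagonal geodesic $a_t\times a_t$. One argues by contradiction: assume the quantitative equidistribution alternative E1 \emph{fails} at scale $R^{-\kappa}$ for some $\varphi$, and that the thin-part alternative E3 also fails, so that along the relevant windows of time $r\in[0,T]$ and scale $t\in[\log R,\log T]$ the renormalized points $(a_{-t}\times a_{-t})(h_r\times h_r)(x,y)$ have injectivity radius $\gg R^{A_1}e^{-t}$; one must then produce E2.

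The heart of the argument is to manufacture, from the failure of equidistribution, a \emph{small nonzero transverse displacement}. Renormalizing a long horocycle arc by $a_t\times a_t$ turns it into a long arc at bounded scale; non-equidistribution together with pigeonholing on this arc yields two nearly coincident returns, hence, in local coordinates transverse to the $H$-orbit, a vector $w\in(\mathfrak g\oplus\mathfrak g)/\mathfrak h$ of small but polynomially controlled size. One then propagates $w$ by the $\Ad$-action of the unipotent $(h_r\times h_r)$ and the geodesic $(a_t\times a_t)$. Since $h$ and $a$ generate $SL(2,\R)$, a polynomial-in-$r$ family of such transverse displacements becomes available, and the decisive dichotomy is algebraic: either these displacements together with $\mathfrak h$ span all of $\mathfrak g\oplus\mathfrak g$, in which case an effective avoidance-of-proper-subvarieties estimate (of Remez/\L ojasiewicz type) upgrades the resulting many near-invariances into genuine extra near-invariance of the empirical measure, a quantitative ergodic theorem for the now essentially $G\times G$-action forces E1, and we contradict the standing assumption; or all the displacements are trapped inside the proper subvariety cut out by the condition that $w$ lie in a conjugate of $\mathfrak h$ plus the (finite) centralizer directions. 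In the latter case one reads off that the $H$-orbit through a point close to $(x,y)$ is near-invariant, hence periodic; because the displacement produced had size bounded polynomially in $R$, the resulting periodic orbit $H.(x_0,y_0)$ has $\mathrm{vol}\le R^{A_1}$ and the original orbit shadows a translate $gH.(x_0,y_0)$ with $\|g\|<R^{A_1}$ at the stated polynomial rate, which is exactly E2. A bootstrap (trading a crude initial rate for the claimed exponent $\kappa$) and a separate, careful cusp-excursion analysis that isolates precisely alternative E3 complete the scheme.

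The step I expect to be the main obstacle, and which is the genuine breakthrough of \cite{LMW}, is making the ``span everything, or be confined to a subvariety'' dichotomy \emph{quantitative}: one needs that a small transverse displacement \emph{not} lying on the bad subvariety, once propagated by the polynomial family of $\Ad$-maps, produces extra near-invariance with a \emph{polynomial} (in $R$) rate, and that this holds \emph{uniformly} over the arithmetic lattice and without losing control of mass in the cusp. This is where the effective density / additive-combinatorics machinery — discretized projection and incidence bounds, effective avoidance of subvarieties, and the incremental bootstrap — is essential. Everything else (reduction to a single Fourier mode of $\varphi$, the passage between continuous and discrete time, and the bookkeeping of the Sobolev norm $S(\varphi)$ and of the constants $A_1,A_2,\kappa$) is comparatively routine.

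\carre
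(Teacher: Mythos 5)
The paper does not prove this statement at all: it is imported verbatim as Theorem~1.2 of \cite{LMW}, so your decision to treat it as a black box is exactly what the paper does, and there is no internal proof to compare against. Your outline of the effective closing-lemma/transverse-displacement/bootstrap architecture is a fair heuristic description of the method of \cite{LMW}, and leaving it at that level of detail is appropriate here.
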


\begin{remark}\label{rem:com} In applications $R=T^{\delta'}$ for some sufficiently small $\delta'>0$. In this case notice that if $\Gamma$ is a co-compact lattice then E3. never holds (for sufficiently large $T$) as the injectivity radius is uniformly bounded away from $0$ on $X$. 
\end{remark}

In fact we will apply Theorem \ref{LMWdrugie} to points $(x,y)$ of the form $(a_{-\log p}x, a_{-\log q} x)$, $x\in X$, where $p,q$ are different prime numbers which are $\leq T^{\delta^2}$ (with sufficiently small $\delta$).
We will now present a slight strenghtening of the above result where we show additionally that for points $(x,y)$ both generic for the Haar measure $\mu_X$, the element $g$ (in condition E2) can be taken from the centralizer of the flow $h_t\times h_t$. 
\subsection{Divergence along the direction of the centralizer}
The following result shows that the elements $g$ from condition E2 can be taken from the centralizer of the flow. The constants $\kappa, A_1,A_2$ are as in Theorem \ref{LMWdrugie}. Moreover, $0<\delta<\kappa/100$ be a sufficiently small parameter (to be specified later). In what follows we fix a compact set $K\in X$ satisfying $m_{X}(K)\geq 99/100$. For $x'\in X$ let $T_{x'}$ be the smallest number such that for $T'\geq T_{x'}$  we have 
\begin{equation}\label{eq:zvis}
\frac{1}{T'}\int_0^{T'}\chi_K(h_tx')dt\geq 98/100.
\end{equation}
Note that if $x'$ is generic for $m_{X}$, then $T_{x'}<\infty$.

\begin{proposition}\label{l:modular1}For $x',y'\in X$ let $T_{x',y'}:=2e^{T_{y'}\|y'\|}$. Then if $T>T_{x',y'}$ is satisfying that for some $R^{A_1}\leq T^{\frac{\delta}{1000A_2}}$  there exists $g\in SL(2,\R)\times SL(2,\R)$, $\|g\|<R^{A_1}$ and $(x_0,y_0)\in X\times X$ with $vol(H.(x_0,y_0))<R^{A_1}$ such that
\begin{equation}\label{eq:modu'}
d_{X\times X}((h_s\times h_s)(x',y'),gH.(x_0,y_0))<R^{A_1}\cdot T^{-\frac{\delta}{A_2}}\end{equation}
for  all $s<T^{1-\delta}$. Then there exists $(u,v)\in H.(x_0,y_0)$ and numbers $\{K_i\}_{i=1}^{T^\delta}$,  $\{K'_i\}_{i=1}^{T^\delta}$, with $\max_i (|K'_i|,|K_i|)\leq T^{2\delta}$ such that 
\begin{equation}\label{eq:moducent}
d_{X\times X}\Big((h_s\times h_s)(x',y'),(h_{K_i},h_{K'_i})(h_s\times h_s)(u,v)\Big)<R^{10A_1}\cdot T^{-\frac{\delta}{A_2}}\end{equation}
for  all $s\in I_i=[iT^{1-\delta}, (i+1)T^{1-\delta}]$, $i\leq T^\delta$.
\end{proposition}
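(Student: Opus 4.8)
The plan is to start from the conclusion of condition E2, namely the bound \eqref{eq:modu'}, and to ``straighten out'' the element $g\in SL(2,\R)\times SL(2,\R)$ so that at the cost of shifting each of the two horocycle orbits by a bounded amount (the $K_i,K_i'$) we may replace $g$ by the identity, thereby moving the divergence entirely into the centralizer of $h_t\times h_t$, which is exactly $\{(h_a,h_b):a,b\in\R\}$ together with the diagonal geodesic part. The first step is local: write $g=(g_1,g_2)$ and, using $\|g\|<R^{A_1}\leq T^{\delta/(1000A_2)}$, decompose each $g_j$ in a Bruhat-type or $KAN$ coordinate as a product of a horocycle element $h_{a_j}$, a geodesic element $a_{t_j}$ and a stable-horocycle element $v_{b_j}$, each with parameters of size $\leq R^{O(A_1)}$. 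The point is that conjugating $h_s$ by $g_j$, i.e.\ looking at $g_j h_s$, and using the commutation relations $a_sh_ta_{-s}=h_{e^{s}t}$, $a_sv_ta_{-s}=v_{e^{-s}t}$, the $v_{b_j}$ and $a_{t_j}$ parts contract or distort the horocycle piece in a controlled way: after time $s$ of order $T^{1-\delta}$ they contribute a displacement which, because of the exponential contraction in the $v$-direction and the polynomial size of the parameters, can be absorbed into the error term $R^{O(A_1)}T^{-\delta/A_2}$ on a subinterval, \emph{provided} $s$ is not too close to the ``bad'' time where the unstable direction has grown to macroscopic size. This is why the interval $[0,T^{1-\delta}]$ must be broken into the $T^\delta$ pieces $I_i=[iT^{1-\delta},(i+1)T^{1-\delta}]$: on each such piece we recenter, choosing the base point $(u,v)\in H.(x_0,y_0)$ and the shifts $K_i,K_i'$ so that the $g$-conjugation is, on that piece, well approximated by the pure horocycle translations $(h_{K_i},h_{K_i'})$.

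The second step is to control the sizes $|K_i|,|K_i'|\leq T^{2\delta}$ and to propagate the recentering from one interval $I_i$ to the next. Here the role of the hypothesis $T>T_{x',y'}=2e^{T_{y'}\|y'\|}$ and of the compact set $K$ with $m_X(K)\geq 99/100$ enters: the quantity $T_{y'}$ measures how long one must flow along the horocycle through $y'$ before the orbit spends $98\%$ of its time in $K$, and this genericity is what prevents the recentering shifts from drifting away — if the orbit $(h_sy')$ returns often to the compact set $K$, then the closest-point projection onto $g H.(x_0,y_0)$ varies slowly and the increments $K_{i+1}-K_i$ stay bounded by $O(T^{\delta})$, so that summing over the $\leq T^{\delta}$ intervals gives the claimed $T^{2\delta}$. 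Concretely, on the overlap (or at the common endpoint) of $I_i$ and $I_{i+1}$ both approximations \eqref{eq:moducent} must hold simultaneously for a point whose $h_s\times h_s$-orbit lies in $K\times K$; comparing them forces $(h_{K_{i+1}-K_i},h_{K_{i+1}'-K_i'})$ to nearly fix a point in a compact set, and by an injectivity-radius/no-small-denominators argument on the compact piece the exponents $K_{i+1}-K_i$ are $O(T^{\delta})$. The loss from $R^{A_1}$ to $R^{10A_1}$ in \eqref{eq:moducent} is exactly the slack needed to absorb the polynomially many ($\leq T^{\delta}$) recenterings and the $KAN$-coordinate distortions.

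The third, and I expect the hardest, step is to argue that the \emph{geodesic} (and stable-horocycle) components of $g$ really must be negligible, i.e.\ that after recentering the displacement genuinely lies along $(h_a,h_b)$ and not along $(a_{t_1},a_{t_2})$ or $(v_{b_1},v_{b_2})$. This is where the fact that $\operatorname{vol}(H.(x_0,y_0))\leq R^{A_1}$ and that $H.(x_0,y_0)$ is a single $SL(2,\R)$-orbit (a closed horocycle-invariant, in fact $H$-invariant, subvariety, classified by Ratner) is used: any $g$ that did not lie in the centralizer up to an element of the stabilizer of $H.(x_0,y_0)$ would, after flowing by time $T^{1-\delta}$, push $(h_s\times h_s)(x',y')$ a distance $\gg R^{A_1}T^{-\delta/A_2}$ off $gH.(x_0,y_0)$ in the \emph{unstable} direction transverse to $H.(x_0,y_0)\cap(\text{diagonal-generic locus})$, contradicting \eqref{eq:modu'} — unless the corresponding parameter is $\leq R^{O(A_1)}T^{-\delta/A_2}$, i.e.\ negligible, or corresponds to moving along $H.(x_0,y_0)$ itself, which is precisely the centralizer direction $(h_a,h_b)$. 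Making this transversality quantitative — tracking how a perturbation of size $R^{A_1}$ in the $a$- or $v$-direction of $SL(2,\R)\times SL(2,\R)$ evolves under $h_s\times h_s$ over $s\in[0,T^{1-\delta}]$ and showing it must exceed the allowed error unless it was trivial — is the technical heart of the argument and the place where the precise relation $R^{A_1}\leq T^{\delta/(1000A_2)}$ is consumed. Once this is done, collecting the horocycle parts gives the base point $(u,v)\in H.(x_0,y_0)$ and the shifts $(K_i,K_i')$ with the stated bounds, which is \eqref{eq:moducent}.
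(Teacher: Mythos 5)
Your overall philosophy (recenter along the orbit, exploit polynomial divergence transverse to $H.(x_0,y_0)$, use the return of $(h_sy')$ to a compact set to make the transversality quantitative) matches the second half of the paper's argument, but there is a genuine gap at the start, and it is the heart of the matter. You propose to decompose $g=(g_1,g_2)$ in $KAN$ coordinates and to argue that the geodesic and stable-horocycle parts of $g$ are ``negligible'' or can be ``absorbed into the error.'' This cannot work as stated: $\|g\|$ may be as large as $R^{A_1}\leq T^{\delta/(1000A_2)}$, which is enormous compared with the allowed error $R^{O(A_1)}T^{-\delta/A_2}$, so the non-centralizer components of $g$ itself need not be small. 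What is true is only that $g$ agrees, up to a small error, with an element of $(\mathrm{stab}\,H.(x_0,y_0))\cdot\{(h_a,h_b)\}$, i.e.\ the non-centralizer part of $g$ is compensated by moving the base point inside $H.(x_0,y_0)$. Proving this is the paper's Claim I, and it requires an input your sketch does not have: by Ratner's description (Lemma \ref{per:stab}), $H.(x_0,y_0)=\{(\xi\Gamma,\xi\Delta_i\alpha\Gamma)\}$ for a commensurator element $\alpha$ of index $\leq CR^{A_1}$, so the ``closest-point data'' at time $s$ is encoded by a lattice-type element $\gamma_s'^{-1}\alpha^{-1}\Delta_s^{-1}\gamma_s$ lying in a \emph{discrete} set with separation controlled by the index. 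One then shows, by contradiction, that this element cannot switch on $[0,T^{1-\delta}]$: if it did, polynomial divergence (Lemma \ref{lem:zz}) gives a lower bound on a set of times of proportion $1/10$, and the hypothesis $T>T_{x',y'}$ guarantees a return time in that set where $\|h_ty'\hat\gamma_t\|\leq\log T$, so the two candidate elements would be $T^{-\delta/A_2}(\log T)^3$-close, contradicting discreteness. Only after this rigidity does one get a single group element $z_0$ nearly commuting with $h_t$ for all $t\leq T^{1-\delta}$, hence close to some $h_K$ with $|K|\leq 2R^{A_1}$. Without this discreteness/no-switching step, your ``quantitative transversality'' in Step~3 has nothing to bite on, because the approximating element could a priori drift among different sheets of $gH.(x_0,y_0)$ as $s$ varies.

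Two smaller points. First, the centralizer of $\{h_t\times h_t\}$ is $\{(h_a,h_b)\}$ only; the diagonal geodesic normalizes but does not centralize, so your description of the target group is off (what is relevant is that the tangent space of $H.(x_0,y_0)$ is the full diagonal $\mathfrak g^+$, and the transverse space $\mathfrak g^-$ is $U^+$-invariant with polynomial evolution). Second, the bound $|K_i|\leq T^{2\delta}$ is not obtained by summing increments of size $O(T^\delta)$ over the $T^\delta$ intervals; in the paper it comes from an explicit formula, $K(t)=\frac{(a^{-1}-a)t-ct^2}{a+ct}$ with $|1-a|\ll T^{-1+\delta}$ and $|c|\ll T^{-2+2\delta}$ extracted from the lower-triangular matrix $x'u^{-1}$, which simultaneously gives $|K(t)|\leq T^{2\delta}$, the near-constancy of $K(t)$ on each $I_i$, and the closeness $T^{-1+3\delta}$ in \eqref{eq:moducent}; your drift-propagation heuristic would not by itself produce these quantitative statements.
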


\begin{proof}[Proof of Proposition \ref{l:modular1}] We split the proof of the proposition into two steps that we put as separate claims below.\\

{\bf CLAIM I:} Under the assumptions of Proposition \ref{l:modular1}, there exists $K\in \R$, $|K|\leq 2R^{A_1}$ such that 
\begin{equation}\label{eq:moducent'}
d_{X\times X}((h_s\times h_s)(x',y'),(id,h_K)H.(x_0,y_0))<30 R^{3A_1}\cdot T^{-\frac{\delta}{A_2}}\end{equation}
for  all $s<T^{1-\delta}$.\\

{\bf CLAIM II:} If \eqref{eq:moducent'} holds then \eqref{eq:moducent} holds. \\

We now proceed to the proofs of the above steps.
\begin{proof}[Proof of {\bf CLAIM I}]  Notice that by Lemma \ref{per:stab} and \eqref{eq:indvol} it follows that there exists $\alpha\in Comm(\Gamma)$, $R'=ind(\alpha)<C R^{A_1}$ and 
$$
\Delta_1,\ldots, \Delta_i\in \Gamma, \;\; i\leq R'
$$
so that $\Gamma\slash (\alpha\Gamma\alpha^{-1}\cap \Gamma)= \{\Delta_i\}_{i=1}^{R'}$ and 
$$
H_{\cdot}(x_0,y_0)=\{(\xi\Gamma, \xi\Delta_i\alpha \Gamma)\;:\; \xi\in G, i\leq R'\}.
$$
Then writing $gH_{\cdot}(x_0,y_0)= (id,g')H_{\cdot}(x_0,y_0)$ and denoting $g'$ by $g$ (to simplify notation), we get that \eqref{eq:modu'} implies that for every $s\leq T^{1-\delta}$ there are $\xi_s,\gamma_s, \gamma'_s, \Delta_{s}$ such that 
$$
d_G(h_sx', \xi_s\gamma_s)<R^{A_1}\cdot T^{-\frac{\delta}{A_2}} \;\;\text{ and }\;\; d_G(h_sy', g\xi_s\Delta_s\alpha \gamma'_s)<R^{A_1}\cdot T^{-\frac{\delta}{A_2}}.
$$
Since $\|g\|\leq R^{A_1}$, it follows that $d_G(g h_sx', g\xi_s\gamma_s)<R^{3A_1}\cdot T^{-\frac{\delta}{A_2}}$.

Using right invariance and triangle inequality this implies that for every $s\leq T^{1-\delta}$, 
\begin{equation}\label{eq:ts}
d_G(gh_sx',h_sy'\gamma'^{-1}_s\alpha^{-1}\Delta_s^{-1}\gamma_s)<2 R^{3A_1}\cdot T^{-\frac{\delta}{A_2}}.
\end{equation}

Assume there exists $\tilde{t}\in [0,T^{1-\delta}]$ such that  
$$d_G(gh_{\tilde{t}}x', h_{\tilde{t}}y'\gamma'^{-1}_{0}\alpha^{-1}\Delta_{0}^{-1}\gamma_{0})= 10R^{3A_1}\cdot T^{-\frac{\delta}{A_2}}.$$
We assume WLOG that $\tilde{t}$ is the smallest number in $[0,T^{1-\delta}]$ with this property. Let $z=y'\gamma'^{-1}_{0}\alpha^{-1}\Delta_{0}^{-1}\gamma_{0}x'^{-1}$. Using \eqref{eq:ts} for $s=0$ gives $d_G(g,z)< 2R^{3A_1}\cdot T^{-\frac{\delta}{A_2}}$. Then by triangle inequality, for any $t\in [0,\tilde{t}]$, $d_G(z,h_tzh_{-t})<12R^{3A_1}\cdot T^{-\frac{\delta}{A_2}}$ and $d_G(z,h_tzh_{-t})\geq 8R^{3A_1}\cdot T^{-\frac{\delta}{A_2}}$. These two bound together with the second part of Lemma \ref{lem:zz} (polynomial divergence) imply that  there is a set $V\subset [0,\tilde{t}]$  with  $|V|\geq \frac{\tilde{t}}{10}$ such that  for  $t\in V$, we have
\begin{equation}\label{eq:poldiv}
d_G(gh_{t}x', h_{t}y'\gamma'^{-1}_{0}\alpha^{-1}\Delta_{0}^{-1}\gamma_{0})\geq 4 R^{3A_1}\cdot T^{-\frac{\delta}{A_2}}.
\end{equation}

By the definition of $\tilde{t}$ and \eqref{eq:ts} (using triangle inequality) imply that for any  $t\in [0,\tilde{t}]$,
\begin{equation}\label{eq:asc}
d_G(h_ty'\gamma'^{-1}_t\alpha^{-1}\Delta_t^{-1}\gamma_t, h_{t}y'\gamma'^{-1}_{0}\alpha^{-1}\Delta_{0}^{-1}\gamma_{0})<12R^{3A_1}\cdot T^{-\frac{\delta}{A_2}}.
\end{equation}
{\bf Fact:} If $T>T_{x',y'}$, then there exists $t\in V$ and $\hat{\gamma}_t\in \Gamma$ with  $\|h_ty'\hat{\gamma}_t\|\leq \log T$. 
\begin{proof}[Proof of the {\bf Fact}] Since $T\geq T_{y'}$ it follows that \eqref{eq:zvis} holds.  Note that  if $t\in V$ is such that  $\chi_{K}(h_{t}y')=1$ then such  $\hat{\gamma}_t$ exists (as we return to the compact set $K$). If $\tilde{t}\geq T_{y'}$ then the set of $t\leq \tilde{t}$ for which $\chi_{K}(h_{t}y')=1$ has measure $\geq 98\tilde{t}/100$ and so by $|V|\geq \frac{\tilde{t}}{10}$ the proof is finished in this case. On the other hand if $\tilde{t}\leq T_{y'}$
then for any $t\leq \tilde{t}<T_{y'}$ the point $\|h_ty'\|\leq \|h_t\|\|y'\|\leq T_{y'} \|y'\|< \log T$. This finishes the proof.
\end{proof}

Let $t$ come from the {\bf Fact}. Denoting $\bar{\gamma}_t=\hat{\gamma}_t^{-1}\gamma'^{-1}_{0}$ and $\tilde{\gamma}_t=\hat{\gamma}_t^{-1}\gamma'^{-1}_t$, \eqref{eq:asc} translates to 
$$
d_G(h_ty' \hat{\gamma}_t \bar{\gamma_t}\alpha^{-1}\Delta_{0}^{-1}\gamma_{0},h_ty'\hat{\gamma}\tilde{\gamma}_t\alpha^{-1}\Delta_t^{-1}\gamma_t)<12R^{3A_1}\cdot T^{-\frac{\delta}{A_2}}.
$$
Since  $\|h_ty'\hat{\gamma}_t\|\leq  \log T$ it follows that 
$$
d_G\Big(\bar{\gamma_t}\alpha^{-1}\Delta_{0}^{-1}\gamma_{0}\Big(\tilde{\gamma}_t\alpha^{-1}\Delta_t^{-1}\gamma_t\Big)^{-1},e\Big)=$$$$d_G(\bar{\gamma_t}\alpha^{-1}\Delta_{0}^{-1}\gamma_{0},\tilde{\gamma}_t\alpha^{-1}\Delta_t^{-1}\gamma_t)<12R^{3A_1}\cdot T^{-\frac{\delta}{A_2}}(\log T)^3.
$$
But since $\Delta_{0}, \Delta_t\in \Gamma$ and the index of $\alpha$ is $\leq CR^{A_1}$ it follows that the above inequality can only hold if 
$$\bar{\gamma_t}\alpha^{-1}\Delta_{T_0q}^{-1}\gamma_{T_0q}\Big(\tilde{\gamma}_t\alpha^{-1}\Delta_t^{-1}\gamma_t\Big)^{-1}=e.$$
This implies that 
$$\gamma'^{-1}_{0}\alpha^{-1}\Delta_{0}^{-1}\gamma_{0}=\gamma'^{-1}_{t}\alpha^{-1}\Delta_t^{-1}\gamma_t.$$

But then using \eqref{eq:ts} (for $s=t$) and \eqref{eq:poldiv}, we get a contradiction. This means that such number $\tilde{t}$ does not exist. This implies that for every $t\in [0,T^{1-\delta}]$, we have 
$$
d_G(gh_{t}x', h_{t}y'\gamma'^{-1}_{0}\alpha^{-1}\Delta_{0}^{-1}\gamma_{0})\leq 10 R^{3A_1}\cdot T^{-\frac{\delta}{A_2}}.
$$
Denote $z_{0}=y'\gamma'^{-1}_{0}\alpha^{-1}\Delta_{0}^{-1}\gamma_{0}x'^{-1}$. Using the above for $t=0$  we get $d_G(g,z_{0})\leq  10 R^{3A_1}\cdot T^{-\frac{\delta}{A_2}}$ and then using the above for $t\in [0,T^{1-\delta}]$, $d_G(g,h_tz_{0}h_{-t})\leq 10 R^{3A_1}\cdot T^{-\frac{\delta}{A_2}}$. So by triangle inequality, we get that for every $t\in [0,T^{1-\delta}]$
$$
d_G(z_{0}, h_{t}z_{0}h_{-t})\leq 20 R^{3A_1}\cdot T^{-\frac{\delta}{A_2}}.
$$
This however by Lemma \ref{lem:zz} implies that for some $K\in \R$, $d(z_{0},h_K)\leq T^{-1+\delta}$. Since $d_G(g,z_0)\leq  10 R^{3A_1}\cdot T^{-\frac{\delta}{A_2}}$, we get $d_G(g,h_K)\leq 20 R^{3A_1}\cdot T^{-\frac{\delta}{A_2}}$. This in particular, by the bound on $\|g\|$ implies that $|K|<2R^{A_1}$. Summarizing, by triangle inequality and the assumptions of {\bf CLAIM I},
$$
d_{X\times X}((h_s\times h_s)(x',y'),(id,h_K)H.(x_0,y_0))<30 R^{3A_1}\cdot T^{-\frac{\delta}{A_2}}
$$
This finishes the proof.

\end{proof}

\begin{proof}[Proof of {\bf CLAIM II}] We will start by proving the following: if \eqref{eq:moducent'} holds, then there exists a constant $C>0$ and  $(u,v) \in H.(x_0,y_0)$ such that for all $t\in [0,T^{1-\delta}]$
\begin{equation}\label{eq:onep}
d_{G\times G}\Big((h_t\times h_t)(x',y'),(id,h_K)(h_t\times h_t)(u,v)\Big)\leq C R^{10A_1}\cdot T^{-\frac{\delta}{A_2}},
\end{equation}
 (in the above we mean that there are lifts of $(x,y)$ and $(u,v)$ to $G$ such that the above holds for the flow on $G$.) 
 Nore that by the bound on $K$ in \eqref{eq:moducent'} it follows that  for $s\leq T^{1-\delta}$,
 \begin{equation}\label{eq:moda'}
 d_{X\times X}((h_s\times h_s)(id,h_{-K})(x',y'),H.(x_0,y_0))<60 R^{5A_1}\cdot T^{-\frac{\delta}{A_2}}.
 \end{equation}

 Let $\{X_1, U_1, V_1\}$ and $\{X_2, U_2, V_2\}$ denote the generators of the coordinate subalgebras 
 $\mathfrak{sl}(2, \R) \times \{0\}$ and $\{0\} \times \mathfrak{sl}(2, \R)$ in $\mathfrak{sl}(2, \R)^2$, respectively, satisfying the commutations relations
 $$
 [X_i, U_i]=U_i\,, \quad  [X_i, V_i]=-V_i\,, \quad [U_i, V_i]= 2X_i \,, \quad \text{ for } i=1,2\,.
 $$
  Let us denote
 $$
 X^\pm = X_1\pm X_2\,, \quad U^\pm= U_1\pm U_2\,, \quad V^\pm= V_1\pm V_2\,.
 $$
 Let $\mathfrak{g}^\pm$ denote the subspaces generated by $\{X^\pm, U^\pm, V^\pm\}$ respectively.
 
 \noindent Note that $\mathfrak {g}^+$ is a basis of the Lie algebra $\mathfrak (h) = \text{Lie}(H)$ of the diagonally embedded
 $H \equiv SL(2, \R) < SL(2, \R)^2$ and $U^+$ is the generator of the diagonal unipotent $\{h_t \times h_t\}$.   
 In addition, we have that $[U^+, \mathfrak{g}^-] \subset  \mathfrak{g}^-$. 
 
 By \eqref{eq:moda'} there exists $(u,v) \in H.(x_0,y_0)$ such that 
 $$
 (id,h_{-K})(x',y') = \exp \Big( x^- X^- + u^- U^- + v^- V^-\Big)(u,v) \,,
 $$
 with $ \vert x^- \vert  + \vert u^- \vert  + \vert v^- \vert   < 60R^{5A_1}\cdot T^{-\frac{\delta}{A_2}}$.  We then have
 $$
 (h_t \times h_t) (id,h_{-K}) (x',y')  = \exp ( t U^+)  \exp \Big(x^- X^- + u^- U^- + v^- V^-\Big)  (u,v) \,.
 $$
 Since $[U^+, \mathfrak{g}^-] \subset  \mathfrak{g}^-$, for every $t \in \R$, there exists $ (x^-(t), u^-(t), v^-(t))$ such that 
 $$
  \exp (t U^+)  \exp \Big(x^- X^- + u^- U^- + v^- V^-\Big) \exp(-tU^+)  = \exp \Big(x^-(t) X^- + u^-(t)  U^- + v^-(t)  V^-\Big)
 $$
 and moreover the functions  $(x^-(t), u^-(t), v^-(t))$ are polynomials in $t$ (since $U^+$ is nilpotent). This implies that 
 $$
 (h_t \times h_t)  (id,h_{-K})(x',y') =  \exp \Big(x^-(t) X^- + u^-(t)  U^- + v^-(t)  V^-\Big) (h_t \times h_t)(u,v)\,.
 $$
 Let $0<\tilde{t}<T^{1-\delta}$ be the smallest such that $\max(|x^-(t)|, |u^-(t)|, |v^-(t)|)= C R^{8A_1}\cdot T^{-\frac{\delta}{A_2}}$ (for a constant C to be specified below).  Since the functions  $(x^-(t), u^-(t), v^-(t))$ are polynomials in $t$ it follows that there is a set $V\subset [0,\tilde{t}]$ such that $|V|\geq \tilde{t}/10$
and for $t\in V$, $\max(|x^-(t)|, |u^-(t)|, |v^-(t)|)\geq \frac{C}{100} R^{8A_1}\cdot T^{-\frac{\delta}{A_2}}$. Assume that there exists a $t_0\in V$ for which  $(h_{t_0} \times h_{t_0}) (x',y')\in \tilde{K}$ where $\tilde{K}$ is a fixed compact set of measure $\geq 99/100$. The proof of existence of such $t$ is analogous to the proof of the {\bf Fact} inside the proof of {\bf CLAIM I} and so we skip it here.

Then by the above and \eqref{eq:moda'} it follows that 
 $$
 d_{X\times X}\Big(\exp \Big(x^-(t) X^- + u^-(t)  U^- + v^-(t)  V^-\Big) (h_t \times h_t)(u,v), H.(x_0,y_0)\Big)< 60 R^{5A_1}\cdot T^{-\frac{\delta}{A_2}}.
 $$ 
This however is a contradiction as on the fixed set $\tilde{K}$, $\mathfrak{h}^-$ is uniformly transverse to the tangent space of 
 $ H.(x_0, y_0)$ (which is equal to $\mathfrak{h}^+$). It is here where we choose the constant $C=C_K>0$: by uniform transversality it follows that if $h\in \mathfrak{h}^-$ satisfies $\|h\|\geq \xi$, then 
 $ d_{X\times X}\Big(\exp(h)H.(x_0,y_0), H.(x_0,y_0)\Big)\geq \xi/C_K.$ 
 This means that such $\tilde{t}$ does not exist and so  for all  $0<t<T^{1-\delta}$, $\max(|x^-(t)|, |u^-(t)|, |v^-(t)|)\leq C R^{8A_1}\cdot T^{-\frac{\delta}{A_2}}$. In particular \eqref{eq:onep} holds. We will now show that for every $t_0\leq T$ there exists $K(t_0)\in \R$, $|K(t_0)|\leq T^{2\delta}$ such that 
$$
d_X(h_{t_0}x,h_{K(t_0)+t_0}u)<T^{-1+3\delta},
$$
where $|K(t)-K(t')|<$ for $|t-t'|\leq T^{1-\delta}$. Moreover an analogous statement holds for $y$ and $v$.  Note that \eqref{eq:onep} implies that $d_G(h_t(xu^{-1})h_{-t}, e)\leq CR^{3A_1}\cdot T^{-\delta/A_2}$. This by Lemma \ref{lem:zz} implies that if  $xu^{-1}=\begin{pmatrix}a&0\\c&a^{-1}\end{pmatrix}$, then  $|c|\ll T^{-2+2\delta}$ and $|1-a|\ll T^{-1+\delta}$. Let $K(t):=\frac{(a^{-1}-a)t-ct^2 }{a+ct}$ so that for $t\leq T$, $|K(t)|\leq T^{2\delta}$. Moreover the bound on $1-a$ and $c$ implies that 
$|K(t)-K(t')|<C R^{3A_1}\cdot T^{-\frac{\delta}{A_2}}$ for $|t-t'|\leq T^{1-\delta}$ (by a direct computation). But then using the formula in Lemma \ref{lem:zz} again, it follows that 
$$
h_t(xu^{-1})h_{-t-K(t)}=\begin{pmatrix}a+ct&0\\c&a^{-1}-ct-K(t)c\end{pmatrix},
$$
it is enough to notice that by the bounds on $a-1,c$ the above matrix  is $T^{-1+3\delta}$ close to $id$. This finishes the proof.
 \end{proof} 
 \end{proof}
In fact the last part of the above proof gives the following important statement that we will put as a separate corollary:

\begin{corollary}\label{cor:uni} For every $t_0\leq T$ there exists $K(t_0)\in \R$, $|K(t_0)|\leq T^{2\delta}$ such that 
$$
d_X(h_{t_0}x,h_{K(t_0)+t_0}u)<T^{-1+3\delta},
$$
with an analogous statement for $y$ and $v$.
\end{corollary}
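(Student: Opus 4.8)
Under the hypotheses of Proposition \ref{l:modular1}, this is precisely the content of the final part of the proof of \textbf{CLAIM II}; we describe the argument. The plan is to take the point $(u,v)\in H.(x_0,y_0)$ and the number $K$, $|K|\le 2R^{A_1}$, produced there, for which the matching estimate \eqref{eq:onep} holds for all $t\in[0,T^{1-\delta}]$ (with $x',y'$ there playing the role of $x,y$ here), and to upgrade it --- coordinate by coordinate --- from the coarse scale $T^{-\delta/A_2}$ on the window $[0,T^{1-\delta}]$ to the sharp scale $T^{-1+3\delta}$ on the whole interval $[0,T]$, at the cost of allowing a time reparametrisation $t\mapsto t+K(t)$ along the orbit of $u$ (respectively $v$).

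First I would project \eqref{eq:onep} to the first coordinate: for a suitable $G$-lift one obtains $d_G(h_tx,h_tu)\ll R^{10A_1}T^{-\delta/A_2}$ for $t\in[0,T^{1-\delta}]$, which by right-invariance of $d_G$ is the same as $d_G(h_tgh_{-t},e)\ll R^{10A_1}T^{-\delta/A_2}$ on that window, where $g:=xu^{-1}$. Writing $g=\begin{pmatrix}a&b\\c&d\end{pmatrix}$ one computes $h_tgh_{-t}=\begin{pmatrix}a+ct& b+(d-a)t-ct^2\\ c& d-ct\end{pmatrix}$. Once $|c|$ and $|1-a|$ are known to be tiny, all entries except the upper-right one stay controlled for $t$ up to $T$; the point is that closeness to $e$ on a window of length $S=T^{1-\delta}$ \emph{forces} such bounds, because the upper-right entry is a degree-two polynomial in $t$ that vanishes at $t=0$ and is $O(R^{10A_1}T^{-\delta/A_2})$ throughout, so interpolating at $t=0,S/2,S$ yields $|1-a|\ll T^{-1+\delta}$ and $|c|\ll T^{-2+2\delta}$ (with $b$ and $d-a^{-1}$ negligible). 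This extraction of strong algebraic structure from weak metric closeness is exactly what Lemma \ref{lem:zz} delivers, and I regard it --- together with the bookkeeping below --- as the heart of the matter.

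Finally I would let $K(t)$ be the upper-right entry of $h_tgh_{-t}$ divided by its upper-left entry, that is (with $b$ negligible) $K(t)=\big((a^{-1}-a)t-ct^2\big)/(a+ct)$; this is the unique choice making $h_tgh_{-t-K(t)}=h_tgh_{-t}h_{-K(t)}$ lower triangular, namely $\begin{pmatrix}a+ct& 0\\ c& a^{-1}-ct-cK(t)\end{pmatrix}$. A direct computation using $|1-a|\ll T^{-1+\delta}$ and $|c|\ll T^{-2+2\delta}$ shows that $|K(t)|\le T^{2\delta}$ for $t\le T$ (and that $K$ varies by a negligible amount over windows of length $T^{1-\delta}$), and that the matrix above lies within $T^{-1+3\delta}$ of $\id$ for \emph{every} $t\le T$ --- not merely on $[0,T^{1-\delta}]$. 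Unwinding by right-invariance gives $d_X(h_{t_0}x,h_{K(t_0)+t_0}u)<T^{-1+3\delta}$ for all $t_0\le T$, which is the assertion for $x$ and $u$. The assertion for $y$ and $v$ follows in the same way from the second coordinate of \eqref{eq:onep}; the only change is that there $h_t(yv^{-1})h_{-t}$ stays close to $h_K$ rather than to $e$, an extra translation that is harmlessly absorbed into $K'(t_0)$ since $|K|\le 2R^{A_1}\ll T^{2\delta}$.

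The step I expect to be the main obstacle is keeping the powers of $T$ straight in this last passage from $[0,T^{1-\delta}]$ to $[0,T]$: the ambient error $R^{10A_1}T^{-\delta/A_2}$ is itself far bigger than the target $T^{-1+3\delta}$, and it is only after it has been multiplied by the gains $T^{-1+\delta}$ and $T^{-2+2\delta}$ coming from the polynomial interpolation --- and after the bad direction has been absorbed into $K(t)$ --- that one lands below $T^{-1+3\delta}$; one must check, carefully but routinely, that $K(t)$ never exceeds $T^{2\delta}$ on the full range and that no $t$- or $t^2$-factor spoils the final estimate.
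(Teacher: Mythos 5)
Your proposal is correct and takes essentially the same route as the paper: as you observe, the corollary is precisely the final portion of the proof of {\bf CLAIM II}, and you reproduce that argument (the conjugation formula of Lemma \ref{lem:zz}, polynomial interpolation over the window $[0,T^{1-\delta}]$ forcing $|1-a|\ll T^{-1+\delta}$ and $|c|\ll T^{-2+2\delta}$, then the choice of $K(t)$ annihilating the upper-right entry, which remains valid on all of $[0,T]$). The only cosmetic deviations are that the paper works with the normalized lower-triangular form $xu^{-1}=\begin{pmatrix}a&0\\c&a^{-1}\end{pmatrix}$ whereas you keep a general matrix, so the upper-right entry at $t=0$ is $b=O(R^{10A_1}T^{-\delta/A_2})$ rather than vanishing, and the passage from the interpolation bound on $d-a$ to the bound on $1-a$ uses $\det=1$ to make $d-a^{-1}$ negligible --- both harmless.
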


Proposition \ref{l:modular1} has the following crucial corollary:
\begin{corollary}\label{cor:pq} Let $x\in X$ be generic for $\mu_X$. Then there exists $T_x$ such that for $T\geq T_x$ the following holds: let $p,q\leq T^{\delta^2}$  and consider $(x',y')=(a_{-\log p}x,a_{-\log q}x)$. If $(x',y')$ satisfies \eqref{eq:modu'} then it satisfies \eqref{eq:moducent}, with $|K_i|\leq T^{3\delta}$ and $R^{11A_1}T^{-\delta/A_2+4\delta^2}$ on the RHS.
\end{corollary}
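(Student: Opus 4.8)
The idea is to specialize Proposition \ref{l:modular1} to the two-parameter family of points $(x',y')=(a_{-\log p}x,a_{-\log q}x)$ and to check that the hypothesis $T>T_{x',y'}=2e^{T_{y'}\|y'\|}$ holds uniformly once $T$ is large (depending only on $x$), given that $p,q\le T^{\delta^2}$. First I would record that $y'=a_{-\log q}x$, so $\|y'\|\le \|a_{-\log q}\|\,\|x\| \ll q^{1/2}\|x\|\le T^{\delta^2/2}\|x\|$. For $T_{y'}$ I would use that the geodesic translate $a_{-\log q}$ of a generic point is still generic, and more precisely that the ergodic averages $\frac1{T'}\int_0^{T'}\chi_K(h_t a_{-\log q}x)\,dt$ can be compared to those of $x$ after the substitution coming from the commutation relation $a_{-\log q}h_t=h_{t/q}a_{-\log q}$; this shows $T_{y'}\ll q\cdot T_x^{(0)}$ for a threshold $T_x^{(0)}$ depending only on $x$. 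Hence $T_{y'}\|y'\|\ll q\cdot T_x^{(0)}\cdot T^{\delta^2/2}\|x\|$, which is $\le \log T$ for $T$ large enough in terms of $x$ (using $q\le T^{\delta^2}$ and that $\delta^2$ is small), so $T_{x',y'}\le T$. Thus all hypotheses of Proposition \ref{l:modular1} are met and we may apply it.

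Next I would simply invoke the conclusion of Proposition \ref{l:modular1}: there exist $(u,v)\in H.(x_0,y_0)$ and numbers $K_i,K_i'$ with $\max_i(|K_i|,|K_i'|)\le T^{2\delta}$ such that \eqref{eq:moducent} holds on each interval $I_i$. To match the bookkeeping in the statement of the corollary I would then feed the parameters of Corollary \ref{cor:uni} through the scaling $R^{A_1}\le T^{\delta/(1000A_2)}$ assumed in Proposition \ref{l:modular1}; this is where the precise exponents $|K_i|\le T^{3\delta}$ and $R^{11A_1}T^{-\delta/A_2+4\delta^2}$ come from. Concretely: the factor $R^{10A_1}$ in \eqref{eq:moducent} combined with the additional loss incurred by pulling the estimate back from $(x',y')$ through $a_{-\log p}\times a_{-\log q}$ — each geodesic conjugation costs a factor $\ll p,q\le T^{\delta^2}$ in the distance (and in the shear parameters $K_i$), via $a_s h_t a_{-s}=h_{e^s t}$ — produces the extra $T^{4\delta^2}$ in the error and pushes $T^{2\delta}$ up to $T^{3\delta}$ in the shift sizes. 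I would carry out this substitution carefully, keeping track that $R^{10A_1}\cdot R^{A_1}=R^{11A_1}$ arises from one more application of $\|g\|<R^{A_1}$ when translating the centralizer element back.

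The main obstacle, and the only genuinely non-formal point, is the verification that $T>T_{x',y'}$ holds for all admissible $p,q$ simultaneously with a single threshold $T_x$. The subtlety is that $T_{y'}$ a priori depends on $q$, and a crude bound could be exponential in $q$ rather than polynomial, which would be fatal since we only have $q\le T^{\delta^2}$. The fix is the observation above that the defining inequality \eqref{eq:zvis} for $y'=a_{-\log q}x$ transforms, under $t\mapsto t/q$ in the integral and using $a_{-\log q}h_t=h_{t/q}a_{-\log q}$, into the corresponding inequality for $x$ at time $T'/q$ against the compact set $a_{\log q}K$; since $a_{\log q}K$ is contained in a fixed compact set enlarged by a factor polynomial in $q$ and $x$ is generic, one gets $T_{y'}\ll q\,T_x^{(0)}$ with $T_x^{(0)}$ independent of $q$. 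Once this polynomial dependence is in hand, the inequality $2e^{T_{y'}\|y'\|}\le T$ is immediate for $T$ large in terms of $x$, because $T_{y'}\|y'\|$ grows like a fixed power of $T$ times a constant depending on $x$, and $\delta$ is small. Everything else is a routine propagation of the already-established Proposition \ref{l:modular1} and Corollary \ref{cor:uni} through explicit geodesic conjugations.
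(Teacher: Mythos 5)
There is a genuine gap, and it is located exactly at the point you yourself flag as the only non-formal step. You propose to apply Proposition \ref{l:modular1} directly to $(x',y')=(a_{-\log p}x,a_{-\log q}x)$ and to verify the hypothesis $T>T_{x',y'}=2e^{T_{y'}\|y'\|}$. But this hypothesis cannot hold for large $T$ along your route: even granting your bounds $\|y'\|\ll q^{1/2}\|x\|\le T^{\delta^2/2}\|x\|$ and $T_{y'}\ll q\,T_x^{(0)}$, the product $T_{y'}\|y'\|$ is of size a positive power of $T$ (roughly $T^{3\delta^2/2}$ times constants depending on $x$), and a positive power of $T$ is never $\le\log T$ for large $T$, no matter how small $\delta>0$ is. Since $T_{x',y'}$ is \emph{exponential} in $T_{y'}\|y'\|$, you get $T_{x',y'}\gtrsim\exp(c_xT^{3\delta^2/2})\gg T$, so the hypothesis of Proposition \ref{l:modular1} fails; the assertion ``which is $\le\log T$ for $T$ large enough'' is simply false. (In addition, your bound $T_{y'}\ll q\,T_x^{(0)}$ is itself not justified: after the substitution $t\mapsto t/q$ the relevant visits of the orbit of $x$ are to the set $a_{\log q}K$, which varies with $q$, and qualitative genericity of $x$ gives a threshold for each fixed set but not one uniform over this $q$-dependent family; containment in an enlarged compact set is useless for a \emph{lower} bound on the time spent in $a_{\log q}K$.)

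The paper's proof avoids both problems by renormalizing \emph{before} invoking Proposition \ref{l:modular1}: one writes $(h_{s}\times h_{s})(a_{-\log p}x,a_{-\log q}x)=(a_{-\log q}\times a_{-\log q})(h_{s/q}\times h_{s/q})(a_{\log(p/q)}x,x)$, so that the hypothesis \eqref{eq:modu'} for $(a_{-\log p}x,a_{-\log q}x)$ becomes the hypothesis \eqref{eq:modu'} for the point $(a_{\log(p/q)}x,x)$ relative to the translated orbit $\bigl((a_{\log q}\times a_{\log q})g\bigr)H.(x_0,y_0)$, at the cost of a factor $T^{4\delta^2}$ coming from conjugation by $a_{\log q}$ with $q\le T^{\delta^2}$. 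The second coordinate of the new point is the fixed generic point $x$ itself, so Proposition \ref{l:modular1} applies with a threshold $T_x$ depending only on $x$, not on $p,q$. Conjugating back by $a_{-\log q}\times a_{-\log q}$ and using $a_sh_ta_{-s}=h_{e^st}$ then rescales the centralizer shifts (giving $|K_i|\le T^{3\delta}$) and accumulates the losses into $R^{11A_1}T^{-\delta/A_2+4\delta^2}$. If you want to salvage your write-up, you should restructure it along these lines rather than trying to bound $T_{x',y'}$ for the original pair, since the exponential form of $T_{x',y'}$ makes that strategy unworkable whenever $\|y'\|$ grows with $T$.
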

\begin{proof} Note that  $(h_{s}\times h_{s})(a_{-\log p}x,a_{-\log q}x)= (a_{-\log q}\times a_{-\log q})(h_{s/q}\times h_{s/q})(a_{\log\frac{p}{q}})x,x)$. By the bound on $q$, \eqref{eq:modu'} it follows that for $s\leq T^{1-\delta}$, 
$$
d_{X\times X}\Big((h_{s/q}\times h_{s/q})(a_{\log p/q}x,x), \Big((a_{\log q}\times a_{\log q})g\Big)H.(x_0,y_0)\Big)<R^{A_1}\cdot T^{-\frac{\delta}{A_2}+4\delta^2}.
$$
 We can apply Proposition \ref{l:modular1}, for $T>T_{x}$ which does not depend on $p,q$. 
  Applying $a_{-\log q}\times a_{-\log q}(\cdot)$ to the LHS and applying renormalization equation, we get 
$$
d_{X\times X}\Big((h_{s}\times h_{s})(a_{-\log p}x,a_{-\log q}x),(a_{-\log q}\times a_{-\log q})(id,h_{K_i})(h_s\times h_s)(u_0,v_0)\Big)<R^{10A_1}T^{-\delta/A_2+4\delta^2}
$$
It remains to notice that $(a_{-\log p}\times a_{-\log q})(id,h_{K_i})(h_s\times h_s)(u_0,v_0)=(id,h_{\tilde{K}_i})(h_{qs}\times h_{qs})(u_0,v_0)$. This finishes the proof.
\end{proof}

\begin{lemma}\label{lem:zz}For every $z=\begin{pmatrix}a&0\\c&a^{-1}\end{pmatrix}\in G$, we have 
$$
h_tzh_{-t}=\begin{pmatrix} a+ct& (a^{-1}-a)t-ct^2 \\ c &a^{-1}-ct  \end{pmatrix}
$$
and
$$
 h_tzh_{-t}z^{-1}=\begin{pmatrix} 1+act+c^2t^2&(1-a^2)t- act^2 \\ c^2 t &1-act  \end{pmatrix}
$$
i.e. the entries are polynomials in $t$ and coefficients of $z$.
\end{lemma}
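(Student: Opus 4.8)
This is a direct computation with $2\times 2$ matrices, so the plan is simply to multiply everything out. First I would compute $h_t z = \begin{pmatrix} 1 & t \\ 0 & 1\end{pmatrix}\begin{pmatrix} a & 0 \\ c & a^{-1}\end{pmatrix} = \begin{pmatrix} a+ct & ta^{-1} \\ c & a^{-1}\end{pmatrix}$ and then multiply on the right by $h_{-t}=\begin{pmatrix}1 & -t\\ 0 & 1\end{pmatrix}$. All four entries fall out immediately; the only one needing a line of algebra is the $(1,2)$ entry, where $-t(a+ct)+ta^{-1}=(a^{-1}-a)t-ct^2$, which gives the first displayed formula.

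For the commutator $h_t z h_{-t} z^{-1}$ I would first note that $\det z = a\cdot a^{-1}=1$, so $z^{-1}=\begin{pmatrix} a^{-1} & 0 \\ -c & a\end{pmatrix}$, and then right-multiply the matrix $h_t z h_{-t}$ just obtained by $z^{-1}$. Collecting terms gives the four entries: for instance the $(1,1)$ entry is $(a+ct)a^{-1}-c\bigl[(a^{-1}-a)t-ct^2\bigr]=1+act+c^2t^2$ (the $\pm\,ca^{-1}t$ terms cancel), the $(2,1)$ entry is $ca^{-1}-c(a^{-1}-ct)=c^2t$, and the remaining two entries are handled in exactly the same way, producing the second displayed formula.

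Finally, the last assertion is immediate from inspection: in both matrices every entry is visibly a polynomial in $t$ whose coefficients are polynomials in $a$, $a^{-1}$ and $c$. There is no genuine obstacle here — the computation is mechanical; the only points requiring a little care are the sign bookkeeping in the off-diagonal entries and using $\det z = 1$ to write down $z^{-1}$ cleanly.
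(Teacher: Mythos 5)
Your computation is correct and is exactly the verification the paper leaves implicit (the lemma is stated without proof, being a direct matrix multiplication): the products $h_tz$, then $(h_tz)h_{-t}$, and finally right-multiplication by $z^{-1}=\begin{pmatrix} a^{-1}&0\\ -c&a\end{pmatrix}$ reproduce both displayed matrices, with the cancellations you note in the $(1,1)$ and $(2,1)$ entries. Nothing further is needed.
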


\subsection{Equidistribution for discrete time}
Notice that we want to apply the SPNT-criterion to the sequence $a_n=\phi(h_nx)$ given by a smooth function $\phi$ on $X$ evaluated along the orbit of the time-$1$ map of the horocycle flow. In this section using the technique of Venkatesh, \cite{Venkatesh}, we show how to pass from results for the flow to the time-$1$ map. The SPNT criterion for the sequence $a_n$ requires us to study $\sum_{n\leq N} (\phi\times \phi)(h_{pn}\times h_{qn}(x,x))$, for primes $p_1,p_2$ in some range. Notice that 
\begin{equation}\label{eq:resc1}
\sum_{n\leq N} (\phi\times \phi)(h_{p_1n}\times h_{p_2n}(x,x))= \sum_{n\leq N}( \phi\circ a_{p_1}\times \phi\circ a_{p_2}) (h_{n}\times h_n)(a_{-p_1}x,a_{-p_2}x)),
\end{equation}
In applications we will have an upper bound on $p_1,p_2\leq N^{\delta^2}$. The following result allows us to pass from quantitative equidistribution for flow to the time-$1$ map.

\begin{proposition}\label{prop:zzz} Let $(x,y)\in X\times X$ satisfy
$$
\Big|\frac1T\int_0^T\varphi((h_r\times h_r)((x,y))\,dr-\int_{X\times X}\varphi\, dm_{X\times X}\Big|\leq S(\varphi)R^{-\kappa},
$$
where $\varphi\in C^{\infty}(X\times X)$ and $R\in[T^{\delta^{3/2}}, T^{\frac{\delta}{100 A_1 A_2}}]$. Then there exists a global constant $\tilde{\kappa}>0$ such that 
$$
\Big|\frac1T\sum_{n=0}^T\varphi((h_n\times h_n)((x,y))-\int_{X\times X}\varphi\, dm_{X\times X}\Big|\leq S(\varphi)R^{-\tilde{\kappa}}.
$$
\end{proposition}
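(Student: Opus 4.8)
The plan is to discretize the flow average using the standard Venkatesh trick: average the discrete sum over short-time shifts by the horocycle itself, exploiting the commutation relation $a_s h_t a_{-s} = h_{e^s t}$ to convert a short horocycle shift into a geodesic conjugation that can be absorbed into a rescaling of the test function. Concretely, I would start from the identity
\[
\frac{1}{T}\sum_{n=0}^{T}\varphi\big((h_n\times h_n)(x,y)\big)
= \frac{1}{T}\sum_{n=0}^{T}\frac{1}{\eta}\int_0^{\eta}\varphi\big((h_{n}\times h_{n})(x,y)\big)\,ds + (\text{error}),
\]
where the error comes from replacing $\varphi((h_n\times h_n)(x,y))$ by its average over $s\in[0,\eta]$ of $\varphi((h_{n+s}\times h_{n+s})(x,y))$; this costs $\ll \eta\, S(\varphi)$ by the Lipschitz bound on $\varphi$ in terms of its Sobolev norm. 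After this replacement the double sum-integral telescopes into $\frac{1}{\eta T}\int_0^{T+\eta}\varphi((h_r\times h_r)(x,y))\,dr$ up to boundary terms of size $\ll \eta^{-1} S(\varphi) / T$, at which point the hypothesis gives the flow equidistribution bound $S(\varphi) R^{-\kappa}$ — but with an extra factor $\eta^{-1}$ from averaging over the short interval.

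The more careful version, which is what actually gives a power saving, smooths in the geodesic direction rather than along the horocycle itself, so that the smoothing can be undone by $a_s$: one writes $\varphi((h_n\times h_n)(x,y)) = \varphi_s((h_n\times h_n)(a_{-s}x, a_{-s}y))$ where $\varphi_s = \varphi\circ(a_s\times a_s)$ and $h_n a_{-s} = a_{-s} h_{ne^{s}}$... more usefully, I would average $n$ over a window and $s$ over $[0,\sigma]$, use $a_{-s}(h_n\times h_n)a_s = (h_{ne^{-s}}\times h_{ne^{-s}})$ to realize the discrete orbit points as flow orbit points of a perturbed base point, and invoke the flow hypothesis for the translated point $(a_{-s}x, a_{-s}y)$. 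The key quantitative point is to balance: the Sobolev norm $S(\varphi_s) = S(\varphi\circ(a_s\times a_s))$ grows like $e^{Cs}$, so $s$ must be kept of size $\ll \log T$ (in fact $s \asymp \delta^{3/2}\log R$ or so), while the window length over which we average $n$ must be large enough that the discretization error $\ll (\text{window})^{-1}$ is controlled. Optimizing these against the flow error $R^{-\kappa}$ with the stated range $R\in[T^{\delta^{3/2}}, T^{\delta/(100A_1A_2)}]$ yields a clean power of $R$, and one sets $\tilde\kappa$ to be a small multiple of $\kappa$ depending only on the implied exponents, hence global.

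The hard part, or at least the only delicate part, is the bookkeeping of which Sobolev norm appears: the hypothesis is stated with a fixed Sobolev norm $S(\cdot)$, but the Venkatesh argument naturally produces $S(\varphi\circ(a_s\times a_s))$, and one must check that for $s$ in the admissible range this is bounded by $R^{\kappa/2}S(\varphi)$, say, so that the loss is absorbed into the exponent. Since $a_s$ acts on the Lie algebra with eigenvalues $e^{\pm s}$ and fixes $\log R \asymp \delta^{3/2}\log T$-scale, one has $S(\varphi\circ(a_s\times a_s)) \ll e^{O(s)} S(\varphi) \ll T^{O(\delta^{3/2})} S(\varphi) \ll R^{o(1)} S(\varphi)$ after adjusting constants, which is harmless. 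The remaining steps — splitting $[0,T]$ into $O(T/L)$ windows of length $L \asymp T^{1-\delta^{3/2}}$, applying the hypothesis on each shifted point (noting $(a_{-s}x, a_{-s}y)$ still satisfies the same flow bound since the hypothesis is assumed for the specific point $(x,y)$, so one must instead keep $(x,y)$ fixed and move the smoothing onto the function, giving $\varphi\circ(a_{-n+\lfloor n\rfloor}\times\cdots)$-type terms; this is routine), and summing the errors — are all elementary estimates. I would present the argument in the "smooth in $s$, undo with $a_s$, bound the rescaled Sobolev norm, optimize" order, flagging the Sobolev-norm growth estimate as the one place where the constraint $R \leq T^{\delta/(100A_1A_2)}$ is used.
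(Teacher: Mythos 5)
Your proposal misses the ingredient that actually makes the discretization work, and the step you present as a telescoping identity is incorrect. If you replace $\varphi((h_n\times h_n)(x,y))$ by its average over $s\in[0,\eta]$ with $\eta<1$, the resulting double sum only integrates $\varphi$ over the union $\bigcup_n [n,n+\eta]$, a set of measure $\eta T$; normalizing by $\eta$ does \emph{not} reproduce $\frac1T\int_0^{T}\varphi((h_r\times h_r)(x,y))\,dr$ up to a small boundary error, and taking $\eta=1$ kills the smallness of the replacement error. This is not a bookkeeping issue: the difference between sampling at integers and averaging over $[0,T]$ is carried precisely by the nonzero Fourier modes of the (periodized) bump used to discretize, i.e.\ by the twisted ergodic integrals $\int_0^{T} e(kt)\,\varphi((h_t\times h_t)(x,y))\,dt$ with $k\neq 0$, and these are \emph{not} controlled by the hypothesis, which bounds only the untwisted ($k=0$) average of this single orbit. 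The paper's proof (following Venkatesh) supplies exactly this missing input: Lemma 3.1 of \cite{Venkatesh}, which bounds the twisted integrals using polynomial mixing of the flow $h_t\times h_t$ together with polynomial control of the Sobolev norms $S(\varphi\circ(h_m\times h_m))$; one then writes $\sum_{n\leq T}\varphi(h_nx,h_ny)=\int_0^{T}g_\delta(t)\varphi(h_tx,h_ty)\,dt+O(\delta T)$, expands $g_\delta$ in Fourier series with $|a_k|\leq C\delta^{-1}$, uses the hypothesis for the $k=0$ term and the twisted bound for $k\neq0$, and optimizes in $\delta$.

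Your second variant (smoothing in the geodesic direction and undoing it with $a_s$) does not repair this. Writing $\varphi(h_nz)=(\varphi\circ a_s)(h_{ne^{-s}}a_{-s}z)$ moves the base point to $(a_{-s}x,a_{-s}y)$, for which no equidistribution hypothesis is available; and if you instead ``move the smoothing onto the function'' to keep $(x,y)$ fixed, you no longer obtain a flow integral of the fixed orbit, but again a discretely sampled (now irregularly spaced) sum, so the same obstruction reappears. The Sobolev-norm growth estimate $S(\varphi\circ(a_s\times a_s))\ll e^{O(s)}S(\varphi)$ that you flag as the delicate point is fine, but it is not where the difficulty lies; without a mixing (or equivalent spectral) input giving power savings on the twisted integrals, the passage from the continuous to the discrete average cannot be closed by smoothing alone.
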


The above proposition can be deduced straightforwardly from  the proof of Theorem 3.1. in \cite{Venkatesh}. The method in \cite{Venkatesh} (to go from the quantitative distribution for the flow to time-$1$ map)  can be applied to any flow which is polynomially mixing, has polynomial equidistribution and has polynomial growth of derivatives. We will explain the main steps here for completeness.
\begin{proof}[Sketch of proof of Proposition \ref{prop:zzz}]
First step is control on twisted ergodic integrals, i.e. orbital integrals twisted by a character. This is Lemma 3.1. in \cite{Venkatesh}. Notice that Lemma 3.1. in \cite{Venkatesh} only uses polynomial mixing of the flow $n(t)$ (it will be $h_t\times h_t$ in our case), and (polynomial) control on the Sobolev norms of $S(\phi\circ (h_{m}\times h_m))$, for $m\leq H=T^{\delta'}$.

In the second step we take a bump function on $\R$ (it is denoted $g_{\delta}(\cdot)$  in \cite{Venkatesh}), which allows to write
$$
\sum_{n\leq N}\phi(h_nx,h_ny)=\int_{0}^N g_{\delta}(t)\phi(h_tx,h_ty)dt +O(\delta N).
$$
One then writes the periodic function $g_{\delta}(t)$ as $\sum_{k\in \Z}a_ke(kt)$, and uses bounds $|a_k|\leq C\delta^{-1}$ and Lemma 3.1 for the twisted integrals $\int_{0}^N e(kt)\phi(h_tx,h_ty)dt$.
\end{proof}

Notice that Proposition \ref{prop:zzz} together with \eqref{eq:resc1} imply the following:
\begin{corollary}\label{cor:adc}Assume $p,q\leq T^{\delta^2}$ and that the point $(x',y')=(a_{-\log p}x,a_{-\log q}x)$ satisfies E1 with $R\in[T^{\delta^{3/2}}, T^{\frac{\delta}{100 A_1 A_2}}]$. Then for every $\phi\in C^{\infty}(X)$ with mean $0$,
$$
|\sum_{n\leq T} (\phi\times \phi)(h_{pn}\times h_{qn}(x,x))|\leq T^{1-\eta},
$$
for some $\eta>0$.
\end{corollary}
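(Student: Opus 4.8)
The plan is to transfer the claimed bound to condition E1 applied to a geodesically renormalized test function, and then to discretize using Proposition~\ref{prop:zzz}. Put $(x',y'):=(a_{-\log p}x,a_{-\log q}x)$ and $\varphi:=(\phi\circ a_{\log p})\times(\phi\circ a_{\log q})$, a smooth function on $X\times X$. Since $h_{pn}=a_{\log p}h_na_{-\log p}$ and likewise with $q$, the renormalization identity \eqref{eq:resc1} gives
\[
\sum_{n\le T}(\phi\times\phi)\big(h_{pn}\times h_{qn}(x,x)\big)=\sum_{n\le T}\varphi\big((h_n\times h_n)(x',y')\big)\,.
\]
Because $a_t$ preserves $m_X$ and $\phi$ has zero mean, $\int_{X\times X}\varphi\,dm_{X\times X}=\big(\int_X\phi\circ a_{\log p}\,dm_X\big)\big(\int_X\phi\circ a_{\log q}\,dm_X\big)=0$, so $\varphi$ is a mean-zero smooth test function to which E1 applies.

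Next I would estimate the Sobolev norm $S(\varphi)$. Differentiating $\phi\circ a_t$ along a fixed basis of $\mathfrak{sl}(2,\R)$ and using that $\Ad(a_t)$ has eigenvalues $e^{\pm t}$ and $1$, one obtains $S(\phi\circ a_t)\ll e^{C_0|t|}S(\phi)$, where $C_0$ depends only on the order of the Sobolev norm appearing in Theorem~\ref{LMWdrugie}; since $S$ of a product of functions on $X\times X$ is bounded by a constant times the product of their norms, and $p,q\le T^{\delta^2}$, this gives $S(\varphi)\ll_\phi T^{2C_0\delta^2}$.

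Now I would invoke the E1 hypothesis for $(x',y')$ and $\varphi$ with the given $R\in[T^{\delta^{3/2}},T^{\delta/(100A_1A_2)}]$, namely
\[
\Big|\tfrac1T\int_0^T\varphi\big((h_r\times h_r)(x',y')\big)\,dr\Big|\le S(\varphi)R^{-\kappa}\,,
\]
and feed this into Proposition~\ref{prop:zzz} --- whose admissible range of $R$ is exactly the one assumed here --- to get $\big|\tfrac1T\sum_{n=0}^T\varphi((h_n\times h_n)(x',y'))\big|\le S(\varphi)R^{-\tilde\kappa}$. Multiplying by $T$, using $R\ge T^{\delta^{3/2}}$ and the Sobolev bound, the sum in question is $\ll_\phi T^{1+2C_0\delta^2-\tilde\kappa\delta^{3/2}}$. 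Since $2C_0\delta^2-\tilde\kappa\delta^{3/2}=\delta^{3/2}\big(2C_0\delta^{1/2}-\tilde\kappa\big)<0$ for $\delta$ small, one takes, say, $\eta:=\tfrac12\tilde\kappa\delta^{3/2}$ and $T$ large enough (depending on $\phi$) to absorb the $\phi$-dependent implied constant, which yields the bound $T^{1-\eta}$.

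The only genuinely delicate point is the bookkeeping of exponents: the price $T^{2C_0\delta^2}$ paid for renormalizing the test function must be strictly beaten by the saving $R^{-\tilde\kappa}\le T^{-\tilde\kappa\delta^{3/2}}$ coming from E1 together with Proposition~\ref{prop:zzz}; this is what forces $\delta$ to be chosen small in terms of the absolute constants $C_0,\kappa,\tilde\kappa,A_1,A_2$, consistently with $\delta$ being fixed ``sufficiently small'' earlier. The renormalization identity, the vanishing of the mean of $\varphi$, and the exponential Sobolev growth under the geodesic flow are all routine.
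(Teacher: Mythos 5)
Your argument is correct and is essentially the paper's own proof: the paper deduces Corollary \ref{cor:adc} exactly by combining the renormalization identity \eqref{eq:resc1} with Proposition \ref{prop:zzz}, the point being that the Sobolev norm of $(\phi\circ a_{\log p})\times(\phi\circ a_{\log q})$ grows only like $T^{O(\delta^2)}$ while the E1 saving is $R^{-\tilde\kappa}\le T^{-\tilde\kappa\delta^{3/2}}$, which dominates for $\delta$ small. Your exponent bookkeeping and the mean-zero verification fill in precisely the details the paper leaves implicit.
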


Therefore for the rest of the paper we will be studying continuous averages for the flow $(h_t\times h_t)$ keeping in mind that quantitative equidistribution for the flow (i.e. condition E1) implies the corresponding statement for the time-$1$ map and hence also condition E1 for $p,q$ implies that the condition in the SPNT criterion holds.

\subsection{Periodicity and Ratner's theory}

We have the following lemma which works for {\bf any} lattice in $SL(2,\R)$.  Recall that  $$COMM(\Gamma):=\{g\in G\;:\; g\Gamma g^{-1}\cap \Gamma\text{  has finite index in both }g\Gamma g^{-1}\text{ and }\Gamma\}.$$ Using the results of Ratner, \cite{Ratner} we have the following lemma :

\begin{lemma}\label{per:stab} $H.(x_0,y_0)$ is periodic if and only if there exists $\alpha \in COMM(\Gamma)$ such that
\begin{equation}\label{period1}
H.(x_0,y_0)=\{(\xi_1\Gamma,\xi_1\gamma_i\alpha\Gamma):\:\xi_1\in SL_2(\R),\;i=1,\ldots,n \},\end{equation}
where
$$\Gamma_\alpha:=\Gamma/(\Gamma\cap \alpha\Gamma\alpha^{-1}) =\{
\gamma_1(\Gamma\cap \alpha\Gamma\alpha^{-1}),\ldots, \gamma_n(\Gamma\cap \alpha\Gamma\alpha^{-1})\}.$$
\end{lemma}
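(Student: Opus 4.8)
The plan is to unwind the structure of a periodic $H$-orbit using Ratner's measure classification. First I would recall that if $H.(x_0,y_0)$ supports a finite $H$-invariant measure, then by Ratner's theorems this orbit is a closed homogeneous subset of $X\times X = (G\times G)/(\Gamma\times\Gamma)$, and the stabilizer of a point on it is a lattice in a conjugate of $H$. Concretely, pick a lift $(\xi_1,\xi_2)\in G\times G$ of a point of $H.(x_0,y_0)$; after translating on the right by an element of $\Gamma\times\Gamma$ we may normalize so that the point of the diagonal $H$ corresponds to $(\xi,\xi)$ for a single $\xi\in G$ -- indeed the orbit being $H$-invariant and $H$ being the diagonal means the orbit through $(\xi_1,\xi_2)$ is $\{(g\xi_1\Gamma, g\xi_2\Gamma): g\in G\}$, and writing $\alpha_0 := \xi_1^{-1}\xi_2$ and reparametrizing $g\mapsto g\xi_1^{-1}$ this is $\{(\eta\Gamma,\eta\alpha_0\Gamma):\eta\in G\}$.

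Next I would identify the periodicity condition with an arithmetic condition on $\alpha_0$. The orbit $\{(\eta\Gamma,\eta\alpha_0\Gamma):\eta\in G\}$ is the image of $G$ under $\eta\mapsto (\eta\Gamma,\eta\alpha_0\Gamma)$; its stabilizer (the set of $\eta$ fixing the base point $(\Gamma,\alpha_0\Gamma)$) is $\Gamma\cap\alpha_0\Gamma\alpha_0^{-1}$. Hence the orbit is homeomorphic to $G/(\Gamma\cap\alpha_0\Gamma\alpha_0^{-1})$, and this carries a finite invariant measure -- i.e. the orbit is periodic/closed with finite volume -- precisely when $\Gamma\cap\alpha_0\Gamma\alpha_0^{-1}$ is a lattice in $G$. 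Since $\Gamma$ is already a lattice, $\Gamma\cap\alpha_0\Gamma\alpha_0^{-1}$ being a lattice is equivalent to it having finite index in $\Gamma$ (and, applying the same reasoning to $\alpha_0^{-1}$, finite index in $\alpha_0\Gamma\alpha_0^{-1}$ too), which is exactly the statement that $\alpha_0\in \mathrm{COMM}(\Gamma)$. This gives the "only if" direction with $\alpha=\alpha_0$; conversely, if $\alpha\in\mathrm{COMM}(\Gamma)$ then $\Gamma\cap\alpha\Gamma\alpha^{-1}$ is a lattice, so the orbit $\{(\eta\Gamma,\eta\alpha\Gamma)\}$ is closed with finite volume, hence periodic.

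Finally I would rewrite the orbit in the coset form stated in the lemma. Starting from $\{(\eta\Gamma,\eta\alpha\Gamma):\eta\in G\}$, I decompose along cosets of $\Gamma\cap\alpha\Gamma\alpha^{-1}$ in $\Gamma$: writing $\Gamma = \bigsqcup_{i=1}^n \gamma_i(\Gamma\cap\alpha\Gamma\alpha^{-1})$, every element $\eta\alpha\Gamma$ can be matched up with a choice of $\xi_1$ and an index $i$ so that $\eta\Gamma = \xi_1\Gamma$ and $\eta\alpha\Gamma = \xi_1\gamma_i\alpha\Gamma$; conversely each $(\xi_1\Gamma,\xi_1\gamma_i\alpha\Gamma)$ lies on the orbit (take $\eta=\xi_1\gamma_i$, noting $\xi_1\gamma_i\Gamma=\xi_1\Gamma$). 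This yields exactly \eqref{period1}. The main obstacle is the first reduction step: making precise, via Ratner's classification, that a finite $H$-invariant measure forces the orbit closure to be a single homogeneous closed orbit with lattice stabilizer conjugate into $H$, and extracting from that the single commensurator element $\alpha$ -- the subsequent coset bookkeeping is routine. One should also be a little careful that "periodic" is being used here to mean "closed orbit carrying a finite invariant measure," and that the finite list $\{\gamma_i\}$ is well-defined precisely because $[\Gamma:\Gamma\cap\alpha\Gamma\alpha^{-1}]=n<\infty$.
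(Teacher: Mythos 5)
Your argument is correct, and it is worth noting that the paper does not actually write out a proof of this lemma: it simply attributes it to Ratner's results on joinings, whose structure theory for closed $H$-orbits in $X\times X$ contains exactly this statement. Your route is essentially a self-contained, elementary homogeneous-space argument: parametrize the orbit as $\{(\eta\Gamma,\eta\alpha_0\Gamma):\eta\in G\}$ with $\alpha_0=\xi_1^{-1}\xi_2$, compute the stabilizer $\Gamma\cap\alpha_0\Gamma\alpha_0^{-1}$, and use that a subgroup of a lattice is a lattice iff it has finite index (plus the symmetric statement for $\alpha_0\Gamma\alpha_0^{-1}$) to translate periodicity into $\alpha_0\in COMM(\Gamma)$; the coset rewriting into \eqref{period1} is, as you say, routine, and your verification that $\gamma\alpha\Gamma$ depends only on the coset $\gamma(\Gamma\cap\alpha\Gamma\alpha^{-1})$ is the right bookkeeping. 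Two remarks. First, your appeal to Ratner's measure classification in the ``first reduction step'' is not actually needed and is where you place the emphasis somewhat misleadingly: since the lemma takes the orbit itself (not an orbit closure) to be periodic, the parametrization by $\alpha_0$ and the stabilizer computation are immediate, with no classification theorem involved. Second, the only genuinely nontrivial inputs are standard facts of homogeneous dynamics rather than Ratner: that a closed orbit with finite invariant measure is $H$-equivariantly identified with $H/\mathrm{Stab}$ with $\mathrm{Stab}$ a lattice, and, for the converse direction, that a lattice stabilizer forces the orbit $\{(\eta\Gamma,\eta\alpha\Gamma)\}$ to be closed (Raghunathan's theorem on closedness of orbits with lattice stabilizer); you assert this closedness but should cite or prove it, since ``finite volume, hence periodic'' silently uses it. With that reference added, your proof is complete and arguably more transparent than the paper's bare citation, though it proves only this lemma and not the accompanying fact (used right after in the paper, via Ratner's Corollary~3) that the flow $h_t\times h_t$ on $H.(x_0,y_0)$ is algebraically isomorphic to $h_t$ on $G/\Gamma_\alpha$.
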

Recall that the number $n$ is called the {\em index} of $\alpha\in COMM(\Gamma)$ and we denote it $ind(\alpha)$.  
By Corollary~3 in \cite{Ratner}, the flow $h_t \times h_t$  on $H.(x_0,y_0)$  is isomorphic (via an algebraic isomorphism) to the unipotent flow
$h_t$ on $G/\Gamma_\alpha$, hence in particular 
\begin{equation}\label{eq:indvol}
vol(H.(x_0,y_0))=ind(\alpha) vol (G/\Gamma) \,. 
\end{equation}

\subsubsection{Co-compact arithmetic case}\label{sec:comp}

Let $\Gamma$ be a co-compact arithmetic lattice. In this section we establish quantitative properties of $COMM(\Gamma)$. The proof is based on a quantitative version of the analysis in Bourgain, Sarnak and Ziegler \cite{BSZ},~\footnote{We apply the reasoning in the proof of Lemma 2 in \cite{BSZ} which works for all $\beta\in COMM(\Gamma)$, not only for those that stabilize a point for the natural action of $SL_2(\R)$ on the projective line.}  we will need to make the argument quantitative. We will use the same notation as in \cite{BSZ}. 

Since $\Gamma$ is co-compact arithmetic it follows that $\Gamma$ is commensurable with the lattice given by the embedding $\phi (A_1(\Z))$ into $M_2(\R)$ of the integral unit group $A_1(\Z)$ in a quaternion algebra $A(\Q)$ defined over a totally real number field. 
As in \cite{BSZ}, for $\alpha= x_0+x_1\omega+ x_2\Omega+x_3\omega \Omega$ we define $N(\alpha)=x_0^2-ax_1^2-bx_2^2+abx_3^2$ (with $a=\omega^2$, $b=\Omega^2$ being two rationals which are square-free) and $tr(\alpha)=2x_0$. We define (see (3.6) in \cite{BSZ}),
 $$
 \phi(\alpha)= \begin{pmatrix}\bar{\xi}&\eta\\ b\bar{\eta}&\xi \end{pmatrix},
  $$
 where $\xi=x_0-x_1\sqrt{a}$, $\eta=x_2+x_3\sqrt{a}$ (see (3.12) in \cite{BSZ}).  Note that $det(\phi(\alpha))=N(\alpha)$  and $trace(\phi(\alpha))=tr(\alpha)$. We have (see (3.15) in \cite{BSZ})
 $$
 COMM(\Gamma)=\Big\{\frac{\phi(\alpha)}{\sqrt{N(\alpha)}}\;:\; \alpha\in A^+(\Q),\Big\}.
 $$
 where $A^+(\Q)=\{\alpha\in A(\Q)\;:\; N(\alpha)>0\}$. Then (see (3.16) in \cite{BSZ}), up to multiplication by scalars,
 $$
 \beta\in COMM(\Gamma) \text{ iff } \beta=\begin{pmatrix}\bar{\xi}&\eta\\ b\bar{\eta}&\xi \end{pmatrix}\text{ with } \xi+\eta\Omega\in A^+(\Q).
 $$
 \begin{lemma}\label{lem:per'} Let $\beta=\phi(\alpha)/ N(\alpha)^{1/2}$, where $\alpha=x_0+x_1\omega+ x_2\Omega+x_3\omega \Omega\in A(\Q)$.  Then the denominator of $N(\alpha)^{-1}x_0^2$,  $N(\alpha)^{-1}x_1^2$, $N(\alpha)^{-1}x_2^2$ and $N(\alpha)^{-1}x_3^2$ are $\leq C \text{ind}(\beta)$.
 \end{lemma}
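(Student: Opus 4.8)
The plan is to reduce the lemma to a single lower bound, $N(\alpha)\leq C\cdot\mathrm{ind}(\beta)$ with $C=C(\Gamma)$, and to prove that bound by a local (Eichler order) analysis of the group $\Gamma\cap\beta\Gamma\beta^{-1}$.

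First I would normalise. Since $\beta=\phi(\alpha)/N(\alpha)^{1/2}$ — and hence $\mathrm{ind}(\beta)$ — is unchanged when $\alpha$ is replaced by $\lambda\alpha$ for $\lambda\in\Q_{>0}$, and the quantity $x_i^2/N(\alpha)$ is likewise scale invariant, we may assume $x_0,x_1,x_2,x_3\in\Z$ with $\gcd(x_0,x_1,x_2,x_3)=1$. Then $N:=N(\alpha)=x_0^2-ax_1^2-bx_2^2+abx_3^2$ is a positive integer ($N>0$ because $\det\beta=1$), and for each $i$ the reduced fraction $x_i^2/N$ has denominator $N/\gcd(x_i^2,N)$, which divides $N$. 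It therefore suffices to show $N\leq C\cdot\mathrm{ind}(\beta)$.

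Next I would make $\mathrm{ind}(\beta)$ explicit. Passing to a commensurable lattice changes $\mathrm{ind}(\beta)$ by at most a bounded factor, so we may take $\Gamma=\phi(\mathcal{O}^1)$ with $\mathcal{O}$ a maximal order of $A$ and $\mathcal{O}^1$ its group of reduced-norm-one units. From $\phi(\alpha)\phi(\bar\alpha)=N\cdot\mathrm{id}$ one gets $\beta^{-1}=\phi(\bar\alpha)/N^{1/2}$, and for $g\in\mathcal{O}^1$ a short computation gives $\beta\phi(g)\beta^{-1}=N^{-1}\phi(\alpha g\bar\alpha)$, which (using $\alpha\bar\alpha=N$) lies in $\phi(\mathcal{O})$ iff $g\in\bar\alpha\mathcal{O}\bar\alpha^{-1}$. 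Hence $\Gamma\cap\beta\Gamma\beta^{-1}=\phi\bigl((\mathcal{O}\cap\bar\alpha\mathcal{O}\bar\alpha^{-1})^1\bigr)$; as $\bar\alpha$ has the same reduced norm as $\alpha$ and coordinates $(x_0,-x_1,-x_2,-x_3)$ (so the numbers $x_i^2/N$ are the same), we may write $\alpha$ for $\bar\alpha$. Thus $\mathcal{L}:=\mathcal{O}\cap\alpha\mathcal{O}\alpha^{-1}$, being an intersection of two maximal orders of $A$, is an Eichler order, and $\mathrm{ind}(\beta)=[\mathcal{O}^1:\mathcal{L}^1]$ up to a bounded factor.

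Finally I would carry out the local computation. Since $A$ is indefinite at $\infty$, strong approximation for $A^1$ gives $[\mathcal{O}^1:\mathcal{L}^1]=\prod_p[\mathcal{O}_p^1:\mathcal{L}_p^1]$. Let $e$ be the fixed index of the coordinate order $\Z\langle1,\omega,\Omega,\omega\Omega\rangle$ in $\mathcal{O}$. At a split prime $p$ one has $\mathcal{O}_p\cong M_2(\Z_p)$; writing $p^{j}$ for the exact power of $p$ dividing $\alpha$ in $\mathcal{O}_p$ (so $j\leq v_p(e)$ by primitivity in the coordinate order), the Smith normal form $p^{-j}\alpha=u\,\mathrm{diag}(1,p^{k})\,v$ with $u,v\in GL_2(\Z_p)$ and $k=v_p(N)-2j$ identifies $\mathcal{L}_p$ with a conjugate of $\{X\in M_2(\Z_p):p^{k}\mid X_{21}\}$, whence
\[
[\mathcal{O}_p^1:\mathcal{L}_p^1]=[SL_2(\Z_p):\Gamma_0(p^{k})_p]\ \geq\ p^{k}\ =\ p^{v_p(N)}p^{-2v_p(e)} .
\]
At a ramified prime $p$, $\mathcal{O}_p$ is the unique maximal order of the local division algebra, hence stable under conjugation, so $\mathcal{L}_p=\mathcal{O}_p$ and $[\mathcal{O}_p^1:\mathcal{L}_p^1]=1$, while primitivity forces $v_p(N)\leq 2v_p(e)+1$. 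Multiplying over all $p$,
\[
\mathrm{ind}(\beta)\ \gg_\Gamma\ \prod_{p\ \mathrm{split}}p^{v_p(N)-2v_p(e)}\ \geq\ \frac{N}{e^{2}\prod_{p\ \mathrm{ram}}p^{\,v_p(N)}}\ \geq\ \frac{N}{e^{2}d_A^{\,2}}\,,
\]
where $d_A$ is the product of the ramified primes of $A$; this gives $N\leq C\cdot\mathrm{ind}(\beta)$ and hence the lemma, since each denominator in the statement divides $N$. The one genuinely delicate point is this passage from the additive to the multiplicative picture: identifying $\Gamma\cap\beta\Gamma\beta^{-1}$ with the norm-one units of the Eichler order $\mathcal{O}\cap\alpha\mathcal{O}\alpha^{-1}$ and bounding $[\mathcal{O}^1:\mathcal{L}^1]$ below by $N(\alpha)$ up to constants depending only on $\Gamma$, which forces one to track carefully the finite set of ``bad'' primes (those dividing the discriminant of $A$ or the index $e$) so that all implied constants are independent of $\alpha$.
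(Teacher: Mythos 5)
Your proof is correct, and it takes a genuinely different route from the paper's. The paper makes the argument of Lemma 2 in \cite{BSZ} quantitative and stays entirely inside the explicit representation $\phi$: it takes the fundamental solution of the Pell equation $A^2-aB^2=1$ to produce a diagonal element $R\in\phi(A_1(\Z))$, conjugates it (and general elements $M$ with $\phi(\alpha)M\phi(\alpha)^{-1}\in\phi(A_1(\Z))$) by $\phi(\alpha)$, and reads off from the resulting entries lying in $\Z+\Z\sqrt a$ a chain of integrality conditions ($N(\alpha)^{-1}Bb|\delta|^2\in\Z+\Z\sqrt a$, etc.), finally choosing the auxiliary data $\eta_1,\eta_2$ of size controlled by the index to bound each denominator coordinate by coordinate. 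You instead reduce everything to the single inequality $N(\alpha)\leq C\,\mathrm{ind}(\beta)$ for a primitive integral $\alpha$ (since each denominator divides $N(\alpha)$ after your scaling normalisation), and prove it structurally: $\Gamma\cap\beta\Gamma\beta^{-1}$ is the group of norm-one units of the Eichler order $\mathcal{O}\cap\bar\alpha\mathcal{O}\bar\alpha^{-1}$, whose index in $\mathcal{O}^1$ factors over primes by strong approximation and is bounded below at split primes, via Smith normal form, by the level $p^{k}$ with $k=v_p(N)-2j$. What each approach buys: the paper's computation is elementary and self-contained (Pell plus matrix identities, no order theory) and produces exactly the statement used later in Lemma \ref{lem:npo}; yours needs the standard but nontrivial machinery of maximal and Eichler orders and strong approximation for $A^1$ (valid since $A$ splits at infinity), but it is more conceptual, explains why the bound is linear in the index (the Eichler level is essentially the reduced norm), and actually yields a slightly stronger conclusion, namely a bound on the common denominator $N(\alpha)$ itself rather than on the four quantities separately. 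Your bookkeeping of bad primes (the commensurability constant, the index $e$ of the coordinate order $\Z\langle 1,\omega,\Omega,\omega\Omega\rangle$ in a maximal order, and the ramified primes of $A$) is precisely what is needed for the implied constant to depend only on $\Gamma$, and since the lemma allows $C=C(\Gamma)$, and the downstream applications only require a bound polynomial in $\mathrm{ind}(\beta)$, your argument is a fully adequate substitute for the paper's.
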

 \begin{proof}
 Let $\beta = \phi (\alpha) / N(\alpha)^{1/2}\in COMM(\Gamma)$ and denote $\Gamma'=\Gamma\cap \beta \Gamma \beta^{-1}$ and let $\Gamma/\Gamma'=\{\gamma_1,\ldots, \gamma_n, \gamma_i\in \Gamma\}$. Consider the fundamental solution of the Pell's equation $A^2-aB^2=1$. Such solution exists as $a>0$ is not a square. Let $
R=\begin{pmatrix}A-B\sqrt{a}&0\\ 0&A+B\sqrt{a}\end{pmatrix}\in \phi(A_1(\Z)).
$
Then $R\in \gamma_i {\beta} \Gamma{\beta}^{-1}=\gamma_i {\beta} \Gamma\gamma_i^{-1}{\beta}^{-1}$. We will consider the matrix 
$\gamma_i{\beta}$ instead of ${\beta}$ and to simplify notation we still call it ${\beta}$. Let $\alpha= x_0+x_1\sqrt{a}+x_2\sqrt{b}+x_3\sqrt{ab}$ and let 
$$
\phi(\alpha)=\left[\begin{array}{cc} \overline{\gamma}& {\delta}\\
                     b {\bar \delta}&\gamma\end{array}\right]$$
with $\gamma = x_0-x_1\sqrt{a}$, $\delta=x_2+x_3\sqrt{a}$. It follows that 
$$
\phi(\alpha)R\phi(\alpha)^{-1}\in \phi(A_1(\Z) ).
$$
We recall the formula
$$
\begin{aligned}
&\phi(\alpha){M}\phi(\alpha)^{-1} = \frac{1}{N(\alpha)} \\ & \times \left[\begin{array}{cc}  \overline{\xi} N(\alpha)  + ( \overline{\xi}-\xi) b|\delta|^2+ b (\delta \gamma \overline{\eta}  - \overline{\delta} \overline{\gamma} \eta  ) &  -b \overline{\eta} \delta^2 + \eta \overline{\gamma}^2  + \delta \overline{\gamma} (\xi -\overline{\xi})  \\
                     -b^2\eta \overline{\delta}^2 + b \overline{\eta} \gamma^2  + b \overline{\delta} \gamma (\overline{\xi}-\xi) &
                      \xi N(\alpha)  + ( \xi - \overline{\xi} ) b|\delta|^2+ b ( \overline{\delta} \overline{\gamma} \eta -\delta \gamma \overline{\eta}) \end{array}\right],
\end{aligned}
$$     
where $M=\begin{pmatrix}\bar{\xi}&\eta\\ b\bar{\eta}&\xi \end{pmatrix}{\in \phi(A_1 (\Z))}$. 

Using this for $M=R$, we get that 
$N(\alpha)^{-1} Bb|\delta|^2, N(\alpha)^{-1} B\delta\bar{\gamma}\in  \Z+\Z\sqrt{a}$. 

\noindent Consider now the general formula above, where ${M}$ is any matrix for which $\phi(\alpha){M}\phi(\alpha)^{-1}\in \phi(A_1(\Z))$. Multiplying by $B$ each term of the matrix $\phi(\alpha){M}\phi(\alpha)^{-1}\in \phi(A_1(\Z))$ and using the knowledge $N(\alpha)^{-1} Bb|\delta|^2, N(\alpha)^{-1} B\delta\bar{\gamma}\in  \Z+\Z\sqrt{a}$, we get that in particular
$$
N(\alpha)^{-1} Bb(\eta \bar{\gamma}^2- b\bar{\eta}\delta^2)\in \Z+\Z\sqrt{a}.
$$
Using this for  any $\eta_1$ and $\eta_2$, we get 
\begin{equation}
\label{eq:etas}
N(\alpha)^{-1} B(\eta_1 \bar{\gamma}^2- b\bar{\eta}_1\delta^2),  \quad  N(\alpha)^{-1} B(\eta_2 \bar{\gamma}^2- b\bar{\eta}_2 \delta^2) \in \Z+\Z\sqrt{a}.
\end{equation}
Multiplying the first inclusion by $\eta_2$ and the second one by $\eta_1$ and subtracting from each other, we get 
$$
N(\alpha)^{-1} Bb\delta^2[\bar{\eta}_1 \eta_2- \eta_1\bar{\eta}_2]\in \Z+\Z\sqrt{a}.
$$
If $\bar{\eta}_1 \eta_2= \ell+m\sqrt{a}$, then it follows that 
$$
2mN(\alpha)^{-1}Bb\sqrt{a}\delta^2\in \Z+\Z\sqrt{a}.
$$
Write $\delta=x_2+x_3\sqrt{a}$. Then the above condition and the condition  that $Bb \vert \delta\vert ^2 \in \Z$  give
$$
2mN(\alpha)^{-1}Bb (x_2^2+ax_3^2)\in \Z  \quad \text{ and }  \quad N(\alpha)^{-1} Bb  (x_2^2 -a x_3^2)  \in \Z.
$$
This two conditions in turn imply that 
\begin{equation}
\label{eq:x_2x_3}
2mN(\alpha)^{-1} Bb x_2^2\in \Z\quad \text{ and } \quad 2m N(\alpha)^{-1} B b a x_3^2 \in \Z\,,
\end{equation} 
So the denominator of $x_2^2$ is $\leq 2mB$ and
that of $x_3^2$ is $\leq 2mBa$.  It remains to notice that $B$ is a fixed constant (depending only on $a$ as a solution of the Pell's equation), and $m$ comes from  $\eta_2\bar{\eta}_1$. Since the above reasoning works for any $\eta_1,\eta_2$ it follows that we can pick $\eta_i$ to be $\leq \text{ \rm ind}(\alpha)$. This finishes the proof as far as $x_2$ and $x_3$ are concerned.

Then, multiplying the first inclusion in formula \eqref{eq:etas} by $\bar{\eta}_2$ and the second one by $\bar{\eta}_1$ and subtracting from each other, we get 
$$
N(\alpha)^{-1} B\bar \gamma^2[\eta_1\bar{\eta}_2-\bar{\eta}_1\eta_2]\in \Z+\Z\sqrt{a}\,,
$$
hence, in particular, we derive
\begin{equation}
\label{eq:x_0x_1}
2m N(\alpha)^{-1}B (x_0^2 + a x_1^2) \in \Z  \,.
\end{equation}
Since $N(\alpha)=x_0^2-ax_1^2-bx_2^2+abx_3^2 \in \Q$ by formula \eqref{eq:x_2x_3}
it follows  that we have 
$$
2 m N(\alpha)^{-1}B  (x_0^2-ax_1^2) = 2 mB +  2 m N(\alpha)^{-1} Bb (x_2^2 - a x_3^2)      \in \Z\,,
$$
which together with the condition \eqref{eq:x_0x_1} gives that $2m N(\alpha)^{-1}B  x_0^2 \in \Z$ and $2mN(\alpha)^{-1}B  a x_1^2 \in \Z$, thereby completing the argument.

\end{proof}

\subsubsection{The modular case}
Let now $\Gamma=SL(2,\Z)$. We will be interested in points of the form $(a_{-\log p}x, a_{-\log q}x)$, where $x\in X$. Assume that $x\in X$ is such that 
$(a_{-\log p}x, a_{-\log q}x)$ satisfies E2 with $R=T^\delta$ and $p,q\leq T^{\delta^2}$. Let $H.(x_0,y_0)$ be the corresponding periodic orbit. By Ratner's results \cite{Ratner} it follows that the flow $h_t\times h_t$  on $H.(x_0,y_0)$ is algebraically conjugated with the flow $h_t$ on $SL(2,\R)\slash \Gamma_{p,q}$, where $\Gamma_{p,q}=\beta\Gamma \beta^{-1}\cap \Gamma$ for some $\beta=\beta(p,q,x)\in COMM(SL(2,\Z))$. From now on the lattice $\Gamma_{p,q}$ will always denote the lattice associated with $H.(x_0,y_0)$ with the corresponding element $\beta=\beta_{p,q}$. Recall that 
$$
COMM(SL(2,\Z))=\Big \{\frac{1}{det(A)^{1/2}}A\;:\; A\in GL_2^+(\Q)\Big \}  
 \subset  \{ \beta \in SL(2, \R) \vert  \beta^2 \in GL_2^+(\Q)\} . 
$$

\begin{lemma}\label{lem:alpha} Let $p,q$ be two different integers with $\log^{1/\eta}T<p,q<T^{\delta^2}$. Assume $x\in X$ is such that $(a_{-\log p}x, a_{-\log q }x)$ satisfies E2 with $T^{\delta^{3/2}}<R^{A_1}\leq T^{\frac{\delta}{100A_2}}$. Then 
\begin{enumerate}
\item[(I1)] $vol(H_{\cdot}(x_0,y_0))=ind(\beta)$;
\item[(I2)] $\Gamma_{p,q}$ is a congruence lattice;
\item[(I3)] $vol(H_{\cdot}(x_0,y_0))\geq \min (p^{1/3},q^{1/3})$;
\item[(I4)] all the two-factor products of the denominators of the matrix $\beta$ 
divide an integer $\leq (\text{\rm ind}(\beta) +1)^3$,  hence they are (in absolute value) smaller than $(\text{\rm ind}(\beta)+1)^3$.
\end{enumerate}
\end{lemma}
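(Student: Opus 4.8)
The statement is Lemma~\ref{lem:alpha}, gathering four structural facts about the periodic orbit $H.(x_0,y_0)$ and its associated commensurator element $\beta=\beta_{p,q}$ when $(a_{-\log p}x,a_{-\log q}x)$ satisfies condition~E2. I would prove the four items essentially in the stated order, since the later ones build on the earlier ones.

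\textbf{Items (I1) and (I2).} Item (I1) is nearly immediate: by Lemma~\ref{per:stab} the periodic orbit $H.(x_0,y_0)$ has the form $\{(\xi\Gamma,\xi\gamma_i\alpha\Gamma)\}$ with $\alpha=\beta\in COMM(\Gamma)$, so by \eqref{eq:indvol} we have $vol(H.(x_0,y_0))=\mathrm{ind}(\beta)\cdot vol(G/\Gamma)$; since $\Gamma=SL(2,\Z)$ has covolume normalized (or absorbed into the implied constants) this reads $vol(H.(x_0,y_0))=\mathrm{ind}(\beta)$. For (I2), $\Gamma_{p,q}=\beta\Gamma\beta^{-1}\cap\Gamma$ with $\beta\in COMM(SL(2,\Z))=\{A/\det(A)^{1/2}:A\in GL_2^+(\Q)\}$; writing $\beta=A/\sqrt{\det A}$ with $A$ integral (clearing denominators), conjugation by $\beta$ is the same as conjugation by $A\in GL_2^+(\Q)$, and $A\,SL(2,\Z)\,A^{-1}\cap SL(2,\Z)$ contains a principal congruence subgroup $\Gamma(M)$ where $M$ is (a multiple of) $\det A$ times the entries of $A$ and $A^{-1}$. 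Hence $\Gamma_{p,q}$ contains $\Gamma(M)$ and is a congruence subgroup. I expect this to be routine linear algebra over $\Z$.

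\textbf{Item (I3): the key lower bound.} This is the heart of the lemma and I expect it to be the main obstacle. The point is that if $(a_{-\log p}x,a_{-\log q}x)$ lies on (or near) $H.(x_0,y_0)$, then by the centralizer-divergence analysis (Proposition~\ref{l:modular1}/Corollary~\ref{cor:uni}, after rescaling as in Corollary~\ref{cor:pq}) the element $\beta$ conjugating $x,x$ to the diagonal must, up to rescaling, have the ratio of its ``horocycle speeds'' equal to $p/q$; concretely $\beta$ is (a scalar multiple of) a matrix whose diagonal renormalization reflects the geodesic displacement $\log(p/q)$, so $\beta$ conjugates $h_t$ to $h_{(p/q)t}$ in the appropriate sense. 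Writing $\beta=A/\sqrt{\det A}$ with $A\in GL_2^+(\Z)$ primitive, this forces the eigenvalues of $A$ (equivalently the scaling behaviour) to involve $\sqrt{p/q}$, so that $\det A$ is divisible by a quantity comparable to $\min(p,q)$ — more precisely one extracts that $pq \mid (\det A)\cdot(\text{small stuff})$, and since $\mathrm{ind}(\beta)$ is comparable to $\det A$ (the index of $A\,SL(2,\Z)A^{-1}\cap SL(2,\Z)$ in $SL(2,\Z)$ is $\asymp \det A$ when $A$ is primitive), one gets $\mathrm{ind}(\beta)\gg \min(p,q)$. The cube-root loss $\min(p^{1/3},q^{1/3})$ is there to absorb the approximation errors (the point is only $R^{A_1}T^{-\delta/A_2}$-close to the orbit, not on it) and the ambiguity in passing from $A$ to the reduced matrix; I would track these errors carefully using the hypotheses $p,q\le T^{\delta^2}$ and $R^{A_1}\le T^{\delta/(100A_2)}$, which make the error terms negligible compared to the main term $\min(p,q)$. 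The hardest bookkeeping is controlling the denominators and the scalar normalization so that the arithmetic of $\det A$ genuinely sees $p$ and $q$; this is where I would spend most of the effort.

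\textbf{Item (I4): denominators of $\beta$.} Finally, write $\beta=A/\sqrt{\det A}$ with $A\in GL_2^+(\Q)$; the entries of $\beta$ are $a_{ij}/\sqrt{\det A}$ and we must bound the common denominators of products of pairs of these. By the same conjugation argument as in (I2), $\Gamma_{p,q}\supseteq \Gamma(M)$ where $M$ is built from $\det A$ and the entries, and $[\Gamma:\Gamma(M)]\gg M^{?}$; comparing with $\mathrm{ind}(\beta)=[\Gamma:\Gamma_{p,q}]$ shows $M\ll \mathrm{ind}(\beta)^{O(1)}$. More directly: the two-factor products of the denominators of $\beta$ are essentially the denominators of the entries of $\beta^2=A^2/\det A\in GL_2^+(\Q)$ together with $1/\det A$, and one checks that $A^2/\det A$ has denominators dividing $\det A$, which in turn divides $(\mathrm{ind}(\beta)+1)^{c}$ for a small constant $c$; the cube in $(\mathrm{ind}(\beta)+1)^3$ comes from combining these with the bound $[SL(2,\Z):\Gamma_0(\det A)]\asymp \det A$ and allowing for the prime powers. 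I would phrase this via the containment $\Gamma(\det A)\subseteq \Gamma_{p,q}$ and the index bound $[\Gamma:\Gamma(N)]\asymp N^3$, which yields exactly a cube. This item is then a short deduction once (I1) and (I2) are in hand.
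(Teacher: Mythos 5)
Your items (I1) and (I2) are fine ((I2) via $\Gamma(\det A)\subseteq A\,SL(2,\Z)A^{-1}\cap SL(2,\Z)$ for an integral representative $A$ is correct and is even a different, slightly more direct route than the paper, which instead deduces (I2) from the quantitative denominator bound (I4)). The genuine gap is in (I3), which you correctly identify as the heart of the lemma but for which you do not supply the actual mechanism. The correct argument (and the paper's) extracts a \emph{conjugation invariant}: by Lemma~\ref{per:stab}, Proposition~\ref{l:modular1}, Corollary~\ref{cor:pq} and Corollary~\ref{cor:uni}, one gets $w_1\xi_0\beta\xi_0^{-1}w_2=h_{-K_2}a_{\log(p/q)}h_{K_1}$ with $w_1,w_2$ within $T^{-1/2}$ of the identity, and the right-hand side has eigenvalues $\sqrt{p/q},\sqrt{q/p}$; hence $\vert\mathrm{trace}(\beta)^2-(p+q)^2/(pq)\vert\leq T^{-1/3}$. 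The arithmetic then enters through a rational-approximation step: the denominator of $\mathrm{trace}(\beta)^2$ is at most $Q\asymp(\mathrm{ind}(\beta)+1)^3$ by (I4), the reduced denominator of $(p+q)^2/(pq)$ is $pq$ (here primality of $p,q$ is used), two distinct rationals with these denominators differ by at least $1/(Q\,pq)\gg T^{-1/3}$ since $p,q\leq T^{\delta^2}$ and $\mathrm{ind}(\beta)\leq R^{O(A_1)}$, so the two rationals coincide and $Q\geq\min(p,q)$; the cube root in (I3) is exactly the cube in (I4), \emph{not} an error-absorption term as you suggest. Your step ``the eigenvalues of $A$ involve $\sqrt{p/q}$, so $pq\mid\det A\cdot(\text{small stuff})$'' is not a deduction: approximate information about eigenvalues of an integer matrix carries no divisibility by itself; you need the trace (an integer, or a rational with controlled denominator) and the forcing-to-equality argument to convert the metric closeness into an exact arithmetic identity.

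Your (I4) also has a logical gap in the direction of the comparison. First, the two-factor products of the entries of $\beta$ are not the entries of $\beta^2$: one needs the denominators of the individual products $a^2,ad,d^2,\dots$ (e.g.\ (I3) uses the denominator of $(a+d)^2$), and $\beta^2$ only gives you combinations such as $a^2+bc$ and $b(a+d)$. Second, your justification ``$\Gamma(\det A)\subseteq\Gamma_{p,q}$ and $[\Gamma:\Gamma(N)]\asymp N^3$ yields the cube'' runs the wrong way: that containment gives $\mathrm{ind}(\beta)=[\Gamma:\Gamma_{p,q}]\ll(\det A)^3$, an upper bound on the index in terms of $\det A$, whereas (I4) requires bounding the denominators (hence $\det A$ for a good representative) \emph{above} by a power of $\mathrm{ind}(\beta)$. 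To close this in your parametrization you would need the Hecke-theoretic lower bound that a primitive integral $A$ of determinant $D$ satisfies $[\Gamma:\Gamma\cap A\Gamma A^{-1}]=\psi(D)\geq D$ — a fact you assert only in passing in (I3) and never prove. The paper avoids all of this with a soft pigeonhole argument: among the $n+1$ unipotents $u_i=\begin{pmatrix}1&i\\0&1\end{pmatrix}$ some $u_i$ lies in $\Gamma_{p,q}$, so $\beta u_i\beta^{-1}\in SL(2,\Z)$ forces $ia^2,ic^2,iac\in\Z$; repeating with lower unipotents $v_j$ and with the matrices $r_k=\begin{pmatrix}k&-1\\1&0\end{pmatrix}$ gives all two-factor products integral after multiplication by $ijk\leq(\mathrm{ind}(\beta)+1)^3$, and then (I2) and (I3) are deduced from (I4). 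Either route can work, but as written your (I3) and (I4) are missing their key steps.
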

 \begin{proof}

Property (I1) just follows from the definition of the index. 

\smallskip
Property (I2)  follows  from (I4). In fact, since $\Gamma_{p,q}=\beta\Gamma \beta^{-1}\cap \Gamma$  
and  by  (I4) the two-factor products  of all denominators of the entries of $\beta$ divide an integer $z \leq (\text{\rm ind} (\beta)+1)^3$, then  $\Gamma(z)\subset \Gamma_{p,q}$.

\smallskip 
Finally, property (I3) also follows from (I4) es explained below. By Lemma \ref{per:stab} it follows that if $\bar{\xi}=(\xi_1\Gamma,\xi_2\Gamma)$ is such that $H_{\cdot}(\xi_1\Gamma,\xi_2\Gamma)$ is periodic, then there exists $\beta\in COMM(\Gamma)$ (note that $COMM(\Gamma)$ is a subgroup so $\gamma_i\beta\in COMM(\Gamma)$)
 \begin{equation}\label{repr1'}
 \xi_2= \xi_1 \beta.\end{equation}
 Moreover, the fact that $vol(H_{\cdot}\bar{\xi})<R^{A_1}$ means that the index of $\beta\Gamma \beta^{-1}\cap \Gamma$ is $\leq R^{4A_1}$. By Proposition \ref{l:modular1} and Corollary \ref{cor:pq} it follows that $(a_{-\log p}x, a_{-\log q }x)$ satisfies \eqref{eq:moducent}. Denote $\bar{x}=(a_{-\log p}x, a_{-\log q }x)$. 

By Corollary \ref{cor:uni}  for $t_0=0$,  there exist $(K_1, K_2) \in \R^2$ such that $\vert K_1\vert$, $\vert K_2\vert
\leq T^{2\delta}$ and 
$$
d_{G\slash \Gamma\times G\slash \Gamma}(\bar{x},(h_{K_1},h_{K_2}) \bar{\xi})< T^{-1+3\delta}.
$$
This means that there exist $\gamma_1,\gamma_2\in \Gamma$ such that $$d_{G}(\bar{x}_1,h_{K_1}\xi_1\gamma_1)\leq T^{-1+3\delta}\,, \qquad d_G(\bar{x}_2,h_{K_2}\xi_2\gamma_2)\leq T^{-1+3\delta}\,.$$ 

We have
$$
 \bar{x}_2\bar{x}_1^{-1}=\bar{x}_2 (\xi_2\gamma_2)^{-1}(\xi_2\gamma_2)
(\xi_1\gamma_1)^{-1}(\xi_1\gamma_1) \bar{x}_1^{-1}.
$$
By applying~\eqref{repr1'} to $\overline{\xi}=(\xi_1\gamma_1\Gamma,\xi_2\gamma_2\Gamma)$, we have 
$\xi_2\gamma_2 = \xi_1 \gamma_1 \beta$, hence by the right invariance of the metric $d_G$ allows us to obtain $w_1,w_2\in G$ with $d_G(w_i,e)\leq T^{-1/2}$ for $i=1,2$ and such that
\begin{equation}\label{period20'}
h_{-K_2}\bar{x}_2 \bar{x}_1^{-1}h_{K_1}=w_1\xi_0\beta\xi_0^{-1}w_2,
\end{equation}
where $\xi_0=\xi_1\gamma_1$. We want to apply this reasoning to the point
$\bar{x}=(a_{-\log p}x,a_{-\log q}x)$. Now, \eqref{period20'} reads as
\begin{equation}\label{eq:pr'}
w_1^{-1}h_{-K_2}a_{\log (p/q)} h_{K_1}w_2^{-1}= \xi_0\beta\xi_0^{-1}
\end{equation}
where $w_i$ and $\beta$ are as above. 
From formula \eqref{period20'}, since the matrix  $a_{\log (p/q)}$
is diagonal with eigenvalues
$ \sqrt{p/q}$ and $\sqrt{q/p}$ and the matrices $h_{K_1}$ and $h_{-K_2}$ are upper triangular and
unipotent,   we derive that  $A= h_{-K_2}a_{\log (p/q)} h_{K_1}$ also has eigenvalues $ \sqrt{p/q}$ and $\sqrt{q/p}$,
hence 
$ |\text{\rm trace} (\beta)-  \frac{p+q}{ \sqrt{pq} }| \leq T^{-1/2}$, which in turn implies
hence
$$
|\text{\rm trace} (\beta)^2-  \frac{(p+q)^2}{pq}| \leq T^{-1/2} \Big( | \frac{p+q}{ \sqrt{pq}} | + T^{-1/2} \Big) \leq T^{-1/3}\,.
$$

This however implies that if $\beta =  \begin{pmatrix} a & b \\ c & d \end{pmatrix}$, then by (I4), the denominator of  
$(a+d)^2$ is $\leq (ind(\beta)^3+1)^3$ and so by the above inequality, $(ind(\beta)^3+1)^3\geq \min(p,q)$. This finishes the proof.

 Finally we prove property (I4). Let $\text{\rm ind}(\beta)=n$. Let $\Gamma'=\beta SL(2,\Z)\beta^{-1}\cap SL(2,\Z)$. Let us consider the upper triangular  unipotent matrices 
 $$
 u_i = \begin{pmatrix}1 & i \\ 0 & 1 \end{pmatrix}  \in SL(2, \Z)\,,  \quad \text{ for all } i=1, \dots, n+1\,.
 $$
Since $SL(2,\Z)/\Gamma'$ has $n$ elements $\{\gamma_1,\ldots, \gamma_n\}$ and we are considering $n+1$ unipotents it follows that there exists $i\in \{1, \dots, n+1\}$  such that $u_i\in \Gamma'$, which is equivalent to the condition that  $\beta u_i \beta^{-1}\in SL(2,\Z)$.  Let then 
$$
\beta =  \begin{pmatrix} a & b \\ c & d \end{pmatrix}  \in SL(2, \R) \,.
$$
A direct computation implies that, 
\begin{equation}
\label{eq:prod_in_Z_1}
i a^2 \,, i c^2   \text{ and }  i ac   \in \Z\,,
\end{equation}
 hence the denominators of $a^2$, $c^2$ and $ac$ divide $i$. 
 By considering unipotents 
 $$
 v_j= \begin{pmatrix}1 & 0 \\ j & 1 \end{pmatrix}  \in SL(2, \Z)\,,  \quad \text{ for all } j=1, \dots, n+1\,,
 $$
 we conclude similarly that there exists $j\in \{1, \dots, n+1\}$  such that 
\begin{equation}
\label{eq:prod_in_Z_2}
j b^2 \,,   jd^2    \text{ and }    j bd     \in   \Z\,,
\end{equation}
 hence the denominators of $b^2$, $d^2$ and $bd$ divide $j$. 
 \smallskip
 For the remaining products we reason analogously on the set of matrices
 $$
 r_k = \begin{pmatrix}k & -1 \\ 1 & 0 \end{pmatrix}  \in SL(2, \Z)\,,  \quad \text{ for all } k=1, \dots, n+1\,.
 $$
 Given \eqref{eq:prod_in_Z_1} and \eqref{eq:prod_in_Z_2}, we conclude that there exists $k \in \in \{1, \dots, n+1\}$  such that 
 $$
 (ij k) ab\,,  (ijk) ad \,,   (ijk) bc  \text{ and }   (ijk) cd  \in \Z\,.
 $$
 hence the denominators of $ab$, $ad$ $bd$ and and $cd$ divide $ijk$.
 
 \end{proof}

\section{SPNT  for horocycle flows in cocompact case - proof of Theorem~\ref{th:glowne}}
Since $\Gamma$ is co-compact it follows that for any point $(x,y)$ the condition E3. in Theorem \ref{LMWdrugie} is not satisfied as the injectivity radius is uniformly bounded below. Let $p,q$ be any primes with $p,q\leq T^{\delta^2}$ and fix $R=T^{\frac{\delta}{1000A_1A_2}}$. We have the following:

 \begin{lemma}\label{lem:npo} Assume $\Gamma$ is co-compact  (and arithmetic). For any $x\in SL(2,\R)\slash \Gamma$,  any $p,q\leq T^{\delta^2}$, $p\neq q$, the point  $(a_{-\log p}x,a_{-\log q}x)$ does not satisfy E2.
 \end{lemma}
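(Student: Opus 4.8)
The plan is to argue by contradiction, making quantitative the argument of \cite{BSZ}: assuming E2 holds for $(a_{-\log p}x,a_{-\log q}x)$, first reduce — via the divergence-along-the-centralizer results of the previous section — to a trace identity for an element $\beta\in COMM(\Gamma)$ of small index, and then extract from that identity a non-trivial zero of the quaternionic norm form $N$, which is impossible because, $\Gamma$ being cocompact, $A$ is a division algebra. For the reduction: taking $r=0$ in E2 and restricting to $s<T^{1-\delta}$, the hypothesis \eqref{eq:modu'} of Proposition~\ref{l:modular1} holds (up to a harmless factor $2$), since $R=T^{\delta/(1000A_1A_2)}$; in the cocompact case every point is generic for $m_X$, so we may take $T>T_x$ and apply Corollary~\ref{cor:pq}. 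Then Corollary~\ref{cor:uni} at $t_0=0$ furnishes a point $(\xi_1\Gamma,\xi_2\Gamma)\in H.(x_0,y_0)$ and reals $K_1,K_2$ with $|K_1|,|K_2|\leq T^{3\delta}$ such that, for the lifts $a_{-\log p}\tilde x$, $a_{-\log q}\tilde x$ and some $\gamma_1,\gamma_2\in\Gamma$,
$$
d_G(a_{-\log p}\tilde x,\,h_{K_1}\xi_1\gamma_1)\leq T^{-1+3\delta},\qquad d_G(a_{-\log q}\tilde x,\,h_{K_2}\xi_2\gamma_2)\leq T^{-1+3\delta}.
$$
By Lemma~\ref{per:stab}, $\xi_2\gamma_2=(\xi_1\gamma_1)\beta$ for some $\beta\in COMM(\Gamma)$, and by \eqref{eq:indvol} together with $vol(H.(x_0,y_0))\leq R^{A_1}$ we get $ind(\beta)\leq C R^{A_1}$.

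Next I would run the trace computation exactly as in the proof of item (I3) of Lemma~\ref{lem:alpha}: since $(a_{-\log q}\tilde x)(a_{-\log p}\tilde x)^{-1}=a_{\log(p/q)}$ and the matrix $h_{-K_2}a_{\log(p/q)}h_{K_1}$ is upper-triangular with diagonal $(\sqrt{p/q},\sqrt{q/p})$, conjugating by $\xi_1\gamma_1$ yields
$$
\Big|\,\mathrm{trace}(\beta)-\tfrac{p+q}{\sqrt{pq}}\,\Big|\leq T^{-1/2}\,,\qquad\text{hence}\qquad \Big|\,\mathrm{trace}(\beta)^2-\tfrac{(p+q)^2}{pq}\,\Big|\leq T^{-1/3}\,,
$$
the error being controlled because $\|a_{\log(p/q)}\|\leq T^{\delta^2}$ and $\|h_{\pm K_i}\|\leq T^{3\delta}$.

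Now I would pass to the arithmetic. Since $\Gamma$ is cocompact arithmetic, $\beta=\phi(\alpha)/N(\alpha)^{1/2}$ for some $\alpha=x_0+x_1\omega+x_2\Omega+x_3\omega\Omega\in A^+(\Q)$, and $\mathrm{trace}(\phi(\alpha))=2x_0$ gives $\mathrm{trace}(\beta)^2=4x_0^2/N(\alpha)$; by Lemma~\ref{lem:per'} this is a rational number with denominator $\leq C\,ind(\beta)\leq C R^{A_1}=C T^{\delta/(1000A_2)}$. As $(p+q)^2/(pq)$ has denominator $pq\leq T^{2\delta^2}$, both are rationals with denominators bounded by a fixed small power of $T$ (this is where $\delta$ small relative to $A_1,A_2$ enters), so two distinct such rationals differ by more than $T^{-1/3}$ once $T$ is large; hence $4x_0^2/N(\alpha)=(p+q)^2/(pq)$ exactly. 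Clearing denominators in $4x_0^2\,pq=(p+q)^2N(\alpha)$ and substituting $N(\alpha)=x_0^2-ax_1^2-bx_2^2+abx_3^2$ gives
$$
(p-q)^2x_0^2-a(p+q)^2x_1^2-b(p+q)^2x_2^2+ab(p+q)^2x_3^2=0\,,
$$
that is, $N(\alpha')=0$ for $\alpha':=(p-q)x_0+(p+q)x_1\omega+(p+q)x_2\Omega+(p+q)x_3\omega\Omega\in A(\Q)$. Because $A$ is a division algebra its norm form is anisotropic, so $\alpha'=0$; since $p+q\neq0$ this forces $x_1=x_2=x_3=0$, and since $p\neq q$ it forces $x_0=0$, whence $\alpha=0$ — contradicting $N(\alpha)>0$. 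Thus E2 cannot hold.

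I expect the only real difficulty to be the bookkeeping: one must check that all accumulated powers of $T^{\delta}$ — from $K_1,K_2$, from $\|a_{\log(p/q)}\|$, and the denominator bound $R^{A_1}$ coming out of Lemma~\ref{lem:per'} — are comfortably beaten by the gain $T^{-1/3}$ furnished by the approximate trace identity, so that it can be upgraded to an exact one. This is precisely where the smallness of $\delta$ relative to the constants $A_1,A_2$ of Theorem~\ref{LMWdrugie} is used. The conceptual point — that $\mathrm{trace}(\beta)=\sqrt{p/q}+\sqrt{q/p}$ produces an isotropic vector in an anisotropic form — is short.
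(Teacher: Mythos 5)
Your proposal is correct and follows essentially the same route as the paper: E2 together with Proposition~\ref{l:modular1}, Corollaries~\ref{cor:pq} and~\ref{cor:uni} yields the approximate trace identity $|\mathrm{trace}(\beta)-(p+q)/\sqrt{pq}|\leq T^{-1/2}$ for a commensurator element $\beta$ of small index, Lemma~\ref{lem:per'} upgrades it to an exact rational identity, and anisotropy of the quaternionic norm form (the division algebra property) gives the contradiction. Your final step (clearing denominators to exhibit the isotropic vector $\alpha'$ and concluding $\alpha=0$) is only a cosmetic rearrangement of the paper's conclusion that $N(\alpha)^{1/2}(\sqrt{p/q}-\sqrt{q/p})=x_1=x_2=x_3=0$, hence $p=q$.
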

 
 Notice that the above lemma immediately implies Theorem \ref{th:glowne}. Indeed from this lemma and the fact that E3 never holds 
 it follows that $(a_{-\log p}x,a_{-\log q}x)$ has to satisfy E1 for all $p,q\leq T^{\delta^2}$. But the by Corollary \ref{cor:adc} it follows that the sequence $b_n=\phi(h_nx)$ satisfies the assumption of the SPNT criterion. So it only remains to prove the above lemma:
 \begin{proof}[Proof of Lemma \ref{lem:npo}] By Lemma \ref{per:stab} it follows that if $\bar{\xi}=(\xi_1\Gamma,\xi_2\Gamma)$ is such that $H_{\cdot}(\xi_1\Gamma,\xi_2\Gamma)$ is periodic, then there exists $\beta \in COMM(\Gamma)$ (note that $COMM(\Gamma)$ is a subgroup so $\gamma_i\beta\in COMM(\Gamma)$) such that
 \begin{equation}
 \xi_2= \xi_1 \beta.\end{equation}
 Moreover, the fact that $vol(H_{\cdot}\bar{\xi})<R^{A_1}$ means that the index of $\beta\Gamma \beta^{-1}\cap \Gamma$ is $\leq R^{4A_1}$. By Proposition \ref{l:modular1} and Corollary \ref{cor:pq} it follows that $(a_{-\log p}x, a_{-\log q }x)$ satisfies \eqref{eq:moducent}. Denote $\bar{x}=(a_{-\log p}x, a_{-\log q }x)$. Applying \eqref{eq:moducent} for $s=0$ (keeping in mind Corollary \ref{cor:pq})
$$
d_{G\slash \Gamma\times G\slash \Gamma}(\bar{x},(h_{K_1},h_{K_2}) \bar{\xi})<R^{4A_1}T^{-\delta/A_2-4\delta^2}.
$$

By Corollary \ref{cor:uni}  for $t_0=0$,  there exist $(K_1, K_2) \in \R^2$ such that $\vert K_1\vert$, $\vert K_2\vert
\leq T^{2\delta}$ and 
$$
d_{G\slash \Gamma\times G\slash \Gamma}(\bar{x},(h_{K_1},h_{K_2}) \bar{\xi})< T^{-1+3\delta}.
$$
This means that there exist $\gamma_1,\gamma_2\in \Gamma$ such that $$d_{G}(\bar{x}_1,h_{K_1}\xi_1\gamma_1)\leq T^{-1+3\delta}\,, \qquad d_G(\bar{x}_2,h_{K_2}\xi_2\gamma_2)\leq T^{-1+3\delta}\,.$$ 
We have
$$
 \bar{x}_2\bar{x}_1^{-1}=\bar{x}_2 (\xi_2\gamma_2)^{-1}(\xi_2\gamma_2)
(\xi_1\gamma_1)^{-1}(\xi_1\gamma_1) \bar{x}_1^{-1}.
$$
By applying~\eqref{repr1'} to $\overline{\xi}=(\xi_1\gamma_1\Gamma,\xi_2\gamma_2\Gamma)$, the right invariance of the metric $d_G$ allows us to obtain $w_1,w_2\in G$ with $d_G(w_i,e)\leq T^{-1/2}$ for $i=1,2$ and such that
\begin{equation}\label{period20''}
h_{-K_2}\bar{x}_2 \bar{x}_1^{-1}h_{K_1}=w_1\xi_0\beta\xi_0^{-1}w_2,
\end{equation}
where $\xi_0=\xi_1\gamma_1$. We want to apply this reasoning to the point
$\bar{x}=(a_{-\log p}x,a_{-\log q}x)$. Now, \eqref{period20''} reads as
\begin{equation}\label{eq:pr''}
w_1^{-1}h_{-K_2}a_{\log (p/q)} h_{K_1}w_2^{-1}= \xi_0\beta\xi_0^{-1}
\end{equation}
where $w_i$ and $\beta$ are as above. Denote $A=h_{-K_2}a_{\log (p/q)} h_{K_1}$. 
From formula \eqref{eq:pr''}, since the matrix $A$ has eigenvalues
$\sqrt{p/q}$ and $\sqrt{q/p}$,  we derive that
$$| \text{\rm trace}(\beta)-  \frac{p+q}{\sqrt{pq} }| = | \text{\rm trace}(\beta)- \text{\rm trace} \big(a_{\log(p/q)} \big)|\leq T^{-1/2}.$$
Let $\beta = \phi(\alpha) / N(\alpha)^{1/2}$ with $\alpha \in A^+(\Q)$. 
Since $\text{\rm trace}({\beta})=2x_0N(\alpha)^{-1/2}$ the above formula reads   $|\frac{2x_0}{N(\alpha)^{1/2}}- \frac{p+q}{(pq)^{1/2}}|\leq T^{-\bar\kappa}$ with $\bar{\kappa}\geq \frac{23\delta}{25A_2}$. In particular, it also implies that 
$$|\frac{4x_0^2}{N(\alpha)}- \frac{{(p+q)^2}}{pq}|\leq T^{-1/4}\,.$$ 
This however, by the bound on $p,q\leq T^{\delta^2}$ and the bound on the denominator of $\frac{4x_0^2}{N(\alpha)}$ (see Lemma \ref{lem:per'})  implies that $\frac{x_0}{N(\alpha)^{1/2}}=\frac{{p+q}}{2(pq)^{1/2}}$. Note that 
$$
\sqrt{p/q}\cdot \sqrt{q/p}=1=\Big(\Big(\frac{x_0}{N(\alpha)^{1/2}}\Big)^2-a\Big(\frac{x_1}{N(\alpha)^{1/2}}\Big)^2-b\Big(\frac{x_2}{N(\alpha)^{1/2}}\Big)^2+ab\Big(\frac{x_3}{N(\alpha)^{1/2}}\Big)^2\Big).
$$
Using the formula {$\frac{x_0}{N(\alpha)^{1/2}}=\frac{p+q}{2(pq)^{1/2}} =  \frac{1}{2}(\sqrt{p/q} +\sqrt{q/p})$, we get that 
$$
\Big(\frac{x_0}{N(\alpha)^{1/2}}\Big)^2 - \sqrt{p/q}\cdot \sqrt{q/p} = \frac{1}{4} (\sqrt{p/q} +\sqrt{q/p})^2  - \sqrt{p/q}\cdot \sqrt{q/p}
= \frac{1}{4} (\sqrt{p/q} -\sqrt{q/p})^2
$$
hence
$$
 \Big( \frac{1}{2}   N(\alpha)^{1/2} (\sqrt{p/q}-\sqrt{q/p}) \Big)^2-ax_1^2-bx_2^2+ab x_3^2=0
$$
}
Note that {since $2x_0/ N(\alpha)^{1/2} = (p+q)/\sqrt{pq}$, we have 
$$N(\alpha)^{1/2} (\sqrt{p/q}-\sqrt{q/p})  = (p-q) N(\alpha)^{1/2} / \sqrt{pq} =   2 x_0 (p-q)/(p+q) \in \Q$$
}
which, since $A(\Q)$ is a division algebra, implies that 
$$
N(\alpha)^{1/2}(\sqrt{p/q}-\sqrt{q/p}) = x_1 = x_2 = x_3 = 0\,,
$$
in particular $p=q$. A contradiction.


\end{proof}

\section{SPNT for horocycle flows (the modular case) - proof of Theorem~\ref{th:glowne'}}

In this section we will use the notation $A_i$ to denote positive constant that depend on $X$ only. Let $x\in X$ be generic for the Haar measure $\mu_X$. We will use Theorem \ref{LMWdrugie} for $R\sim T^\delta$, for some small $\delta>0$ and for sufficiently large $T\geq T_x$. Recall that analogously to the  co-compact case that we are interested in the behavior of the point $(a_{-\log p}x, a_{-\log q }x)$, where $p,q\leq T^{\delta^2}$ and $x\in X$ is generic for Haar measure\footnote{If $x$ is an  $(h_t)$-periodic point then the SPNT for $x$ follows from Vinogradov's theorem (if the period is irrational) and semiprimes in arithmetic progressions (if the period is rational).}.
 Notice that if $x\in X$ is such that E1. holds for $(a_{-\log p}x, a_{-\log q }x)$ and all $p\neq q$, $p,q\leq T^{\delta^2}$ then analogously to the cocompact case we show that SPNT holds, using the semiprime criterion from Section \ref{sec:conspnt}. The analysis below deals with points $x\in X$ (generic for Haar) such that one can find $p\neq q$ with $p,q\leq T^{\delta^2}$ for which $(a_{-\log p}x, a_{-\log q }x)$ does not satisfy E1. The following results describe the behavior of $x\in X$ for which $(a_{-\log p}x, a_{-\log q }x)$ satisfies E2. or E3. \\
 
The proposition below holds for any sub-polynomial function $\psi: \R^+ \to \R^+$, that is any
 function such that for all $\epsilon >0$ we have
 $$
 \lim_{T\to +\infty}  \frac{\psi(T)}{T^{\epsilon} }=0\,,
 $$
 in particular for $\psi(T)=\log T$ and also $\psi(T)=\log \log T$.
 \begin{proposition}\label{prop:E2}  Let $p,q$ be two different primes with $\psi(T)^{1/\eta}<p\neq q<T^{\delta^2}$ and $\frac{1}{5}\leq p/q\leq 5$. Assume $x\in X$ is such that $(a_{-\log p}x, a_{-\log q}x)$ satisfies E2 with $R=T^\delta$. There exists $\eta_0>0$ such that for all $\eta <\eta_0$ at least one of the following holds. 
 \begin{enumerate}
 \item[$E2_1$] there exist a Sobolev norm $S_d$ and a constant $C_d>0$ such that for every $\varphi\in C_c^{\infty}(X\times X)$ with $\mu_{X\times X}(\varphi)=0$, we have
$$
\Big|\frac1T\int_0^T\varphi\circ (a_{\log p}\times a_{\log q})\circ (h_r\times h_r)(a_{-\log p}x,a_{-\log q}x)\,dr\Big|\ll  S_d(\varphi) \frac{C_d}{\psi(T)^{101}}.$$

\item[$E2_2$]  
  there exists a periodic point $w\in X$ with $per(w)<T^{A_3\delta}$  and $t_0\in [0,T]$ such that 
 $$
 d_X( h_{t_0} a_{-\log p}x,w)\leq T^{-1+A_4\delta}.
 $$
 \end{enumerate}
 \end{proposition}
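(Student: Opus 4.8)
The plan is to push the algebraic structure of the periodic $H$-orbit through Ratner's theory, reduce to quantitative equidistribution of a genuine horocycle orbit on a congruence cover of $X$, and then feed in the uniform (in the lattice) equidistribution estimates of the appendix.

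First I would unwind E2. Since $(a_{-\log p}x,a_{-\log q}x)$ satisfies E2 with $R=T^{\delta}$ and $p,q\leq T^{\delta^2}$, Corollary~\ref{cor:pq} (hence Proposition~\ref{l:modular1}) applies: on each of the $\leq T^{\delta}$ intervals $I_i$ the orbit $(h_s\times h_s)(a_{-\log p}x,a_{-\log q}x)$ stays within $R^{11A_1}T^{-\delta/A_2+4\delta^2}$ of $(h_{K_i},h_{K_i'})(h_s\times h_s)(u,v)$ for a fixed $(u,v)\in H.(x_0,y_0)$ with $|K_i|,|K_i'|\leq T^{3\delta}$, and, by Corollary~\ref{cor:uni}, for every $t_0\leq T$ there is $K(t_0)$ with $|K(t_0)|\leq T^{3\delta}$ and $d_X(h_{t_0}a_{-\log p}x,\,h_{t_0+K(t_0)}u)<T^{-1+3\delta}$. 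By Lemma~\ref{per:stab}, $H.(x_0,y_0)=\{(\xi\Gamma,\xi\beta\Gamma):\xi\in G\}$ for some $\beta\in COMM(SL(2,\Z))$; writing $\Gamma_{p,q}=\beta\Gamma\beta^{-1}\cap\Gamma$, the map $g\Gamma_{p,q}\mapsto(g\Gamma,g\beta\Gamma)$ is an algebraic conjugacy from $(h_t,\,G/\Gamma_{p,q})$ onto $(h_t\times h_t,\,H.(x_0,y_0))$ (Corollary~3 in~\cite{Ratner}), under which projection to the first coordinate corresponds to the natural $1$-Lipschitz, $h_t$-equivariant covering $\pi\colon G/\Gamma_{p,q}\to X$. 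By Lemma~\ref{lem:alpha}, $\Gamma_{p,q}$ is a congruence lattice — hence by Selberg~\cite{Selberg} has a uniform spectral gap — with covolume $V=ind(\beta)\,vol(G/\Gamma)$ satisfying $\min(p,q)^{1/3}\ll V\ll R^{A_1}=T^{\delta A_1}$; in particular, for $\eta$ small, $\psi(T)^{1/(3\eta)}\leq V\leq T^{\delta A_1}$, so $V=V(T)\to\infty$ yet stays sub-polynomial in $T$.

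Writing $(u,v)=(g_0\Gamma,\,g_0\beta\Gamma)$, the conjugacy identifies $(u,v)$ with $g_0\Gamma_{p,q}$ and $u$ with $\pi(g_0\Gamma_{p,q})$. I would now invoke the appendix's quantitative equidistribution dichotomy for the horocycle flow on $G/\Gamma_{p,q}$ — the Str\"ombergsson~\cite{Strom13}/Flaminio-Forni~\cite{FlaFor} estimates together with a uniform version of Streck's theorem~\cite{Streck}, made effective in the lattice through the bounds on $V$ and the uniform spectral gap — which asserts that either \emph{(a)}~for every zero-mean $f\in C_c^{\infty}(G/\Gamma_{p,q})$ the time-$T$ horocycle average along $\{h_sg_0\Gamma_{p,q}\}$ is bounded by $V^{O(1)}T^{-c_0}S(f)$ for a fixed $c_0>0$, or \emph{(b)}~there is $s_0\in[0,T]$ with $h_{s_0}g_0\Gamma_{p,q}$ within $T^{-1+c_1\delta}$ of a periodic horocycle orbit of period $\leq T^{c_1\delta}$. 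In case (a) I would deduce $E2_1$: transferring through the conjugacy, the relevant test function on $G/\Gamma_{p,q}$ is $\varphi\circ(a_{\log p}\times a_{\log q})$ carried across the conjugacy and composed with the centralizer shifts $(h_{K_i},h_{K_i'})$, operations that inflate the Sobolev norm by at most $V^{O(1)}\max(p,q)^{O(1)}T^{O(\delta)}\leq T^{O(\delta)}$ (recall $p,q\leq T^{\delta^2}$). Splitting $[0,T]$ into the $\leq T^{\delta}$ pieces $I_i$, summing, and absorbing the sliding error $R^{11A_1}T^{-\delta/A_2+4\delta^2}$ and the negligible exceptional-time contributions (all $\leq T^{-c_2}$ for $\delta$ small), the main term is $\leq V^{O(1)}T^{O(\delta)-c_0}S_d(\varphi)\leq T^{-c_0/2}S_d(\varphi)$ since $V\leq T^{\delta A_1}$; as $\psi$ is sub-polynomial this is $\ll S_d(\varphi)\,C_d/\psi(T)^{101}$.

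In case (b) I would deduce $E2_2$: the covering $\pi$, being $1$-Lipschitz and $h_t$-equivariant, sends the periodic orbit of (b) to a periodic orbit $w'\subset X$ of period $\leq T^{c_1\delta}$ and puts $h_{s_0}u$ within $T^{-1+c_1\delta}$ of $w'$; choosing $t_0\in[0,T]$ with $t_0+K(t_0)=s_0$ (possible since $s\mapsto s+K(s)$ is monotone with $|K|\leq T^{3\delta}$) and using Corollary~\ref{cor:uni} gives $d_X(h_{t_0}a_{-\log p}x,\,h_{s_0}u)<T^{-1+3\delta}$, hence $d_X(h_{t_0}a_{-\log p}x,w)<T^{-1+A_4\delta}$ for a suitable $w\in w'$ with $per(w)=per(w')<T^{A_3\delta}$, on taking $A_3,A_4$ large. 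The one genuinely hard ingredient is the dichotomy (a)/(b) with constants uniform in the lattice $\Gamma_{p,q}$ — which itself varies with $T$: this uniformity (the precise covolume dependence, the uniform spectral gap coming from congruence, and the Streck-type localization of non-equidistributing orbits near short periodic orbits) is exactly the content of the appendix, whereas transferring the estimate back through Ratner's conjugacy and the centralizer sliding, and bounding Sobolev-norm inflation, are delicate but essentially routine. The smallness of $\eta$ enters only to make $p,q$ (hence $V$) large enough both for Lemma~\ref{lem:alpha} to apply and for the error in (a) to genuinely beat $\psi(T)^{-101}$.
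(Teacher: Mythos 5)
Your overall architecture matches the paper's: Ratner's conjugacy identifying $H.(x_0,y_0)$ with $G/\Gamma_{p,q}$, the congruence/volume/injectivity-radius data from Lemma~\ref{lem:alpha} plus Selberg's gap, the appendix's lattice-uniform deviation estimates, a Streck-type statement (Lemma~\ref{lem:per}) producing a short periodic orbit in the cuspidal regime, and Corollary~\ref{cor:uni} to carry the conclusion back to $h_{t_0}a_{-\log p}x$ for $E2_2$. (Your way of matching $t_0$ with $s_0$ by inverting $s\mapsto s+K(s)$ is unnecessary; the paper simply replaces the periodic point by $h_{-K(t_0)}x'$, which has the same period.)

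There is, however, a genuine gap in your case~(a). The appendix's estimate (Theorem~\ref{thm:SL2_equi}) controls equidistribution of the horocycle orbit \emph{within} $H.(x_0,y_0)\cong G/\Gamma_{p,q}$, i.e.\ towards the normalized volume $\mathrm{vol}_{x_0,y_0}$, and it applies to functions after subtracting their mean \emph{on that closed orbit}. The hypothesis $\mu_{X\times X}(\varphi)=0$ does not make the transferred function $\varphi\circ(a_{\log p}\times a_{\log q})\circ(h_{K_i}\times h_{K_i'})$ have zero mean with respect to $\mathrm{vol}_{x_0,y_0}$, so your bound $V^{O(1)}T^{O(\delta)-c_0}S_d(\varphi)$ omits the residual term $\int_{H.(x_0,y_0)}\varphi_i\,d\mathrm{vol}_{x_0,y_0}$, which can be as large as a constant. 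Closing this requires the effective equidistribution of the closed $H$-orbit itself inside $X\times X$ (Theorem~\ref{thm:EMMV}, after using diagonal invariance to reduce to $\varphi\circ(\mathrm{id}\times a_{\log(p/q)})$ so that $1/5\leq p/q\leq 5$ controls the Sobolev norm); its error is $\mathrm{vol}(H.(x_0,y_0))^{-\sigma}$, and since the only lower bound available is $\mathrm{vol}\geq\min(p,q)^{1/3}\geq\psi(T)^{1/(3\eta)}$, the mean term decays merely like $\psi(T)^{-\sigma/(3\eta)}$, not polynomially in $T$. This is precisely why $E2_1$ is stated with the bound $C_d\,\psi(T)^{-101}$ and why $\eta$ must be small (to make $\sigma/(3\eta)>101$), not, as you suggest, to make a polynomial-in-$T$ error beat $\psi(T)^{-101}$; your claimed final bound $T^{-c_0/2}S_d(\varphi)$ is not attainable by this route.
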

 
 \begin{proposition}\label{prop:E3} Assume $x\in X$   is such that $(a_{-\log p}x, a_{-\log q}x)$ satisfies E3 for some $p,q\leq T^{\delta^2}$. Then there exists a periodic point $w\in X$ with $per(w)<T^{A_4\delta}$ and $t_0\in [0,T]$ and such that 
 $$
 d_X(h_{t_0}x,w)\leq T^{-1+A_4\delta}.
 $$
 \end{proposition}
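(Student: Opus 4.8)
The plan is to use only the $r=0$ instance of E3, together with the geometry of the thin part of $X=SL(2,\R)/SL(2,\Z)$, identified with the space of unimodular lattices in $\R^2$: there $\inj(g\Gamma)\asymp\min(\lambda_1(g\Z^2)^2,1)$ with absolute constants (where $\lambda_1$ is the length of a shortest nonzero vector), and $\inj_{X\times X}(u,v)=\min(\inj_X u,\inj_X v)$. Since $p,q\le T^{\delta^2}$ one has $|\log(p/q)|\le\delta^2\log T$ and $a_{-t}a_{-\log p}x=a_{-(t+\log p)}x$; so putting $r=0$ in E3 and absorbing the factors $p,q\le T^{\delta^2}$ into $R^{2A_1}=T^{2A_1\delta}$, one checks that the closed set
\[
\mathcal T:=\{\,u\ge 0:\ \inj(a_{-u}x)\le R^{2A_1}e^{-u}\,\}
\]
satisfies $\mathcal T\cup(\mathcal T-\log(q/p))\supseteq[\log R+\log q,\ \log T+\log p]$, an interval of length $\ge\tfrac12\log T$. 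A short pigeonhole then gives $s^\ast<t^\ast$ in $\mathcal T$ with $t^\ast\ge(1-\delta)\log T$ and $s^\ast$ above a threshold $\asymp\delta\log T$ fixed in the next step.

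The key step I would establish next is a rigidity statement for the short direction: if $s<t$ lie in $\mathcal T$ and $s>(2A_1+1)\delta\log T$, then shortest vectors of $a_{-s}(x\Z^2)$ and $a_{-t}(x\Z^2)$ are multiples of the same primitive vector of $x\Z^2$. Indeed, pick primitive $v_s,v_t\in x\Z^2$ realising these minima, so $\|a_{-s}v_s\|\le cR^{A_1}e^{-s/2}$ and $\|a_{-t}v_t\|\le cR^{A_1}e^{-t/2}$ by $\inj\asymp\lambda_1^2$; if $v_s,v_t$ were independent then $1\le|\det(v_s,v_t)|=|\det(a_{-s}v_s,a_{-s}v_t)|\le\|a_{-s}v_s\|\,\|a_{-s}v_t\|$ forces $\|a_{-s}v_t\|\ge e^{s/2}/(cR^{A_1})$, while comparing coordinates in $a_{-t}v_t=a_{-(t-s)}(a_{-s}v_t)$ (with $t>s$) gives $\|a_{-s}v_t\|\le 2cR^{A_1}e^{-s/2}$, hence $e^{s}\le 2c^2R^{2A_1}$ — impossible for $s$ above the threshold and $T$ large. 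Applying this to $s^\ast<t^\ast$ yields a fixed primitive $v=(v_1,v_2)\in x\Z^2$ with $\|a_{-u}v\|\le cR^{A_1}e^{-u/2}$ for $u\in\{s^\ast,t^\ast\}$. At $u=t^\ast$ this reads $|v_1|\le cT^{A_1\delta}$ and $|v_2|\le cR^{A_1}e^{-t^\ast}\le cT^{(A_1+1)\delta-1}$, and since $\|v\|\ge\lambda_1(x\Z^2)=:\ell_x>0$ is a fixed positive constant while $|v_2|\to0$, also $|v_1|\ge\ell_x/2$ once $T\ge T_x$: thus $x\Z^2$ carries a primitive vector extremely close to horizontal.

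To finish, let $\theta$ be the small rotation angle sending $v$ to a horizontal vector, so $|\theta|\asymp|v_2|/|v_1|\le(2c/\ell_x)T^{(A_1+1)\delta-1}$, let $k_\theta$ be rotation by $\theta$, and put $w:=k_\theta x\,\Gamma$. Then $k_\theta(x\Z^2)$ contains the horizontal vector $k_\theta v$, so $w$ is $h_t$-periodic with period equal to the square of the length of a shortest horizontal vector of $k_\theta(x\Z^2)$, hence $\le\|k_\theta v\|^2=\|v\|^2\le 4c^2T^{2A_1\delta}<T^{A_4\delta}$ once $A_4>2A_1+1$; and $d_X(x,w)=d_X(x\Gamma,k_\theta x\Gamma)\le C(x)\,|\theta|\le T^{-1+A_4\delta}$ for $A_4>A_1+2$ and $T\ge T_x$, the $x$-dependent quantities ($\ell_x$, $C(x)=|\Ad(x^{-1})|$ applied to the rotation generator, and the thin-part comparison constants) being absorbed into $T^{\delta}$. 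Taking $t_0=0$ then completes the argument, with e.g.\ $A_4:=2A_1+2$ and $\delta$ small in terms of $A_1$.

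The step I expect to be the main obstacle is the first one: because E3 only asserts that \emph{one} of the two product coordinates is thin at each $(r,t)$, constructing $\mathcal T$ forces the covering/pigeonhole bookkeeping, and one must land a thin time $t^\ast$ close enough to $\log T$ that $e^{-t^\ast}\le T^{-1+O(\delta)}$ — this is precisely what produces the decisive $T^{-1}$ in the final distance bound — while simultaneously keeping a thin time $s^\ast$ above the threshold the rigidity lemma needs, and absorbing all $x$-dependent constants into $T^{O(\delta)}$; this non-uniformity in $x$ is exactly why the hypothesis $T\ge T_x$ for the fixed point $x$ is indispensable.
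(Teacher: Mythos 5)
Your argument takes a genuinely different route from the paper's, which is a one-liner: E3 at $r=0$, $t=\log T$ gives that the injectivity radius of $a_{-\log T}$ applied to (a bounded geodesic shift of) $x$ is at most $R^{O(A_1)}e^{-\log T}$, and the paper then quotes Lemma \ref{lem:per} (Streck's Lemma 1.3 from \cite{Stre}, adapted to general lattices), which directly produces, for every $t_0\in[0,T]$ outside an exceptional set of length $T^{1-A_4\delta}$, a periodic point of period $\leq C\,T\exp(-d_\Gamma(a_{-\log T}x))\leq T^{A_4\delta}$ within $T^{-1+A_4\delta}$ of $h_{t_0}x$, uniformly in $x$. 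You instead give a self-contained lattice-geometry proof: from E3 at $r=0$ you extract a time $t^*\geq(1-\delta)\log T$ with $a_{-t^*}x$ thin (your $\mathcal T$-bookkeeping is the right way to handle the ambiguity of which factor is thin; the displayed inclusion for $\mathcal T\cup(\mathcal T-\log(q/p))$ has the shift and endpoints slightly off, but the fact you actually use — for each $t\in[\log R,\log T]$ one of $t+\log p$, $t+\log q$ lies in $\mathcal T$ — is correct), pull the short vector back to a primitive nearly horizontal $v\in x\Z^2$ with $|v_1|\leq cT^{A_1\delta}$, $|v_2|\leq cT^{(A_1+1)\delta-1}$, and rotate $x$ to make $v$ horizontal. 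This is sound for a fixed $x$, and more elementary than citing \cite{Stre}; note also that your rigidity step (same short vector at $s^*$ and $t^*$) is never used afterwards — the bound at the single time $t^*$ already yields everything — so it can simply be deleted.

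The genuine weakness is the last step, where you insist on $t_0=0$ and bound the distance by $|\theta|\asymp|v_2|/\|v\|\leq (2c/\ell_x)\,T^{(A_1+1)\delta-1}$ with $\ell_x=\lambda_1(x\Z^2)$: this makes the threshold $T_x$ (and effectively the constants) depend on $x$, so you prove only a non-uniform version of Proposition \ref{prop:E3}, and your closing claim that the hypothesis $T\geq T_x$ is indispensable is not correct. The uniformity in $x$ is exactly what the freedom in $t_0\in[0,T]$ buys, and you give it away: if $\|v\|$ is small (i.e. $x$ is high in the cusp), then for a typical $t_0\in[0,T]$ the vector $h_{t_0}v=(v_1+t_0v_2,\,v_2)$ has first coordinate of size about $t_0|v_2|$, so the rotation needed to make it horizontal has angle about $1/t_0\asymp T^{-1}$ and the resulting period is about $\|h_{t_0}v\|^2\leq T^{O(A_1\delta)}$, independently of $\lambda_1(x\Z^2)$ — this is precisely what Lemma \ref{lem:per} packages, exceptional set of $t_0$'s included. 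The uniformity is not cosmetic: Proposition \ref{prop:E3} is later applied, in the proof of Proposition \ref{prop:mult}, to points $\bar w$ that vary with the scale ($r$ and $M$), where a statement valid only for a fixed point and $T\geq T_x$ would not suffice without an additional argument controlling $\lambda_1(\bar w\Z^2)$. So either rerun your construction at a generic $t_0$ as just described (which removes the $x$-dependence and recovers the full statement), or reduce to Lemma \ref{lem:per} as the paper does.
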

 
 Finally we have the following result describing points which are closed to a periodic point:
 \begin{theorem}\label{prop:peri} Assume $x\in X$ is such that  there exists a $t_0\in [0,T^{1+\delta^2}]$ and a periodic point $w\in X$ with $per(w)<T^{A_4\delta}$  such that $d_X(h_{t_0}x,w)\leq T^{-1+A_4\delta}$. Then for $T\geq T_x$,
 $$
 |\sum_{p_1\cdot p_2\leq T} f(h_{p_1\cdot p_2}x)- \pi_2(T) \int_X f d\mu_X|=o(\pi_2(T)).
 $$
 \end{theorem}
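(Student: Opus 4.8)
The statement asserts that if the horocycle orbit of $x$ comes polynomially close (at scale $T^{-1+A_4\delta}$) to a periodic orbit $\{h_tw\}$ of period $p = \operatorname{per}(w) < T^{A_4\delta}$ at some time $t_0 \in [0,T^{1+\delta^2}]$, then the semiprime average of $f$ along the orbit of $x$ is asymptotic to $\pi_2(T)\int f\,d\mu_X$ as $T\to\infty$. The strategy is to first replace $x$ by the nearby periodic point and reduce to a one-dimensional problem on the closed horocycle $\{h_tw\}\cong \R/p\Z$, then split into a major-arc case (when $1/p$, suitably interpreted, is close to a rational with small denominator) and a minor-arc case, handled respectively by the generalized Siegel–Walfisz theorem for semiprimes (Proposition \ref{thm:Maks2}) and by the exponential-sum estimate (Proposition \ref{thm:maks1}); in both regimes one finally invokes the fact that \emph{the period must tend to infinity with $T$} (since $x$ is generic, not periodic) together with Sarnak's equidistribution of long closed horocycles \cite{Sarn} to pass from "equidistribution within the closed horocycle" to "equidistribution with respect to $\mu_X$".

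First I would set up the reduction. Since $h_{p_1p_2}x$ for $p_1p_2 \le T$ stays within the relevant window and the approximation $d_X(h_{t_0}x,w)\le T^{-1+A_4\delta}$ persists under the horocycle flow for times $\ll T$ up to polynomially small error (because $h_s$ expands distances only polynomially, by Lemma \ref{lem:zz}-type control), one gets $d_X(h_{t}x, h_{t - t_0 + m(t)}w) \ll T^{-1 + O(\delta)}$ on a large subset of $[0,T]$, where $m$ is the explicit sliding function from Lemma \ref{l:modular3} which is approximable by polynomials on large pieces of $[0,T]$. Hence, using uniform continuity of $f$, it suffices to estimate $\frac{1}{\pi_2(T)}\sum_{p_1p_2\le T} f(h_{p_1p_2 - t_0 + m(p_1p_2)}w)$, and since $\{h_tw\}$ is the closed horocycle of length $p$, this becomes an average of $\tilde f$ (a function on $\R/p\Z$, essentially $f$ restricted to the closed horocycle) evaluated at the points $\{(p_1p_2 - t_0 + m(p_1p_2)) \bmod p\}$, i.e. a weighted equidistribution problem for the sequence $\alpha(p_1p_2)$ with $\alpha = 1/p$ (up to the lower-order perturbation $m$).

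Next I would dichotomize on $\alpha$. In the \textbf{major-arc case}, where $\alpha = 1/p$ is close to $a/b$ with $b$ small — in particular when $p$ is an integer, $b=p$ — the orbit $\{(p_1p_2)\alpha \bmod 1\}$ does not equidistribute on its own; instead one must separate $p_1p_2$ into residue classes mod $b$. Here the generalized Siegel–Walfisz theorem for semiprimes with a multiplicative twist (Proposition \ref{thm:Maks2}) applies, because $b < T^{A_3\delta}$ lies in the admissible moduli range once $\delta$ is small and $T$ is large; this yields the correct count of semiprimes in each residue class. Combined with the known equidistribution of the piece of the closed horocycle over a full set of residues (via Strömbergsson \cite{Strom04}, \cite{Strom13}), one concludes $\tilde f$-averages converge to $\frac1p\int_{\R/p\Z}\tilde f$, which is the integral of $f$ over the closed horocycle of length $p$. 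In the \textbf{minor-arc case}, where $\alpha$ is far from rationals of small denominator, Proposition \ref{thm:maks1} (the $A$–$B$ process plus Vinogradov's method, applied to $m(p_1p_2)\alpha$ using the polynomial approximability of $m$) gives directly that $\{m(p_1p_2)\alpha \bmod 1\}$ equidistributes, so again the $\tilde f$-average converges to the average of $f$ over the closed horocycle.

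Finally, in both cases the conclusion so far is that the semiprime average of $f$ along the orbit of $x$ converges to $\int f\,d\mu_{w}$, the Haar measure on the closed horocycle $\overline{\{h_tw\}}$ of length $p = p(T)$. The last and decisive point: if $x$ were close to a \emph{fixed} periodic orbit as $T\to\infty$, then $x$ itself would lie on (the closure of) that orbit and would not be generic for $\mu_X$ — contradicting the hypothesis. Therefore $p(T)\to\infty$, and by the qualitative form of Sarnak's theorem \cite{Sarn} the normalized Haar measures on closed horocycles of length $p(T)\to\infty$ converge weakly to $\mu_X$, so $\int f\,d\mu_{w(T)} \to \int f\,d\mu_X$. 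Assembling the pieces with an $\epsilon$-of-room argument (the exceptional times where the approximation or polynomial-approximability of $m$ fails contribute $o(\pi_2(T))$) gives the claimed asymptotic.

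\medskip

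\noindent\textbf{Main obstacle.} I expect the hardest part to be the uniformity and the bookkeeping in the major-arc case: one needs the generalized Siegel–Walfisz theorem (Proposition \ref{thm:Maks2}) to hold with the multiplicative twist \emph{uniformly} over moduli up to $T^{A_3\delta}$ and simultaneously control the equidistribution of a length-$p$ piece of the closed horocycle with $p$ itself growing like a power of $T$ — so one is fighting on two fronts (arithmetic uniformity in the modulus, and dynamical uniformity in the lattice/horocycle length) at once. The careful matching of the admissible ranges — ensuring $T^{A_3\delta}$ stays within the Siegel–Walfisz window while $p(T)$ is still large enough for Strömbergsson-type equidistribution — and the handling of the case where the period is close to, but not exactly, a rational with small denominator, is where the real technical weight lies. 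The minor-arc input (Proposition \ref{thm:maks1}) and the final appeal to \cite{Sarn} are comparatively clean once the reduction to the circle is in place.
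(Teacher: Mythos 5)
Your overall architecture (reduce via Lemma \ref{l:modular3} to a circle problem on the closed horocycle, split into major and minor arcs, use Proposition \ref{thm:Maks2} resp.\ Proposition \ref{thm:maks1}, and finish by arguing that genericity forces the approximating periods to grow and invoking Sarnak's theorem) is the paper's, and your minor-arc step and endgame are essentially correct. But there is a genuine gap in your major-arc case. Proposition \ref{thm:Maks2} does \emph{not} say that semiprimes are equidistributed in residue classes mod $r$: its conclusion carries the term $\chi(a)F_{J,r,T}/\varphi(r)$ with $|F_{J,r,T}|\le \pi_2(J)$, i.e.\ a possible Siegel-zero contribution of the \emph{same order as the main term}, twisted by a real character $\chi \bmod r$. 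After summing against $f$ at the sampled points of the periodic orbit you are therefore left not only with $\sum_{v<r}1_{(v,r)=1}f(h_{vR/r}w'_s)$ but also with $\sum_{v<r}\chi(v)f(h_{vR/r}w'_s)$, and ``equidistribution of the piece of the closed horocycle'' gives no cancellation at all in the character-twisted sum: a priori the values $f(h_{vR/r}w)$ could correlate with $\chi(v)$. This is exactly where the paper has to work hardest: it proves Proposition \ref{prop:mult}, a bound $\sum_{n\le r}\nu(n)f(h_{nR/r}w)\ll r(\log\log R)^{-2}$ for an arbitrary bounded multiplicative function $\nu$, by running the quantitative DKBSZ-type criterion (the second half of Proposition \ref{prop:typ2}), which in turn requires applying Theorem \ref{LMWdrugie} again at the scale of the period and excluding the degenerate alternatives via Strömbergsson's equidistribution of long pieces of closed horocycles \cite{Strom04}. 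None of that is replaceable by the qualitative equidistribution statement you invoke.

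A second, related omission: even the untwisted (principal-character) sum is not a consequence of equidistribution of the continuous closed horocycle when $r$ is comparable to the period $R$, since then the sample points $vR/r$ with $(v,r)=1$ form a discrete, possibly sparse subset of the orbit (e.g.\ if $r=R\in\Z$ they are just the integer points $h_vw$ with a coprimality weight). The paper splits accordingly: for $r\le R^{20}$ it again uses Proposition \ref{prop:mult} with $\nu=1_{(\cdot,r)=1}$, while for $r\ge R^{20}$ P\'olya--Vinogradov (Lemma \ref{lem:taPV}) together with the density of the sample points and Sarnak's theorem suffices. So your reduction to the circle and your final genericity/Sarnak argument match the paper, but the heart of the major-arc case, namely controlling multiplicatively twisted sums of $f$ along the periodic orbit, is missing from your proposal.
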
 
 
 Let us  first show how the above propositions imply Theorem \ref{th:glowne'}. We will then prove the propositions in separate subsections. 
 \begin{proof}[Proof of Theorem \ref{th:glowne'}] Take  $x\in X$ generic for the Haar measure.  If for all $p,q\leq T^{\delta^2}$ at least one of the following holds: (c1).  $(a_{-\log p}x,a_{-\log q}x)$ satisfies E1. or (c2). $(a_{-\log p}x,a_{-\log q}x)$ satisfies E2 and also $E2_1$, then analogously to the co-compact case we use the criterion in Section \ref{sec:conspnt}. Therefore we are left with the case in which there are $p,q\leq T^{\delta^2}$ such that $(a_{-\log p}x,a_{-\log q}x)$ satisfies $E2_2$ or $E3$. If $E2_2$ holds then 
 $$
   d_X( h_{pt_0} x,a_{\log p}w)\ll p^2 d_X( a_{-\log p}h_{pt_0}x,w)=p^2 d_X(h_{t_0}a_{-\log p}x,w)\leq T^{-1+A_4\delta+2\delta^2}.
 $$
 It remains to notice that $pt_0\leq T^{1+\delta^2}$ and $a_{\log p} w$ is a periodic point of period $\leq T^{A_3\delta+\delta^2}$. We can then apply Theorem \ref{prop:peri}. If E3 holds then by Proposition \ref{prop:E3} we can apply Theorem \ref{prop:peri} directly to get that Theorem \ref{th:glowne'} holds.  
  \end{proof}

 So it remains to prove the above results. We will do it in separate subsections below. 
 
 \subsection{Proposition \ref{prop:E2}} 
Let $\Gamma< SL(2, \R)$ be any lattice and let $M_\Gamma = M_{\rm thin} \cup M_{\rm thick}$ denote Margulis thin-thick decomposition of $M_\Gamma = SO(2, \R) \backslash SL(2, \R) / \Gamma  $. We recall that the Margulis decomposition is
 defined as follows. Let $\epsilon_0 >0$ be any fixed number strictly less than the Margulis constant of the Poincar\'e plane
 (which is a universal number). Let $\rho_\Gamma: M_\Gamma \to \R^+$ denote the injectivity radius function. Then
 $$
 M_{\rm thin} :=\{x \in M_\Gamma \vert  \rho(x) <\epsilon_0\} \quad \text{ and } \quad M_{\rm thick} :=\{x \in M_\Gamma \vert  \rho(x) \geq \epsilon_0\} \,.
 $$
 Since $M_\Gamma$ has finite volume, the thin part $M_{\rm thin}$ is a union of cusps (unbounded components)  and  {\it Margulis
 tubes} (boundaries of closed geodesics of length less than $\epsilon_0$. 
 
 \begin{definition}  The cuspidal part $M_{\rm cusp}$ of $M_\Gamma$ is defined as the subset of the thin part $M_{\rm thin}$
 which is a finite union of cusps. The compact part $M_{\rm cpt}$ of $M_\Gamma$ is defined as the union of subset of the thick 
 part $M_{\rm thick}$ with all Margulis tubes. By definition we have a decomposition
 $$
 M_\Gamma = M_{\rm cpt}  \cup  M_{\rm cusp} \,.
 $$
  \end{definition} 
 \noindent Let $\mu_\Gamma>0$ denote the bottom of the spectrum of the (positive) Laplace operator for the hyperbolic metric on $M$ on the orthogonal complement of constant functions and let $\nu_\Gamma\in (0,1)$ denote the number
 $$
 \nu_{\Gamma} := \text{Re}  \sqrt {1-4 \mu_\Gamma} \,.
 $$
 Let $\text{\rm inj}_\Gamma$ denote the injectivity radius of the compact  part $M_{\rm cpt}$
 and let $d_\Gamma: S_\Gamma \to \R^+$ denote the hyperbolic distance function on  $S_\Gamma= 
 SL(2, \R)/\Gamma$ from the closed subset $S_\Gamma \vert M_{\rm cpt}$.  
 We then have (see \cite{FlaFor}, Theorem 5.14, and \cite{Strom13}, Theorem 1):
 
 \begin{theorem}  
 \label{thm:SL2_equi}
 For every $s\geq 4$ there exists a constant  $C_s>0$ such that for every function  $f \in W^s(S_\Gamma)$ 
 and for all $(x,T) \in S_\Gamma \times [1, +\infty)$, 
 $$
\Big \vert \frac{1}{T} \int_0^T   f \circ h_t (x) dt  -  \int_{S_\Gamma}  f d\text{\rm vol}_\Gamma \Big\vert  \leq  
C_s \Vert f  \Vert_{W^s(S_\Gamma)}   \max\{\text{\rm inj}_\Gamma^{-1}, e^{ \frac{(1-\nu_\Gamma)}{2}d_\Gamma(a_{\log T} (x))}\}    T^{\frac{-(1- \nu_\Gamma)}{2}} \,.
 $$
 \end{theorem}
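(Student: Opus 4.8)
The plan is to recover this statement by combining the spectral (representation–theoretic) method for horocycle ergodic averages of Flaminio--Forni \cite{FlaFor} with the explicit analysis of the cusps due to Str\"ombergsson \cite{Strom13}, keeping careful track of all implied constants so that they depend only on $s$, on the spectral gap $\mu_\Gamma$ (equivalently on $\nu_\Gamma$), and on the injectivity radius $\mathrm{inj}_\Gamma$ of the compact core. After subtracting $\int_{S_\Gamma} f\,d\mathrm{vol}_\Gamma$ we may assume $\int_{S_\Gamma} f\,d\mathrm{vol}_\Gamma = 0$.

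First I would decompose $L^2_0(S_\Gamma)$ under the right regular representation of $SL(2,\R)$ into its irreducible components: complementary series pieces, whose parameters $\nu_\pi$ correspond to Laplace eigenvalues $\mu_\pi = (1-\nu_\pi^2)/4 < 1/4$ and satisfy $\nu_\pi \le \nu_\Gamma$ by definition of the gap; principal and discrete series pieces (the tempered part, $\nu_\pi = 0$); and the continuous (Eisenstein) spectrum. On each irreducible unitary representation $\pi$ one has the quantitative Ratner-type bound
$$
\Big\| \frac1T \int_0^T \pi(h_t) v\, dt \Big\| \;\ll\; \|v\|_{W^1}\, T^{-(1-\nu_\pi)/2}\,,
$$
with an absolute implied constant; this is most transparently obtained from the Flaminio--Forni classification of $h_t$-invariant distributions on $\pi$ (the leading distribution produces the main term, the next one the error), but it also follows from the shearing/renormalisation cocycle argument. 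Since $\frac1T\int_0^T f(h_t x)\,dt = \big(\frac1T\int_0^T \pi(h_t)f\,dt\big)(x)$ as smooth functions, summing over the decomposition -- using that for $s\ge 4$ the $W^s(S_\Gamma)$-norm dominates a convergent weighted sum of the $W^1$-norms of the components -- yields
$$
\Big| \frac1T \int_0^T f\circ h_t(x)\, dt \Big| \;\ll_s\; \|f\|_{W^s(S_\Gamma)}\, T^{-(1-\nu_\Gamma)/2}
$$
with a constant depending only on $s$ and $\nu_\Gamma$, provided the relevant horocycle arc stays in the compact core, where the Sobolev embedding $W^s\hookrightarrow C^0$ is available with a constant controlled by $\mathrm{inj}_\Gamma$; this is the only place $\mathrm{inj}_\Gamma$ enters.

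Next I would address the position of the base point. Using the commutation relation $a_s h_t a_{-s} = h_{e^s t}$, the average of $f$ over $\{h_t x : 0\le t\le T\}$ is, after renormalising by the geodesic flow, an average over a unit-length horocycle arc whose geometric position relative to the compact core is encoded by $d_\Gamma(a_{\log T}(x))$, the quantity in the statement. When this distance is bounded the previous paragraph applies verbatim. When it is large the renormalised arc enters a cusp, and I would expand $f$ in each cusp, in the standard upper-half-plane coordinates of a width-one cusp truncated at a height fixed by the Margulis constant, into its zeroth Fourier mode along the closed horocycle plus the higher modes. The higher modes decay exponentially in the cuspidal height, so their contribution to the arc average is bounded trivially by $\|f\|_{W^s}$ times a negligible quantity; the zeroth mode reduces to a one-dimensional equidistribution statement for the ``height'' of a long horocycle segment, which is governed by the continuous spectrum and the gap $\mu_\Gamma$ exactly as in Str\"ombergsson, and which produces precisely the amplification factor $e^{\frac{(1-\nu_\Gamma)}{2} d_\Gamma(a_{\log T}(x))}$. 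Combining the two regimes via a cutoff at height $\sim d_\Gamma(a_{\log T}(x))$ gives the stated bound.

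I expect the main obstacle to be the uniformity in $\Gamma$: one must verify that the spectral sum, the Sobolev embedding on $M_{\mathrm{cpt}}$, and -- hardest -- the analysis in the transition region between the thick part and each cusp can all be carried out with constants depending only on $\nu_\Gamma$, $\mathrm{inj}_\Gamma$ and $s$, and not on the number, widths or locations of the cusps. The cleanest route is to fix once and for all a uniform model cuspidal neighbourhood (the standard quotient by $z\mapsto z+1$, truncated at a height fixed by $\epsilon_0$), to perform the Fourier analysis there a single time, and to glue via a partition of unity whose derivatives are bounded in terms of $\epsilon_0$ and $\mathrm{inj}_\Gamma$ alone; since a horocycle arc visits distinct cusps in disjoint time-intervals, the finitely many cusps contribute additively with no loss. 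A secondary technical point is ensuring that the tempered and Eisenstein contributions, which carry only the rate $T^{-1/2}$, are genuinely dominated by $T^{-(1-\nu_\Gamma)/2}$ with no logarithmic loss; this is why one works with $W^s$ for $s$ slightly above the minimal exponent rather than at the threshold.
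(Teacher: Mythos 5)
Your per--representation bound $\Vert \tfrac1T\int_0^T \pi(h_t)v\,dt\Vert \ll \Vert v\Vert_{W^1} T^{-(1-\nu_\pi)/2}$ is correct, but it is a bound on the \emph{Hilbert norm} of the averaged vector, i.e.\ an $L^2$ (mean ergodic) statement, and the step in which you convert it into the required bound at \emph{every} point $x$ is a genuine gap. Writing $\frac1T\int_0^T f(h_tx)\,dt$ as the value at $x$ of the averaged function and invoking the Sobolev embedding $W^s\hookrightarrow C^0$ does not work: to evaluate pointwise you need to control higher Sobolev norms of the time-$T$ average, and these are \emph{not} small --- derivatives in the directions transverse to the flow (the geodesic and opposite horocycle directions) pick up factors $\mathrm{Ad}(h_t^{-1})X$, $\mathrm{Ad}(h_t^{-1})V$ which grow linearly, respectively quadratically, in $t\leq T$, so $\Vert \frac1T\int_0^T f\circ h_t\,dt\Vert_{W^2}$ is of polynomial size in $T$ and the decay $T^{-(1-\nu_\Gamma)/2}$ is destroyed. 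This loss is exactly the obstruction that separates mean/almost-everywhere effective equidistribution from the everywhere statement being proved, and your outline supplies no mechanism to overcome it; the renormalisation by $a_{\log T}$ appears in your plan only to locate the base point relative to the cusps, not to produce the rate.

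The paper's proof (following Flaminio--Forni and Str\"ombergsson, with constants re-tracked in $\mathrm{inj}_\Gamma$, $\nu_\Gamma$ and the cusp excursion $d_\Gamma$) avoids this by working with the dual object: the orbit functional $\gamma_{x,T}\in W^{-s}(S_\Gamma)$ is decomposed into its components along $h_t$-invariant distributions plus a remainder orthogonal to them; the remainder is shown to be $O(1/T)$ times a uniform Sobolev-embedding factor via the cohomological equation and the Gottschalk--Hedlund argument (Lemma \ref{lemma:remainder}), and the invariant-distribution coefficients are then propagated under push-forward by the geodesic flow through the iterative difference-equation scheme (Lemmas \ref{lemma:opeq}--\ref{lemma:compbounds}), with the refined embedding Lemmas \ref{lemma:Sobembed_2}--\ref{lemma:Sobembed_3} supplying exactly the cuspidal gain $e^{-d_\Gamma/2}$ and the complementary-series factor $e^{\frac{1-\nu}{2}d_\Gamma}$ that you hoped to extract from a Fourier analysis in the cusp. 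If you want to keep a purely representation-theoretic front end you would still need either this invariant-distribution/coboundary machinery or a Venkatesh-type thickening argument (which yields a weaker exponent than $T^{-(1-\nu_\Gamma)/2}$); as written, the central pointwise step of your proposal would fail.
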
 
 In the above Theorem \ref{thm:SL2_equi} the volume $d\text{\rm vol}_\Gamma$ is normalized, while the Sobolev spaces
 are defined with respect to the constant curvature metric (whose volume is not normalized).

We proceed to the proof of Theorem \ref{thm:SL2_equi}. 
 \begin{lemma} 
 \label{lem:per}
 (\cite{Stre}, Lemma 1.3)
 Let $x \in S_\Gamma$  and $T >0$. Let $\eta>0$ and $1\leq K \leq T$.
  There is an interval $I_0 \subset {[}0, T{]}$  of size $\vert I_0\vert \leq \eta^{-1} K^2$ such that
for all $s_0 \in {[}0, T{]} \setminus I_0$ there is a segment $\{ h_s (\xi) \vert 0 \leq s \leq K\}$  
  of a closed horocycle approximating $\{  h_{s_0+s} (x) \vert 0 \leq t \leq K\}$ of order in the
sense that
  $$
  d_S  (h_{s_0+s} (x) , h_s(\xi) )  \leq \eta\,, \quad \text{ for all }  0 \leq s \leq K \,.
  $$
There exists $C>1$ such that period $P:= P(s_0,x)$  of this closed horocycle is at most  
$$
 P \leq C T  \exp \Big(-d_\Gamma( a_{-\log T}(x)) \Big ) \,.
$$  
Moreover, one can assure $P \geq C^{-1} \zeta^2 T  \exp \Big( - d_\Gamma( a_{-\log T}(x) )\Big )$  for some $\zeta$ by weakening the
bound on $I_0$ to the bound $\vert I_0\vert \leq \max \{\eta^{-1} K^2, \zeta T\}$.
\end{lemma}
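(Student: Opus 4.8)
The plan is to combine the renormalization relation $a_sh_ta_{-s}=h_{e^st}$ with quantitative non-divergence of unipotent orbits and the explicit geometry of the cusps of $M_\Gamma$. First I would reduce length-$K$ arcs to unit arcs: since $h_{s}=a_{\log K}h_{s/K}a_{-\log K}$, the arc $\{h_{s_0+s}x:0\le s\le K\}$ is the $a_{\log K}$-image of the unit arc $\{h_uz(s_0):0\le u\le 1\}$ with $z(s_0):=a_{-\log K}h_{s_0}x$. Because the geodesic flow distorts the metric by a bounded factor on a fixed time scale, it suffices to produce, for all but a set of $s_0$ of measure $\le\eta^{-1}K^2$, a periodic point $\zeta(s_0)$ whose unit horocycle arc stays within $c\eta K^{-2}$ of $\{h_uz(s_0):0\le u\le 1\}$; applying $a_{\log K}$ and then invoking the polynomial divergence of nearby horocycle orbits (Lemma~\ref{lem:zz}, which gives $d_G(h_sz,h_sz')\ll(1+s)^2d_G(g,e)$ when $z'=z\cdot g$) would upgrade this to the required $\eta$-shadowing on $[0,K]$ by the arc of the closed horocycle through $\xi:=a_{\log K}\zeta(s_0)$, whose period is then $P=K\cdot\operatorname{per}(\zeta(s_0))$.

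The geometric heart of the matter is the behaviour inside a cusp. Normalizing a cusp neighbourhood as $\{\Im w>Y_0\}$ modulo $w\mapsto w+\omega$ (with $\omega$ the cusp width), the horocycle orbit there is the horizontal line $\Im w=Y$, which is exactly a closed horocycle of $h$-period $\omega$; a point at height $Y$ has injectivity radius $\asymp 1/Y$ and its unit horocycle arc literally coincides with an arc of that closed horocycle. So for $s_0$ such that $z(s_0)$ sits at height $Y_1(s_0)\asymp e^{d_\Gamma(z(s_0))}\gg K^2/\eta$ in a cusp, I would take $\zeta(s_0)$ on the closed horocycle running straight up that cusp; the shadowing bound is then a direct estimate in the cusp coordinates. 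Un-renormalizing and tracking that $a_{-\log T}x$ sits at height $\asymp T/Y_1(s_0)$ inside a $\Gamma$-conjugate of the same cusp converts $P=\omega K$ (times the appropriate winding count) into the claimed bound $P\le CT\exp(-d_\Gamma(a_{-\log T}x))$; the matching lower bound $P\ge C^{-1}\zeta^2T\exp(-d_\Gamma(a_{-\log T}x))$ comes out once one additionally throws away the $s_0$ for which $Y_1(s_0)$ is atypically large, which is precisely what degrades the bound on $|I_0|$ to $\max\{\eta^{-1}K^2,\zeta T\}$ in that variant.

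The measure estimate for $I_0$ would come from quantitative non-divergence. The bad set is $\{s_0\in[0,T]:\text{injectivity radius of }a_{-\log K}h_{s_0}x>c\eta K^{-2}\}$ together with a transition set between consecutive cusp excursions. For the first I would apply the Dani--Margulis non-divergence estimate for the one-parameter unipotent orbit $s_0\mapsto h_{s_0}x$ in its sharp one-dimensional form (controlling the $\Gamma$-orbit of the linear functionals $\Im(\gamma\cdot)$ that cut out the cusp neighbourhoods): the time the orbit spends with injectivity radius below $\rho$ inside a fixed cusp is $\ll\rho T$, uniformly in $x$. Pushing this through the geodesic flow $a_{-\log K}$ (a bounded distortion on the scale of a cusp neighbourhood) and summing over the finitely many cusps yields $|I_0|\ll\eta^{-1}K^2$.

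The hard part will be the uniformity in $x$ combined with the sharpness of the period bound. When $x$ is itself deep in a cusp --- equivalently, $x$ is close to a short periodic horocycle and $d_\Gamma(a_{-\log T}x)$ is large --- one cannot fall back on soft density of long closed horocycles (which would in any case be close to circular, since Lemma~\ref{lem:per} feeds into Theorem~\ref{thm:SL2_equi}) and must instead carry out the height identification and the time-$K$ shadowing with completely explicit constants. The upper bound $P\le CT\exp(-d_\Gamma(a_{-\log T}x))$ is the easy half, since non-divergence already caps how high the excursion climbs; the lower bound, which is what guarantees that the approximating closed horocycle is long enough to be useful, needs a genuine quantitative recurrence statement and is exactly the source of the extra $\zeta T$ term in the enlarged version of $I_0$.
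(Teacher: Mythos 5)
The paper does not reprove this statement from scratch: its proof is essentially a citation of Streck's Lemma 1.3 for the modular cusp, followed by a short rescaling argument (set $x'=a_{-t_1}x$ at the boundary of the thick part, apply the modular statement to $x'$ at the rescaled time, conjugate back by $a_{t_1}$, and track how $\eta$, $\vert I_0\vert$ and the period transform under the renormalization). Your proposal instead attempts a self-contained proof, which would be acceptable if the mechanism were right; but the central geometric step fails.

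The gap is that you locate the approximating closed horocycle by requiring $z(s_0)=a_{-\log K}h_{s_0}x$ to sit at height $\gg K^2/\eta$ in a cusp, and you invoke Dani--Margulis non-divergence to bound the complementary set of $s_0$. Non-divergence runs in the opposite direction: it bounds the time spent with \emph{small} injectivity radius (deep in the cusp) by $\ll \rho T$, i.e.\ it bounds exactly the set on which your construction applies, not its complement. Concretely, for $\Gamma=SL(2,\Z)$, a generic $x$ in the thick part, $K=10$, $\eta=1/10$ and $T$ large, the points $a_{-\log K}h_{s_0}x$ equidistribute with respect to Haar measure as $s_0$ ranges over $[0,T]$, so they lie at height $\geq K^2/\eta$ only on a set of measure $O(\eta K^{-2}T)$; on the complement, of measure comparable to $T\gg \eta^{-1}K^2$, your construction produces no approximant at all. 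The lemma is nonetheless true there because the approximating closed horocycles for typical $s_0$ are \emph{long} (period up to $CT\exp(-d_\Gamma(a_{-\log T}x))$, possibly of order $T$) and pass through the thick part: the correct mechanism is a single renormalization at scale $T$, not $K$, under which the whole orbit becomes one unit arc at $a_{-\log T}x$, i.e.\ one circle tangent to $\partial\H$ in a lift realizing $d_\Gamma(a_{-\log T}x)$; most parameter-length-$K$ sub-arcs are $\eta$-close to the horizontal lines (closed horocycles) osculating that circle, and $I_0$ is the stretch of parameter time near the tangency point where $a+ct$ is small, exactly as in the proof of Lemma~\ref{l:modular3}. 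Relatedly, your period bookkeeping is asserted rather than proved: the $h$-period of the closed horocycle at height $Y$ in a cusp of width $\omega$ is not $\omega$ but of order $\omega/Y$ (convention-dependent), the height $Y_1(s_0)$ of $a_{-\log K}h_{s_0}x$ fluctuates strongly with $s_0$ rather than satisfying $Y_1(s_0)\asymp T\exp(-d_\Gamma(a_{-\log T}x))/1$ uniformly, and the uniformity of the period bound in terms of the single quantity $d_\Gamma(a_{-\log T}x)$ — as well as the lower-bound variant with the extra $\zeta T$ loss in $\vert I_0\vert$ — is precisely the content of Streck's lemma and cannot be recovered from the scale-$K$ local picture alone.
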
 
\begin{proof} The proof in \cite{Stre}, Lemma 1.3, is given  for $\Gamma= SL(2, \Z)$ and the argument can be applied
without modifications to the cusps. In the general case we may proceed as follows. Let $x \in S_\Gamma$ and let 
$$
t_1:=  \max \{t\geq 0 \vert   a_{-t} x \in S_\Gamma \vert M_{thick} \}\,.
$$
Let $x' = a_{-t_1} x$ denote the point at the boundary of the thick
part. Let $t_2>0$ denote the time spent by the orbit in a cusp. By the result of  \cite{Stre}, Lemma 1.3,  given $\eta>0$
and $K\leq  e^{t}$ there exists an interval $I_0' \subset [0,e^{t-t_1}]$ of length $\leq  (\eta e^{-t_1})^{-1} (K e^{-t_1})^2$  and a
periodic point $\xi'$ such that for some $r_0 \in [0,1]$
$$
d(h_{r_0 +r} x', h_r \xi')  \leq   e^{-t_1} \eta \,,\quad \text{ for all } r \in [0,K e^{-t_1}] \setminus I'_0\,,
$$
and there exists $C>1$ such that the period $P'$ of $\xi'$ satisfies
$$
P'=1  \leq  e^{t_2} \exp (-d_\Gamma (g_{t_2} x') ) \,.
$$
Let $\xi = a_{-t_1} \xi'$ and let $t= t_1+t_2$. By definition we have $d_\Gamma (a_{-t_2} x) )= d_\Gamma (a_{-t} x)$ and 
the period $P$ of $\xi$ is at most
$$
P \leq e^{t_1} = e^{t_1+t_2} e^{-t_2}  \leq  e^t \exp (-d_\Gamma (a_{-t_2} x') )  =  e^t \exp (-d_\Gamma (a_{-t} x) ) \,.
$$
In addition we have that 
$$
\begin{aligned}
d(h_{s_0 +s} x, h_s \xi) &= d(h_{s_0 +s} a_{t_1} x' , h_s a_{t_1} \xi') = d (a_{t_1} h_{e^{-t_1} (s_0 +s)} x' , a_{t_1} h_{e^{-t_1}s}  \xi')
\\ &\leq e^{t_1}  d (h_{e^{-t_1} (s_0 +s)} x' ,  h_{e^{-t_1}s}  \xi')    \leq   e^{t_1}  e^{-t_1} \eta    =\eta\,,
\end{aligned} 
$$
hence $d(h_{s_0 +s} x, h_s \xi)$ with $s_0 = e^{t_1} r_0$ for all $s \in [0, K] \setminus e^{t_1} I'_0$,  and the interval
$I_0 = e^{t-1} I'_0$ has length $\leq e^{t_1} \vert I_0' \vert \leq \eta K^2$.

\end{proof} 

We state below an equidistribution result which can be derived from M.~Einsiedler, G.~Margulis and A.~Venkatesh \cite{EMV} and 
.M.~Einsiedler, G.~Margulis, A. Mohammadi and and A.~Venkatesh \cite{EMMV}.

Let $G$ be  a semisimple $\Q$-group so that $G= G(\R)$ and $\Gamma$ is a congruence subgroup of 
$G(\Q)$;  let $H\subset G$ be a subgroup such that  $H^+=H$, i.e. $H$ has no compact factors and is simply connected
and such that the centralizer of $\mathfrak h = \text{Lie} (H)$ in $\mathfrak g = \text{Lie} (G)$ is trivial.

Below we will apply the theorem with $G= SL(2, \R)^2$, $\Gamma= SL(2, \Z)^2$ and $H=SL(2, \R)$ embedded
diagonally in $G$. Notice that $H$ is a maximal subgroup of $G$. The theorem below is a special case of the more general Theorem 1.5. in \cite{EMMV}. Since in  our case $G$ is simply connected, it follows that $\pi^f(x)=\int_{X\times X}f d\mu_{X\times X}$. Moreover, the set $Y_{\mathscr{D}}$ is a maximal algebraic semisimple homogeneous set as the diagonally embedded subgroup  $H$ is maximal in $G$ ($\iota$ is the diagonal embedding).\footnote{The authors would like to thank M. Einsiedler for his feedback on Theorem 1.5. in \cite{EMMV}.}
\begin{theorem} [\cite{EMMV}, Theorem 1.5, see also \cite{EMV}, Theorem 1.3]
\label{thm:EMMV}
Let $\Gamma$, $H \subset G$  be as above. For any $g\in G$, let $H_g:=  gHg^{-1}$ and let $\mu_g$ be the $H_g$-invariant probability measure on a closed $H_g$-orbit $H_g  . g(x_0,y_0)$ inside $X= \Gamma \backslash G$.  There exists $\sigma$, $d>0$ and a constant $C_d>0$  (depending only on $G$, $H$) such that  $\mu_g$ is $V^{-\sigma}$-close to   $\mu_{X\times X}$, i.e. for any $f \in C^\infty(X)$ we have  
$$
\Big\vert \int_{X\times X}  f d\mu_g - \int_{X\times X}  f d\mu_{X\times X} \Big\vert  < C_d \text{vol}(H_g . g(x_0,y_0))^{-\sigma} S_d(f) 
$$
where $S_d(f)$ denotes an $L^2$-Sobolev norm of degree $d$.
\end{theorem}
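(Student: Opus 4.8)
The plan is to obtain Theorem \ref{thm:EMMV} as a specialization of Theorem 1.5 of \cite{EMMV} (see also Theorem 1.3 of \cite{EMV}) to the semisimple group $\mathbf{G} = \mathrm{SL}_2 \times \mathrm{SL}_2$ over $\Q$, with congruence lattice $\Gamma = \mathrm{SL}_2(\Z)^2$ and $\mathbf{H} = \mathrm{SL}_2$ embedded diagonally, so that $H = \{(g,g) : g \in \mathrm{SL}_2(\R)\}$. The first step is to verify the standing hypotheses of \cite{EMMV}: $\mathbf{G}$ is semisimple and simply connected; $\mathbf{H}$ is semisimple, simply connected and has no $\R$-anisotropic factors, so $H = H^+$ with $H \cong \mathrm{SL}_2(\R)$ connected; and the centralizer of $\mathfrak{h} = \Lie(H)$ in $\mathfrak{g} = \Lie(\mathbf{G})$ is trivial. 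The last point is the one-line computation that $(Y_1,Y_2) \in \mathfrak{sl}_2 \oplus \mathfrak{sl}_2$ commuting with every $(X,X)$ forces $Y_1,Y_2$ into the center of $\mathfrak{sl}_2$, hence $Y_1 = Y_2 = 0$. I would also record that $\mathbf{H}$ is a \emph{maximal} connected subgroup of $\mathbf{G}$: in the notation $\mathfrak{g} = \mathfrak{g}^+ \oplus \mathfrak{g}^-$ of the proof of Proposition \ref{l:modular1}, the summand $\mathfrak{g}^-$ is $3$-dimensional and irreducible under the adjoint action of $H$, so any intermediate Lie subalgebra equals $\mathfrak{g}^+ \oplus W$ with $W$ a submodule of $\mathfrak{g}^-$, whence $W = 0$ or $W = \mathfrak{g}^-$.

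Next I would translate the closed orbit into the homogeneous datum used in \cite{EMMV}. A closed $H_g$-orbit $H_g.g(x_0,y_0) = gH.(x_0,y_0)$ of finite volume corresponds, via Ratner's theorem and arithmeticity exactly as in Lemma \ref{per:stab} (where such orbits are parametrized by $COMM(\mathrm{SL}_2(\Z))$), to a $\Q$-form of the diagonal $\mathrm{SL}_2$ inside $\mathbf{G}$, hence to a homogeneous datum $\mathscr{D}$ with associated homogeneous subvariety $Y_{\mathscr{D}} = H_g.g(x_0,y_0)$ and homogeneous probability measure $\mu_{\mathscr{D}} = \mu_g$. Passing between the real quotient and the corresponding adelic quotient of $\mathbf{G}$ is legitimate because $\mathbf{G}$ is simply connected and satisfies strong approximation, so this datum genuinely fits the framework of \cite{EMMV}.

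Then Theorem 1.5 of \cite{EMMV} supplies constants $\kappa, d > 0$ and $C_d > 0$, depending only on $\mathbf{G}, \mathbf{H}$, with
$$
\Big| \int_{X \times X} f \, d\mu_{\mathscr{D}} - \pi^f \Big| \leq C_d \, \mathrm{disc}(\mathscr{D})^{-\kappa} \, S_d(f),
$$
where $\pi^f$ is the comparison quantity of \cite{EMMV} and $S_d$ the indicated $L^2$-Sobolev norm. Two identifications finish the argument: since $\mathbf{G} = \mathrm{SL}_2 \times \mathrm{SL}_2$ is simply connected with strong approximation, the relevant arithmetic quotient is connected and $\pi^f = \int_{X \times X} f \, d\mu_{X \times X}$; and since $\mathbf{H}$ is maximal in $\mathbf{G}$, the homogeneous subvariety $Y_{\mathscr{D}}$ is maximal, so no intermediate homogeneous measure can occur on the right-hand side. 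It remains to pass from the discriminant to the volume via the polynomial comparison $\mathrm{disc}(\mathscr{D}) \gg \mathrm{vol}(Y_{\mathscr{D}})^{c}$ for some $c > 0$ (proved in \cite{EMMV}/\cite{EMV}), which converts the bound above into the asserted one with $\sigma = c\kappa$; if the cited theorem is already phrased in terms of $\mathrm{vol}(Y_{\mathscr{D}})$ this step is vacuous.

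The main obstacle is not a single estimate but the bookkeeping of this reduction: confirming that a closed finite-volume $H_g$-orbit in the real quotient produces a bona fide homogeneous datum over $\Q$ to which Theorem 1.5 of \cite{EMMV} applies, and, crucially, confirming that the comparison quantity $\pi^f$ of \cite{EMMV} collapses to $\int_{X\times X} f \, d\mu_{X \times X}$ in our setting. Both rely on simple connectedness and strong approximation for $\mathrm{SL}_2$ together with the maximality of the diagonal $\mathrm{SL}_2$ in $\mathrm{SL}_2 \times \mathrm{SL}_2$, and this is precisely the content one would check against the exact formulation of Theorem 1.5 in \cite{EMMV}.
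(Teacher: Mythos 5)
Your proposal is correct and follows essentially the same route as the paper, which simply invokes Theorem 1.5 of \cite{EMMV} for $G=SL(2,\R)^2$, $\Gamma=SL(2,\Z)^2$ and the diagonal $H$, noting that simple connectedness of $G$ forces $\pi^f=\int_{X\times X}f\,d\mu_{X\times X}$ and that maximality of the diagonal embedding makes $Y_{\mathscr D}$ a maximal algebraic semisimple homogeneous set. Your write-up just spells out these hypothesis checks (trivial centralizer, maximality via irreducibility of $\mathfrak g^-$, discriminant--volume comparison) in more detail than the paper does.
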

In our setting we will apply it for elements $g=Id\times h_L$. In this case it follows that $vol(H_g . g(x_0,y_0))\geq vol(H_\cdot(x_0,y_0))$ and in fact we have a polynomial gain in $|L|$ (which is especially powerful for large $|L|$).

We are now ready to give the proof of Proposition \ref{prop:E2}.

\begin{proof} [Proof Proposition \ref{prop:E2}]
Let $(x_0, y_0)\in X\times X$ and $H$ be such that  that $(a_{-\log p}x, a_{-\log q }x)$ satisfies E2 with 
$R=T^\delta$. By  assumption $H.  (x_0, y_0)$ is a closed submanifold of $X\times X$ hence there exists a lattice
$\Gamma_{p,q}$ such that $H. (x_0, y_0)$ is isomorphic to the quotient $S_{p,q}:=SL(2, \R)/ \Gamma_{p,q}$. 
By (I3) in Lemma \ref{lem:alpha}, we have 
$$
\psi(T)^{1/(3\eta)} \leq \min\{p^{1/3}, q^{1/3}\} \leq \text{vol} (\Gamma_{p,q}) = \text{vol} ( H. (x_0, y_0))  \leq R' \leq T^{A_1\delta}\,.
$$
We also have that 
$$
 \nu_{\Gamma_{p,q}}  \leq  1-\rho    \quad \text{and} \quad   \text{\rm inj}_{\Gamma_{p,q} } \geq   \rho  \,.
$$  
Indeed the upper bound on $ \nu_{\Gamma_{p,q}} $ follows from (I2) in Lemma \ref{lem:alpha} and Selberg's bound on the spectral gap for congruence lattices, \cite{Selberg}.  Moreover  since $\Gamma_{p,q}  < SL(2, \Z)$ and it has finite index, the quotient $SL(2, \R)/\Gamma_{p,q}$ is a finite cover of the modular quotient $SL(2, \R)/SL(2,\Z)$.  It follows than any Margulis tube 
in $SL(2, \R)/\Gamma_{p,q}$ projects (by a locally isometric map) to a Margulis tube in $SL(2, \R)/SL(2,\Z)$, hence, for all 
$(p, q)\in \Z \times \N\setminus\{0\}$, we have  
$$
\text{\rm inj}_{\Gamma_{p,q} }  \geq  \inj_{SL(2, \Z)}\,.
$$

Let $\tilde{T}>0$ and assume that  for some $(u,v)\in H.(x_0,y_0)$, 
$$
d_{H.(x_0,y_0)} \Big( (a_{\log \tilde{T}} \times a_{\log \tilde{T}})(u, v) \Big)  \leq    (1-A\delta) \log \tilde{T}\,.
$$
By Theorem \ref{thm:SL2_equi}  for every  $f \in W^s( H.(x_0,y_0) )$,
 $$
\Big \vert  \frac{1}{
\tilde{T}} \int_0^{\tilde{T}}   f \circ (h_t \times h_t) (u, v) dt  -   \int_{H.(x_0,y_0)}  f d\text{\rm vol}_{x_0,y_0} \Big\vert  \leq  
C_s(\rho)  \Vert f  \Vert_{W^s(H.(x_0,y_0))}   \tilde{T}^{-A \delta  \frac{1-\rho}{2}} \,.
 $$
In the above formula the measure $d \text{vol}_{x_0,y_0}$ is the normalized volume. We note that if $\varphi$ is the restriction to 
$H.(x_0,y_0)$ of the function $\varphi \circ (a_{\log p} \times a_{\log q})$ with $\varphi \in C^{\infty} _c(X\times X)$, then taking into account that by assumption $p, q < T^{\delta^2}$,  we have
$$
\begin{aligned}\label{all.pq}
&\Big \vert  \frac{1}{\tilde{T}} \int_0^{\tilde{T}}   \varphi \circ (a_{\log p} \times a_{\log q})  \circ (h_t \times h_t) (u, v) dt  -   \int_{H.(x_0,y_0)}  \varphi \circ (a_{\log p} \times a_{\log q}) d\text{\rm vol}_{x_0,y_0} \Big\vert \\ & \quad  \leq  
C_s(\rho)  \Vert \varphi \circ  (a_{\log p} \times a_{\log q})\Vert_{C^s(X\times X)} \text{vol}(H.(x_0, y_0))^{1/2}    \tilde{T}^{-A \delta  \frac{1-\rho}{2}} \\
& \quad  \quad \leq C_s(\rho)  \Vert \varphi \Vert_{C^s(X\times X)}   \tilde{T}^{s \delta^2}  \tilde{T}^{-  (A   (1-\rho) -A_1) \delta/2}  \,.
\end{aligned}
$$

If additionally $\tilde{T}=T^{1-\delta}$, then since $\text{vol} (H.(x_0,y_0)) = \text{vol} (\Gamma_{p,q})  \geq \log^{1/(3\eta)} T$, it follows from Theorem~\ref{thm:EMMV} that, for $T$ sufficiently large, we have
\begin{equation}
\label{eq:EMV}
\begin{aligned}
\Big \vert   \int_{H.(x_0,y_0)}  &\varphi \circ  (a_{\log p} \times a_{\log q})   d\text{\rm vol}_{x_0,y_0}   \\ &-  \int_{X\times X} \varphi \circ (a_{\log p} \times a_{\log q})  d \mu_{X\times X} \Big \vert 
\leq  S_d(\varphi)\psi(T)^{- \sigma/(3\eta)}  \,.
\end{aligned}
\end{equation}
 In addition, since the measures $\text{\rm vol}_{x_0,y_0}$ and $\mu_{X\times X}$ are invariant under the diagonal geodesic flow $\{a_t \times a_t\}$ it follows that 
$$
\begin{aligned}
\int_{H.(x_0,y_0)}  \varphi \circ  (a_{\log p} \times a_{\log q})   d\text{\rm vol}_{x_0,y_0}  -  \int_{X\times X} \varphi \circ (a_{\log p} \times a_{\log q})  d \mu_{X\times X} \\
=\int_{H.(x_0,y_0)}  \varphi \circ  (a_{\log (p/q)} \times \text{\rm Id})   d\text{\rm vol}_{x_0,y_0}  -  \int_{X\times X} \varphi  \circ(a_{\log (p/q)} \times \text{\rm Id}) d \mu_{X\times X} \,,
\end{aligned}
$$
thus Theorem \ref{thm:EMMV}  can be applied to the function $ \varphi \circ  (a_{\log (p/q)} \times \text{\rm Id})$ and thanks
to the hypothesis that $1/5 \leq p/q \leq 5$, there exists a constant $C_d >0$ such that
$$
S_d(\varphi \circ   (a_{\log (p/q)} \times \text{\rm Id})  )  \leq C_d S_d(\varphi) \,.
$$

We have thus concluded that in this case, there exist a Sobolev norm $S_d$ and a constant $C_d>0$ such that, for $\delta>0$ sufficiently small,  
$$
\begin{aligned}
\Big \vert  \frac{1}{\tilde{T}} \int_0^{\tilde{T}}   &\varphi  \circ  (a_{\log p} \times a_{\log q})  \circ (h_t \times h_t) (u, v) dt   \\ &-  \int_{X\times X} \varphi  \circ  (a_{\log p} \times a_{\log q}) d \mu_{X\times X} \Big \vert 
\leq  C_d S_d(\varphi)  \psi(T)^{- \sigma/(3\eta)}\,.
\end{aligned}
$$
It remains to estimate the deviation of the ergodic average for the orbit of $(a_{-\log p} x, a_{-\log q} x)$. By Proposition \ref{l:modular1} and Corollary \ref{cor:pq} it follows that for $(u_i,v_i)=(h_{iT^{1-\delta}}\times h_{iT^{1-\delta}})(u,v)\in H.(x_0,y_0)$ we have that 
$$
d_{X\times X}\Big((h_s\times h_s)(a_{-\log p}x,a_{-\log p}x), (h_{K_i}\times h_{K'_i})(h_{s-iT^{1-\delta}}\times h_{s-iT^{1-\delta}})(u_i,v_i)\Big)\leq T^{-\eta},
$$
for every $s\in I_i=[iT^{1-\delta},(i+1)T^{1-\delta}]$ and $i\leq [T^\delta]$. Using this we get (denoting $\varphi_i=\varphi \circ  (a_{\log p} \times a_{\log q})  \circ (h_{K_i} \times h_{K'_i})$)
\begin{equation}
\label{eq:int_approx}
\begin{aligned}
\Big\vert  \int_0^{T} &\varphi \circ  (a_{\log p} \times a_{\log q})  \circ (h_{t}  \times h_{t}) (a_{-\log p} x, a_{-\log q} x)dt 
\\ &- \sum_{i=0}^{[T^{\delta}]}\int_0^{T^{1-\delta} } \varphi_{i} \circ (h_s\times h_s) ( u_i,v_i)  ds  \Big\vert  \leq  \Vert \varphi\Vert_0 T^{1-\eta} \,.
\end{aligned}
\end{equation}
Let $A>A_1/(1-\rho)$. Assume first that  there exists $i\in \{0, \dots, [T^{\delta}]\}$ such that
$$
d_{H.(x_0,y_0)} \Big( (a_{\log T^{1-\delta}} )(u_i, v_i) \Big)  \geq    (1-A\delta)(1-\delta) \log T\,.
$$

In  this case by Lemma~\ref{lem:per}  with $K=1$ and $\eta = T^{-1+ A \delta}$ there exists $(x',y') \in H. (x_0, y_0)$ 
such that  $(x', y')$ is periodic of period $P \leq  C T^{A \delta}$ and $\tau_0 \in [0,   T^{1-A \delta}]$ such that
$$
d_{H.(x_0,y_0)} \Big(  h_{\tau_0} \times h_{\tau_0} (u_i, v_i), (x',y') \Big) \leq  T^{-1+ A \delta}\,.
$$
It follows that $x' \in X$ is a periodic point of period $P\leq  C T^{A \delta}$  such that 
$$
d _X\Big(  h_{\tau_0}   u_i  , x'  \Big) \leq  T^{-1+ A \delta}, 
$$
which implies that there exists $t_0\leq T$ such that 
$$
d _X\Big(  h_{t_0}   u  , x'  \Big) \leq  T^{-1+ A \delta}.
$$
However Corollary \ref{cor:uni} and the bound on $K(t_0)$ then imply that 
$$
d _X\Big(  h_{t_0}   x , \bar{x}'  \Big) \leq  T^{-1+ A \delta+3\delta},
$$
where $\bar{x}'=h_{-K(t_0)}x'$ is periodic of the same period as $x'$. This finishes the proof in this case.

On the other hand, if for all  $i\in \{0, \dots, [T^{\delta}]\}$ 
$$
d_{H.(x_0,y_0)} \Big( (a_{\log T^{1-\delta}} )(u_i, v_i) \Big)  \leq    (1-A\delta)(1-\delta) \log T\
$$
we proceed as follows. Notice first that by Theorem \ref{thm:SL2_equi} we have
$$
\begin{aligned}
\Big \vert  \frac{1}{T^{1-\delta}} \int_0^{T^{1-\delta}}    \varphi_{i} \circ (h_t\times h_t)(u_i, v_i) dt  -   \int_{H.(x_0,y_0)}  \varphi_{i}d \text{\rm vol}_{x_0,y_0} \Big\vert \\ \leq
 C_s(\rho)  \Vert \varphi  \Vert_{C^s(X\times X)}    T^{-  [ (A(1-\delta)-8)   (1-\rho) -A_1 ] \frac{\delta}{2} + s\delta^2}  \,.
\end{aligned}
$$
So we only need to estimate $\int_{H.(x_0,y_0)}  \varphi_{i}d \text{\rm vol}_{x_0,y_0}$; which by the definition of $\varphi_i$ is equal to
$$
\begin{aligned} 
\int_{H.(x_0,y_0)} \varphi\circ(id \times a_{\log p/q})\circ (id\times h_{q(K_i'-K_i)})(a_{\log q} \times a_{\log q})  \circ (h_{K_i} \times h_{K_i})   d \text{\rm vol}_{x_0,y_0}\\ =\int_{H.(x_0,y_0)} \varphi\circ(id,a_{\log p/q})\circ (id\times h_{q(K_i'-K_i)})d \text{\rm vol}_{x_0,y_0},
\end{aligned}
$$
where in the last equality we use that  $H.(x_0, y_0)$ is invariant under the diagonal subgroup. By Theorem \ref{thm:EMMV} it follows that (for some constant $C'_d>0$), 
$$
\begin{aligned}
\Big\vert \int_{H.(x_0,y_0)} \varphi\circ(id,a_{\log p/q})\circ (id\times h_{q(K_i'-K_i)})d \text{\rm vol}_{x_0,y_0}-\int_{X\times X} \varphi  d \mu_{X\times X} \Big\vert  \\ \ll C_d S_d\big (\varphi\circ (id\times a_{\log_{p/q}}) \big) vol(H_{\cdot}x_0,y_0)^{-\sigma} \leq C'_d S_d(\varphi)\psi(T)^{-\sigma/3\eta.}
\end{aligned}
$$
So finally we derive that (for some constant $C''_d>0$)
$$
\Big \vert   \int_0^{T^{1-\delta}}   \varphi_{i} \circ h_t\times h_t (u_i, v_i) dt  - T^{1-\delta}   \int_{X\times X} \varphi  d \mu_{X\times X} \Big \vert 
\leq  C''_d S_d(\varphi) T^{1-\delta} \psi(T)^{-\sigma/(3\eta) }\,.
$$
By the approximation estimate in formula \eqref{eq:int_approx} we conclude that 
$$
\begin{aligned}
\Big \vert  \frac{1}{T}  \int_0^{T}   \varphi \circ (a_{\log p} \times a_{\log q})  \circ (h_t \times h_t) &(a_{-\log p} x, a_{-\log q} x)    dt  
\\ & -   \int_{X\times X} \varphi d \mu_{X\times X} \Big \vert 
\leq  2C''_dS_d(\varphi)    \psi(T)^{-\sigma/(3\eta) }\,.
\end{aligned}
$$
The argument is therefore complete. This finishes the proof.

\end{proof}

 \subsection{Proposition \ref{prop:E3}}
Note that E3 for the point $(a_{-\log p} x,a_{-\log q }x)$ implies that the injectivity radius of $a_{-\log T}x$ is at most $R'^2 e^{-T}$. It is then enough to use  Lemma \ref{lem:per}, applied to the lattice $SL(2,\Z)$.

 \section{Proof of Theorem \ref{prop:peri}}
One of the main tools in proving Theorem \ref{prop:peri}  is the following approximation of a point $x\in X$ by a union of periodic orbits:
\begin{lemma}\label{l:modular3} There exists $A_5>4A_4$ such that the followoing holds:
Assume $x\in X$ satisfies $d_{X}(x,w)<T^{-1+2A_4\delta}$ for some $w\in X$ periodic with period ${\rm per}(w)<T^{2A_4\delta}$.
Let $\inf_{\gamma\in SL(2,\Z) }xw^{-1}\gamma^{-1}=\left[\begin{array}{cc}a&b\\c&d\end{array}\right]$.
Then, there exist some periodic $w_i$, $ i\in [-T^{A_5\delta},T^{A_5\delta}]\cap \Z$, with period $<T^{A_5\delta}$ and disjoint intervals $J_1,K,J_2$ such that $[-T,T]=J_1\cup K\cup J_2$, $|K|=O(T^{1-\delta})$ and
$$
d_{X}\Big(h_tx,h_{\frac{(a+ct_i)^2at}{a+ct}}w_i\Big)<\frac1{\log T}$$
for each $t\in I_i \cap J_s$, where $I_i=[iT^{1-A_5\delta},(i+1)T^{1-A_5\delta})$ and any $t_i\in I_i$ for all $i$.
\end{lemma}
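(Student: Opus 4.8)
The plan is to ``unfold'' the hypothesis $d_X(x,w)<T^{-1+2A_4\delta}$ into an explicit description of the orbit segment $\{h_t x:t\in[-T,T]\}$ in terms of the closed horocycle through $w$. First I would record that the matrix $g:=\inf_{\gamma\in SL(2,\Z)}xw^{-1}\gamma^{-1}=\begin{pmatrix} a & b\\ c& d\end{pmatrix}$ lies within $T^{-1+2A_4\delta}$ of the identity, so $|a-1|,|b|,|c|,|d-1|\ll T^{-1+2A_4\delta}$; this is legitimate because the injectivity radius of $X$ at the periodic point $w$ is comparable to ${\rm per}(w)^{-1}\geq T^{-2A_4\delta}$, which dwarfs $T^{-1+2A_4\delta}$. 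Writing $x=gw$ on the level of representatives (the discrepancy acting on the left, as the notation $xw^{-1}\gamma^{-1}$ suggests), the renormalization $h_t x=(h_tgh_{-t})(h_t w)$ and a matrix computation in the spirit of Lemma \ref{lem:zz} give $h_tgh_{-t}=\ell_t\,h_{K(t)}$ with $\ell_t=\begin{pmatrix} a+ct & 0\\ c& (a+ct)^{-1}\end{pmatrix}$ and $K(t)=\frac{b+(d-a)t-ct^2}{a+ct}$, so that $K(t)+t=\frac{b+dt}{a+ct}$ (the identity for the lower-right entry reduces exactly to $ad-bc=1$). Hence, using $\ell_t=v_{c/(a+ct)}\,a_{2\log(a+ct)}$ and the commutation relation $a_sh_u=h_{e^s u}a_s$,
\[
h_t x \;=\; v_{c/(a+ct)}\, h_{(a+ct)(b+dt)}\, a_{2\log(a+ct)}\, w .
\]

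Next I would set $I_i=[iT^{1-A_5\delta},(i+1)T^{1-A_5\delta})$, fix any $t_i\in I_i$, and put $w_i^0:=a_{2\log(a+ct_i)}w$, which is periodic of period ${\rm per}(w)(a+ct_i)^2$. Writing $a_{2\log(a+ct)}w=a_{2\log\frac{a+ct}{a+ct_i}}w_i^0$ and pushing the residual geodesic to the far left via $h_sa_u=a_uh_{se^{-u}}$ gives
\[
h_t x \;=\; v_{c/(a+ct)}\, a_{2\log\frac{a+ct}{a+ct_i}}\, h_{\frac{(a+ct_i)^2(b+dt)}{a+ct}}\, w_i^0 .
\]
On the part of $[-T,T]$ where $|a+ct|\geq T^{-\delta}$, the leftmost factor $g_t^{(i)}:=v_{c/(a+ct)}a_{2\log\frac{a+ct}{a+ct_i}}$ satisfies $d_G(g_t^{(i)},e)\leq T^{-c_1\delta}$ for a fixed $c_1>0$: the $v$-part has size $\leq|c|T^{\delta}$ and the geodesic part has size $\ll|c|\,|I_i|\,T^{\delta}\leq T^{(2A_4-A_5+1)\delta}$, both negligible once $A_5$ is large relative to $A_4$. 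Finally the exponent $\frac{(a+ct_i)^2(b+dt)}{a+ct}$ differs from the stated $m_i(t):=\frac{(a+ct_i)^2at}{a+ct}$ by $\Delta(t):=\frac{(a+ct_i)^2(b+(d-a)t)}{a+ct}$, whose derivative $\frac{(a+ct_i)^2((d-a)a-cb)}{(a+ct)^2}$ is $O(T^{-1+2A_4\delta})$ on the good set, so $\Delta$ is constant to within $O(T^{(2A_4-A_5)\delta})$ on each $I_i$. Absorbing the value $\Delta(t_i)$ into $w_i^0$ by a horocycle shift, $w_i:=h_{\Delta(t_i)}w_i^0$ is still periodic of period ${\rm per}(w)(a+ct_i)^2<T^{A_5\delta}$ (for $A_5$ as in the statement), and one gets
\[
h_t x \;=\; \tilde g_t^{(i)}\, h_{m_i(t)}\, w_i , \qquad d_G(\tilde g_t^{(i)},e)\;\ll\; T^{-c_1\delta}+T^{(2A_4-A_5)\delta},
\]
so $d_X(h_t x,h_{m_i(t)}w_i)<1/\log T$ by right-invariance of $d_G$, for every $t\in I_i$ in the good set.

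It then remains to carve out the bad set: let $K$ be the union of those $I_i$ meeting $\{t\in[-T,T]:|a+ct|<T^{-\delta}\}$, and let $J_1,J_2$ be the two intervals into which $[-T,T]\setminus K$ decomposes. Since $|a-1|$ is small, this bad set is nonempty only if $|c|\geq 1/(4T)$, in which case it is an interval of length $\leq 8T^{1-\delta}$ around $t=-a/c$; rounding out to whole $I_i$'s costs at most $2T^{1-A_5\delta}$, so $|K|=O(T^{1-\delta})$, and on $J_1,J_2$ the estimate above applies. (When $a+ct<0$ on one of the $J_s$ one replaces $a_{2\log(a+ct)}$ by $-I\cdot a_{2\log|a+ct|}$, which is harmless since $-I\in SL(2,\Z)$.)

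I expect the only genuinely delicate point to be the cusp bookkeeping: one must keep below $1/\log T$ \emph{simultaneously} the residual $v$- and geodesic factors, the drift $\Delta$, and the error from freezing $a+ct\approx a+ct_i$, while also ensuring ${\rm per}(w_i)<T^{A_5\delta}$ and $|K|=O(T^{1-\delta})$; balancing these constraints is exactly what forces the hierarchy of exponents (windows of length $T^{1-A_5\delta}$, threshold $T^{-\delta}$, $A_5$ large compared to $A_4$) and is the heart of the argument.
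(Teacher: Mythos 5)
Your proposal is correct and follows essentially the same route as the paper: the same factorization $h_tx=(h_tgh_{-t})(h_tw)$ with $h_tgh_{-t}=v_{c/(a+ct)}\,h\,a_{2\log(a+ct)}$, the same transfer of the geodesic part onto $w$ to define $w_i$ as a fixed horocycle shift of $a_{2\log(a+ct_i)}w$, the same freezing of $a+ct\approx a+ct_i$ on windows of length $T^{1-A_5\delta}$ with the drift $\frac{(a+ct_i)^2(b+(d-a)t)}{a+ct}$ absorbed as a constant, and the same excision of the interval where $|a+ct|<T^{-\delta}$. One small inaccuracy: the opening justification via the injectivity radius at $w$ is both unnecessary and misstated (that radius is comparable to $\min(\mathrm{per}(w),1)$, not $\mathrm{per}(w)^{-1}$); the bound $\max(|a-1|,|b|,|c|,|d-1|)\ll T^{-1+2A_4\delta}$ follows directly from $d_G(g,e)<T^{-1+2A_4\delta}$ by right-invariance, which is exactly what the paper uses, so nothing downstream is affected.
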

\begin{proof} We will give the proof for positive $t\in [0,T]$, the part for negative $t\in [-T,0]$ follows the same lines. Using the right invariance of the metric $d_X$ and replacing $w$  by $w\gamma$ (for some $\gamma\in SL_2(\Z)$) if needed, we have
\begin{equation}\label{peq0}
\max(|a-1|,|d-1|, |b|,|c|)<\frac2{T^{1-2A_4\delta}}.\end{equation}

For simplicity denote $A(t)=a+ct$. Then,  set $K:=\{0\leq t\leq T:\:|A(t)|<T^{-\delta}\}$. Clearly, $K$ is an interval, and $[0,T]=J_1\cup K\cup J_2$ for some other disjoint intervals $J_1,J_2$. If $K\neq\emptyset$ then  $c<0$ (since $a$ is close to 1). Moreover, the initial point $t_0$ of $K$ satisfy $a+ct_0=T^{-\delta}$, so $t_0=\frac1c(T^{-\delta}-a)$ and also $t_0\leq T$. Whence $|c|>\frac1{2T}$. Furthermore, $|K|$ is at most $2T^{-\delta}|c|^{-1}$, so finally, $|K|=O(T^{1-\delta})$.

Observe that if $0<t,t'<T$ and $|t-t'|<T^{1-5A_4\delta}$ then
$$
|A(t)-A(t')|=c|t-t'|<2T^{-3A_4\delta}.$$
Therefore, $$
|A({t})^2-A({t'})^2|<8\cdot T^{-A_4\delta}$$
because (in view of~\eqref{peq0}), $|A(t)|\leq\frac2{T^{1-2A_4\delta}}T=2T^{2A_4\delta}$.

We write
\begin{equation}\label{peq1}
h_tx=(h_txw^{-1}h_{-t})(h_tw)\end{equation}
and denote $M(t)=-ct^2+t(d-a)+b$.
Then (tacitly assuming that $A(t)\neq0$)
$$
h_txw^{-1}h_{-t}=\left[\begin{array}{cc}1&t\\0&1\end{array}
\right]\left[\begin{array}{cc}a&b\\c&d\end{array}\right]
\left[\begin{array}{cc}1&-t\\0&1\end{array}\right]=$$
$$
\left[\begin{array}{cc}A(t)&M(t)\\c&d-ct\end{array}\right]=
\left[\begin{array}{cc}1&0\\\frac{c}{A(t)}&1\end{array}\right]
\left[\begin{array}{cc}A(t)&M(t)\\0&d-ct-\frac{cM(t)}{A(t)}\end{array}\right]=$$
$$
\left[\begin{array}{cc}1&0\\cA(t)^{-1}&1\end{array}\right]
\left[\begin{array}{cc}1&M(t)A(t)\\0&1\end{array}\right]
\left[\begin{array}{cc}A(t)&0\\0&A(t)^{-1}\end{array}\right]$$
(since $d-ct-\frac{cM(t)}{A(t)}=A(t)^{-1}$).

Now, let $A_5:=10A_4$, fix $1\leq i\leq T^{A_5\delta}$ and let $t\in I_i\cap K$, i.e.
\begin{equation}\label{przyp1}
|A(t)|>\frac1{T^{A_4\delta}}.
\end{equation}

Note that by the renormalization property,
$$
h_{M(t)A(t)}a_{2\log A(t)}(h_tw)=h_{M(t)A(t)+A(t)^2t}(a_{2\log A(t)}w).$$  By~\eqref{przyp1}  and~\eqref{peq0}, we obtain that $|cA(t)^{-1}|<2T^{-1+2A_4\delta}$. 

Returning to \eqref{peq1} and using that $d_G(\begin{pmatrix}1&0\\cA(t)^{-1}&1\end{pmatrix},e)\ll |cA(t)|^{-1}$ 
,  it follows that
$$d_{X}(h_tx,h_{M(t)A(t)+A(t)^2t}(a_{2\log A(t)}w))\ll 2T^{-1+2A_4\delta}$$
(note that $a_sw$ is also periodic with ${\rm per}(a_sw)=e^s{\rm per}(w)$).  Let $t_i\in I_i\cap K$. We set
$$
\widetilde{w}_i:=a_{2\log A({t_i})}w.$$
 Note that
$$
{\rm per}(\widetilde{w}_i)=A({t_i})^2{\rm per}(w)\leq 4T^{2A_4\delta}\cdot T^{A_4\delta}=4T^{3A_4\delta}.$$
We have,
$$
h_{M_tA_t+A_t^2t}a_{2\log A_t}w=a_{2\log A_t}(h_{\frac{M_t}{A_t}+t}w).$$

Moreover, as $t,t_i\in I_i\cap K$, $|\frac{A(t)}{A(t_i)}-1|\leq T^\delta c |t-t_i|\ll T^{(A_4+1-A_5)\delta}$. Therefore,
and by right-invariance
$$
d_{SL(2,\R)}(a_{2\log A(t)}(h_{\frac{M(t)}{A(t)}+t}w), a_{2\log A({t_i})}(h_{\frac{M(t)}{A(t}+t}w))=d_{SL(2,\R)}(a_{2\log (A(t)/A({t_i}))},e) \ll T^{A_4+1-A_5)\delta}.
$$
But (again by renormalization)
$$
a_{2\log A_{t_i}}(h_{\frac{M_t}{A_t}+t}w)=
h_{\frac{M_t}{A_t}A_{t_i}^2+tA^2_{t_i}}\widetilde{w}_i.
$$

Putting the above estimates together we get 
\begin{equation}\label{eq:mart}
d_X(h_tx, h_{\frac{M(t)A(t_i)^2}{A(t)}+tA(t_i)^2}\widetilde{w}_i)\ll T^{A_4+1-A_5)\delta}
\end{equation}

Note that by the definition of $M(t)$ and $A(t)$, $A(t_i)^2\frac{M(t)+tA(t)}{A(t)}=A(t_i)^2\frac{b+dt}{A(t)}$. Moreover, 

$$
\frac{b+dt}{A(t)}-\frac{b}{A(t_i)}-\frac{at}{A(t)}-\frac{(d-a)t_i}{A(t_i)}= 
$$$$
b\Big(\frac{1}{A(t)}-\frac{1}{A(t_i)}\Big)+ \Big(\frac{(d-a)t}{A(t)}-\frac{(d-a)t_i}{A(t)}\Big)+ \Big(\frac{(d-a)t_i}{A(t)}-\frac{(d-a)t_i}{A(t_i)}\Big).
$$
Note that since $t,t_i\in K$ it follows that $|\frac{1}{A(t)}- \frac{1}{A(t_i)}|<2T^\delta$ and by the bound on $b$ (see \eqref{peq0}) it follows that the first term above is (in absolute value) $\ll T^{-1/2}$. Similarly and using additionally that $|t-t_i|<T^{1-A_5\delta}$, we get that the second term is (in absolute value) $\ll T^\delta \cdot 2T^{-1+A_4\delta}\cdot T^{1-A_5\delta}\leq 2 T^{(A_4+1-A_5)\delta}$. Finally the third term is, using \eqref{peq0}, $t,t_i\in K\cap I_i$ and $A(t)-A(t_i)=c(t-t_i)$, is $\ll T^{(2A_4+2-A_5)\delta}$. Using also that $A(t_i)\leq 2T^{A_4\delta}$. We get that 
$$
\Big|A(t_i)^2\frac{b+dt}{A(t)}-A(t_i)^2\frac{at}{A(t)}-A(t_i)^2\frac{b}{A(t_i)}-A(t_i)^2\frac{(d-a)t_i}{A(t_i)}\Big|\ll T^{(4A_4+2-A_5)\delta}\leq T^{-\delta}
$$
as $A_5=10A_4$. Note that term $A(t_i)^2\frac{b}{A(t_i)}+A(t_i)^2\frac{(d-a)t_i}{A(t_i)}$ does not depend on $t$. Consequently so defining $\bar{w}_i=h_{A(t_i)^2\frac{b}{A(t_i)}+A(t_i)^2\frac{(d-a)t_i}{A(t_i)}}\widetilde{w}_i$ (which is also periodic of the same period as $\widetilde{w}_i$, we get that by \eqref{eq:mart},
$$
d_X(h_tx, h_{ A(t_i)^2\frac{at}{A(t)}}\bar{w}_i)\ll T^{-\delta/3}.
$$
This finishes the proof.
\end{proof}

To prove Theorem \ref{prop:peri}, we will show that there exists a global constant $\bar{C}>0$ such that for any $\delta>0$ and any $T\geq T_{x,\delta}$, 
 \begin{equation}\label{eq:zzzz}
 \Big|\frac{1}{\pi_2(T)}\sum_{pq\leq T} f(h_{pq}x)\Big|=\bar{C}\delta+ {\rm o}_{T\to \infty}(1),
\end{equation}
since $\delta$ can be taken arbitrary small, we get that $\frac{1}{\pi_2(T)}\sum_{pq\leq T} f(h_{pq}x)=o(1)$ for any $x\in X$ generic for the Haar measure. Denote $\cA_{a,c,I_i}(t):=\frac{(a+ck_i)^2at}{a+ct}$ on $I_i=[k_i,l_i]$. Note that by the assumptions of Theorem \ref{prop:peri} we can use Lemma \ref{l:modular3} for the time $\bar{T}=T^{1+\delta^2}$ the point $=h_{t_0x}$, $t_0\leq T^{1+\delta^2}$ and  the periodic point $w$. Let $I_i=[i\bar{T}^{1-A_5\delta},(i+1)\bar{T}^{1-A_5\delta}]$. The intervals $\{I_i\}$ cover the interval $[-\bar{T}, \bar{T}]$ up to possible  the exceptional set $K$ with $|K|={\rm O}(T^{(1+\delta^2)(1-\delta)})$. Let $\{I'_i\}_{i=1}^z$ consist of those intervals in the collection $\{I_i\}$ for which $I'_i\subset [0,T]$. Then by Lemma \ref{l:modular3},
$$
\sum_{pq\leq T}f(h_{pq}x)=\sum_{pq\leq T}f(h_{pq-t_0}h_{t_0}x)=
\sum_{i=1}^{z}\sum_{pq\in I'_i} f(h_{pq-t_0}h_{t_0}x)+ {\rm O} (T^{1-\delta/2})=
$$
\begin{equation}\label{eq:zzz}
\sum_{i=1}^{z}\sum_{pq\in I'_i} f(h_{\cA_{a,c,I'_i-t_0}(pq-t_0)}w_i)+{\rm O}(\epsilon\pi_2(T)).
\end{equation}
Note that $I'_i-t_0\subset [-\bar{T},\bar{T}]$. The following proposition describes distribution of periodic points for the function $\cA(\cdot)$: 

\begin{proposition}\label{prop:vin} There exists a constant $A_6>0$ such that the following holds: for $\delta>0$ let  $I=[M,N]\subset [-T,T]$,  $|I|\geq T^{1-2A_5\delta}$. Let $\cA_{a,c,I}(t)=(a+cM)^2\cdot \frac{at}{a+ct}$ with 
$$
|a-1|,|c|\leq 2T^{-1+2A_4\delta},
$$
Then for any $f\in C_c^\infty(X)$ with $\mu_X(f)=0$ any periodic point $w$ with $per(w)\leq T^{A_5\delta}$ and any $t_0\in [0,T]$
 $$
\frac{1}{\pi_2(I)} \Big|\sum_{pq\in I}f(h_{\cA_{a,c,I}(pq-t_0)}w)\Big|\leq A_6\Big[\delta+K(\frac{1}{per(w)})\Big],
$$
where $K:\R_+\to \R_+$, $K(\delta)\to 0$ as $\delta \to 0$.
\end{proposition}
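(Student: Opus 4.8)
The plan is to Fourier-analyse $f$ along the closed horocycle through $w$, dispose of the zero frequency with Sarnak's equidistribution theorem for long closed horocycles, and kill the non-zero frequencies with an exponential-sum estimate over semiprimes obtained by combining van der Corput's $A$- and $B$-processes with Vinogradov's bilinear method. Since $w$ is $(h_t)$-periodic with period $P:={\rm per}(w)\le T^{A_5\delta}$, its orbit is a closed horocycle which we parametrise by $\R/P\Z$, and we expand $f(h_sw)=\sum_{k\in\Z}\widehat f_w(k)\,e(ks/P)$. Writing $U$ for the horocycle vector field and integrating by parts, $|\widehat f_w(k)|\ll_f (P/|k|)^2$ for $k\ne0$; hence truncating to $|k|\le K_0:=T^{3A_5\delta}$ costs only $O_f(\pi_2(I)\,T^{-A_5\delta})$, while $\sum_{1\le|k|\le K_0}|\widehat f_w(k)|\ll_f P\ll T^{A_5\delta}$. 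Therefore
\[
\sum_{pq\in I}f\big(h_{\cA_{a,c,I}(pq-t_0)}w\big)=\widehat f_w(0)\,\pi_2(I)+\sum_{1\le|k|\le K_0}\widehat f_w(k)\,E_k+O_f\big(\pi_2(I)\,T^{-A_5\delta}\big),
\]
where $E_k:=\sum_{pq\in I}e\big(k\,\cA_{a,c,I}(pq-t_0)/P\big)$.

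\textbf{The zero frequency.} Here $\widehat f_w(0)=\tfrac1P\int_0^Pf(h_sw)\,ds$ is the average of $f$ over the whole closed horocycle of length $P$, so by the qualitative theorem of Sarnak \cite{Sarn} on equidistribution of long closed horocycles there is a function $K:\R_+\to\R_+$, depending only on $f$, with $K(\theta)\to0$ as $\theta\to0$, such that $|\widehat f_w(0)|=|\widehat f_w(0)-\mu_X(f)|\le K(1/P)$. This is the source of the term $K(1/{\rm per}(w))$ in the conclusion.

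\textbf{The non-zero frequencies (the heart).} It remains to show $|E_k|\ll_f\pi_2(I)\big(T^{-\eta_1}+K(1/P)\big)$ for $1\le|k|\le K_0$, for some fixed $\eta_1>0$; combined with the bound on $\sum_k|\widehat f_w(k)|$ this suffices. One uses that $\cA_{a,c,I}$ is a Möbius function with $\cA_{a,c,I}'(t)=a^2(a+cM)^2(a+ct)^{-2}$ and, away from its pole (the short interval $K$ excised in Lemma~\ref{l:modular3}), second derivative $|\cA_{a,c,I}''(t)|\ll T^{-1+O(\delta)}$, so the phase $k\,\cA_{a,c,I}(\cdot)/P$ has total curvature over $I$ of order $|k|\,|c|\,|I|^2/P$, which is $\gg T^{\eta_2}$ for some fixed $\eta_2>0$ outside the degenerate regime $|c|\ll T^{-2+O(\delta)}$. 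Writing $pq$ with $p\le q$, splitting $p$ dyadically, and applying Vaughan's identity to the sum over the larger prime (equivalently, using the bilinear structure carried by the two prime factors), one reduces $E_k$ to Type~I sums $\sum_{d\sim D}\lambda_d\sum_m e(k\,\cA_{a,c,I}(pdm-t_0)/P)$ and Type~II sums $\sum_{d\sim D}\sum_{m\sim M'}\alpha_d\beta_m\,e(k\,\cA_{a,c,I}(pdm-t_0)/P)$, with $D$ confined to a bounded-power range. For the Type~I sums the inner phase $m\mapsto k\,\cA_{a,c,I}(pdm-t_0)/P$ has second derivative of controlled size, so van der Corput's $A$- and $B$-processes reduce it to a linear phase $m\mapsto\beta m$ whose slope $\beta$ is the effective rotation number, and Vinogradov's bilinear estimate produces a power saving whenever $\beta$ is not too close to a rational with small denominator. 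For the Type~II sums, Cauchy--Schwarz in $d$ followed by van der Corput on the difference phase $m\mapsto k\big[\cA_{a,c,I}(pd_1m-t_0)-\cA_{a,c,I}(pd_2m-t_0)\big]/P$ (whose derivatives are controlled by $|c|$ and $|d_1-d_2|$) gives the power saving similarly. On the residual major arcs the same method yields instead a main term of size $\ll\mu(q')^2\phi(q')^{-2}\,\pi_2(I)$, with $q'$ the small effective denominator, supplied by the Siegel--Walfisz theorem for semiprimes; since a genuinely small $q'$ forces either $|k|$ to be large — where $\widehat f_w(k)$ is negligible by the decay above — or $P$ to be small — where $K(1/P)$ is already bounded below — this contribution is absorbed into the $K(1/P)$-term.

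\textbf{Conclusion and main obstacle.} Assembling the three contributions,
\[
\frac1{\pi_2(I)}\Big|\sum_{pq\in I}f\big(h_{\cA_{a,c,I}(pq-t_0)}w\big)\Big|\le K(1/P)+O_f\big(T^{A_5\delta-\eta_1}\big)+o(1),
\]
which is $\le A_6\big(\delta+K(1/{\rm per}(w))\big)$ for $\delta$ small and $T$ large, absorbing $T^{A_5\delta-\eta_1}$ into $\delta$; the constant $A_6$ and the function $K$ depend only on $f$. The decisive step is the exponential-sum bound for $E_k$: one must run the van der Corput reduction so that the effective rotation always lands on a minor arc, and extract a power saving $\eta_1$ independent of $\delta$ so that it beats the $T^{O(\delta)}$ losses from $P\le T^{A_5\delta}$ and from the number of frequencies. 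The genuine obstruction is the degenerate regime in which $\cA_{a,c,I}$ is essentially linear over $I$ (that is, $|c|$ is atypically small) and the induced linear rotation is Diophantine-bad: there the sum need not cancel at all, which is precisely why the estimate cannot be pushed below $K(1/{\rm per}(w))$ and why that case is treated separately by means of the generalised Siegel--Walfisz theorem (Proposition~\ref{thm:Maks2}).
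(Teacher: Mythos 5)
Your minor-arc treatment (Fourier expansion along the closed horocycle, zero mode via Sarnak, non-zero modes via van der Corput's $A$/$B$ processes and Vinogradov-type bilinear estimates) is essentially the paper's minor-arc case, carried out there through Proposition \ref{thm:maks1}. The genuine gap is in the major-arc (degenerate) regime, where your absorption argument fails. Take $c=0$, $a=1$, $t_0=0$ and $w$ of \emph{integer} period $P\le T^{A_5\delta}$; then $\cA_{a,c,I}(t)=t$, and for every frequency $k$ that is a multiple of $P$ with $|k|\le K_0=T^{3A_5\delta}$ the sum $E_k=\sum_{pq\in I}e(k\,pq/P)$ equals $\pi_2(I)$ with no cancellation whatsoever, while the integration-by-parts bound $|\widehat f_w(k)|\ll (P/|k|)^{2}$ is only $O(1)$ at $|k|\asymp P$, not negligible. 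So it is not true that a small effective denominator forces either $|k|$ large (with negligible coefficient) or $P$ small: the frequencies $|k|$ comparable to $P$ are exactly the dangerous ones, and mode by mode their major-arc contribution has size comparable to $\pi_2(I)$ and cannot be absorbed into $K(1/\mathrm{per}(w))$. (One could try to show $\widehat f_w(k)$ is itself small for $1\le|k|\ll P$ via twisted ergodic integrals along the long closed horocycle, but that is a separate non-trivial argument which your proposal does not make.)

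In this regime the paper does something structurally different (its CASE I: $r\le T^{1000A_5\delta}$ and $|c|\le T^{100A_5\delta}|I|^{-2}$): it keeps $f$ unexpanded, linearizes $\cA_{a,c,I}$ on subintervals, and uses the Siegel--Walfisz theorem for semiprimes with its possible quadratic-character secondary term (Proposition \ref{thm:Maks2}) to reduce the problem to showing that $\sum_{v<r}1_{(v,r)=1}f(h_{vR/r}w'_s)$ and $\sum_{v<r}\chi(v)f(h_{vR/r}w'_s)$ are $o(\phi(r))$. For $r\le R^{20}$ this is Proposition \ref{prop:mult}, i.e.\ orthogonality of $f$ sampled along the closed horocycle to arbitrary bounded multiplicative functions; its proof reruns the quantitative bilinear machinery (Theorem \ref{LMWdrugie}, Propositions \ref{prop:E2} and \ref{prop:E3}) at the scale $r$ and excludes the degenerate alternatives using Str\"ombergsson's equidistribution of long pieces of closed horocycles \cite{Strom04}. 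For $r\ge R^{20}$ one uses P\'olya--Vinogradov (Lemma \ref{lem:taPV}) together with Sarnak's equidistribution of long closed horocycles \cite{Sarn}. Your proposal contains no substitute for this step, which is precisely the hardest scenario the proposition must handle (the near-rational, slowly growing period case emphasized in the introduction), so as written the argument breaks down on the major arcs.
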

We will prove the above proposition in Section \ref{sec:prop:vin'}.  Notice however that by \eqref{eq:zzz} and the above proposition we immediately get that \eqref{eq:zzzz} holds (since $\epsilon$ is arbitrary small).

Note that using Proposition \ref{prop:vin} and \eqref{eq:zzz}, the proof of Theorem \ref{prop:peri} will be finished if we show that for most $i\leq T^{A_5\delta}$ the period of $w_i$ is large (and grows to $\infty$ with $T$). For this we will use that $x\in X$ is generic for Haar. We have:

\begin{lemma}\label{lem:disj} Let $P_W=\bigcup_{per(w)\leq W} supp(\mu_w)$, where $\mu_w$ is the probability measure on the periodic orbit $w$. Then for every $\epsilon>0$ there exists a function $g\in C(X)$, $0\leq g\leq 1$ of compact support such that  $g\equiv 0$ on $P_{\epsilon^{-1}}$, $\int_X g d\mu_{X} \geq 1-\epsilon$.
\end{lemma}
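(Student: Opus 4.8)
The plan is to isolate two soft structural properties of the set $P_W=\bigcup_{\operatorname{per}(w)\le W}\operatorname{supp}(\mu_w)$ — that it is closed in $X$ and that $\mu_X(P_W)=0$ — and then to obtain $g$ by a routine regularity-plus-Urysohn argument on the locally compact space $X=SL(2,\R)/SL(2,\Z)$.

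First I would describe $P_W$ geometrically. A point of $X$ is $(h_t)$-periodic exactly when it lies on a closed horocycle; by a direct computation (in the spirit of Lemma~\ref{lem:zz}, which shows that up to $\Gamma=SL(2,\Z)$ a periodic point can be written as $a_s h_u$) the closed horocycle of period $e^s$ equals $O_s=\{a_s h_u\Gamma:u\in\R\}$, the map $u\mapsto a_s h_u\Gamma$ has period $1$, and $P_W=\{a_s h_u\Gamma:s\le\log W,\ u\in\R\}$. Since $B:=\{a_s h_u:s,u\in\R\}$ is the $2$-dimensional upper triangular subgroup of the $3$-dimensional group $G=SL(2,\R)$, the set $B\Gamma=\bigcup_{\gamma\in\Gamma}B\gamma$ is a countable union of right translates of the Haar-null set $B$, hence is Haar-null in $G$, and therefore $\mu_X(P_W)=0$. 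For closedness I would use that a closed horocycle of length $e^s$ is a non-contractible loop of length $e^s$, while for any compact $K\subset X$ there is $c_K>0$ such that every non-contractible loop meeting $K$ has length $\ge c_K$ (by discreteness of $\Gamma$ and compactness of $K$); hence any closed horocycle meeting $K$ has $e^s\ge c_K$. Thus $P_W\cap K$ is contained in the compact set $\{a_s h_u\Gamma:\log c_K\le s\le\log W\}$ (a continuous image of $[\log c_K,\log W]\times(\R/\Z)$) and in fact equals its intersection with $K$; so $P_W\cap K$ is compact for every compact $K$, and therefore $P_W$ is closed in $X$.

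Granting these two facts the lemma is immediate. Fix $\epsilon>0$ and put $W=\epsilon^{-1}$. By inner regularity of the Radon probability measure $\mu_X$ choose a compact $L\subset X$ with $\mu_X(L)\ge 1-\epsilon/2$, and by outer regularity of the closed null set $P_W$ choose an open $V\supseteq P_W$ with $\mu_X(V)<\epsilon/2$. Then $L\setminus V$ is compact, disjoint from $P_W$, and $\mu_X(L\setminus V)\ge\mu_X(L)-\mu_X(V)\ge 1-\epsilon$. Let $r:=\tfrac12\operatorname{dist}(L\setminus V,P_W)>0$ and let $U$ be the open $r$-neighbourhood of $L\setminus V$; by the triangle inequality $U\cap P_W=\emptyset$. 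By Urysohn's lemma for locally compact Hausdorff spaces there is $g\in C_c(X)$ with $0\le g\le 1$, $g\equiv 1$ on $L\setminus V$, and $\operatorname{supp}g\subseteq U$. Then $g$ has compact support, $g\equiv 0$ on $P_W\supseteq P_{\epsilon^{-1}}$, and $\int_X g\,d\mu_X\ge\int_{L\setminus V}g\,d\mu_X=\mu_X(L\setminus V)\ge 1-\epsilon$, which is what we wanted.

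The only non-formal ingredient is the geometry of $P_W$: that the $(h_t)$-periodic points are exactly the union of the closed horocycles, and that a short closed horocycle is a short non-contractible loop, hence confined to the cuspidal thin part and disjoint from any fixed compact set. This is classical (see e.g.~\cite{Dani}), so I do not expect a genuine obstacle here; the remaining steps are standard measure-theoretic bookkeeping, including the compact-support requirement, which is handled automatically by the locally compact form of Urysohn's lemma.
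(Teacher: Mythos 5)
Your argument is correct, but it takes a different route from the paper: the paper's entire proof is a citation to Str\"ombergsson \cite{Strom04}, Lemma 3.2, whereas you give a self-contained argument. Concretely, you identify the periodic set explicitly as $P_W=\{a_sh_u\Gamma:\ e^{s}\le W,\ u\in\R\}$ (the part of the $NA$-orbit of the identity coset with period $e^s\le W$), deduce that it is a Haar-null subset of $X$ and that it is closed because short closed horocycles are short non-contractible loops and hence stay out of any fixed compact set, and then finish by inner/outer regularity plus the locally compact Urysohn lemma. This is a perfectly valid and more elementary substitute for the cited lemma, and it has the merit of making the geometry (short closed horocycles live deep in the cusp) and the measure-zero statement explicit; the paper's citation is shorter and hands the same content off to a result stated precisely for the modular surface. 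Two small points of hygiene: the classification of $h_t$-periodic points as $\{a_sh_u\Gamma\}$ with period $e^{s}$ is the standard fact that a lattice is $h$-periodic iff it contains a horizontal vector (equivalently $g$ lies in the upper-triangular Borel times $\Gamma$), and is not really a consequence of Lemma~\ref{lem:zz}, so you should either prove it in one line as above or cite \cite{Dani}; and in the Urysohn step one should note the trivial degenerate case $L\setminus V=\emptyset$ (which cannot occur when $\epsilon<1$, since $\mu_X(L\setminus V)\ge 1-\epsilon$ is needed). Neither affects the correctness of the proof.
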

\begin{proof}
This  follows from e.g.  Strombergsson \cite{Strom04},  Lemma 3.2.
\end{proof}

Let now $x\in X$ be as in Theorem \ref{prop:peri}. Then Lemmas \ref{l:modular3} and \ref{lem:disj} imply the following:  For every $\epsilon$ there exists $T_{\epsilon,x}$ such that for $T\geq T_{\epsilon,x}$,
$$
\bigcup_{i\in Z}|I_i|\geq (1-\epsilon)T, \text{ where } Z=\{i\leq T^{A_5\delta}\;:\; per(w_i)\geq \epsilon^{-1}\}.
$$
Indeed, note that by Lemma \ref{lem:disj} for $\epsilon^2$ it follows that $g$ vanishes an all periodic orbits of period $\leq \epsilon^{-2}$.Therefore, 
$$
(1-\epsilon^2)T\leq T\mu_X(g)\leq \sum_{n\leq T}g(h_nx)+\epsilon^2=\sum_{i} \sum_{n\in I_i}g(h_{\cA{a,c,t_i}(n-t_0)}w_i)+ 2\epsilon^2 \leq \bigcup_{i\in Z}|I_i|.
$$

The above shows that for most $i\leq T^{A_4\delta}$, $per(w_i)\geq \epsilon^{-1}$ and so Proposition \ref{prop:vin} and \eqref{eq:zzz} finish the proof.

It remains to prove Proposition \ref{prop:vin}.

\subsection{Proof of Proposition \ref{prop:vin}}\label{sec:prop:vin'}
In this section we will prove Proposition \ref{prop:vin}. Let $\R\ni R=per(w)<T^{A_5\delta}$. We will consider the function $f$ restricted to the periodic orbit $\{h_tw:0\leq t\leq R\}$. Recall that $\cA_{a,c,I}(t)=(a+cM)^2\frac{at}{a+ct}$. Let $\beta=1/R$.
Let $1\leq b\leq r<|I|$, $(b,r)=1$, be such that
\begin{equation}\label{eq:bet}
\Big\|(a+cM)^2\beta-\frac{b}{r}\Big\|<\frac{1}{r}|I|^{-1}.
\end{equation}

We will consider two cases:

{\bf CASE I (major arc case):} $r\leq T^{1000A_5\delta}$ and $|c|\leq T^{100A_5\delta}|I|^{-2}$. In this case we split the interval $I$ into disjoint  intervals $\{J_s=[z'_s,z'_{s+1}]\}$ such that $|J_s|\sim T^{-998A_5\delta}|I|\geq T^{1-999A_5\delta}$ (we WLOG assume that $z'_s$ are integers). Note that for $t\in J_s$, by Taylor's formula (up to order 2)
 $$ 
 \cA_{a,c,I}(t)= \cA_{a,c,I}(z'_s)+(a+cM)^2 \frac{a^2}{(a+cz'_s)^2}(t-z'_s)+ \rm{O}(T^{-A_5\delta}), 
$$
as $|\partial_{tt}\cA_{a,c,I}(\theta)(t-z'_s)^2|\leq |J_s|^2 (a+cM)^2\frac{2a^2|c|}{(a+c\theta)^3}=\rm{O}(T^{-A_5\delta})$ by the bound on $c$. We have
$$
|(a+cM)^2 \frac{a^2}{(a+cz'_s)^2}-(a+cM)^2|\ll \frac{2acz'_s+(cz'_s)^2}{(a+cz'_s)^2}\ll T^{\delta} T^{100A_5\delta} |I|^{-2}T \ll T^{-A_5\delta}|J_s|^{-1},
$$
as $|I|\geq T^{1-A_5\delta}$. This shows that $|(a+cM)^2 \frac{a^2}{(a+cz'_s)^2}(t-z'_s)-(a+cM)^2(t-z'_s)|\ll T^{-A_5\delta}$. Moreover, by the bounds on $a,c$, 
$$|(a+cM)^2(t-z'_s)-(t-z'_s)|\leq |J_s||a+cM-1||a+cM+1|\ll $$$$T^{-998A_5\delta}|I|(T^{-1+2A_4\delta}+T^{100A_5\delta}|I|^{-2}T)\leq T^{1-998A_5\delta}(T^{-1+2A_4\delta}+T^{100A_5\delta}T^{-2+2A_5\delta}T)\leq T^{-A_5\delta}.
$$
Summarizing,  $\cA_{a,c,I}(t-t_0)= \cA_{a,c,I}(z'_s-t_0)+(t-z'_s-t_0)+ \rm{O}(T^{-A_5\delta})$.
Therefore for $w_s=h_{\cA_{a,c,I}(z'_s-t_0)}w$ and $z_s=z'_s-t_0$, we have
$$
\sum_{pq\in I}f(h_{\cA_{a,c,I}(pq-t_0)}w)=\sum_{s}\sum_{pq\in J_s}f(h_{pq-z_s}w_s) + {\rm O}(T^{-A_5\delta}\pi_2(I))
$$
Note that by \eqref{eq:bet}, 
$$\Big\|\beta-\frac{b}{r}\Big\|\leq |(a+cM)^2-1|+ \Big\|(a+cM)^2\beta-\frac{b}{r}\Big\|<
3 T^{-A_5\delta}|J_s|+\frac{1}{r}|I|^{-1}\ll T^{-2A_5\delta}|J_s|
$$
Fix $s$ and let $0<v<r$. If $(pq-z_s)b\equiv v \text{ mod }r$, i.e. $(pq-z_s)\frac{b}{r}=k+v/r$, then 
$$
\frac{pq-z_s}{R}=(pq-z_s)(\beta-\frac{b}{r})+k+\frac{v}{r}=k+\frac{v}{r}+ {\rm O}(T^{-2A_5\delta}).
$$
Therefore, $pq-z_s=kR+\frac{vR}{r}+ {\rm O}(T^{-A_5\delta})$, as $R\leq T^{A_5\delta}$.
Let $M_{v,s}:=\{pq\in J_s,\;:\; (pq-z_s)b\equiv v-z_sb\mod r \}$.  Let $w_s'= h_{-z_sbR/r}w_s$. Since $w_s$ is periodic of period $R$, we get 

$$
\sum_{pq\in J_s}f(h_{pq-z_s}w_s)=\sum_{v<r} \sum_{pq\in M_{v,s}}f(h_{pq-z_s}w_s)=\sum_{v<r} |M_{v,s}|f(h_{\frac{vR}{r}}w'_s)+ {\rm O}(T^{-A_5\delta} |J_s|).
$$
By Proposition \ref{thm:Maks2} (for $\kappa=999A_5\delta$), using that $r<T^{1000A_5\delta}$
$$
\sum_{v<r} |M_{v,s}|f(h_{\frac{vR}{r}}w'_s)=\frac{\pi_2(J)}{\phi(r)}  \sum_{v<r}1_{(v,r)=1}f(h_{\frac{vR}{r}}w'_s)+ $$$$\frac{F_{J,r,T}}{\phi(r)}  \sum_{v<r}\chi(v) f(h_{\frac{vR}{r}}w'_s)+{\rm O}(\delta\pi_2(J)).
$$
We will consider two cases depending on how large $r$ is with respect to $R$ (trivially $r\geq R$). 

{\bf Subcase A:} $r\leq R^{20}$.
In this case the proof is finished by Proposition \ref{prop:mult} below (using it for $\nu=1_{(\cdot,r)}$ and $\nu=\chi$), as $\phi(r)\gg r [\log \log r]^{-1}$ and $r\leq R^{20}$.

{\bf Subcase B:} $r\geq R^{20}$. In this case we split the interval $[0,R]$ into intervals of $\{U_i=[u_i,u_{i+1}]\}$ of size $U\sim \frac{r^{1-\xi}}{R}$, $\xi=1/100$. Note that for $v\in U_i$, $|\frac{vR}{r}- \frac{u_iR}{r}|\ll r^{-\xi}$. Therefore 
$$
\sum_{v<r}1_{(v,r)=1} f(h_{\frac{vR}{r}}w'_s)=\sum_{i\leq r/U }\Big(\sum_{v\in U_i}1_{(v,r)=1}\Big)f(h_{\frac{u_iR}{r}}w'_s) +{\rm O}\Big(r^{1-\xi}\Big).
$$
Moreover, by Lemma \ref{lem:taPV}, 
$$
\sum_{i}\Big(\sum_{v\in U_i}1_{(v,r)=1}\Big)f(h_{\frac{u_iR}{r}}w'_s)=\phi(r)\frac{U}{r}\sum_{i\leq r/U}f(h_{\frac{u_iR}{r}}w'_s)+ o(\phi(r))
$$
Note that the points $h_{\frac{u_iR}{r}}w'_s$ are $r^{-\xi}$ dense on the periodic orbit of period $R\to \infty$. Therefore $Ur^{-1}\sum_{i\leq r/U}f(h_{\frac{u_iR}{r}}w'_s)$ is close to the integral of $f$ on this periodic orbit. Using that measures on long periodic orbits distribute towards the Haar measure, \cite{Sarn}, and $\mu_X(f)=0$, it follows that $Ur^{-1}\sum_{i\leq r/U}f(h_{\frac{u_iR}{r}}w'_s)$ is $o(1)$. Therefore, $\sum_{v<r}1_{(v,r)=1} f(h_{\frac{vR}{r}}w'_s)=o(\phi(r))$. Analogous reasoning for $\chi$ in place of $1_{(v,r)=1}$ together with Lemma  \ref{lem:taPV} shows that $\sum_{v<r}\chi(v) f(h_{\frac{vR}{r}}w'_s)=o(\phi(r))$. This finishes the proof in this case.

\begin{proposition}\label{prop:mult} Let $\nu$ be any multiplicative function, $|\nu|\leq 1$ and let $w$ be a periodic point of period $R$. Then for every $r\in [R,R^{20}]$
$$
|\sum_{n\leq r} \nu(n)f(h_{nR/r}w)|\ll \frac{r}{[\log \log(R)]^2},
$$
where the implied constant depends only on $X$, $\nu$ and $f$ (but not on $w$).
\end{proposition}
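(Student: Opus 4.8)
The plan is to reduce the bound on the twisted sum $\sum_{n\le r}\nu(n)f(h_{nR/r}w)$ to the second part of Proposition \ref{prop:typ2} (the M\"obius/multiplicative estimate \eqref{eq:mob'}), applied to the sequence $a_n := f(h_{nR/r}w)$ on the index range $[1,r]$. To do this I must verify the bilinear hypothesis \eqref{eq:typ2'} for $(a_n)$: for $M\in[r^{9/10},r]$ and primes $p',q'$ in the window $[e^{(\log\log\log r)^3}, e^{(\log\log r)^{10}}]$ with $1/5\le p'/q'\le 5$ and $p'\ne q'$, one needs
$$
\sum_{n\le M} a_{np'}\overline{a_{nq'}}=\sum_{n\le M} f(h_{np'R/r}w)\overline{f(h_{nq'R/r}w)}\ll \frac{M}{(\log\log M)^{10}}.
$$
Since $h_t w$ is periodic of period $R$, writing $\beta=1/R$ this is a bilinear sum over a rotation-type orbit: $a_n=F(n\beta R/r)$ where $F$ is the restriction of $f$ to the closed horocycle, which can be treated exactly as in the DKBSZ/Bourgain--Sarnak--Ziegler scheme, using the commutation relation $a_s h_t a_{-s}=h_{e^s t}$ to rewrite the product $f(h_{np'R/r}w)\overline{f(h_{nq'R/r}w)}$ as $(f\times f)\circ(a_{\log p'}\times a_{\log q'})\circ(h_{nR/r}\times h_{nR/r})$ evaluated at $(a_{-\log p'}w, a_{-\log q'}w)$, and then invoking quantitative equidistribution for $h_t\times h_t$ on the product of the (short!) periodic orbit with itself. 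The key point is that the relevant closed orbit has bounded volume ($\asymp R\le r^{A_5\delta/(1-2A_5\delta)}$, which is small relative to $r$), and $p',q'$ lie in a range that is subpolynomial in $r$, so the Sobolev-norm loss $S(\varphi)\ll (p'q')^{O(1)}$ is dominated by the polynomial equidistribution gain, giving a bound of the shape $M/r^{c}$ for $M\in[r^{9/10},r]$ — far stronger than the required $M/(\log\log M)^{10}$.

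Concretely, the steps are: (1) Fix the periodic orbit $\mathcal O_w=\{h_t w : 0\le t\le R\}$ and set $F(\theta)=f(h_\theta w)$, a smooth function on $\R/R\Z$ with $C^s$-norms bounded in terms of $f$ and $R\le r^{O(\delta)}$. (2) For the bilinear sum, apply the renormalization identity to recast $\sum_{n\le M}F(np'R/r)\overline{F(nq'R/r)}$ as a continuous-time ergodic average of a smooth function $\varphi_{p',q'}$ over the $h_t\times h_t$-orbit of $(a_{-\log p'}w,a_{-\log q'}w)$ inside $\mathcal O_w\times\mathcal O_w$ (with a negligible discretization error, using Proposition \ref{prop:zzz}-type comparison or direct summation by parts, since the orbit is 1-periodic and $F$ is Lipschitz). (3) Since $\mathcal O_w\times\mathcal O_w$ is itself a closed $SL(2,\R)$-orbit only after passing to the diagonal — here we instead use that the two closed horocycles $\mathcal O_{a_{-\log p'}w}$ and $\mathcal O_{a_{-\log q'}w}$ are \emph{distinct} closed horocycles (as $p'\ne q'$) of period $R/p'$ and $R/q'$, so the relevant invariant measure for $h_t\times h_t$ is the product measure and Str\"ombergsson-type equidistribution of long pieces of (products of) closed horocycles, \cite{Strom04}, \cite{Strom13}, gives the cancellation; alternatively one can invoke Theorem \ref{LMWdrugie} / Theorem \ref{thm:SL2_equi} with uniform constants since $R$ is small. (4) Collect the estimate into the form \eqref{eq:typ2'} and conclude \eqref{eq:mob'} from Proposition \ref{prop:typ2}, taking $N=r$; this yields $\sum_{n\le r}\nu(n)a_n\ll r(\log\log r)^{-4}\ll r(\log\log R)^{-2}$ since $R\le r\le R^{20}$ forces $\log\log r\asymp\log\log R$.

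The main obstacle, as elsewhere in this paper, is uniformity: the period $R$ varies with $r$ (and with $w$), so the equidistribution statements used in step (3) must have constants depending only on $\mathrm{vol}$ and the spectral gap of the associated lattice, not on the lattice itself — but the orbit here is (a cover of) a closed horocycle in $SL(2,\R)/SL(2,\Z)$, so Selberg's bound gives a uniform spectral gap and the covering map controls the injectivity radius and cusp geometry, exactly as in the proof of Proposition \ref{prop:E2}. The second, more delicate point is that the DKBSZ-type input requires cancellation in the bilinear sum for \emph{most} pairs $(p',q')$ in the window, and one must check that the exceptional pairs (those for which $(a_{-\log p'}w,a_{-\log q'}w)$ fails to equidistribute, e.g. because it lies near a short periodic $SL(2,\R)$-orbit in the product) can be discarded within the error budget of \eqref{eq:typ2'}; this follows because the product of two distinct closed horocycles cannot be trapped in a lower-dimensional homogeneous subvariety unless the horocycles coincide, which is excluded by $p'\ne q'$, so in fact \emph{every} admissible pair gives cancellation and the argument simplifies. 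I would allocate most of the write-up to making step (3) precise with explicit uniform constants.
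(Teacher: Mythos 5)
Your overall reduction is the same as the paper's: apply the second (multiplicative) part of Proposition \ref{prop:typ2} with $N=r$ to $a_n=f(h_{nR/r}w)$, and verify the bilinear hypothesis \eqref{eq:typ2'} by renormalizing with $a_{\log p'}\times a_{\log q'}$ so that one studies the diagonal flow applied to $(a_{-\log p'}\bar w,a_{-\log q'}\bar w)$ for a suitable periodic $\bar w$. The gap is in your step (3), i.e.\ in how you claim the bilinear cancellation. First, the diagonal orbit $(h_t\times h_t)(a_{-\log p'}w,a_{-\log q'}w)$ is confined to the product of two closed horocycles of periods $R/p'$ and $R/q'$; these periods are rationally dependent (ratio $q'/p'$), so the diagonal flow on that two-torus is itself periodic and does \emph{not} equidistribute with respect to the product measure. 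There is no ``Str\"ombergsson-type equidistribution of products of closed horocycles'' to invoke in the form you describe; what one can hope for is equidistribution in $X\times X$ at the finite scale $T=M$, and in the paper this comes only through the trichotomy of Theorem \ref{LMWdrugie} (plus Proposition \ref{prop:zzz} to pass to discrete time), not unconditionally from Theorem \ref{thm:SL2_equi}, which concerns a single quotient and not the product system.

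Second, and this is the heart of the matter, you dismiss the exceptional alternatives of the trichotomy by asserting that the product of two distinct closed horocycles cannot be trapped in a lower-dimensional homogeneous subvariety. That is not the relevant obstruction: E2/E3 concern the orbit of the \emph{specific point} staying, throughout the window $[0,M]$, close to a translate of a small-volume $H$-orbit or in the thin part, and a priori this can happen if the closed horocycle through $\bar w$ shadows a very short periodic horocycle or spends the window deep in a cusp; $p'\neq q'$ does not exclude this. The paper's proof of this proposition is precisely about ruling this out: if $E2_2$ or E3 held, Propositions \ref{prop:E2} and \ref{prop:E3} would produce a periodic point $\tilde w$ of period at most $M^{A_4\delta}$ with $d_X(h_{t_0}\bar w,\tilde w)<M^{-1+A_4\delta}$; since $\bar w$ is \emph{exactly} periodic of period $r$ and $M\geq r^{9/10}$, this would trap an arc of length $\geq (r\bar R^{-1})^{3/5}$ of a closed horocycle of period $r\bar R^{-1}$ near a unit arc, contradicting Str\"ombergsson's theorem \cite{Strom04} that arcs of length $T^{1/2+\epsilon}$ of a closed horocycle of period $T$ equidistribute. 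Your proposal contains no substitute for this step, so the hypothesis \eqref{eq:typ2'} is not established. (A secondary point: your claimed saving of the shape $M/r^{c}$ for every admissible pair is too strong; in the non-E1 cases the saving is governed, via the EMMV input in Proposition \ref{prop:E2}, by $\min(p',q')^{1/3}$, which in the window $[e^{(\log\log\log r)^3},e^{(\log\log r)^{10}}]$ is only sub-polynomial — this is exactly why the final bound in the proposition is of $(\log\log)^{-A}$ type.)
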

\begin{proof}  We will show this by using the condition implying \eqref{eq:mob'} with $X=r$. For this for $M\in [r^{9/10},r]$ and $p,q$ primes with  $p,q\in \Big[ e^{(\log \log \log r)^3},e^{(\log \log r)^{10}}\Big]$ with $1/5\leq p/q\leq 5$ we need to show that 
\begin{equation}\label{eq:hp}
|\sum_{n\leq M}f(h_{pnR/r}w)f(h_{qnR/r}w)|\leq  \frac{M}{(\log\log M)^{10}}.
\end{equation}
Note that the LHS above can, using renormalization, be written as 
$$
\sum_{n\leq M}f(h_{pnR/r}w)f(h_{qnR/r}w)=
$$$$
\sum_{n\leq M}(f\times f)\circ(a_{\log(pR/r)},a_{\log(qR/r)}) (h_n\times h_n)(a_{-\log(pR/r)}w,a_{-\log(qR/r)}w).
$$
Since $\log(pR/r)=\log p+\log R/r$, the above can be written as 
$$
\sum_{n\leq M}(\bar{f}\times \bar{f})\circ(a_{\log(p)},a_{\log(q)}) (h_n\times h_n)(a_{-\log(p)}\bar{w},a_{-\log(q)}\bar{w}),
$$
where $\bar{f}=f\circ a_{\log R/r}$ and $\bar{w}=a_{-\log(R/r)}w$. Since $w$ is periodic of period $R$,  $h_{r}\bar{w}=h_ra_{-\log R/r}w=a_{-\log(R/r)}h_{R}w=\bar{w}$ and so $\bar{w}$ is periodic of period $r\in [R,R^{20}]$.

Note that we can analyze the above expression using again Theorem \ref{LMWdrugie} for the point  $(a_{-\log(p)}\bar{w},a_{-\log(q)}\bar{w})$ and different parameters: $T=M$ and $R=R'=e^{(\log \log r)^{20}}$.  Assume first that E1 or E2 holds for all $p,q\in  \Big[ e^{(\log \log \log r)^3},e^{(\log \log r)^{10}}\Big]$ and assume additionally that for those $p,q$ for which E2 holds we have that $E2_1$ in Proposition \ref{prop:E2} holds for $\Psi(T)=\log \log T$. Then notice that  \eqref{eq:hp} holds for all $p,q$. So by Proposition \ref{prop:E2} we only need to consider the case in which there are exist $p,q\in  \Big[ e^{(\log \log \log r)^3},e^{(\log \log r)^{10}}\Big]$ for which either E3. holds or $E2_2$ holds for the point $\bar{w}$. But  this in both these cases (using also Proposition \ref{prop:E3}) means that there exists a point $\tilde{w}\in X$ of period $per(\bar{w})\leq M^{A_4\delta}$ and $t_0\leq M$ such that $d_X(h_{t_0}\bar{w},\tilde{w})<M^{-1+A_4\delta}$. By the definition of $\tilde{w}$ it follows that  $\tilde{w}=a_{\log \bar{R}}h_{t'}(e) \Gamma$, for some $t'\leq 1$, $\bar{R}\leq M^{A_4\delta}$. This by the bound on $\bar{R}$ then implies that 
 $d_X\Big(a_{-\log \bar{R}}h_{t_0}\bar{w},h_{t'}(e)\Big)<M^{-1+10A_4\delta}$. Similarly, since $h_{t_0}\bar{w}\in X$ is a periodic point of period $r$, $h_{t_0}\bar{w}=a_{\log r} h_{t''}(e) \Gamma$, for some $t''\leq 1$. This implies that 
 
 $$
 d_X\Big(a_{\log (r\bar{R}^{-1})}h_{t''}(e),h_{t'}(e)\Big)<M^{-1+10A_4\delta}
 $$
 Notice that the point $u=a_{\log (r\bar{R}^{-1})}h_{t''}(e)$ is periodic of period $r\bar{R}^{-1}$. Moroever the above assumption implies in particular that 
 $$
d_X\Big( \{h_t u \}_{t\leq M^{2/3}}, \{h_te\}_{t\leq 1}\Big)<(\log M)^{-1}.
 $$
Note that $M\geq r^{9/10}\geq (r\bar{R}^{-1})^{3/5}$. However the above implies that the orbit of length $\geq (r\bar{R}^{-1})^{3/5} $ of a periodic point of period $r\bar{R}^{-1}$ is not equidistributed (it is trapped in the neighborhood of the orbit of $e$). This contradicts the fact that for any $\epsilon>0$ and any $T>0$ pieces of periodic horocycles of period $T$ of length $\geq T^{1/2+\epsilon}$ become equidistributed, \cite{Strom04}. This implies that $E2_2$ or E3 are never satisfied. The proof is finished.
\end{proof}

{\bf CASE II (minor arc case):} We have either $r\geq T^{1000A_5\delta}$ or  $|c|\geq T^{100A_5\delta}|I|^{-2}$.

 Let $L_t(\theta)=\theta+t \mod 1$ be the linear flow on the circle $S^1$. Let $\Delta:\{h_tw:0\leq t\leq R\}\to S^1$ be given by $\Delta(w)=0$ and $\Delta(h_tw)=L_{t/R}0$. 
and $\tilde{f}:S^1\to \R$,
$\tilde{f}(x)=f(\Delta^{-1}x)$. Then since the map $\Delta$ is equivariant,
$$
\sum_{pq\in I}f(h_{\cA_{a,c,I}(pq-t_0)}w)=\sum_{pq\in I}\tilde{f}(L_{\cA_{a,c,I}(pq-t_0)\beta}0)=\sum_{pq\in I}\tilde{f}(\cA_{a,c,I}(pq-t_0)\cdot \beta).
$$
Since $\tilde{f}$ is a function on the circle it follows that $\tilde{f}(x)=\sum_{n\in \Z} a_ne_n(x)$, where $a_n=\int_{S^1}\tilde{f}(x)e_n(x)dx$. Note that $a_0=\int_{S^1}\tilde{f}(x)dx=\frac{1}{R}\int_{\{h_tw\;:\; 0\leq t\leq R\}}f(x)d\mu_w\leq A_6 K(\frac{1}{per(w)})$, for some function $K:\R_+\to \R_+$ with $K(\delta)\to 0$ as $\delta\to 0$. This follows from the well known fact, see e.g. \cite{Sarn}, that if $per(w)\to \infty$, then $\mu_{per(w)}\to \mu_{X}$ and we know that $\mu_X(f)=0$.

 Since $\|\tilde{f}''\|_{C^2}= {\rm O}(\|f''\|_{C^2})\cdot R^2$ it follows by integration by parts that $|a_n|={\rm O}(R^2/n^2)$. Therefore 
$$
\Big|\sum_{|n|\geq R^{2.01}}a_n (\sum_{pq\in I}e_n(\cA_{a,c,I}(pq)\cdot \beta))\Big|\ll \pi_2(I)\cdot \frac{1}{R^{0.01}}.
$$
Moreover, 
$$
\Big|\sum_{|n|\leq R^{2.01}, n\neq 0}a_n (\sum_{pq\in I}e_n(\cA_{a,c,I}(pq-t_0)\cdot \beta))\Big|\ll R^{2.01}\cdot  \max_{|n|\leq R^{2.01}}\Big|\sum_{pq\in I}e_n(A_{a,c,I}(pq)\cdot \beta)\Big|= o(\pi_2(I)),
$$
the last inequality by Theorem \ref{thm:maks1}. Indeed, note that $\cA_{a,c,I}(pq-t_0)n\beta=m_{a,c}(pq-t_0)\cdot n(a+cM)^2\beta$. (see \eqref{eq:muv}). Note that by \eqref{eq:bet} and the bound $n\leq R^{2.01}$, 
$$\|n(a+cM)^2\beta- \frac{nb}{r}\|\leq \frac{R^{2.01}}{r|I|},$$
 So we can indeed use Proposition \ref{thm:maks1} with $\tilde{\beta}=n(a+cM)^2\beta$ remembering that $R\leq T^{A_5\delta}$. This finishes the proof in this case.



\section{Distribution of semi-primes in short intervals}
\begin{lemma}\label{lem:taPV} Let $r\in \N$ and let $I\subset [0,r]$, $|I|\geq r^{1-1/100}$. Then 
$$
\sum_{v\in I} 1_{(v,r)=1}= \frac{\phi(r)|I|}{r}+ o\Big(\frac{\phi(r)|I|}{r}\Big).
$$
Moreover, if $\chi$ is a real quadratic non-principal character, then 
$$
\sum_{v\in I} \chi(v)= o\Big(\frac{\phi(r)|I|}{r}\Big).
$$
\end{lemma}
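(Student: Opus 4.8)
The plan is to detect the coprimality condition and the character via multiplicativity, reducing both statements to counting integers in residue classes plus the P\'olya--Vinogradov inequality. Throughout, the asymptotics are understood as $r\to\infty$, uniformly over admissible $I$ (and, for the second part, uniformly over $\chi$ of bounded conductor growth). I will repeatedly use the elementary bounds $\omega(r)\ll \log r/\log\log r$, so that $2^{\omega(r)}=r^{o(1)}$, together with $\phi(r)\gg r/\log\log r$; these give
\[
\frac{\phi(r)}{r}|I|\ \geq\ \frac{\phi(r)}{r}\,r^{1-1/100}\ \geq\ r^{1-1/100-o(1)}\,.
\]

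For the first assertion, write $I=(M,M+N]$ with $N=|I|$ and expand $\mathbf 1_{(v,r)=1}=\sum_{d\mid(v,r)}\mu(d)$ (M\"obius inversion). Interchanging the order of summation,
\[
\sum_{v\in I}\mathbf 1_{(v,r)=1}=\sum_{d\mid r}\mu(d)\,\#\{v\in I:\ d\mid v\}=\sum_{d\mid r}\mu(d)\Big(\frac{N}{d}+O(1)\Big)=\frac{\phi(r)}{r}\,N+O\big(2^{\omega(r)}\big),
\]
using $\sum_{d\mid r}\mu(d)/d=\prod_{p\mid r}(1-1/p)=\phi(r)/r$ and that $r$ has $2^{\omega(r)}$ squarefree divisors. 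Since $2^{\omega(r)}=r^{o(1)}=o\big(\tfrac{\phi(r)}{r}N\big)$ by the displayed lower bound, this is exactly the claimed asymptotic.

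For the second assertion, let $q\mid r$ be the conductor of $\chi$ and let $\chi^*$ be the primitive character mod $q$ inducing it; since $\chi$ is non-principal, $q>1$. For $v\in I$ one has $\chi(v)=\chi^*(v)\mathbf 1_{(v,r)=1}$, so as above
\[
\sum_{v\in I}\chi(v)=\sum_{d\mid r}\mu(d)\sum_{\substack{v\in I\\ d\mid v}}\chi^*(v)=\sum_{\substack{d\mid r\\ (d,q)=1}}\mu(d)\,\chi^*(d)\sum_{w\in I_d}\chi^*(w),
\]
where $I_d$ is an interval of length at most $N/d+1$, and the terms with $(d,q)>1$ vanish because then $(v,q)>1$, hence $\chi^*(v)=0$, for every $v$ divisible by $d$. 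By the P\'olya--Vinogradov inequality for the primitive character $\chi^*$ mod $q$, each inner sum is $\ll \sqrt q\,\log q\ll \sqrt r\,\log r$, so
\[
\Big|\sum_{v\in I}\chi(v)\Big|\ \ll\ 2^{\omega(r)}\sqrt r\,\log r\ =\ r^{1/2+o(1)}\ =\ o\Big(\frac{\phi(r)}{r}|I|\Big)\,,
\]
again by the displayed lower bound on $\tfrac{\phi(r)}{r}|I|$.

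There is essentially no serious obstacle here: the only point that needs care is verifying that the error term $O(2^{\omega(r)})$ (respectively the P\'olya--Vinogradov bound $r^{1/2+o(1)}$) is genuinely of smaller order than the main term $\tfrac{\phi(r)}{r}|I|$, and this is precisely what the hypothesis $|I|\geq r^{1-1/100}$ buys, in combination with the standard divisor and totient bounds. I would also stress that the whole argument is uniform in $M$ (and in $\chi$ for fixed conductor growth), which is exactly what is needed in the application in Section~\ref{sec:prop:vin'}.
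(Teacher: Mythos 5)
Your proof is correct and follows essentially the same route as the paper: M\"obius inversion over divisors of $r$ for the coprimality count, and the P\'olya--Vinogradov inequality for the character sum, with the hypothesis $|I|\geq r^{1-1/100}$ absorbing the $r^{o(1)}$ and $r^{1/2+o(1)}$ error terms. The only (harmless) difference is that you reduce to the primitive character inducing $\chi$ before applying P\'olya--Vinogradov, whereas the paper applies the inequality directly to the non-principal character mod $r$.
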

\begin{proof} 
  
  By Polya-Vinogradov, 
  $$
  \sum_{v \in I} \chi(v) \ll \sqrt{r} \log r
  $$
  thus giving us the second part of the Lemma. 
  For the first part of the Lemma, we notice that, by the formula for the sum of the M\"obius function $\mu$ over divisors,
  $$
  \sum_{v \in I} 1_{(v,r) = 1} = \sum_{v \in I} \sum_{\substack{d | v \\ d | r}} \mu(d) = \sum_{d | r} \mu(d) \sum_{u: d u\in I} 1 =
  \sum_{d | r} \mu(d) \cdot \Big ( \frac{|I|}{d} + O(1) \Big )
  $$
  By M\"obius inversion formula, this is
  $$
  \frac{\phi(r)}{r} \cdot |I| + O(d(r))
  $$
  and the result follows, since $d(r) \ll_{\varepsilon} r^{\varepsilon}$ for every $\varepsilon > 0$. 

\end{proof}

\subsection{Siegel-Walfisz to large moduli}
\begin{proposition}\label{thm:Maks2} Let $J\subset [0,T]$ be an interval of length $|J|\geq T^{1-\kappa}$ and let $r<T^{\kappa}$. Then
$$|\{pq\in J\;:\; pq\equiv a \mod r\}|=1_{(a,r)=1}\frac{\pi_2(J)}{\varphi(r)}+\chi(a) \cdot \frac{F_{J, r, T}}{\varphi(r)}+ {\rm O}\Big(\frac{1}{\varphi(r)}\kappa \pi_2(J)\Big),$$
where $\chi$ is a quadratic real character and $F_{J, r, T}$ is such that $|F_{J, r, T}| \leq \pi_2(J)$ for all large enough $T$. 
\end{proposition}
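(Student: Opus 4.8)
The plan is to count semiprimes $pq$ in the interval $J$ lying in the fixed residue class $a \bmod r$, in the regime where $r < T^{\kappa}$ is polynomially bounded but possibly larger than any fixed power of $\log T$, so that the classical Siegel–Walfisz theorem does not apply directly. The standard way to handle this range is to invoke the Bombieri–Vinogradov theorem on average over moduli combined with a hyperbola-type decomposition for the divisor-like structure of $pq$, but since here $r$ is \emph{fixed} (not averaged) and potentially close to $T^{1/2}$ in size (recall $\kappa$ can be close to $1/2$ in applications — though in this Proposition $\kappa$ is small), one must be more careful. First I would write $\sum_{pq \in J,\, pq \equiv a \bmod r} 1 = \sum_{p}\sum_{q \equiv a \bar p \bmod r,\, q \in J/p} 1$, splitting the sum over $p$ into dyadic ranges and using the constraint $(p, r) = 1$ (the contribution of $p \mid r$ being negligible, $O(\omega(r) \cdot \text{something small})$). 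For each fixed $p$ coprime to $r$ one needs a prime number theorem for $q$ in an arithmetic progression mod $r$ inside an interval of length $|J|/p$.

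The key input is the explicit formula / zero-free region for Dirichlet $L$-functions $L(s, \chi)$ with $\chi \bmod r$. By Siegel–Walfisz-type analysis, the only obstruction to equidistribution in the full range $r < T^{\kappa}$ is a possible \emph{exceptional (Siegel) zero} $\beta_1$ of a single real character $\chi \bmod r$ (or $\chi$ of small conductor dividing $r$). This is exactly where the term $\chi(a) F_{J,r,T}/\varphi(r)$ comes from: the main term $\frac{\pi_2(J)}{\varphi(r)} 1_{(a,r)=1}$ is the ``expected'' count, and the secondary term collects the contribution of the Siegel zero, with $F_{J,r,T}$ essentially $-\sum_{p} |J|/(p \cdot \text{stuff}) \cdot (\text{mass at }\beta_1)$, manifestly of size $\leq \pi_2(J)$ since it is dominated in absolute value by the main term. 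So the key steps are: (i) reduce to counting $q$ prime in progressions mod $r$ in short intervals; (ii) apply the explicit formula, using the log-free zero-density estimate (Linnik) to discard all zeros except a possible single real exceptional zero; (iii) bound the error by $O(\kappa \cdot \pi_2(J)/\varphi(r))$ using that $|J| \geq T^{1-\kappa}$ controls how much room the interval gives — the loss $\kappa$ arises because the short interval of relative length $T^{-\kappa}$ restricts the range where the PNT-in-progressions error is genuinely power-saving; and (iv) perform the sum over $p$, splitting into dyadic blocks, and at each scale using the already-established equidistribution for $q$, finally reassembling via partial summation / Abel summation to recover $\pi_2(J) = \sum_{pq \in J} 1$ and the twisted analogue $F_{J,r,T} = \sum_{pq \in J} \chi(pq)$ (up to sign and the exceptional-zero normalization). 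For the short-interval aspect, since $|J| \geq T^{1-\kappa}$ and $r \leq T^{\kappa}$ with $\kappa$ small, the interval $J/p$ still has length $\geq T^{1-2\kappa}$ for $p \leq T^{\kappa}$, comfortably in the range where short-interval prime-counting in progressions (via Huxley's zero-density theorem, or just the classical explicit formula with $T^{7/12}$-type error) is valid.

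The main obstacle, as is traditional, is the possible Siegel zero: one cannot rule it out effectively, so instead one \emph{incorporates} it into the statement via the character $\chi$ and the function $F_{J,r,T}$, rather than trying to eliminate it. The delicate point is to verify that (a) there is at most one such exceptional character $\chi$ (Landau's theorem on zeros of products of $L$-functions for real characters), and that it may be taken real and quadratic as stated, and (b) that the resulting secondary main term is genuinely of the shape $\chi(a) F_{J,r,T}/\varphi(r)$ with $|F_{J,r,T}| \leq \pi_2(J)$ — this last bound follows because $F_{J,r,T}$, being a weighted count of semiprimes twisted by $\chi$, is trivially bounded by the untwisted count. I would also need to check that the hyperbola method for $pq$ (as opposed to a single prime) does not introduce a second exceptional-zero term: because $pq$ is a product, one gets, schematically, a ``square'' of the exceptional contribution, $\rho_1^2$-type terms with $\rho_1 = \beta_1$; since $\chi^2 = \chi_0$ is principal these fold back into the main term with a computable (negative, small) correction absorbed into the $O(\kappa \pi_2(J)/\varphi(r))$ error once one uses Siegel's ineffective lower bound $1 - \beta_1 \gg_\varepsilon r^{-\varepsilon}$ together with $r \leq T^\kappa$. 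This ineffectivity is harmless for the application since only a qualitative statement is needed downstream.
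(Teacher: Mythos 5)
Your skeleton is the same as the paper's: fix the smaller prime $p$, count primes $q \equiv \overline{p}a \pmod r$ in the interval $J/p$, expand into Dirichlet characters mod $r$, keep the real character's possible Siegel-zero contribution as the secondary term $\chi(a)F_{J,r,T}/\varphi(r)$, and dispose of the remaining characters with a log-free zero-density input (Gallagher \cite{Gallagher}) and of the principal character with a short-interval prime number theorem (Huxley \cite{Huxley}). What you omit is the paper's opening reduction, and it is precisely what makes the rest routine: by Brun--Titchmarsh \cite{MontgomeryVaughan}, the semiprimes with both factors larger than $T_0:=\exp(\log T/\log\log T)$ contribute $\ll |J|\log\log\log T/(\varphi(r)\log T)$, which is negligible, so one may assume $p<T_0$ and sum over $p$ completely trivially. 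Losing this has two costs in your plan. First, with $p$ allowed up to $\sqrt T$, the inner interval $J/p$ sits around $x=T/p\approx\sqrt T$ and has length about $x^{1-2\kappa}$, so you need a prime number theorem in progressions to a modulus $r\leq x^{2\kappa}$ in a power-saving short interval at that scale -- a hybrid short-interval/arithmetic-progression statement you never pin down (differencing Gallagher's full-interval bound does not give it, since the error $x\exp(-c\log x/\log r)$ dwarfs the interval length $x^{1-2\kappa}$), and you also take on the hyperbola/double-counting bookkeeping for no gain; in the paper every inner interval has length $x^{1-\kappa-o(1)}$ with $x=T^{1-o(1)}$, and the errors are summed over $p<T_0$ with weight $1/p$ only. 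Second, because only the $q$-variable is treated analytically, the exceptional character enters \emph{linearly}, and its entire contribution is packaged into $F_{J,r,T}$ using nothing beyond the trivial bound $\xi_{J/p}^{-\delta_r}\leq 1$; no control of the Siegel zero is needed anywhere.

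The step of yours that genuinely fails is the plan to absorb the residual exceptional-zero (``$\beta_1^2$-type'') contributions into the error ``using Siegel's ineffective lower bound $1-\beta_1\gg_\varepsilon r^{-\varepsilon}$ together with $r\leq T^{\kappa}$''. In this modulus range Siegel's theorem gives essentially nothing: it only yields $1-\beta_1\gg_\varepsilon T^{-\varepsilon\kappa}$, so $(1-\beta_1)\log T$ may tend to $0$ and $x^{\beta_1-1}$ may be $1+o(1)$; Siegel-type savings require the modulus to be at most a fixed power of $\log T$, which is exactly why the proposition carries the term $\chi(a)F_{J,r,T}/\varphi(r)$ rather than absorbing it into the error. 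Hence any secondary exceptional term produced by your hyperbola decomposition must either be shown not to arise at all (this is what the one-small-prime reduction accomplishes) or be folded into $F_{J,r,T}$ with the trivial bound; it cannot be estimated away via Siegel. Your proposed identification of $F_{J,r,T}$ with a character sum over semiprimes is harmless, since the statement only uses $|F_{J,r,T}|\leq\pi_2(J)$, but as written the argument has a gap at exactly the point where the Siegel zero must be confronted, and a second, unresolved analytic input (primes in progressions in short intervals near $\sqrt T$) created by allowing both prime factors to be large.
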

\begin{proof}

  First assume without loss of generality that $J \subset [T^{1 - 2 \kappa}, T]$. Indeed dropping the integers in $J \cap [0, T^{1 - 2 \kappa}]$ incurs a loss of at most $T^{1 - 2 \kappa}$ integers which is completely acceptable.

  We also notice that we can assume 
  that one of the primes in $p q$ is $< T_0 := \exp(\log T / \log\log T)$.
  Indeed, by Brun-Titchmarsh (see e.g \cite{MontgomeryVaughan}), the contribution of integers
  $p q$ with $p, q > T_0$ is 
  $$
  \ll \sum_{\substack{p q \in J \\ T_0 < p \leq \sqrt{T} < q \\ p q \equiv a \pmod{r}}} 1 \ll \frac{|J|}{\varphi(r)\log T} \sum_{T_0 < p < \sqrt{T}} \frac{1}{p},
  $$
  and this is 
  $$
   \ll \frac{|J|}{\varphi(r)} \cdot \frac{\log\log\log T}{\log T}
  $$
  and is therefore negligible. 

  We now evaluate the contribution of the remaining integers, which is
  $$
  \sum_{\substack{ p< T_0 \\ p \nmid r}} \sum_{\substack{p q  \in J \\ q \equiv \overline{p} a \pmod{r}}} 1. 
  $$
  We now notice that, since $p <T_0$ and so $q > T^{1-4\kappa}$,
  $$
  \sum_{\substack{ q \in J / p \\ q \equiv \overline{p} a \pmod{r}}} 1
  = \frac{(1 + O(\kappa))}{\log T} \sum_{\substack{q \in J / p \\ q \equiv \overline{p} a \pmod{r}}} \log q
  $$
  By Brun-Titchmash the error term from $O(\kappa)$ is acceptable and absorbed by the final error term. We now open the sum over $q$ into characters,
  $$
  \frac{1}{\log T}\sum_{\substack{q \in J / p \\ q \equiv \overline{p}a \pmod{r}}} \log q = \frac{1}{\varphi(r) \log T} \sum_{\chi \pmod{r}} \sum_{q \in J / p} \log q \cdot \chi(q)  \chi(p \overline{a}).
    $$
    We separate the contribution of the principal and quadratic character,
  \begin{align} \label{hoh}
    \frac{1}{\varphi(r) \log T} \cdot \sum_{q \in J / p} \log q & +
  \frac{\psi(p a)}{\varphi(r) \log T} \cdot \sum_{q \in J / p} \psi(q) \log q  \\ \label{hoh2} & + \frac{1}{\varphi(r) \log T} \sum_{\chi^2 \neq \chi_0 \pmod{r}} \chi(p)\overline{\chi(a)}\sum_{q \in J / p}  \chi(q) \log q
  \end{align}
  where $\psi$ is the quadratic character $\pmod{r}$. By Huxley's theorem \cite{Huxley} (see also \cite{H-B88}),
  $$
  \sum_{q \in J / p} \log q \sim \frac{|J|}{p}.
  $$
  Therefore the total contribution of the first term is 
  $$
  (1 + O(\kappa)) \frac{\pi_2(J)}{\varphi(r)}
  $$
  as expected. 
  
  By Gallagher's theorem \cite[Theorem 7]{Gallagher} the term in \eqref{hoh2} is
  \begin{equation} \label{siegel}
\ll \sum_{\chi^2 \neq \chi_0} \Big | \sum_{q \in J / p} \chi(q) \log q \Big | \ll \frac{|J|}{p} \exp \Big ( - c \cdot \frac{\log T}{\log r} \Big )
  \end{equation}
  and therefore its total contribution is
  $$
    \ll \frac{\pi_2(J)}{\varphi(r)} \cdot \exp \Big ( - \frac{c}{\kappa} \Big ).
  $$
  It thus remains to deal with the contribution of the quadratic character. If there is no Siegel-zero then by Gallagher's theorem the contribution of the quadratic character to \eqref{hoh} is bounded by the right-hand side of \eqref{siegel} and therefore gives also an acceptable total contribution. Meanwhile if there is a Siegel zero, then the contribution of the quadratic character to \eqref{hoh} is,
  $$
  - \frac{\psi(p a)}{\varphi(r) \log T} \cdot \frac{|J|}{p} \xi_{J / p}^{- \delta_r} + O \Big ( \frac{|J|}{\varphi(r) \log T} \cdot \frac{1}{p} \cdot \exp \Big (- \frac{c}{\kappa} \Big ) \Big ).
  $$
  where $\xi_{J / p} \in J / p$ and $\delta_{r}$ is the smallest positive real number such that $L(1 - \delta_r, \psi) = 0$. The
  total contribution of the error term is once again acceptable, while the contribution of the main term is, 
  $$
  - \frac{\psi(a)}{\varphi(r)} \cdot \frac{|J|}{\log T} \sum_{p < T_0} {{\frac{\psi(p)}{p}}} \cdot \xi_{J / p}^{-\delta_r}.
  $$
  it then suffices to note that we can write, 
  $$
F_{J,r,T} := \frac{|J|}{\log T} \sum_{p < T_0}{{\frac{\psi(p)}{p}}}  \cdot \xi_{J / p}^{-\delta_r}
  $$
  and this has the property that 
  $$
  |F_{J, r, T}| \leq \frac{\pi_2(J)}{\varphi(r)}
  $$
  provided that $T$ is sufficiently large. 
\end{proof}

\subsection{Minor arc estimates}
We will also need to following result:
\begin{proposition}\label{thm:maks1}
  Let $\delta \in (0, \tfrac{1}{100})$. Let $T$ be given, $R \leq T^{10 A_5 \delta}$ with $A_{5} > 0$ an absolute constant and $I \subset [0, T]$ an interval of length $> T^{1 - A_5 \delta}$.
  Let $\mu, \beta, \nu$ be real numbers with
  $$
  |\mu - 1| \leq 2 T^{-1 + A_5 \delta} \ , \ \beta = \frac{1}{R} \ , \ |\nu| \leq 2 T^{-1 + A_5 \delta}.  $$
  Let
  \begin{equation}\label{eq:muv}
  m_{\mu, \nu}(t) := \frac{\mu t}{\mu + \nu t}. 
  \end{equation}
  Suppose that either
  \begin{enumerate}
  \item $|\nu| > T^{100 A_5 \delta} \cdot |I|^{-2}$, 
  \item or, there exists an $T^{1000 A_5 \delta} \leq r \leq |I| T^{-1000 A_5 \delta}$ and $(b,r) = 1$, such that,
    $$
    \Big \vert \beta - \frac{b}{r} \Big \vert \leq \frac{T^{1000 A_5 \delta}}{r |I|}. 
    $$
  \end{enumerate}
  Then, for all $A_5 \delta$ sufficiently small, and all $t_0\in [0,T]$
  $$
  \Big\vert \sum_{p q \in I} e (m_{\mu, \nu}(p q-t_0) \beta) \Big\vert   \ll |I| T^{- 15 A_5 \delta}. 
  $$
\end{proposition}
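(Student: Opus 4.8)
The plan is to estimate the exponential sum over semiprimes $pq \in I$ by the classical Vinogradov dispersion/bilinear machinery, reducing it to Type I and Type II sums for the sequence $n \mapsto e(m_{\mu,\nu}(n - t_0)\beta)$. First I would record the key structural fact about the phase: since $|\mu - 1|, |\nu| \leq 2T^{-1+A_5\delta}$, on the interval $I$ the function $m_{\mu,\nu}(t) = \mu t/(\mu+\nu t)$ is a genuinely ``slowly varying'' perturbation of $t$, and on any subinterval $J \subset I$ of length $\leq |I| T^{-C A_5\delta}$ (for suitable $C$) one can Taylor-expand $m_{\mu,\nu}$ to second order: $m_{\mu,\nu}(t) = m_{\mu,\nu}(t_1) + m'_{\mu,\nu}(t_1)(t-t_1) + O(\|m''\|_\infty |J|^2)$ with $m'_{\mu,\nu}(t_1) = \mu^2/(\mu+\nu t_1)^2$ and $m''_{\mu,\nu}(t) = -2\mu^2\nu/(\mu+\nu t)^3$. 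Thus after multiplying by $\beta = 1/R$, on each such short block the phase is essentially linear, $e(c_J n + d_J)$ with a local frequency $c_J = m'_{\mu,\nu}(t_1)\beta$ that drifts across blocks at a rate governed by $\nu\beta$. The whole game is then to show this local linear frequency is never too close to a rational with small denominator (on enough blocks), so that linear exponential sums over primes/semiprimes in each block exhibit cancellation.

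Next I would split into the two hypotheses. In case (1), $|\nu| > T^{100A_5\delta}|I|^{-2}$: here the \emph{derivative of the frequency} $\frac{d}{dt}(m'_{\mu,\nu}(t)\beta) = m''_{\mu,\nu}(t)\beta \asymp \nu\beta \asymp \nu/R$ is of size $\gg T^{100A_5\delta}|I|^{-2} R^{-1} \gg T^{90 A_5\delta}|I|^{-2}$ (using $R \le T^{10A_5\delta}$), so as $t$ ranges over $I$ the local frequency sweeps out an interval of length $\gg T^{90A_5\delta}|I|^{-1}$; this is a standard van der Corput / minor-arc situation, and writing $pq$ with $p < q$ and applying Cauchy--Schwarz in the $p$-sum (a Type I treatment, or the ``$A$-process'' for the bilinear form as the authors call it), the quadratic term in the phase gives square-root-type cancellation in the inner sum over $n$ in an arithmetic progression, yielding the claimed saving $|I|T^{-15A_5\delta}$. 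The relevant tool is a Weyl-type bound for $\sum_{n \le N} e(\alpha_2 n^2 + \alpha_1 n)$ together with the spacing of the frequencies. In case (2), there is a rational approximation $|\beta - b/r| \le T^{1000A_5\delta}/(r|I|)$ with $T^{1000A_5\delta} \le r \le |I|T^{-1000A_5\delta}$, i.e. $\beta$ itself lives on a genuine minor arc (moderate denominator). Here I would run Vinogradov's method directly: decompose the indicator of semiprimes $pq$ via a Vaughan/Heath-Brown type identity (or just use $\Lambda * \Lambda$-weights and Brun--Titchmarsh/Mertens to pass between weighted and unweighted counts as in Proposition \ref{thm:Maks2}) into Type I sums $\sum_{d} \sum_{m} a_d\, e(m_{\mu,\nu}(dm - t_0)\beta)$ with $d \le T^{\delta'}$ and Type II sums $\sum_{d \sim D}\sum_{m \sim M} a_d b_m\, e(m_{\mu,\nu}(dm-t_0)\beta)$ with $D, M$ in a middle range. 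For Type I sums the inner sum over $m$ is, after the Taylor linearization, a linear exponential sum with frequency $d\cdot m'_{\mu,\nu}(\cdot)\beta$, and one bounds it by $\min(M, \|d\,m'\beta\|^{-1})$; summing over $d$ and invoking the three-distance/rational-approximation structure coming from $\beta \approx b/r$ with $r$ in the stated range gives the saving. For Type II sums one applies Cauchy--Schwarz in the $m$ variable and expands, reducing to $\sum_{d_1, d_2}\sum_m e((m_{\mu,\nu}(d_1 m) - m_{\mu,\nu}(d_2 m))\beta)$, where for $d_1 \ne d_2$ the phase difference again has a non-degenerate linear-in-$m$ part whose frequency $\asymp (d_1-d_2) m'_{\mu,\nu}\beta$ is poorly approximable for most pairs, yielding cancellation.

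The main obstacle, and where the bulk of the work lies, is bookkeeping the uniformity: one must ensure that the Taylor-linearization error $O(\|m''\|_\infty|J|^2\beta)$ on each block is $o(1)$ \emph{simultaneously} with the block being long enough ($\gg T^{1-A_5\delta}$-ish, or at least polynomially long) that prime-detecting cancellation in the block beats the trivial bound, and that the accumulated frequency drift across the $\sim T^{C A_5\delta}$ blocks is controlled by the rational approximation in case (2) or by the lower bound on $|\nu|$ in case (1). Concretely, the delicate point is choosing the block length $|J| \sim |I| T^{-c A_5\delta}$ and the Vaughan decomposition parameter $\delta'$ so that all of: (a) $\|m''\|_\infty |J|^2 \beta = O(T^{-A_5\delta})$, (b) the Type I/II ranges are admissible, and (c) the minor-arc saving in each block is at least $T^{-\text{(small)}\,A_5\delta}$, hold at once — this forces the exponents $15$ vs $100$ vs $1000$ in the statement and is exactly why the constants are so lopsided. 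I would organize the proof so that case (1) is dispatched first via van der Corput on the quadratic term (cleaner), then case (2) via the Vaughan identity plus the Type I / Type II dichotomy, in each Type reducing to a linear exponential sum on a short block and invoking the standard $\sum \min(M, \|{\cdot}\|^{-1}) \ll (\text{length})(r^{-1} + \text{length}^{-1})\log$ estimate with $r$ the denominator from hypothesis (2).
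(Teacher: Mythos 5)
Your overall architecture matches the paper's: detect the primes by a standard Type I/Type II criterion (the paper's Lemma \ref{le:Type}), handle case (2) by replacing $m_{\mu,\nu}(t)$ by $t$ on short blocks and running Vinogradov's method off the rational approximation $\beta\approx b/r$ (the paper simply quotes Lemmas \ref{le:Vinogradov} and \ref{le:VinogradovTypeII}), and handle case (1) bilinearly via Cauchy--Schwarz plus curvature of the phase (the paper's Lemmas \ref{le:TypeI}, \ref{le:TypeII}). But two steps of your case-(1) plan would fail as written. First, the premise that $m_{\mu,\nu}$ is a uniformly slowly varying perturbation of $t$ on $I$ is false: $|\nu|$ may be as large as $2T^{-1+A_5\delta}$ while $t-t_0$ ranges over an interval of length $\sim T$, so the denominator $\mu+\nu(t-t_0)$ can vanish inside the summation range and all derivatives of the phase blow up near the pole $t-t_0=-\mu/\nu$. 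Your Taylor-expansion error bounds and frequency-drift estimates are invalid there; the paper must first discard the sub-intervals where $|\mu+\nu(\ell a b-t_0)|\leq T^{-20A_5\delta}$ and then decompose the remaining range according to the dyadic size of this factor before any derivative test is applied.

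Second, and more seriously, a quadratic-Weyl (second-derivative) input alone does not suffice throughout case (1). After Cauchy--Schwarz, the second derivative of the differenced phase is of size roughly $e^{-U}\ell^{2}|\nu|\,a_1|a_1-a_2|\,\beta$, and in the regime where $|\nu|$ is near its maximum, the bilinear ranges are balanced ($A\sim T^{1/2}$) and $\beta$ is of order one (only $R\leq T^{10A_5\delta}$ is assumed, so $R$ may be bounded), this quantity can exceed $1$ on any block long enough to be useful; then the bound $N\lambda_2^{1/2}+\lambda_2^{-1/2}$ gives nothing. This is exactly why the paper splits case (1) into $|\nu|>T^{-1-1/10}$, treated with the \emph{third}-derivative test (Lemma \ref{le:Robert2}, i.e.\ one extra van der Corput differencing), and $T^{1000A_5\delta}|I|^{-2}<|\nu|<T^{-1-1/10}$, treated with the second-derivative test (Lemma \ref{le:Robert1}); note also that the paper never linearizes in case (1) --- the derivative tests are applied directly to the nonlinear differenced phase. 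Finally, your hope of showing the local linear frequency stays away from rationals with small denominator cannot be realized: that frequency is essentially $\beta=1/R$, about which no Diophantine information is available in case (1) (it could be exactly $1/2$), so all cancellation must come from curvature, uniformly in the linear coefficient --- which is what the van der Corput estimates provide and a purely ``minor-arc for the linear part'' argument does not.
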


We split according to the two possible cases.
\subsection{ Case $|\nu| > |I|^{-2} T^{100 A_5 \delta}$}

Our main tool will be the following two Lemmas. The first Lemma amounts to an application of Poisson summation followed by a bound on the oscillatory integrals. 
\begin{lemma} \label{le:Robert1}
  Let $\alpha \geq 1$ be a real number. There exists a constant $C(\alpha)$  such that for any interval $I$, any real number $\lambda_{2} > 0$ and twice differentiable function $f : I \mapsto \mathbb{R}$ such that,
  $$
  \lambda_{2} \leq |f''(x)| \leq \alpha \lambda_{2} \ , \ x \in I
  $$
  one has,
  $$
  \Big | \sum_{m \in I} e(f(m)) \Big | \leq C(\alpha) \Big ( |I| \lambda_2^{1/2} + \lambda_{2}^{-1/2} \Big ) \,.
  $$
\end{lemma}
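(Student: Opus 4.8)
The plan is to prove Lemma~\ref{le:Robert1} by the classical van der Corput second derivative argument, realized through a truncated Poisson summation formula. First I would dispose of the degenerate cases. Since $|f''|\ge\lambda_2>0$ and $f''$ (being a derivative) has the intermediate value property, $f''$ has constant sign; replacing $f$ by $-f$ conjugates the exponential sum and leaves its modulus unchanged, so I may assume $f''>0$ on $I$, hence $f'$ is strictly increasing. If $\lambda_2\ge 1$ the trivial bound $\big|\sum_{m\in I}e(f(m))\big|\le |I|+1\ll |I|\lambda_2^{1/2}$ already suffices, and if $|I|\le\lambda_2^{-1/2}$ the trivial bound gives $\ll\lambda_2^{-1/2}$; so I may assume $0<\lambda_2<1$ and $|I|>\lambda_2^{-1/2}>1$. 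Writing $I=(a,b]$ and $\eta:=f'(b)-f'(a)$, the mean value theorem together with $\lambda_2\le|f''|\le\alpha\lambda_2$ gives $\lambda_2|I|\le\eta\le\alpha\lambda_2|I|$.

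The main step will be to apply a standard form of van der Corput's transformation of exponential sums: since $f'$ is monotone on $(a,b]$,
$$
\sum_{m\in I}e(f(m))\;=\;\sum_{f'(a)-1<\nu<f'(b)+1}\int_a^b e\big(f(x)-\nu x\big)\,dx\;+\;O\big(\log(2+\eta)\big).
$$
The number of integers $\nu$ in the summation range is $\ll 1+\eta\ll 1+\alpha\lambda_2|I|$. For each such $\nu$ the phase $\phi_\nu(x):=f(x)-\nu x$ has $\phi_\nu''=f''$, so $|\phi_\nu''|\ge\lambda_2$ throughout $I$, and the second-derivative van der Corput estimate for oscillatory integrals (which, unlike the first-derivative version, requires no monotonicity hypothesis on $\phi_\nu'$) produces a universal constant $c$ with $\big|\int_a^b e(f(x)-\nu x)\,dx\big|\le c\,\lambda_2^{-1/2}$. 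Combining,
$$
\Big|\sum_{m\in I}e(f(m))\Big|\;\ll\;(1+\alpha\lambda_2|I|)\lambda_2^{-1/2}+\log(2+\alpha\lambda_2|I|)\;\ll_\alpha\;\lambda_2^{-1/2}+|I|\lambda_2^{1/2}+\log(2+\lambda_2|I|),
$$
and a short case check (splitting on whether $\lambda_2|I|\le 1$ and using $\lambda_2<1$) absorbs the logarithm into $\lambda_2^{-1/2}+|I|\lambda_2^{1/2}$, giving the assertion with $C(\alpha)$ depending only on $\alpha$.

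The only input I expect to require care is the truncated Poisson summation formula with its $O(\log(2+\eta))$ error term: this is the point where the sharp cutoff at the endpoints $a,b$ enters and must be controlled. If one prefers to avoid quoting that lemma, the alternative is to fix a smooth partition of unity on $I$ at scale $(\alpha\lambda_2)^{-1}$ — on each piece $f'$ varies by $O(1)$, so there are $\ll 1+\alpha\lambda_2|I|$ pieces — and to apply ordinary smooth Poisson summation to each piece, keeping the Fourier modes $\nu$ in a neighbourhood of the range of $f'$ on that piece and disposing of the tail modes by non-stationary phase; this trades the logarithmic error for a little more bookkeeping but is otherwise identical. The oscillatory-integral bound itself is routine: near the unique critical point $x_\nu$ defined by $f'(x_\nu)=\nu$ one uses stationary phase together with $|f''|\ge\lambda_2$, and away from it one uses the non-stationary estimate, which yields the clean $O(\lambda_2^{-1/2})$ independent of $|I|$ and of $\nu$.
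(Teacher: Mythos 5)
The paper does not actually prove this lemma: it is quoted from O.~Robert's work on van der Corput's $k$-th derivative test (Theorem~1 there), so there is no internal argument to compare against. What you have written is the classical proof of the second-derivative test --- the B-process (truncated Poisson summation / van der Corput transform with the $O(\log(2+\eta))$ error term), the bound $\ll\lambda_2^{-1/2}$ for each oscillatory integral coming from the second-derivative van der Corput lemma (which indeed needs no monotonicity of the phase derivative), and the count $\ll 1+\alpha\lambda_2|I|$ of admissible frequencies $\nu$. That chain of estimates is correct, and the absorption of the logarithm works exactly as you say: after your reductions one has $|I|\lambda_2^{1/2}>1$ and $\lambda_2|I|\le|I|\lambda_2^{1/2}$, so $\log(2+\alpha\lambda_2|I|)\ll_\alpha|I|\lambda_2^{1/2}$. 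In effect you have supplied, in the standard way, the proof that the paper outsources to the reference.

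One caveat, which is a defect of the statement as transcribed rather than of your main argument: with an arbitrary interval $I$ and no lower bound on $|I|$, the inequality is false in the degenerate regime where $\lambda_2$ is large and $|I|$ is tiny. Take $f(x)=\lambda_2(x-m_0)^2/2$ on an interval of length $\epsilon$ containing the integer $m_0$: the sum has modulus $1$, while $|I|\lambda_2^{1/2}+\lambda_2^{-1/2}\to0$ as $\lambda_2\to\infty$ with $\epsilon\lambda_2^{1/2}\to0$, so no constant $C(\alpha)$ can work. Correspondingly, your opening reduction ``if $\lambda_2\ge1$ the trivial bound $|I|+1\ll|I|\lambda_2^{1/2}$ suffices'' is not valid when $|I|<\lambda_2^{-1/2}$; the classical formulations (e.g.\ Titchmarsh's Theorem~5.9) assume the interval has length at least $1$, which restores both the statement and your reduction, and in the paper's application $|I|$ has size $T^{1-O(\delta)}$, so nothing is lost. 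Apart from flagging that hypothesis, your write-up is complete: Darboux's theorem for the constancy of the sign of $f''$, the truncated-Poisson step, and the stationary/non-stationary phase estimates are all standard and correctly invoked.
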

\begin{proof}
  See \cite[Theorem 1]{Robert}
\end{proof}

The second Lemma amounts to an application of van der Corput differencing followed by an application of Poisson summation and the estimation of oscillatory integrals.
\begin{lemma} \label{le:Robert2}
  Let $\alpha \geq 1$ be a real number. There exists a constant $C(\alpha)$  such that for any interval $I$, any real number $\lambda_{2} > 0$ and twice differentiable function $f : I \mapsto \mathbb{R}$ such that,
  $$
  \lambda_{3} \leq |f'''(x)| \leq \alpha \lambda_{3} \ , \ x \in I
  $$
  one has,
  $$
  \Big | \sum_{m \in I} e(f(m)) \Big | \leq C(\alpha) \Big ( |I| \lambda_3^{1/6} + |I|^{1/2} \lambda_{3}^{-1/6} \Big )\,.
  $$
\end{lemma}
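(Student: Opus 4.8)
The plan is to deduce this third--derivative test from the second--derivative test of Lemma~\ref{le:Robert1} by one application of van der Corput's $A$--process (Weyl differencing). First I would dispose of the trivial ranges: if $\lambda_3>1$ then $|I|\lambda_3^{1/6}>|I|\geq\big|\sum_{m\in I}e(f(m))\big|$, and if $\lambda_3<|I|^{-3}$ then $|I|^{1/2}\lambda_3^{-1/6}>|I|$, so in either case the asserted bound holds trivially. Hence I may assume $|I|^{-3}\leq\lambda_3\leq1$, which will guarantee that the free parameter chosen below lies in the admissible range.

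Next I would apply the van der Corput (Weyl--differencing) inequality: for every integer $H$ with $1\leq H\leq|I|$,
$$
\Big|\sum_{m\in I}e(f(m))\Big|^2\ll\frac{|I|^2}{H}+\frac{|I|}{H}\sum_{1\leq h<H}\Big|\sum_{m\in I_h}e\big(f(m+h)-f(m)\big)\Big|,
$$
where $I_h=I\cap(I-h)$ is an interval contained in $I$. For a fixed $h\geq1$ put $g_h(x)=f(x+h)-f(x)$; by the mean value theorem $g_h''(x)=f''(x+h)-f''(x)=h\,f'''(\xi)$ for some $\xi\in(x,x+h)$, so that $h\lambda_3\leq|g_h''(x)|\leq\alpha h\lambda_3$ on $I_h$. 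Lemma~\ref{le:Robert1}, applied to $g_h$ with parameter $h\lambda_3$ in place of $\lambda_2$, therefore gives
$$
\Big|\sum_{m\in I_h}e(g_h(m))\Big|\leq C(\alpha)\big(|I|(h\lambda_3)^{1/2}+(h\lambda_3)^{-1/2}\big),
$$
with the constant $C(\alpha)$ uniform in $h$.

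Substituting this into the van der Corput bound and summing over $h$ with $\sum_{h\leq H}h^{1/2}\ll H^{3/2}$ and $\sum_{h\leq H}h^{-1/2}\ll H^{1/2}$, I obtain
$$
\Big|\sum_{m\in I}e(f(m))\Big|^2\ll_\alpha\frac{|I|^2}{H}+|I|^2\lambda_3^{1/2}H^{1/2}+|I|\lambda_3^{-1/2}H^{-1/2}.
$$
Finally I would choose $H=\lfloor\lambda_3^{-1/3}\rfloor$, which lies in $[1,|I|]$ by the reduction in the first paragraph; the first two terms on the right then become $\ll|I|^2\lambda_3^{1/3}$ and the third becomes $\ll|I|\lambda_3^{-1/3}$, and taking square roots yields $\big|\sum_{m\in I}e(f(m))\big|\ll_\alpha|I|\lambda_3^{1/6}+|I|^{1/2}\lambda_3^{-1/6}$, as required.

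I do not expect any real obstacle here: this is the classical passage from the $B$--process to the $AB$--process, and the only point meriting a line of care is checking, via the mean value theorem, that $g_h''$ has size comparable to $h\lambda_3$ with the same ratio constant $\alpha$, so that Lemma~\ref{le:Robert1} may be invoked with a constant independent of $h$. If one preferred not to use Lemma~\ref{le:Robert1} as a black box, an additional Weyl--differencing step reduces the matter to the first--derivative (Kusmin--Landau) estimate for the twice--differenced phase, at the cost of optimising over two parameters rather than one.
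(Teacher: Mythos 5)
Your argument is correct and is exactly the classical route the paper relies on: the paper simply cites Robert's Theorem 2, after describing the method as van der Corput differencing followed by the second-derivative (Poisson/stationary phase) estimate, which is precisely your $A$-process reduction to Lemma \ref{le:Robert1} with the MVT check that $|g_h''|\asymp h\lambda_3$ with the same ratio constant $\alpha$ and the choice $H\asymp\lambda_3^{-1/3}$. No gap beyond the usual harmless replacement of the trivial bound $|I|$ by the integer count $|I|+1$, which the implied constant absorbs in the relevant ranges.
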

\begin{proof}
  See \cite[Theorem 2]{Robert}.
\end{proof}

We also recall the standard method of Type-I/Type-II sums for estimating sums over primes.
\begin{lemma} \label{le:Type}
  Suppose that $f : \mathbb{N} \rightarrow \mathbb{C}$ is an arbitrary sequence supported on $[0, T]$. Let $\alpha_a$ and $\beta_b$ denote two arbitrary sequences with $|\alpha_a| \leq 1$ and $|\beta_b| \leq 1$ supported respectively in $[A, 2A)$ and $[B, 2B)$ with $A B \asymp T$. Suppose that $\delta > 0$ is such that for all $T^{1/10} \leq A \leq T^{1/2} \leq B$ we have,
      $$
       \Big\vert  \sum_{a,b} \alpha_a \beta_b f(a b)  \Big\vert  \ll T^{1 - \delta}.
      $$
      Suppose also that for all $A \leq T^{1/10}$ we have,
      $$
      \Big\vert  \sum_{a,b} \alpha_a f(a b)  \Big\vert  \ll T^{1 - \delta}. 
      $$
      Then,
      $$
       \Big\vert  \sum_{p} f(p)  \Big\vert  \ll T^{1 - \delta / 2}.  
      $$
\end{lemma}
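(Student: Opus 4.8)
The plan is to recognize Lemma~\ref{le:Type} as the classical combinatorial reduction of a sum over primes to Type~I and Type~II sums, and to carry it out through Vaughan's identity. \emph{First,} I would pass to the von Mangoldt function: remove the weight $\log p$ by partial summation, and discard the prime powers $p^k\le T$ with $k\ge 2$, of which there are $O(\sqrt T)$ and which therefore contribute only $O(\sqrt T\log T)$ (using that $f$ is bounded, as in all our applications). Up to losses that are powers of $\log T$ it then suffices to bound $\big|\sum_{n\le T}\Lambda(n)f(n)\big|\ll T^{1-\delta/2}$. (The hypotheses are invoked at scale $T$; in the applications they hold at every scale $T'\le T$, which is what legitimizes the partial summation.)

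\emph{Second,} apply Vaughan's identity with parameters $U=V=T^{1/10}$. For $n>U$ it writes $\Lambda(n)$ as a bounded linear combination of: a linear term $\sum_{d\le V}\mu(d)\log(n/d)$; a term $\sum_{cd\mid n,\ c\le U,\ d\le V}\mu(d)\Lambda(c)$, in which the combined variable $m:=cd$ satisfies $m\le T^{1/5}$ and carries a $\log$-bounded coefficient; and a genuinely bilinear term $\sum_{mk=n,\ m>V,\ k>U}\big(\sum_{d\mid m,\ d\le V}\mu(d)\big)\Lambda(k)$, with $T^{1/10}<m,k<T^{9/10}$. The contribution of $n\le U$ is $O(T^{1/10}\log T)$ and is negligible.

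\emph{Third,} split each term dyadically into blocks $\sum_{m\sim M}\sum_{k\sim K}c_m\,g_k\,f(mk)$ with $MK\asymp T$, where (after a trivial partial summation removing the benign factor $\log(n/d)$ in the linear term) $c_m$ is divisor-bounded, $|c_m|\ll d(m)\log T\ll T^{o(1)}$ with $d(\cdot)$ the divisor function, and $g_k\in\{1,\Lambda(k)\}$. Normalizing by the $T^{o(1)}$ factor so that all coefficients are $\le 1$: if the divisor/smooth variable runs over a range of size $\le T^{1/10}$ — this covers the linear term and the sub-case $m\le T^{1/10}$ of the second term — the block is of Type~I shape and is controlled by the first hypothesis. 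Otherwise the complementary variable is long (size $\ge T^{4/5}$ in the sub-case $T^{1/10}<m\le T^{1/5}$; and, in the bilinear term, since $m$ and $k$ multiply to $\asymp T$ with both $>T^{1/10}$, one of $M,K$ lies in $[T^{1/10},T^{1/2}]$ and the other in $[T^{1/2},T]$ up to bounded constants, which one arranges by splitting into $O(1)$ further blocks); taking $A$ to be the scale of the shorter variable and $B$ that of the longer, the second hypothesis applies. Summing over the $O(\log^2 T)$ dyadic blocks and the $O(1)$ normalizing constants gives $\big|\sum_{n\le T}\Lambda(n)f(n)\big|\ll T^{1-\delta}\,T^{o(1)}\ll T^{1-\delta/2}$, as desired.

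\emph{Main obstacle.} There is no deep point here — the work is pure bookkeeping. The delicate part is choosing the Vaughan cutoffs and the dyadic ranges so that every bilinear block lands inside the \emph{precise} window $T^{1/10}\le A\le T^{1/2}\le B$ furnished by the hypothesis (the boundary constants costing only finitely many extra blocks), and checking that the $T^{o(1)}$ losses from the divisor-bounded coefficients and from the number of dyadic blocks are absorbed by the gap between the assumed exponent $1-\delta$ and the claimed $1-\delta/2$.
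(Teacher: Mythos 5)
Your Vaughan-identity reduction is exactly the standard Type~I/Type~II argument the paper has in mind: its proof of Lemma~\ref{le:Type} is a one-line citation to Heath-Brown's generalized Vaughan identity \cite{H-B82} and to the corresponding lemmas of \cite{Polymath}, so your write-up just makes that citation explicit, and the bookkeeping you describe (dyadic blocks fitted, up to bounded constants, into the window $T^{1/10}\le A\le T^{1/2}\le B$, divisor-bounded coefficients absorbed into $T^{o(1)}$) is the intended one. One small remark: the removal of the weight $\log p$ can be kept within the stated hypotheses, rather than appealing to the bounds at all scales $T'\le T$, by writing $1/\log n=\int_0^\infty n^{-\sigma}\,d\sigma$ and absorbing the factors $m^{-\sigma}$, $k^{-\sigma}$ into the bounded coefficients of each block.
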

\begin{proof}
  This is a standard Type-I/Type-II estimate, see for example \cite{H-B82} or \cite[Lemma 3.1, Lemma 3.3]{Polymath}.
\end{proof}

We will naturally choose $f(n) := e(m_{\mu, \nu}(n - t_{0})) \mathbf{1}_{n \in I}$. We will use the lemmas below to obtain the required type-I and type-II information.
\begin{lemma} \label{le:TypeII}
  Let $T$ be given, $R \leq T^{10 A_5 \delta}$ and $I \subset [0, T]$ an interval of length $> T^{1 - A_5 \delta}$.
  Let $\alpha_a$ and $\beta_b$ be arbitrary coefficients with $|\alpha_a| \leq 1$ and $|\beta_b| \leq 1$ and supported respectively in $[A, 2A]$ and $[B, 2B]$ with $ A B \asymp T$ and $T^{40 A_5 \delta} \leq A \leq B$. Suppose that $\mu, \nu$ and $\beta$ are such that,
  $$|\mu - 1| \leq 2 T^{-1 + A_5 \delta} \ , \ |I|^{-2} \cdot T^{1000 A_5 \delta} \leq |\nu| \leq 2 T^{-1 + A_5 \delta} \ , \ \beta = \frac{v}{R}$$
  with $1 \leq v \leq R^3$. 
  Let 
  $$
  m_{\mu, \nu}(t) = \frac{\mu t}{\mu + \nu t}. 
  $$
  Then, for $1 \leq \ell \leq T^{40 A_5 \delta}$ and $t_{0} \in [0, T]$
  $$
 \Big\vert   \sum_{a b \in I} \alpha_a \beta_b e(m_{\mu,\nu}(\ell a b - t_{0}) \beta)  \Big\vert  \ll T^{1 - 20 A_5 \delta} \,.
  $$
\end{lemma}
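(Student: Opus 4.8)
The plan is to estimate the bilinear sum
$$
\mathcal{B} := \sum_{ab\in I}\alpha_a\beta_b\, e(m_{\mu,\nu}(\ell ab - t_0)\beta)
$$
by Cauchy--Schwarz in the $a$-variable, expand the square, and then apply the second-derivative (van der Corput) test of Lemma \ref{le:Robert1} to the resulting inner sum over $b$. First I would apply Cauchy--Schwarz to get
$$
|\mathcal{B}|^2 \ll A\sum_{a\sim A}\Big|\sum_{b}\beta_b\, e\big(m_{\mu,\nu}(\ell ab - t_0)\beta\big)\Big|^2
= A\sum_{a\sim A}\sum_{b_1,b_2}\beta_{b_1}\overline{\beta_{b_2}}\, e\big(\big(m_{\mu,\nu}(\ell ab_1 - t_0)-m_{\mu,\nu}(\ell ab_2 - t_0)\big)\beta\big),
$$
and then swap the order of summation so that the outer variables are $b_1,b_2$ and the inner sum is over $a$. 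The diagonal $b_1=b_2$ contributes $\ll A B^2 \ll T^2/A \ll T^{2-40A_5\delta}$, which after the square root is $\ll T^{1-20A_5\delta}$ — exactly the target — so the whole game is to show the off-diagonal terms are no larger.

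For the off-diagonal inner sum $\sum_{a\sim A} e(f_{b_1,b_2}(a))$ with $f_{b_1,b_2}(a) = \beta\big(m_{\mu,\nu}(\ell a b_1 - t_0)-m_{\mu,\nu}(\ell a b_2 - t_0)\big)$, the key analytic input is a lower and upper bound on $|f''_{b_1,b_2}(a)|$. Here I would compute the derivatives of $m_{\mu,\nu}(t)=\mu t/(\mu+\nu t)$ explicitly: one has $m_{\mu,\nu}'(t)=\mu^2/(\mu+\nu t)^2$ and $m_{\mu,\nu}''(t) = -2\mu^2\nu/(\mu+\nu t)^3$, so the second derivative of $f_{b_1,b_2}$ in $a$ is, by the chain rule, of size comparable to $|\nu|\,\ell^2 A^2 |b_1-b_2|$ up to bounded factors, using that $|\mu+\nu t| \asymp 1$ on the relevant range (since $|\nu|\le 2T^{-1+A_5\delta}$ and $t\le \ell \cdot 2A\cdot 2B \ll T^{1+41A_5\delta}$, so $|\nu t|$ is at most $T^{42A_5\delta - 1}\cdot$something — here I need to be a bit careful and possibly argue $|\nu t|$ is small, which holds once $41A_5\delta < 1$). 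Thus with $\lambda_2 \asymp |\nu|\ell^2 A^2|b_1-b_2|$ and using the hypothesis $|\nu| \ge |I|^{-2}T^{1000A_5\delta} \gg T^{-2+1000A_5\delta}$ together with $|b_1-b_2|\ge 1$, Lemma \ref{le:Robert1} gives
$$
\Big|\sum_{a\sim A} e(f_{b_1,b_2}(a))\Big| \ll A\lambda_2^{1/2} + \lambda_2^{-1/2}.
$$
Summing this over $b_1,b_2 \sim B$ and multiplying by the outer factor $A$, then taking square roots and checking the exponents of $T$ against $1-20A_5\delta$, should close the argument provided $A_5\delta$ is small enough; the bound $\beta = v/R$ with $v\le R^3$, $R\le T^{10A_5\delta}$ guarantees $\beta \le T^{30A_5\delta}$ which is the only place the size of $\beta$ enters the $\lambda_2$ estimate.

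The main obstacle, and the step that requires the most care, is the bookkeeping of exponents: one must verify that for \emph{both} the regime where $A\lambda_2^{1/2}$ dominates (large $|\nu|$, or $A$ large) and the regime where $\lambda_2^{-1/2}$ dominates (small $|\nu|$, $A$ small), the resulting power of $T$ stays below $1-20A_5\delta$, uniformly over $A$ in the dyadic range $T^{40A_5\delta}\le A\le T^{1/2+o(1)}$ and over $\ell \le T^{40A_5\delta}$. In particular the lower bound $|\nu|\ge |I|^{-2}T^{1000A_5\delta}$ is precisely what rescues the $\lambda_2^{-1/2}$ term when $|\nu|$ is as small as allowed, while the constraint $A\ge T^{40A_5\delta}$ (and $AB\asymp T$) is what controls the diagonal and the $A\lambda_2^{1/2}$ term; a secondary subtlety is ensuring $|\mu+\nu t|\asymp 1$ uniformly, i.e. that $\nu t$ never comes close to $-\mu$, which follows from the smallness of $|\nu|$ and $t\le T^{1+O(A_5\delta)}$ once $A_5\delta$ is taken small enough that $O(A_5\delta)<1$. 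If one of the two derivative regimes turns out not to suffice by itself, the fallback is to also invoke the third-derivative test Lemma \ref{le:Robert2} in that regime, which is why that lemma is stated alongside.
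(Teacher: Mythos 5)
There is a genuine gap, and it sits exactly at the step you flag as "the main obstacle". Your application of Lemma \ref{le:Robert1} requires a two-sided bound $\lambda_2\le |f''_{b_1,b_2}(a)|\le\alpha\lambda_2$ on the whole range of $a$, and the bound you propose, $|f''|\asymp \beta|\nu|\ell^2(\cdot)^2|b_1-b_2|$ "using $|\mu+\nu t|\asymp1$", does not hold. First, $|\nu(\ell ab-t_0)|$ can be as large as $\asymp T^{41A_5\delta}$ (your exponent arithmetic lost the factor $t\le \ell\cdot 4AB\asymp T^{1+40A_5\delta}$: the product is $T^{41A_5\delta}$, not $T^{41A_5\delta-1}$), and worse, $\mu+\nu(\ell ab-t_0)$ can pass through $0$ inside the range, since the pole $t=-\mu/\nu$ has $|\mu/\nu|\ge \tfrac14 T^{1-A_5\delta}$ while $\ell ab-t_0$ ranges up to $\asymp T^{1+40A_5\delta}$; so the denominators are neither bounded nor bounded away from zero. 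Second, even away from the pole the bracket $b_1^2/(\mu+\nu\ell ab_1-\nu t_0)^3-b_2^2/(\mu+\nu\ell ab_2-\nu t_0)^3$ has \emph{different} denominators and can vanish for $a$ inside $[A,2A]$ (solve $b_1^{2/3}(c+ub_2)=b_2^{2/3}(c+ub_1)$ with $u=\nu\ell a$, $c=\mu-\nu t_0$), so the heuristic "difference $\asymp|b_1-b_2|\times$ typical derivative" fails. Handling these two issues is the bulk of the paper's proof: it first cuts $I$ into subintervals of length $T^{1-100A_5\delta}$ and discards trivially those meeting the set $|\mu+\nu(\ell ab-t_0)|\le T^{-20A_5\delta}$; then, after Cauchy--Schwarz, it Taylor-expands the differenced phase (using that on a short $I'$ the off-diagonal difference satisfies $|a_1-a_2|\ll AT^{-100A_5\delta}$), isolates the factor that can vanish, and decomposes the inner range into $O(\log T)$ pieces on which that factor has a fixed dyadic size, bounding the near-vanishing piece $J^{(k)}_\infty$ trivially via its short length; finally it splits into $|\nu|>T^{-1-1/10}$ (third-derivative test, Lemma \ref{le:Robert2}) and smaller $|\nu|$ (second-derivative test), which is not a fallback but necessary for the exponent bookkeeping.

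There is also a structural problem with your choice of Cauchy--Schwarz variable, independent of the phase analysis. You Cauchy--Schwarz in $a$, so the exponential sum to be estimated runs over the \emph{short} range $[A,2A]$, with $A$ possibly as small as $T^{40A_5\delta}$. Since the bounds of Lemmas \ref{le:Robert1}--\ref{le:Robert2} are always $\gg|J|^{1/2}$ (resp.\ $|J|^{3/4}$) by AM--GM, the best total these lemmas can ever give for $|\mathcal B|^2$ in your arrangement is $\gg A\cdot B^2\cdot A^{1/2}\asymp T^2A^{-1/2}$, which is $\le T^{2-40A_5\delta}$ only when $A\ge T^{80A_5\delta}$; so for $T^{40A_5\delta}\le A<T^{80A_5\delta}$ the plan cannot reach the target at all. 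The paper instead Cauchy--Schwarzes in the long variable $b$, keeping a sum of length $B\ge\sqrt T$ inside, for which square-root cancellation is ample, and the restriction to short intervals $I'$ is what makes the off-diagonal pairs satisfy $|a_1-a_2|\ll AT^{-100A_5\delta}$. (Two further slips consistent with this confusion: your diagonal bound $AB^2\ll T^2/A$ is the one for Cauchy--Schwarz in $b$ — with your displayed inequality the diagonal is $A^2B$ — and the chain rule in the variable $a$ produces factors $(\ell b_i)^2\asymp\ell^2B^2$, not $\ell^2A^2$.)
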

\begin{proof}
  We start by first cutting the interval $I$ into shorter (disjoint) intervals $\{ I' \}$ of length $T^{1 - 100 A_5 \delta}$, and a remaining shorter interval that we simply ignore by bounding its contribution trivially.
  We also bound trivially the contribution of any interval $I'$ for which there exists $a b \in I'$ such that
  $$
  |\mu + \nu (\ell a b - t_{0}) | \leq T^{-20A_{5} \delta}.
  $$
  This removes at most $T^{1 - 20 A_{5} \delta}$ integers and is therefore acceptable. 

  On the remaining intervals $I'$ we can therefore assume that for all $ a b \in I'$ we have
  $|\mu + \nu (\ell a b - t_{0})| > T^{-20 A_{5} \delta}$.
  We fix such an interval $I'$. We aim to obtain a bound $|I'| T^{-20 A_5 \delta}$ for
  $$
 \Big\vert   \sum_{a b \in I'} \alpha_a \beta_b e(m_{\mu, \nu}(\ell a b - t_{0}) \beta)  \Big\vert \,.
  $$
  By Cauchy-Schwarz the above is
  $$
  \leq B^{1/2} \Big ( \sum_{B \leq b < 2B} \Big | \sum_{a b \in I'} \alpha_a e(m_{\mu, \nu}(\ell a b - t_{0}) \beta) \Big |^2 \Big )^{1/2}
  $$
  Expanding the square we get, 
  $$
  \sum_{a_1, a_2} \alpha_{a_1} \overline{\alpha_{a_2}} \sum_{\substack{B \leq b < 2B \\ a_1 b , a_2 b \in I'}} e\Big(m_{\mu, \nu}(\ell a_1 b - t_{0}) \beta - m_{\mu, \nu}(\ell a_2 b - t_{0}) \beta \Big)
  $$
  We notice that the condition $a_1 b, a_2 b \in I'$ implies $|a_1 - a_2| \ll |I'| / B \asymp A T^{-100 A_5 \delta}$. We further separate this sum into terms with $a_1 = a_2$ and terms with $a_1 \neq a_2$. Thus we get,
  $$
  \ll |I'| + \sum_{\substack{0 < |a_1 - a_2| \ll A T^{-100 A_5 \delta}}} \Big | \sum_{\substack{b \in J}} e \Big ( m_{\mu, \nu}(\ell a_1 b - t_{0}) \beta - m_{\mu, \nu}(\ell a_2 b - t_{0}) \beta \Big ) \Big |
  $$
  where $J := (I' / a_1) \cap (I' / a_2) \cap [B, 2B]$.
  Let
  $$
  f_{\mu,\nu,\beta, \ell, a_1, a_2, t_{0}}(x) :=  m_{\mu, \nu}(\ell a_1 x - t_{0}) \beta - m_{\mu, \nu}(\ell a_2 x - t_{0}) \beta.
  $$
  We now differentiate the function $f$. We notice that
  $$
  m_{\mu, \nu}(t) = \frac{\mu t}{\mu + \nu t} = \frac{1}{\nu} \cdot \Big ( {{\mu}} - \frac{\mu^2}{\mu + \nu t} \Big )
  $$
  Therefore, for $k\geq 1$,
  $$
  m_{\mu, \nu}^{(k)}(t) = \frac{c_k \mu^2 \nu^{{{k-1}}} }{(\mu + \nu t)^{k + 1}}
  $$
  for some coefficients $c_k \neq 0$. 
  Therefore
  $$
  f^{(k)}(x) = c_k \mu^2 \beta \ell^k \nu^{k - 1} \cdot \Big (  \frac{a_1^k}{(\mu + \nu \ell a_1 x - \nu t_{0})^{k + 1}} - \frac{a_2^k}{(\mu + \nu \ell a_2 x - \nu t_{0})^{k + 1}} \Big ).
  $$
  To understand this quantity write $a_2 = a_1 + h$ and note that $|h| \ll A T^{-100 A_5 \delta}$. Therefore expanding in a Taylor series, we find, 
  \begin{align*}
  & \frac{(a_1 + h)^{k}}{(\mu + \nu \ell (a_1 + h) x - \nu t_{0})^{k + 1}} \\ & = \frac{a_1^k}{(\mu + \nu \ell a_1 x - \nu t_{0})^{k + 1}}\cdot \Big (1 +  \frac{h}{a_1} \cdot \Big ( k - \frac{(k + 1) \nu \ell x a_1}{\mu + \nu \ell a_1 x - \nu t_{0}} \Big ) + O \Big ( \frac{T^{40 A_{5} \delta} |h|^2}{A^2} \Big ) \Big ).
  \end{align*}
  since by assumption for all $x \in J$ we have $| \mu + \nu ( \ell a_{1} x - t_{0} ) | > T^{-20A_{5} \delta}$.
  We can cut the interval $J$ into a union of $\ll \log T$ intervals $J^{(k)}_{U, \mu, \nu \ell a_1}$ on which
  $$
  \Big | k - \frac{(k + 1) \nu \ell x a_1}{\mu + \nu \ell x a_1 - \nu t_{0}} \Big | \asymp  e^{- U}
  $$
  with $|U| \leq 50 \delta A_5 \log T$ and an interval $J^{(k)}_{\infty, \mu , \nu \ell a_1}$ on which the left-hand side in the above formula is less than $T^{-50 A_5 \delta}$. Notice that because of the triangle inequality we don't care if the above intevals are disjoint, we will also not care about their lengths except for the interval $J^{{(k)}}_{\infty, \mu, \nu \ell a_{1}}$ and we don't care if the same $U$ repeats for several of the intervals. We only care that there is not too many such intervals (e.g $\ll \log T$) and that $J^{(k)}_{\infty}$ is reasonably short.  By abuse of notation we will also write,
  $$
  J^{(k)}_{U} := J^{(k)}_{U, \mu, \nu \ell a_{1}} \ , \ J^{(k)}_{\infty} := J^{(k)}_{\infty, \mu, \nu \ell a_{1}}.
  $$
  All subsequent $\ll$ and $\asymp$ symbols are allowed to depend on $k$.
  Notice that,
  $$
  |J^{(k)}_{\infty}| \ll B T^{-49 A_{5} \delta},
  $$
  since on such an interval necessarily $\nu \ell x a_{1} = \mu - \nu t_{0} + o(1)$ and $\mu - \nu t_{0} \asymp 1$.

Notice furthermore that,
  $$
  \vert f^{(k)}(x) \vert \asymp \lambda_k:= e^{-U} \cdot \ell^{k} \vert \nu\vert ^{k - 1} |a_1|^{k-1}  |a_1 - a_2| |\beta| \ , \ x \in J^{(k)}_{U, \mu, \nu \ell a_1}\,.
  $$
     We further separate into subcases according to the size of $\vert \nu\vert$. To avoid further cluttering the notation we will drop extraneous subscripts from the intervals $J$, which is permissible since when summing over $b$ we treat $a_1, h, \mu, \nu, \ell$ as fixed.

  \subsubsection{The subcase $|\nu| > T^{-1 - 1/10}$}

  First we trivially bound the contribution of $b \in J^{(3)}_{\infty}$. The total contribution is
  $$
  B^{1/2} \cdot \Big ( B \cdot T^{-49 A_5 \delta} \cdot \frac{|I'|^2}{B^2} \Big )^{1/2} \ll |I'| \cdot T^{-24 A_5 \delta}.
  $$
  Fix an interval $J_{U}^{(3)}$. 
  Notice that for $x \in J_{U}^{(3)}$, since by hypothesis $|\nu| \leq  2T^{-1 + A_5 \delta }$, we have
  $$ \lambda_3 \ll \ell^3 |\beta| \cdot T^{-2 + 2 A_5 \delta} A^3 \ll T^{-1/2 + 200 A_5 \delta} $$ while
  $$ \lambda_3  \gg |a_1|^{2}  |a_1 - a_2| T^{-2 - 1/5 - 1000 A_5 \delta}.$$ Therefore by Lemma \ref{le:Robert2},
  $$
  \begin{aligned}
 \Big\vert  \sum_{b \in J_{U}^{(3)}} e (f_{\mu, \nu, \beta, a_1, a_2, t_{0}}(b)) \Big\vert\ll  & |J_{U}^{(3)}| T^{-1/12 + 2000 A_5 \delta} \\ &+ |J_U^{(3)}|^{1/2} T^{1/3 + 1/30 + 4000 A_5 \delta} (|a_1|^{2}  |a_1 - a_2| )^{-1/6}
  \end{aligned}
  $$
  We bound the length of each interval trivially by $B$.
  The contribution of the first term, summed over $a_1 \neq a_2$ is
  $$
  \ll A T^{1-1/20}
  $$
  provided that $ A_5 \delta$ is sufficiently small. On the other hand the contribution of the second term summed over $a_1 \neq a_2$ is (since $B \asymp T / A$)
  $$
    \Big(\frac{T}{A} \Big )^{1/2} \cdot T^{1/3 + 1/30 + 4000 A_5 \delta} \sum_{\substack{a_1 \neq a_2 \\ |a_1|, |a_2| \ll A}} \frac{1}{   |a_1|^ {1/3}  |a_1- a_2|^{1/6}}c  \ll A T^{7/8}
  $$
  for $A_5 \delta$ sufficiently small, and this is also sufficient.

  \subsubsection{The subcase $T^{1000 A_5 \delta} \cdot |I|^{-2} < |\nu| < T^{-1 - 1/10}$}

  We bound trivially the contribution of $b \in J_{\infty}^{(2)}$, and again this yields,
  $$
  \ll |I'| \cdot T^{-24 A_5 \delta}.
  $$
  We fix our attention on an interval $J = J_{U}^{(2)}$ for some $U \in [-50 A_5 \delta \log T, 50 A_{5} \delta \log T]$.
  We notice that on the entire interval, 
  $$
 \lambda_2 \ll T^{-1/20}
  $$
  provided that $A_5 \delta$ is sufficiently small, 
  while also
  $$
  \lambda_2 \gg |I'|^{-2}  |a_1 |  |a_1- a_2| T^{100 A_5 \delta}
  $$
  (Notice that we use here that $|I'| \gg |I| T^{-100 A_5 \delta}$.)
  Therefore, by Lemma \ref{le:Robert1} we have,
  $$
    \Big\vert \sum_{b \in J} e(f_{\mu, \nu, \beta, a_1, a_2, t_{0}}(b))   \Big\vert \ll |J| T^{-1/40} + |I'|  \big(|a_1 |  |a_1- a_2| \big)^{-1/2} T^{-100 A_5 \delta}.
  $$
  Summing the main term over $a_1 \neq a_2$ we obtain a bound of
  $$
  \ll A T^{1-1/40}.
  $$
  Summing the off-diagonal term over $a_1 \neq a_2$ we obtain a final bound of
  $$
  \ll |I'| A T^{-40 A_5 \delta}. 
  $$
  Putting all these bounds together gives the final result. 
\end{proof}

We also need an easy type-I estimate, stated below.

\begin{lemma} \label{le:TypeI}
  Let $\delta \in (0, \tfrac{1}{100})$. Let $T$ be given, $R \leq T^{10 A_5 \delta}$ with $A_{5} > 0$ an absolute constant and $I \subset [0, T]$ an interval of length $> T^{1 - A_5 \delta}$.
  Let $\alpha_a$ be arbitrary coefficients with $|\alpha_a| \leq 1$ and supported respectively in $[A, 2A]$ with $ A \ll T^{1/10}$.
  Suppose that $\mu, \nu$ and $\beta$ are such that, $$|\mu - 1| \leq 2 T^{-1 + A_5 \delta} \ , \ |I|^{-2} T^{1000 A_5 \delta} \leq |\nu| \leq 2 T^{-1 + A_5 \delta} \ , \ \beta = \frac{1}{R}\,.$$ 
  Let
  $$
  m_{\mu, \nu}(t) = \frac{\mu t}{\mu + \nu t}. 
  $$
  Then, for all $1 \leq \ell \leq T^{40 A_5 \delta}$, $t_{0} \in [0, T]$ and all $A_5 \delta$ sufficiently small
  $$
   \Big\vert \sum_{a b \in I} \alpha_a e(m_{\mu,\nu}(\ell a b - t_{0}) \beta)   \Big\vert \ll T^{1 - 20 A_5 \delta} \,.
  $$
\end{lemma}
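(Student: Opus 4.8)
The plan is to reduce the exponential sum $\sum_{ab\in I}\alpha_a e(m_{\mu,\nu}(\ell ab-t_0)\beta)$ to an inner sum over $b$, for each fixed $a$ in the (short) range $[A,2A]$ with $A\ll T^{1/10}$, and then apply one of the van der Corput--type estimates (Lemma~\ref{le:Robert1} or Lemma~\ref{le:Robert2}) to that inner sum. First I would swap the order of summation: since $|\alpha_a|\le 1$, it suffices to prove that for each fixed $a\asymp A$,
$$
\Big|\sum_{b:\ ab\in I} e\bigl(m_{\mu,\nu}(\ell ab-t_0)\beta\bigr)\Big| \ll \frac{T}{A}\,T^{-21 A_5\delta},
$$
so that summing over the $\ll A$ values of $a$ gives the claimed bound $T^{1-20A_5\delta}$. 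Write $\phi_a(x):=m_{\mu,\nu}(\ell a x-t_0)\beta$ as a function of the single variable $x$ ranging over an interval $J_a\subset[B,2B]$ with $B\asymp T/A\gg T^{9/10}$ (here $B$ is large precisely because $A$ is small --- this is the whole point of the type-I regime).

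Next I would compute the derivatives of $\phi_a$ exactly as in the type-II lemma. From the identity $m_{\mu,\nu}(t)=\nu^{-1}(\mu-\mu^2/(\mu+\nu t))$ one gets $m_{\mu,\nu}^{(k)}(t)=c_k\mu^2\nu^{k-1}(\mu+\nu t)^{-k-1}$, hence
$$
\phi_a^{(k)}(x)=c_k\,\mu^2\beta\,\ell^k a^k\,\nu^{k-1}\,(\mu+\nu\ell a x-\nu t_0)^{-k-1}.
$$
As in the proof of Lemma~\ref{le:TypeII}, I would first discard (bounding trivially) the sub-interval of $J_a$ where $|\mu+\nu(\ell a x-t_0)|\le T^{-20A_5\delta}$; this removes $\ll B\,T^{-49A_5\delta}$ values of $b$ by the same argument used there, since on such a set $\nu\ell a x=\mu-\nu t_0+o(1)\asymp 1$. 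On the complementary set the denominator is $\asymp 1$ up to a factor $T^{O(A_5\delta)}$ --- again after cutting $J_a$ into $\ll\log T$ dyadic-in-$|\mu+\nu(\ell a x-t_0)|$ pieces --- so on each piece $|\phi_a''(x)|\asymp\lambda_2$ with $\lambda_2\asymp|\beta|\,\ell^2 a^2|\nu|$ (up to $T^{O(A_5\delta)}$). Using $|\beta|=1/R\le 1$, $\ell\le T^{40A_5\delta}$, $a\ll T^{1/10}$ and $|\nu|\le 2T^{-1+A_5\delta}$ one checks $\lambda_2\ll T^{-1+1/5+O(A_5\delta)}$, which is $\le T^{-1/2}$ for $A_5\delta$ small; and from $|\nu|\ge|I|^{-2}T^{1000A_5\delta}\ge T^{-2+1000A_5\delta - 2A_5\delta}$ one gets a lower bound $\lambda_2\gg T^{-2+900A_5\delta}$ (using $\ell,a\ge 1$), in particular $\lambda_2^{-1/2}\ll T^{1-400A_5\delta}$. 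Then Lemma~\ref{le:Robert1} gives
$$
\Big|\sum_{b\in J_a\text{-piece}} e(\phi_a(b))\Big|\ll |J_a|\lambda_2^{1/2}+\lambda_2^{-1/2}\ll B\,T^{-1/4}+T^{1-400A_5\delta}\ll \frac{T}{A}T^{-21A_5\delta}
$$
provided $A_5\delta$ is small enough (recall $B\asymp T/A$ and $A\ll T^{1/10}$ so $B\gg T^{9/10}$, which makes $B\,T^{-1/4}\ll B\,T^{-21A_5\delta}$, and $T^{1-400A_5\delta}\le (T/A)T^{-21A_5\delta}$ since $A\le T^{1/10}$ forces $T/A\ge T^{9/10}$). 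Summing over the $\ll\log T$ pieces and over the $\ll A$ values of $a$ yields the lemma. The second-derivative estimate suffices here precisely because $B$ is so large; one does not need the third-derivative refinement of Lemma~\ref{le:Robert2} as in the type-II case.

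The main obstacle --- though it is a mild one compared with the type-II lemma --- is book-keeping the exponents of $T$ through the dyadic decomposition of $|\mu+\nu(\ell a x - t_0)|$ and verifying that the trivially-bounded pieces (the ``$\infty$'' interval and the leftover of length $<T^{1-100A_5\delta}$ from cutting $I$, if one chooses to cut $I$ as in Lemma~\ref{le:TypeII}) contribute $\ll |I|T^{-21A_5\delta}$; this forces the numerology $|\nu|>|I|^{-2}T^{1000A_5\delta}$ into play in exactly the way it was used before. One must also be slightly careful that $A$ could be as small as $O(1)$, in which case $B\asymp T$ and the inner sum has full length --- but then $\lambda_2\asymp|\beta||\nu|\ge T^{-2+900A_5\delta}$ still gives $\lambda_2^{-1/2}\ll T^{1-400A_5\delta}\ll T^{1-21A_5\delta}$, and $|J_a|\lambda_2^{1/2}\ll T\cdot T^{-1/4}$, so the bound survives with room to spare. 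Everything else is a direct transcription of the already-established arguments, with the simplification that there is no Cauchy--Schwarz and no off-diagonal to handle.
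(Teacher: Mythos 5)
Your route is the same as the paper's: fix $a$, remove (or dyadically decompose according to) the size of $\vert \mu+\nu(\ell a b-t_{0})\vert$, compute $\phi_a''$, and apply the second-derivative test (Lemma~\ref{le:Robert1}) to the long sum over $b$, so no Cauchy--Schwarz is needed. However, there is a concrete quantitative gap in your bookkeeping at the last step. You discard the factor $a^{2}$ from the lower bound on $\lambda_{2}$ (``using $\ell, a\ge 1$''), obtaining only the $a$-uniform bound $\lambda_{2}^{-1/2}\ll T^{1-400A_5\delta}$, and then claim the per-$a$ estimate $T^{1-400A_5\delta}\le (T/A)T^{-21A_5\delta}$ ``provided $A_5\delta$ is small enough''. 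This inequality goes the wrong way: since $(T/A)T^{-21A_5\delta}$ can be as small as $T^{9/10-21A_5\delta}$ when $A\asymp T^{1/10}$, it requires $379\,A_5\delta\ge 1/10$, i.e.\ $A_5\delta$ bounded \emph{below} --- exactly the opposite of the regime ``$A_5\delta$ sufficiently small'' in which the lemma is stated. Equivalently, multiplying your uniform bound by the $\asymp A$ values of $a$ gives $\ll A\,T^{1-400A_5\delta}\ll T^{11/10-400A_5\delta}$, which is not $\ll T^{1-20A_5\delta}$ once $A_5\delta$ is small. (Your closing remark only checks the easy case $A=O(1)$; the failure is at the other endpoint $A\asymp T^{1/10}$.)

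The repair is immediate and is what the paper does: keep the factor $a^{2}$, which is already present in your displayed formula for $\phi_a^{(k)}$. One gets $\lambda_{2}\gg \ell^{2}a^{2}\vert\nu\vert\,\vert\beta\vert\,T^{-O(A_5\delta)}\gg a^{2}\vert I\vert^{-2}T^{cA_5\delta}$ for some $c>0$, hence $\lambda_{2}^{-1/2}\ll a^{-1}\vert I\vert\,T^{-cA_5\delta/2}$, and the sum over $a\in[A,2A]$ of $a^{-1}$ costs only $O(1)$ rather than a factor $A$; together with $\vert J_a\vert\lambda_{2}^{1/2}\ll (T/a)T^{-1/8}$ and the $\ll\log T$ dyadic pieces this yields $\ll T^{1-20A_5\delta}$ uniformly in $A\ll T^{1/10}$, as required. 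With that one-line correction your argument coincides with the paper's proof.
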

\begin{proof}
  We split again into short intervals of length $T^{1 - 20 A_{5} \delta}$. Discarding $\ll 1$ intervals we can assume that we are located on an interval on which $|\mu + \nu(\ell a b - t_{0})| > T^{-20 A_{5} \delta}$ for all $ a b$ in the interval.
  We then cover this interval by a union of $\ll \log T$ intervals $I_{U}$ on which
  \begin{equation} \label{eq:holds}
    |\mu + \nu ( \ell a b - t_{0} ) | \asymp U
  \end{equation}
  with $U$ running over powers of two between $T^{-50 A_{5} \delta}$ and $T^{50 A_{5} \delta}$.
  As in the proof of the previous lemma we don't care about the intervals $I_{U}$ being disjoint, about their lengths, or possible reappearance of the same value $U$. It only matters that there is not too many of them. Let $J$ be one such interval on which $|\mu + \nu(\ell a b - t_{0})| \asymp U$ for all $ a b \in J$ for some $U \in [T^{-50A_{5} \delta}, T^{50 A_{5} \delta}]$.
  Let
  $$
  f(x) := m_{\mu, \nu}( \ell a x - t_{0}) \beta .
  $$
  Notice that,
  $$
  |f''(x)| \asymp  \lambda_2:=\ell^2 a^2 \mu^{2} |\nu| \vert \beta\vert\ U^{-3},.
  $$
  Therefore $\lambda_2  \leq T^{-1/4}$ on the entire interval, and moreover
  $  \lambda_2 \geq a^2 |I|^{-2} T^{100 A_5 \delta}$ on the entire interval.
  Therefore by Lemma \ref{le:Robert1} the sum over $b \in J / a$ above is bounded by
  $$
  \ll \sum_{A \leq a < 2A} \frac{|I|}{a} \Big ( T^{-50 A_5 \delta} +  a T^{-1/8} \Big )  \ll |I| T^{-21 A_5 \delta}
  $$
  as needed. This is sufficient since we sum this over at most $\ll \log T$ intervals $I_{U}$.
\end{proof}

We are now ready to prove Proposition \ref{thm:maks1} in the case when $|\nu| > T^{100 A_5 \delta} \cdot |I|^{-2}$.
\begin{proof}[Proof of Proposition \ref{thm:maks1} for $|\nu| > T^{1000 A_5 \delta} \cdot |I|^{-2}$]
  First by Lemma \ref{le:TypeII} we can assume that $p \leq T^{40 A_5 \delta}$. Now by Lemma \ref{le:Type} we are reduced to bounding type-I and type-II sums which are handled by Lemma \ref{le:TypeI} and Lemma \ref{le:TypeII}. This gives the claim. 
\end{proof}

\subsection{The case when $|\nu| \leq |I|^{-2} T^{1000 A_5 \delta}$}

In this case we can assume without loss of generality that $r > T^{10000 A_5 \delta}$. The result then following easily from the following three Lemmas.
\begin{lemma} \label{le:VinogradovTypeII}
  Let $\alpha$ be a real number such that,
  $$
  \Big | \alpha - \frac{a}{q} \Big | \leq \frac{1}{q Q}
  $$
  with $1 \leq q \leq Q$ and $(a,q) = 1$. Then, for any sequence $\alpha_a$ and $\beta_b$ with $|\alpha_a| \leq 1$ and $|\beta_b | \leq 1$, 
  $$
 \Big\vert  \sum_{\substack{m n \leq x \\ m > M, n > N}} \alpha_m \beta_n e(\alpha m n)  \Big\vert\ll \Big ( \frac{x}{M} + \frac{x}{N} + \frac{x}{q} + q \Big )^{1/2} \sqrt{x} (\log x)^2.
  $$
\end{lemma}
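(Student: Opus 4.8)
The plan is to prove Lemma~\ref{le:VinogradovTypeII} by the classical Vinogradov argument: one Cauchy--Schwarz to delete a coefficient sequence, duplication of the resulting square, and a standard bound for $\sum_{h}\min(U,\|\alpha h\|^{-1})$ that exploits the Diophantine hypothesis $|\alpha-a/q|\le 1/(qQ)\le 1/q^2$. First I would reduce to a dyadic range in $n$: split the condition $n>N$ into $O(\log x)$ ranges $n\asymp N'$ with $N/2\le N'\le x/M$ (ranges with $N'\ge x/M$ are empty since we also need $m>M$ and $mn\le x$); it then suffices to bound each dyadic piece $S_{N'}:=\sum_{m>M,\ n\asymp N',\ mn\le x}\alpha_m\beta_n e(\alpha mn)$ by $\ll (x/M+x/N+x/q+q)^{1/2}\sqrt{x}\,\log x$. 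The variable $m$ can be left with its full range $(M,x/N']$; no dyadic split in $m$ is needed because, after expanding the square, the inner $m$-sum has length $\asymp x/N'$ anyway.

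For a fixed dyadic piece, apply Cauchy--Schwarz in $m\in(M,x/N']$ (where $\sum_m|\alpha_m|^2\le x/N'$); expanding the square and interchanging summation gives
$$
|S_{N'}|^2\le \frac{x}{N'}\sum_{n_1,n_2\asymp N'}\beta_{n_1}\overline{\beta_{n_2}}\sum_{M<m\le x/\max(n_1,n_2)}e\big(\alpha m(n_1-n_2)\big).
$$
The diagonal terms $n_1=n_2$ contribute $\ll \tfrac{x}{N'}\cdot N'\cdot\tfrac{x}{N'}=\tfrac{x^2}{N'}\le\tfrac{x^2}{N}$. For the off-diagonal, write $h=n_1-n_2\neq0$, so $0<|h|\ll N'$ and there are $\ll N'$ pairs $(n_1,n_2)$ for each $h$, while the geometric inner sum is $\ll\min(x/N',\|\alpha h\|^{-1})$; thus the off-diagonal is $\ll x\sum_{0<|h|\ll N'}\min(x/N',\|\alpha h\|^{-1})$. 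Here I would invoke the standard estimate (partition $[1,N']$ into blocks of $q$ consecutive integers, on each of which the values $\|\alpha h\|$ are, up to $O(1/q)$, the numbers $\|c+j/q\|$, $0\le j<q$):
$$
\sum_{0<|h|\ll N'}\min\!\Big(\frac{x}{N'},\frac{1}{\|\alpha h\|}\Big)\ll\Big(\frac{N'}{q}+1\Big)\Big(\frac{x}{N'}+q\log q\Big)\ll \frac{x}{q}+N'\log q+\frac{x}{N'}+q\log q.
$$
Multiplying by $x$ and using $N'\le x/M$ and $x/N'\le x/N$ bounds the off-diagonal by $\ll \tfrac{x^2}{q}+\tfrac{x^2}{M}\log q+\tfrac{x^2}{N}+xq\log q$, so $|S_{N'}|^2\ll\big(\tfrac{x^2}{M}+\tfrac{x^2}{N}+\tfrac{x^2}{q}+xq\big)\log q$; taking square roots and summing over the $O(\log x)$ dyadic pieces gives the claimed bound, the $(\log x)^2$ comfortably absorbing the $\log x$ from the dyadic count and the $\sqrt{\log q}$ factor.

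There is no deep obstacle here — this is a textbook argument, and one could simply cite Vaughan's book or Iwaniec--Kowalski (or the Polymath8 lemmas already used in the proof of Lemma~\ref{le:Type}). The only point requiring genuine care is the auxiliary bound for $\sum_{h\le H}\min(U,\|\alpha h\|^{-1})$: it must be stated with the correct dependence on $q$ and the correct power of $\log$ so that, after the final square-root, the four terms $x/M$, $x/N$, $x/q$, $q$ appear exactly as in the statement. A minor secondary point is checking that leaving the $m$-sum non-dyadic causes no loss, which is immediate once one observes that $\max(n_1,n_2)\asymp N'$ in the expanded sum.
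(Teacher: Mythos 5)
Your argument is correct: it is the classical Cauchy--Schwarz/duplication proof, using the standard bound $\sum_{0<h\le H}\min\bigl(U,\|\alpha h\|^{-1}\bigr)\ll (H/q+1)(U+q\log q)$ valid under $|\alpha-a/q|\le 1/q^{2}$, and the bookkeeping with $N'\le x/M$, $x/N'\ll x/N$ delivers exactly the four terms $x/M$, $x/N$, $x/q$, $q$ inside the square root, with room to spare in the $(\log x)^{2}$ (only note that $\log q\ll\log x$ needs $q\ll x$, the case $q\ge x$ being trivial since the stated bound then exceeds the trivial estimate $\ll x\log x$). The paper itself offers no argument here --- it simply cites Iwaniec--Kowalski, Lemma 13.8 --- and what you wrote is essentially the standard proof of that cited lemma, so there is no substantive divergence to compare.
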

\begin{proof}
  See \cite[Lemma 13.8]{IwaniecKowalski}
\end{proof}

\begin{lemma} \label{le:Vinogradov}
  Let $\alpha$ be a real number such that
  $$
  \Big | \alpha - \frac{a}{q} \Big | \leq \frac{1}{q Q}
  $$
  with $(a,q) = 1$ and $1 \leq q \leq Q$. Then,
  $$
  \Big\vert  \sum_{p \leq x} e(\alpha p)  \Big\vert \ll \Big ( \sqrt{q x} + q^{-1/2} x  + x^{4/5} \Big ) (\log x)^2. 
  $$
\end{lemma}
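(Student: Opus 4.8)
The cleanest route is to quote this directly: it is Vinogradov's classical estimate for exponential sums over primes, and with these exponents and the factor $(\log x)^{2}$ it is essentially \cite[Theorem~13.9]{IwaniecKowalski}. For completeness I would also indicate the self-contained argument, which is anyway of the same type as the other lemmas in this section. The plan is to reduce to the von Mangoldt weighted sum, apply Vaughan's identity with thresholds $U=V=x^{2/5}$, and bound the resulting Type~I and Type~II sums — the Type~II part being exactly Lemma~\ref{le:VinogradovTypeII}.

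Concretely, first I would pass from $\sum_{p\le x}e(\alpha p)$ to $A(x):=\sum_{n\le x}\Lambda(n)e(\alpha n)$ by partial summation, which costs one factor of $\log x$, the prime powers $p^{k}$ with $k\ge 2$ contributing $O(\sqrt x)$. Vaughan's identity then writes the Type~I piece as a sum $\sum_{d\le x^{4/5}}c_{d}\sum_{m\le x/d}e(\alpha d m)$ with divisor-bounded coefficients $c_{d}$; using $\sum_{m\le x/d}e(\alpha dm)\ll\min(x/d,\|\alpha d\|^{-1})$ together with the Dirichlet approximation $|\alpha-a/q|\le (qQ)^{-1}$ (splitting the range of $d$ into blocks of length $q$), one gets
\[
\ll\bigl(x^{4/5}+x/q+q\bigr)(\log x)^{O(1)}\ll\bigl(x^{4/5}+xq^{-1/2}+\sqrt{xq}\bigr)(\log x)^{O(1)},
\]
where the range $q>x$ is trivial. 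For the Type~II piece, after a dyadic split into ranges $m\asymp M$, $d\asymp N$ with $MN\asymp x$ and $M,N\gg x^{2/5}$, and the standard reduction to coefficients of modulus $\le 1$, one is left with a bounded number of applications of Lemma~\ref{le:VinogradovTypeII}, each of size
\[
\ll\bigl(x/M+x/N+x/q+q\bigr)^{1/2}\sqrt x\,(\log x)^{2}\ll\bigl(x^{3/5}+x/q+q\bigr)^{1/2}\sqrt x\,(\log x)^{2}\ll\bigl(x^{4/5}+xq^{-1/2}+\sqrt{xq}\bigr)(\log x)^{O(1)}.
\]
Assembling the pieces and undoing the partial summation then yields the stated bound.

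The substantive input is Lemma~\ref{le:VinogradovTypeII}, which we may assume; the rest is routine bookkeeping, so there is no serious obstacle. The only mildly delicate points are the classical estimate for $\sum_{d\le D}\min(x/d,\|\alpha d\|^{-1})$ in terms of the Dirichlet denominator $q$, and controlling the divisor-bounded coefficients produced by Vaughan's identity so that the accumulated powers of $\log x$ do not exceed those in the statement; both are standard, and I would either carry them out as in \cite{IwaniecKowalski} or simply cite that reference for the whole lemma.
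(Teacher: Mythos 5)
Your primary route is exactly the paper's: the lemma is quoted directly from Iwaniec--Kowalski (the paper cites \cite[Theorem~13.6]{IwaniecKowalski}, so adjust your theorem number), and your optional Vaughan-identity sketch is just the standard proof of that cited result, with the Type~II input matching Lemma~\ref{le:VinogradovTypeII} as you say. Correct, and essentially identical to the paper's treatment.
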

\begin{proof}
See \cite[Theorem 13.6]{IwaniecKowalski}
\end{proof}

We are now ready to prove the result in the case when $r$ is large. 

\begin{proof}[Proof of Proposition \ref{thm:maks1} for $r$ large]
    This follows from a minor variant of Vinogradov's work on $e(\alpha p)$. Indeed, one short intervals of length $T^{1 - 2000 A_5 \delta}$ the function $m_{a,c}(t)$ can be replaced simply by the identity, that is $m_{a,c}(t) \approx t$,  with an acceptable error. By Lemma\ref{le:VinogradovTypeII} we can assume that $p \leq T^{500 A_5 \delta}$. Since $r > T^{10000 A_5 \delta}$ it follows that even in the case where $r$ is divisible by $p$, the denominator $r / p$ is still larger than $T^{500 A_5 \delta}$. Therefore applying
    Lemma~\ref{le:Vinogradov} we end up with a satisfactory saving in the sum over $q$. 
\end{proof}




\section{Appendix: Deviation of ergodic averages for $SL(2, \R)$ unipotent flows}

\subsection{Spectral decomposition of horocycle orbits}
Since~${\mathcal I}^s(S_\Gamma)\subset W^{-s}(S_\Gamma)$ is closed, there is an
orthogonal splitting
\begin{equation}
  \label{eq:orthsplitone}
  W^{-s}(S_\Gamma)={\mathcal I}^s(S_\Gamma) \oplus^{\perp} {\mathcal I}^s(S_\Gamma)^\perp\,.
\end{equation}
Although the space~${\mathcal I}^s(S_\Gamma)$ is $\{\phi^X_t\}$-invariant, the
action of the geodesic one-parameter group~$\{\phi^X_t\}$ on~$W^s(S_\Gamma)$ is 
{\it not }unitary and the orthogonal splitting~\eqref{eq:orthsplitone} is 
{\it not }$\{\phi^X_t\}$-invariant.

According to Theorems 1.1 and 1.4 of  \cite{FlaFor}, the one-parameter group
$\{\phi^X_t\}$ has a (generalized) spectral representation on the space~${\mathcal
I}^s(S_\Gamma)$. In fact, for all $s>0$, there is a $\{\phi_t^X\}$-invariant
orthogonal splitting 
\begin{equation}
  \label{eq:orthsplittwo}
  {\mathcal I}^s(S_\Gamma)={\mathcal I}^s_d \oplus^{\perp} {\mathcal I}^s_{\mathcal C}
\end{equation}
and the spectrum of $\phi_t^X$ is discrete on the subspace~${\mathcal I}^s_d:=
{\mathcal I}_d\cap {\mathcal I}^s(S_\Gamma)$ and Lebesgue of finite multiplicity with
spectral radius equal to $e^{-t/2}$ on ${\mathcal I}^s_{\mathcal C}$, for all $t\in
\R$. 

\smallskip
Let ${\mathcal B}\subset {\mathcal I}_d$ be a basis of (generalized) eigenvectors
for~$\{\phi_t^X\}\,|\,{\mathcal I}_d$ such that ${\mathcal B}\cap \bigl({\mathcal I}_d
\ominus {\mathcal I }_{1/4}\bigr)$ is a basis of eigenvectors for $\{\phi_t^X\}
\,|\,\bigl({\mathcal I}_d\ominus {\mathcal I}_{1/4}\bigr)$ and, if $1/4\in\sigma_{pp}(\square)$,
the spectrum of the Casimir operator $\square$,
the set ${\mathcal B}_{1/4}:={\mathcal B}\cap {\mathcal I}_{1/4}$ is a basis which
brings $\{\phi_t^X\}\,|\,{\mathcal I}_{1/4}$ into its Jordan normal form.

\smallskip
For any~${\mathcal D}\in {\mathcal B}\setminus {\mathcal B}_{1/4}$ of Sobolev
order~$S_{\mathcal D}>0$, there exists a complex exponent~$\lambda_{\mathcal D}\in \C$ with~$\Re
(\lambda_{\mathcal D})=-S_{\mathcal D}<0$ such that, for all~$t\in\R$,
\begin{equation}
 \label{eq:exponents}
 \phi^X_t({\mathcal D})= e^{\lambda_{\mathcal D} t}\,{\mathcal D}\,;
\end{equation}
in fact, for any Casimir parameter  $\mu= 1-\nu^2/4  \in \R^+\setminus \{1/4\}$, with $\nu \in (0, 1) \cup i \R$, 
there exists a  distributional  basis $\mathcal B_\mu = \mathcal B \cap \mathcal E'(H_\mu)= \{ \mathcal D^+_\mu, \mathcal D^-_\mu\}$ such that
$$
 \phi^X_t({\mathcal D}^\pm_\mu)= e^ { - \frac{1\pm \nu}{2}  t}\,{\mathcal D}^\pm_\mu \,;
$$
or any Casimir parameter  $\mu = 1-\nu^2/4= -n^2+n <0$, with $\nu =2n-1$ ($n\in \N\setminus \{0\}$)
there exists a  distributional  basis ${\mathcal B}_n = \mathcal{B} \cap \mathcal E'(H_\mu)= \{ {\mathcal D }_n\}$ such that
$$
 \phi^X_t({\mathcal D}^+_n)= e^ {- \frac{1+ \nu}{2}  t}\,{\mathcal D}^+_n = e^ {- n  t}\,{\mathcal D}^+_n \,;
$$
if~$1/4\in\sigma _{pp}(\square)$, the subset ${\mathcal B}_{1/4}\subset {\mathcal B}$ is
the union of a finite number of pairs $\{{\mathcal D}^+,{\mathcal D}^-\}$ such that
the distributions~${\mathcal D}^{\pm}\in {\mathcal B}^{\pm}_{1/4}={\mathcal B}\cap {\mathcal
I}^{\pm}_{1/4}$ have the same Sobolev order equal to $1/2$ and the following formula holds:
\begin{equation}
\label{eq:JBtwo}
\phi^X_t \begin{pmatrix} {\mathcal D}^+ \\ {\mathcal D}^- \end{pmatrix} \,=\,
e^{-t/2}\,\begin{pmatrix} 1 & 0\\ -\frac{t}{2} & 1 \end{pmatrix}\, 
\begin{pmatrix} {\mathcal D}^+ \\ {\mathcal D}^- \end{pmatrix}\,\,.
\end{equation} 

The set~${\mathcal B}^s:={\mathcal B}\cap {\mathcal I}^s_d$ is a basis of (generalized)
eigenvectors for the action of~$\{\phi^X_t\}$ on~${\mathcal I}^s_d$. By Theorem
1.1 of \cite{FlaFor}, for all $s>1$, there is a decomposition  
\begin{equation}
{\mathcal B}^s= \bigcup_{\mu\in \sigma_{pp}(\square)} {\mathcal B}_{\mu} \cup
\bigcup_{ 1\le n < s} {\mathcal B}_n\,\,.
\end{equation}
The operator $\phi_t^X\,|\,{\mathcal I}^s_{\mathcal C}$ has Lebesgue spectrum of finite
multiplicity supported on the circle of radius~$e^{-t/2}$ in the complex plane,
for all $t\in \R$. Its norm satisfies the following bound.

\begin{lemma} (\cite{FlaFor}, Lemma 5.1) 
\label{lemma:normbound}
For every $s>1$, there exists a constant $C_1:=C_1(s)>0$ such that, for all $t\in \R$,
\begin{equation}
\label{eq:normbound}
\| \phi^X_t \bigm\vert \,{\mathcal I}^s_{\mathcal C}\|_{-s}
\le C_1\, (1+|t|) \,e^{-t/2}\,\,.
\end{equation} 
\end{lemma}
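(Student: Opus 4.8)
The plan is to reduce the estimate to a uniform fibrewise bound over the continuous part of the Casimir spectrum, and then to treat separately the bulk of that spectrum and the resonance at its bottom; the latter is where the factor $1+|t|$ — and the entire difficulty — comes from. Note first that if $\Gamma$ is cocompact the Casimir operator has pure point spectrum, so $\mathcal{I}^s_{\mathcal{C}}=\{0\}$ and there is nothing to prove; thus we may assume $S_\Gamma$ has cusps.

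First I would invoke the generalized spectral decomposition of \cite{FlaFor} (Theorems 1.1 and 1.4): the closed subspace $\mathcal{I}^s_{\mathcal{C}}\subset W^{-s}(S_\Gamma)$ is, $\{\phi^X_t\}$-equivariantly, a direct integral $\mathcal{I}^s_{\mathcal{C}}=\int^{\oplus}_{\sigma_c(\square)}\mathcal{I}^s(H_\mu)\,d\lambda(\mu)$ over the continuous spectrum $\sigma_c(\square)\subseteq[1/4,+\infty)$ of the Casimir operator, where $\lambda$ is the associated Plancherel/Eisenstein measure and, for $\mu=(1-\nu^2)/4$ with $\nu\in i\R$, the fibre $\mathcal{I}^s(H_\mu)$ is the two-dimensional space of $\{h_t\}$-invariant distributions of Sobolev order $<s$ inside the principal-series component $H_\mu$, equipped with the induced norm $\|\cdot\|_{-s,\mu}$. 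Since $\phi^X_t$ acts fibrewise on this measurable field, its norm on $\mathcal{I}^s_{\mathcal{C}}$ is the $\lambda$-essential supremum of the fibrewise operator norms, so it suffices to prove
$$
\bigl\|\,\phi^X_t\mid\mathcal{I}^s(H_\mu)\,\bigr\|_{-s,\mu}\;\le\;C(s)\,(1+|t|)\,e^{-t/2}\qquad(t\in\R)
$$
with $C(s)$ independent of $\mu\in[1/4,+\infty)$.

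For the bulk $\mu\ge 1/4+\epsilon_0$ (including $\mu\to+\infty$) this is routine: on $H_\mu$ the basis $\{\mathcal{D}^+_\mu,\mathcal{D}^-_\mu\}$ from \eqref{eq:exponents} diagonalizes $\phi^X_t$ with eigenvalues $e^{-(1\pm\nu)t/2}$, both of modulus $e^{-t/2}$ since $\nu\in i\R$. For $\mu$ bounded away from $1/4$ the two eigenlines are uniformly transverse, and — using $s>1$, which keeps the invariant distributions in $W^{-s}(H_\mu)$ with uniformly controlled norms as $\mu\to+\infty$ — after normalizing $\mathcal{D}^\pm_\mu$ to unit $\|\cdot\|_{-s,\mu}$-norm one checks that their Gram matrix stays uniformly non-degenerate. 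A diagonal action of modulus $e^{-t/2}$ together with a bounded condition number then gives $\bigl\|\,\phi^X_t\mid\mathcal{I}^s(H_\mu)\,\bigr\|_{-s,\mu}\le C\,e^{-t/2}$ on this range.

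The hard part is the resonance $1/4\le\mu<1/4+\epsilon_0$, i.e.\ $|\nu|$ small: as $\nu\to0$ the lines $\R\mathcal{D}^+_\mu$ and $\R\mathcal{D}^-_\mu$ collapse onto one another, so any basis keeping both forces a condition number $\asymp|\nu|^{-1}$ and would produce a spurious factor $|\nu|^{-1}$ instead of $1+|t|$. The remedy I would use is to pass to the Jordan-adapted basis $\mathcal{E}^+_\mu:=\tfrac{1}{2}(\mathcal{D}^+_\mu+\mathcal{D}^-_\mu)$, $\mathcal{E}^-_\mu:=\tfrac{1}{2\nu}(\mathcal{D}^+_\mu-\mathcal{D}^-_\mu)$, which by \eqref{eq:JBtwo} extends continuously across $\nu=0$ to the Jordan basis $\{\mathcal{D}^+,\mathcal{D}^-\}$ of the exceptional representation $H_{1/4}$ and whose $\|\cdot\|_{-s,\mu}$-Gram matrix stays uniformly non-degenerate for $|\nu|$ small. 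In this basis the matrix of $\phi^X_t$ is
$$
e^{-t/2}\begin{pmatrix}\cosh(\nu t/2) & -\nu\sinh(\nu t/2)\\ -\nu^{-1}\sinh(\nu t/2) & \cosh(\nu t/2)\end{pmatrix},
$$
an entire function of $(\nu,t)$ whose value at $\nu=0$ is $e^{-t/2}\left(\begin{smallmatrix}1 & 0\\ -t/2 & 1\end{smallmatrix}\right)$, consistent with \eqref{eq:JBtwo}; and for $\nu=i\tau$ with $|\tau|\le1$ we have $|\cosh(\nu t/2)|=|\cos(\tau t/2)|\le1$, $|\nu\sinh(\nu t/2)|=|\tau\sin(\tau t/2)|\le1$ and $|\nu^{-1}\sinh(\nu t/2)|=|\sin(\tau t/2)/\tau|\le|t|/2$, so every entry is $\le C(1+|t|)$. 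With the uniform non-degeneracy of the Gram matrix this gives $\bigl\|\,\phi^X_t\mid\mathcal{I}^s(H_\mu)\,\bigr\|_{-s,\mu}\le C(1+|t|)e^{-t/2}$ for $\mu$ near $1/4$. Taking the $\lambda$-essential supremum over $\mu$ of the bulk and resonance bounds yields the lemma. I expect the genuine content of the argument to be exactly this uniform control as $\mu\to1/4^+$ (the coalescence of the two invariant distributions), together with the bookkeeping in the first step guaranteeing that the fibrewise norms assemble into the global operator norm; both are supplied by the spectral analysis of \cite{FlaFor}.
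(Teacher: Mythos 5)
Correct, and essentially the same route as the source: the paper does not prove this lemma but imports it verbatim from Flaminio--Forni (Lemma 5.1), and your fibrewise direct-integral analysis over the continuous Casimir spectrum $\mu\in[1/4,\infty)$ — diagonal action of modulus $e^{-t/2}$ with uniformly transverse eigendistributions away from $\mu=1/4$, plus the Jordan-adapted basis $\mathcal{E}^\pm_\mu$ handling the coalescence at the bottom of the spectrum — is exactly the mechanism behind the cited bound, consistent with \eqref{eq:JBtwo} and with the Gram-matrix behavior recorded in Lemma \ref{lemma:bounddist}. Your $2\times 2$ computation and the elementary estimates $|\cos(\tau t/2)|\le 1$, $|\tau\sin(\tau t/2)|\le 1$, $|\sin(\tau t/2)/\tau|\le |t|/2$ correctly identify the non-diagonalizable action at $\mu=1/4$ as the sole source of the factor $1+|t|$, with the remaining regularity-in-$\nu$ input supplied, as you note, by the spectral analysis of \cite{FlaFor}.
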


According to~\eqref{eq:orthsplitone} and~\eqref{eq:orthsplittwo}, every
$\gamma \in W^{-s}(S_\Gamma)$ can be written as

\begin{equation}
\label{eq:gammasplit}
  \gamma= \sum_{{\mathcal D}\in {\mathcal B}^s} c_{{\mathcal D}}(\gamma)
{\mathcal D} + {\mathcal C}(\gamma) +{\mathcal R}(\gamma)
\end{equation}
with~${\mathcal C}(\gamma)\in {\mathcal I}^s_{\mathcal C}$ and~${\mathcal R}
(\gamma)\in {\mathcal I}^s(S_\Gamma)^{\perp}$. The real number~$c_{\mathcal D}
(\gamma)$ will be called the $\mathcal D${\it -component} of~$\gamma$
along~${\mathcal D}\in {\mathcal B}^s$ and the distribution~${\mathcal C}(\gamma)$ the
($U$-invariant) {\it continuous component} of~$\gamma$. We recall that
the continuous component vanishes for all~$\gamma\in W^{-s}(S_\Gamma)$ if
$S_\Gamma$ is compact. The following Lemma tells us that bounds on the norms
of distributions in $W^{-s}(S_\Gamma)$ are equivalents to bounds on their
coefficients.
\begin{lemma} (\cite{FlaFor}, Lemma 5.2)
\label{lemma:bounddist}
There exists a constant $C_2:=C_2(s)>0$ such that, for evert Casimir parameter $\mu >0$,
$$
C_2^{-2} \|\gamma \vert W^{-s}(H_\mu)\|^2 \le  \sum_{{\mathcal D}\in {\mathcal B}^s_\mu}
|c_{\mathcal D}(\gamma)|^2 \,+\,  \|{\mathcal R}(\gamma) \vert  W^{-s}(H_\mu) \|^2 \le C_2^2\, \|\gamma\vert W^{-s}(H_\mu)\|^2\,,
$$
hence in particular 
\begin{equation}
\label{eq:estcoeff}
C_2^{-2} \|\gamma\|^2_{-s} \le \sum_{{\mathcal D}\in {\mathcal B}^s}
|c_{\mathcal D}(\gamma)|^2  \,+\, \|{\mathcal C}(\gamma)\|^2_{-s}\,+\,
\|{\mathcal R}(\gamma)\|^2_{-s} \le C_2^2\, \|\gamma\|^2_{-s} \,.
\end{equation}
\end{lemma}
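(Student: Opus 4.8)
The plan is to reduce the Lemma to a single statement that is local in the Casimir parameter $\mu$ and then assemble. The decompositions $W^{-s}(S_\Gamma)=\mathcal I^s(S_\Gamma)\oplus^\perp\mathcal I^s(S_\Gamma)^\perp$, $\mathcal I^s(S_\Gamma)=\mathcal I^s_d\oplus^\perp\mathcal I^s_{\mathcal C}$, together with the $SL(2,\R)$-invariant orthogonal splitting of $L^2(S_\Gamma)$ and of every Sobolev space into irreducible components, give $W^{-s}(S_\Gamma)=\bigoplus^\perp_{\mu\in\sigma_{pp}(\square)}W^{-s}(H_\mu)\oplus^\perp W^{-s}_{\mathcal C}$, and on each discrete component $\mathcal I^s_d\cap W^{-s}(H_\mu)=\mathcal I^s(H_\mu)$ is spanned by $\mathcal B^s_\mu=\{\mathcal D^\pm_\mu\}$ (two-dimensional for a generic Casimir parameter, one-dimensional in the discrete series). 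Within each $W^{-s}(H_\mu)$ one has the orthogonal splitting $\mathcal I^s(H_\mu)\oplus^\perp\mathcal I^s(H_\mu)^\perp$, so $\|\gamma\vert W^{-s}(H_\mu)\|^2=\|\pi_\mu\gamma\|_{-s}^2+\|\mathcal R(\gamma)\vert W^{-s}(H_\mu)\|^2$ with $\pi_\mu$ the orthogonal projection onto $\mathcal I^s(H_\mu)$. Hence the displayed per-component inequality in the Lemma is \emph{equivalent} to the comparison $\|\pi_\mu\gamma\|_{-s}^2\asymp\sum_{\mathcal D\in\mathcal B^s_\mu}|c_{\mathcal D}(\gamma)|^2$ with a constant uniform in $\mu$; granting this, summing over $\mu\in\sigma_{pp}(\square)$ and adding the mutually orthogonal pieces $\|\mathcal C(\gamma)\|_{-s}^2$ (the part of $\gamma$ in $\mathcal I^s_{\mathcal C}$) and $\|\mathcal R(\gamma)\|_{-s}^2$ (the residual, continuous and discrete parts combined) yields \eqref{eq:estcoeff}, since by construction $\bigcup_\mu\mathcal B^s_\mu=\mathcal B^s$.

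For the uniform per-component estimate I would work in the line (Fourier) model of $H_\mu$ used in \cite{FlaFor}: $H_\mu$ is realised as a weighted $L^2$-space on $\R$ on which the unipotent generator $U$ acts by multiplication and the norms $\|\cdot\vert W^{\pm s}(H_\mu)\|$ become explicit weighted Sobolev norms, while $\mathcal D^\pm_\mu$ are the explicit $\phi^X_t$-eigendistributions with eigenvalues $e^{-(1\pm\nu)t/2}$ (boundary evaluations in this model). Both the norms $\|\mathcal D^\pm_\mu\vert W^{-s}(H_\mu)\|$ and the pairing $\langle\mathcal D^+_\mu,\mathcal D^-_\mu\rangle_{-s}$ then have closed forms in terms of Gamma/Beta functions of $\nu$ and $s$. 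Writing $\hat{\mathcal D}^\pm_\mu:=\mathcal D^\pm_\mu/\|\mathcal D^\pm_\mu\vert W^{-s}(H_\mu)\|$ and letting $G_\mu$ be the $2\times2$ Gram matrix of $\{\hat{\mathcal D}^+_\mu,\hat{\mathcal D}^-_\mu\}$, the quantity $\|\pi_\mu\gamma\|_{-s}^2$ equals $c^\ast G_\mu c$, where $c$ is the coefficient vector of $\pi_\mu\gamma$ in the normalised basis, and $\sum_{\mathcal D\in\mathcal B^s_\mu}|c_{\mathcal D}(\gamma)|^2=\|c\|^2$ up to the fixed normalisation factors; diagonalising $G_\mu$ reduces everything to the claim that the eigenvalues of $G_\mu$ are bounded above and below by constants depending only on $s$. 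The upper bound is Cauchy–Schwarz (the entries of $G_\mu$ are $\le1$ in modulus), so the content is the lower bound $\inf_\mu\det G_\mu=\inf_\mu\bigl(1-|\langle\hat{\mathcal D}^+_\mu,\hat{\mathcal D}^-_\mu\rangle_{-s}|^2\bigr)>0$, i.e.\ a uniform lower bound on the angle between $\hat{\mathcal D}^+_\mu$ and $\hat{\mathcal D}^-_\mu$.

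I expect this last uniform bound to be the main obstacle. For $\mu$ in the principal series ($\nu\in i\R$) and in the complementary strip ($\nu\in(0,1)$) with $\nu$ bounded away from $0$, it is a direct estimate showing the closed-form expression for $\langle\hat{\mathcal D}^+_\mu,\hat{\mathcal D}^-_\mu\rangle_{-s}$ stays strictly below $1$ in modulus. The delicate regime is $\nu\to0$, i.e.\ $\mu\to1/4^+$, where the eigenvalues $e^{-(1\pm\nu)t/2}$ coalesce and $\mathcal D^+_\mu,\mathcal D^-_\mu$ individually approach a common limit; here one must verify that the normalised combinations $\hat{\mathcal D}^+_\mu+\hat{\mathcal D}^-_\mu$ and $\nu^{-1}(\hat{\mathcal D}^+_\mu-\hat{\mathcal D}^-_\mu)$ converge to the two linearly independent elements of the Jordan block at $\mu=1/4$ (formula \eqref{eq:JBtwo}), so that $1-|\langle\hat{\mathcal D}^+_\mu,\hat{\mathcal D}^-_\mu\rangle_{-s}|^2$ has a strictly positive limit. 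The Casimir value $\mu=1/4$ itself (only when $1/4\in\sigma_{pp}(\square)$) and the discrete series $\mu=-n^2+n\le0$, where $\mathcal B^s_\mu=\{\mathcal D^+_n\}$ is a single distribution, are finite-dimensional computations with no uniformity issue. Thus the whole proof hinges on extracting, from the explicit special-function formulas for the Sobolev pairings of $\mathcal D^\pm_\mu$, that the normalised invariant distributions never become asymptotically parallel, uniformly across the principal and complementary series and through the degeneration $\nu\to0$; everything else is orthogonality bookkeeping and bounded-dimensional linear algebra.
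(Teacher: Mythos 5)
Your reduction---orthogonality across isotypical components, then inside each $W^{-s}(H_\mu)$ a $2\times 2$ Gram--matrix comparison for the normalised pair $\hat{\mathcal D}^{\pm}_\mu$---is exactly the bookkeeping the paper uses, and the lemma does come down to a lower bound on $1-|\langle \hat{\mathcal D}^{+}_\mu,\hat{\mathcal D}^{-}_\mu\rangle_{-s}|^{2}$. The gap is in how you propose to obtain that bound near $\mu=1/4$. You expect this quantity to have a strictly positive limit as $\nu\to 0$, but in fact it tends to $0$: the two eigendistributions coalesce there, which is precisely why the geodesic action on the invariant distributions at $\mu=1/4$ is the nontrivial Jordan block \eqref{eq:JBtwo} rather than diagonalisable (a family of diagonalisable operators whose normalised eigenbases stay uniformly transverse cannot converge to a nontrivial Jordan block). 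Your own description already implies this: if $\nu^{-1}(\hat{\mathcal D}^{+}_\mu-\hat{\mathcal D}^{-}_\mu)$ converges to a nonzero limit while both vectors are unit, then $\Vert \hat{\mathcal D}^{+}_\mu-\hat{\mathcal D}^{-}_\mu\Vert_{-s}=O(\nu)$, hence $\langle \hat{\mathcal D}^{+}_\mu,\hat{\mathcal D}^{-}_\mu\rangle_{-s}=1-O(\nu^{2})\to 1$ and the Gram determinant degenerates, the opposite of what you conclude. This is also what the paper's proof records: the pairing is continuous on $\R^{+}\setminus\{1/4\}$, converges to $0$ as $\mu\to+\infty$ and to $1$ as $\mu\to 1/4$. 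So there is no bound uniform over all real $\mu$ near $1/4$, and the step on which your argument hinges fails as stated.

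The missing idea is that uniformity is only needed over the Casimir parameters that actually occur, i.e.\ over the point spectrum $\sigma_{pp}(\square)$ of the fixed lattice, which is a discrete, locally finite subset of $\R^{+}$ accumulating only at $+\infty$. Consequently at most finitely many eigenvalues lie in any punctured neighbourhood of $1/4$; at each of them $\mathcal D^{+}_\mu,\mathcal D^{-}_\mu$ are genuinely independent, so the angle is positive; away from such a neighbourhood continuity together with the limit $0$ at infinity gives a uniform bound; and $\mu=1/4$ itself, if it is an eigenvalue, is handled separately with the Jordan pair $\mathcal B_{1/4}$. Taking the minimum over the finitely many exceptional parameters produces the constant $C_2$. (Note that this constant then depends on $\Gamma$ through how close $\sigma_{pp}(\square)$ comes to $1/4$, which is harmless for a fixed lattice but is the point to watch if one wants lattice-uniform versions.) The rest of your outline---orthogonality between different irreducible components, the explicit line-model computation of the pairing, and the finite-dimensional cases of the discrete series and of $\mu=1/4$---matches the paper's argument.
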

\begin{proof}
  The splittings \eqref{eq:orthsplitone} and~\eqref{eq:orthsplittwo} are
  orthogonal with respect to the Hilbert structure of $W^{-s}(S_\Gamma)$.
  The basis~${\mathcal B}^s$ is not orthogonal, however we claim that its
  distortion is uniformly bounded. In fact, vectors of the basis
  supported on different irreducible representations are orthogonal;
  if~${\mathcal D}^+_{\mu}$, ${\mathcal D}^-_{\mu} \in {\mathcal B}^s_\mu$ are normalized
  eigenvectors supported on the same irreducible representation of
  Casimir parameter~$\mu\in\R^+$ (principal or complementary series),
  a calculation shows that the function $\langle{\mathcal D}^+_{\mu}, {\mathcal
    D}^-_{\mu}\rangle_{-s}$ is continuous on the open set $\R^+\setminus \{1/4\}$,
  it converges to~$0$ as $\mu \to +\infty$ and to~$1$ as~$\mu\to 1/4$.
  Since~${\mathcal I}^s_d$ is contained in the pure point component of the
  the spectral representation of the Casimir operator, the angle
  between ${\mathcal D}^+_{\mu}$ and~${\mathcal D}^-_{\mu}$ has a strictly
  positive uniform lower bound for ~$\mu\in \sigma(\square)$.
  \relax
\end{proof}

\subsection{Horocycle orbits}
For $x\in S_\Gamma$ and $T\in \R^+$, let~$\gamma_{x,T}$ be the probability
measure uniformly distributed on the horocycle orbit of length $T$
starting at $x$. More precisely, for any continuous function~$f$
on~$S_\Gamma$, we define
$$
\gamma_{x,T}(f)= \frac{1}{T} \int_0^T f(h_t(x))\,dt
$$

By the Sobolev embedding Theorem (see \cite{Ad}), for~$s>3/2$,
the measures $\gamma_{x,T} $ are continuous functionals on~$W^s(S_\Gamma)$
(which depend weakly-continuously on~$x\in S_\Gamma$ and~$T\in\R^+$).  Thus
the splitting~\eqref{eq:gammasplit} can be applied to horocycle
orbits.  We set
$$c_{\mathcal D}(x,T):=c_{\mathcal D}(\gamma_{x,T}),\qquad
{\mathcal C}(x,T):={\mathcal C}(\gamma_{x,T})\,,\qquad
{\mathcal R}(x,T):={\mathcal R}(\gamma_{x,T})\,.
$$
so that
\begin{equation}
\label{eq:gammasplit2}
\gamma_{x,T}  = \sum_{{\mathcal D}\in {\mathcal B}^s} c_{\mathcal D}(x,T)
{\mathcal D} + {\mathcal C}(x,T) +{\mathcal R}(x,T)\,.
\end{equation}
Following~\cite{FlaFor}, the proof of Theorem~\ref{thm:SL2_equi} will be reduced to
estimates on the norms of the three parts of this splitting. We start
by showing in next section that since the parts of this splitting
invariant by the horocycle flow, namely $ \sum_{{\mathcal D}\in {\mathcal B}^s}
c_{\mathcal D}(x,T) {\mathcal D}$ and ${\mathcal C}(x,T)$, vanish on coboundaries,
the remainder part ${\mathcal R}(x,T)$ must be of the order of $1/T$;
furthermore the individual coefficients $c_{\mathcal D}(x,T)$ and the
continuous component ${\mathcal C}(x,T)$ cannot be too small.

The uniform norm of functions on a compact manifold can be bounded in terms 
of a Sobolev norm by the Sobolev embedding theorem. In the case of
a non-compact hyperbolic surfaces $M$ of finite area, since the injectivity
radius is not bounded away from zero, the Sobolev embedding theorem
holds only locally. We therefore prove a version of the Sobolev
embedding theorem on compact subsets of the unit tangent bundle $S_\Gamma$, 
with an explicit bound on the constant.

\subsection{Sobolev embedding}

The following Lemma is a version of Lemma 5.3 of
\cite{FlaFor} (see also Lemma 2.1 in \cite{Strom13} or Prop. B.2 in \cite{BR})  rewritten to carefully keep track of the dependence of the constants on the lattice.

\begin{lemma} 
  \label{lemma:Sobembed_1}
  There exists a universal constant $C>0$ such that for any
  function $F\in W^2(S_\Gamma)$, we have that $F$ is continuous and, for all $x \in S_\Gamma$,
  $$
  \begin{cases}
    |F(x)| \le C \,  \text{\rm inj}_\Gamma^{-1}   \| F\|_{W^2(S_\Gamma)} \,,  \quad &\text{ if }  \pi(x) \in M_{cpt}\,, \\ 
  |F(x)| \le C \, e^{d_\Gamma(x)/2}  \| F\|_{W^2(S_\Gamma)} \,,  \quad &\text{ if }  \pi(x) \in M_{cusp} \,.
  \end{cases} 
  $$
\end{lemma}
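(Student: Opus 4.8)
**The plan is to prove Lemma \ref{lemma:Sobembed_1} by transporting the classical Sobolev embedding on a fixed-size ball in $SL(2,\R)$ (or rather in the universal cover / a small ball where the exponential map is a diffeomorphism with uniformly controlled distortion) to $S_\Gamma$, being careful about where the injectivity radius degenerates.** First I would recall the local Sobolev inequality on $SL(2,\R)$: there is a universal $r_0 > 0$ and a universal constant $C_0 > 0$ such that for any $g \in SL(2,\R)$ and any $u \in W^2$ of the ball $B(g, r_0)$ of radius $r_0$ (in the right-invariant metric), $|u(g)| \le C_0 r_0^{-3/2}\,\|u\|_{W^2(B(g,r_0))}$ — this is just the Euclidean Sobolev embedding $W^2(\R^3) \hookrightarrow C^0$ pulled back through the exponential chart, whose Jacobian is bounded above and below by universal constants on a ball of universal radius. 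The point is that $r_0$ and $C_0$ do \emph{not} depend on $\Gamma$.

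Next I would treat the two cases. \textbf{Compact part.} If $\pi(x) \in M_{\rm cpt}$ then, by definition of $\inj_\Gamma$ (the injectivity radius of the compact part), the projection $p: SL(2,\R) \to S_\Gamma$ restricted to a ball $B(\tilde x, \rho)$ of radius $\rho := \min\{r_0, \inj_\Gamma\}$ around a lift $\tilde x$ of $x$ is an isometry onto its image. Hence for $F \in W^2(S_\Gamma)$ we may lift $F$ to $\tilde F := F \circ p$ on this ball, apply the local inequality, and note $\|\tilde F\|_{W^2(B(\tilde x,\rho))} = \|F\|_{W^2(p(B(\tilde x,\rho)))} \le \|F\|_{W^2(S_\Gamma)}$; a trivial scaling of the local inequality from radius $r_0$ down to $\rho$ (at worst) costs a factor $(r_0/\rho)^{3/2} \ll \inj_\Gamma^{-3/2}$, but since $\inj_\Gamma \le$ some universal bound (the surface $M_\Gamma$ is a quotient of $\mathbb H$, so closed geodesics are short enough that $\inj_\Gamma$ is bounded) one gets the stated bound $|F(x)| \le C\,\inj_\Gamma^{-1}\|F\|_{W^2(S_\Gamma)}$ — in fact one can be wasteful with the power of $\inj_\Gamma$. \textbf{Cusp part.} If $\pi(x) \in M_{\rm cusp}$, the injectivity radius at $x$ is comparable to $e^{-d_\Gamma(x)}$ (as one moves a hyperbolic distance $d_\Gamma(x)$ up a standard cusp, the horocycle loop of length $\asymp 1$ at the boundary of the thick part gets contracted by $e^{-d_\Gamma(x)}$). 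So the same argument applies with $\rho \asymp e^{-d_\Gamma(x)}$, and the scaling from $r_0$ to $\rho$ produces the factor $\rho^{-3/2} \asymp e^{(3/2) d_\Gamma(x)}$; however, along the cuspidal flat direction one only loses scaling in the one contracted direction rather than all three (the cusp is $\{y \geq y_0\}$ with the other two directions of bounded injectivity), which improves $e^{(3/2)d_\Gamma(x)}$ to $e^{d_\Gamma(x)/2}$. Making this last improvement precise is the one place requiring care: one uses the explicit product-like structure of a standard cusp neighborhood in $S_\Gamma \cong SL(2,\R)/\Gamma$ — i.e.\ coordinates in which the unipotent direction is the only short one — and applies an anisotropic Sobolev embedding (Sobolev in the two ``long'' directions at unit scale, and a one-dimensional embedding $W^1 \hookrightarrow C^0$ in the short direction, which costs only $\rho^{-1/2}$) rather than the isotropic one; since $W^2 \hookrightarrow W^1$ in the short direction this is harmless. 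Combining, $|F(x)| \le C e^{d_\Gamma(x)/2}\|F\|_{W^2(S_\Gamma)}$.

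The main obstacle, and the only nontrivial point, is \textbf{getting the exponent $1/2$ (rather than $3/2$) in the cusp estimate}: this forces one to not simply shrink an isotropic ball but to exploit that in a cusp only one direction degenerates, and to run an anisotropic Sobolev embedding adapted to the horocyclic coordinate. Everything else — the universality of $r_0, C_0$, the lifting to the universal cover on an embedded ball, the comparison of the ambient Sobolev norm with the restricted one — is routine and is exactly the content of Lemma 5.3 of \cite{FlaFor} (and Lemma 2.1 of \cite{Strom13}); the present statement differs only in that the constant is made explicit in terms of $\inj_\Gamma$ and $d_\Gamma$, which is automatic from the proof sketched above since at no point does anything depend on $\Gamma$ beyond these two quantities. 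I would also remark that continuity of $F$ is part of the conclusion and follows from the same local embedding, since $W^2$ on a ball embeds into $C^0$ of that ball.
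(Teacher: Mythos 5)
Your overall framing (a local Sobolev embedding whose constant depends on $\Gamma$ only through $\operatorname{inj}_\Gamma$ and $d_\Gamma$, applied after lifting to a ball where the covering is controlled) is the same as the paper's, but the mechanism you use at the two delicate points is different from the paper's and, as written, has gaps. The paper does \emph{not} shrink the ball and rescale: it lifts $F$ to the universal cover, applies the embedding on a ball of \emph{fixed} universal radius $\epsilon_0/2$, and observes that this ball covers the corresponding ball in $S_\Gamma$ with multiplicity at most $\asymp e^{d_\Gamma(x)}$ in a cusp (the horocyclic loop has period $\asymp e^{-d_\Gamma(x)}$, all other directions being of unit size), so the $W^2$-integral upstairs is at most $e^{d_\Gamma(x)}$ times $\|F\|_{W^2(S_\Gamma)}^2$, giving the exponent $1/2$ at once; the same one-short-direction counting in a Margulis tube gives the $\operatorname{inj}_\Gamma^{-1}$ factor in the compact part.

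Concretely, two steps of your argument do not go through as stated. First, in the compact part, isotropic shrinking of the ball to radius $\rho\asymp\operatorname{inj}_\Gamma$ costs $\rho^{-3/2}$ (the $L^2$ term scales like $\rho^{-\dim/2}$ in dimension $3$), i.e.\ $\operatorname{inj}_\Gamma^{-3/2}$, and your attempt to reconcile this with the stated $\operatorname{inj}_\Gamma^{-1}$ via an \emph{upper} bound on $\operatorname{inj}_\Gamma$ is backwards: an upper bound makes $\operatorname{inj}_\Gamma^{-3/2}$ \emph{larger} than $\operatorname{inj}_\Gamma^{-1}$; you would need a lower bound, which is exactly what fails for the lattices $\Gamma_{p,q}$ with short closed geodesics. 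Second, in the cusp, the anisotropic inequality you invoke is indeed the crux, but your justification (``$2$D Sobolev in the long directions composed with $W^1\hookrightarrow C^0$ in the short direction, harmless since $W^2\hookrightarrow W^1$'') is not a proof: composing the two embeddings in either order produces mixed derivatives of total order $3$ (e.g.\ $\partial_u\nabla^2_{t,\theta}F$), which are not controlled by $\|F\|_{W^2}$. The anisotropic bound $|F(x)|\le C\rho^{-1/2}\|F\|_{W^2}$ on a box with one periodic direction of length $\rho$ is in fact true, but proving it with only two derivatives requires a genuine argument (Fourier expansion in the periodic horocyclic variable plus interpolation between $H^1$ and $H^2$ in the long variables and a weighted Cauchy--Schwarz), or, much more simply, the covering-multiplicity argument the paper uses, which sidesteps the anisotropic embedding entirely. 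Both of your gaps are repaired simultaneously by that single observation.
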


\begin{proof} 
  Recall that if $G$ is a locally $W^2$ function on Poincar\'e's plane
  $H$ then $G$ is continuous and there exists a universal constant  $C>0$ such
  that, for all $\epsilon \in (0, 1)$ we have 
  $$
  |G(z)|^2 <  \frac{C}{\epsilon} \int_{B(z,\epsilon)} ( |G(w)|^2 + |dG(w)|^2 +
  |\Delta G(w)|^2 ) \, dw
  $$
  for any $z\in H$ (\cite{He} page 63).  Indeed, the dependence of the constant 
  on the radius of the ball can be determined by a scaling argument or by examining the 
  proof of Theorem 3.4 in \cite{He}.   Indeed, for every function $G$ on the Poincar\'e disk
  let $G_\epsilon (z) :=  G(\epsilon z)$.  There exists a universal constant $C'>0$ such that
  $$
  |G(0)|^2 = |G_\epsilon(0)|^2 <  C' \int_{B(0,1)} ( |G_\epsilon(w)|^2 + |dG_\epsilon(w)|^2 +
  |\Delta G_\epsilon(w)|^2 ) \, dy
  $$
  An immediate computation by change of variables establishes that there exists
  a universal constant $C'' >0$ such that
  $$
  \int_{B(0,1)} ( |G_\epsilon(w)|^2 + |dG_\epsilon(w)|^2 +
  |\Delta G_\epsilon(w)|^2 ) \, dy \leq   \epsilon^{-2} \int_{B(0,1)} ( |G(w)|^2 + |dG(w)|^2 +
  |\Delta G(w)|^2 ) \, dy \,.
  $$
  Let $SB(z, \epsilon)$ the unit tangent bundle of $B(z, \epsilon)$. A similar argument
  gives that for any $G$ locally $W^2$ function on the unit tangent bundle $SH$ of the Poincar\'e
  plane $H$, $G$ is continuous and there exists a universal constant  $C>0$ such
  that, for all $\epsilon \in (0, 1)$ and  for any $x\in SH$, we have 
  $$
  |G(x)|^2 <  \frac{C}{\epsilon} \int_{B(z,\epsilon)} ( |G(y)|^2 + |dG(y)|^2 +
  |\Delta G(y)|^2 ) \, dy\,.
  $$
  For~$p$ in~$M_\Gamma$ denote by~$\rho_\Gamma(p)$ the radius of injectivity
  of~$M_\Gamma$ at~$p$. Let $\pi: S_\Gamma \to M_\Gamma$ the projection
  defined as $\pi(x) = SO(2, \R) x  \in   SO(2, \R)\backslash SL(2, \R)/\Gamma$.
  
 \noindent  Let $\epsilon_0>0$ denote the Margulis constant of the Poincar\'e plane. 
   Let $\epsilon:= \epsilon_0/2$ and set $A_0$ the open set of points $x \in S_\Gamma$
  where $\rho(\pi(x)\geq \epsilon$. Hence the complement $A_0^c$ consists of the union of 
  of~$k$ connected components $V_i$ each contained in $SA_i$, the tangent unit bundle
  of disjoint open cusps $A_i\approx S^1 \times \R^+$ whose boundary horocycle has length 
  $2\epsilon=\epsilon_0$ and of~$h$ connected components $T_j$ which are tubular neighborhoods of geodesic
  of length less than $\epsilon$ (Margulis tubes).

 \noindent    By the Sobolev embedding theorem mentioned above there exists
  $C >0$ such that for any $x\in A_0$ we have
  $$
  |F(x)|^2 <  \frac{2C}{\epsilon_0} \int_{ S_\Gamma \vert B(\pi(x),\epsilon)} ( |F|^2 + |dF|^2 +
  |\Delta F|^2 ) \, d \hat y  \leq  \frac{2C}{\epsilon_0}  \Vert F \Vert^2_{W^2(S_\Gamma)}.
  $$ 
  For $x \in  T_j$, let $\epsilon := \inj_\Gamma$ be the injectivity radius of the compact part. By the
  Sobolev embedding theorem we then have, we then have 
  $$
  |F(x)|^2 <  \frac{C}{\epsilon} \int_{ S_\Gamma \vert B(\pi(x),\epsilon)} ( |F|^2 + |dF|^2 +
  |\Delta F|^2 ) \, d \hat y  \leq  \frac{C}{\inj_\Gamma}  \Vert F \Vert^2_{W^2(S_\Gamma)}.
  $$ 
For $x\in V_i$ let $d$ be the distance of $x$ from $\partial A_i$. We note that
$d = d_\Gamma(x)$, the distance of $x$ from the compact part $S_\Gamma\vert M_{\rm cpt}$
of $S_\Gamma$. It's easy to see that $\epsilon_0 e^{-d} \le \rho(x) \le 2 \epsilon_0
  e^{-d}$.  Let $\tilde F$ denote the lift of $F$ to Poincare's
  half-plane~$H$ and let $\tilde x$ be a point $SH$ projecting to $x$.
  Then, by the same embedding theorem, with $\epsilon = \epsilon_0/2$, 
  $$
  |\tilde F(\tilde x)|^2 <  \frac{C}{\epsilon} \int_{B(\tilde x,\epsilon)} (
  |\tilde F|^2 + |d \tilde F|^2 + |\Delta \tilde F|^2 ) \, d\tilde y
  $$
  and, since, the ball $B(\tilde x,\epsilon) \subset SH$ covers the
  ball $B( x, \epsilon)\subset S_\Gamma$ at most $[e^{d}/2]+1$ times, we get
  $$
  |F(x)|^2=|\tilde F(\tilde x)|^2 < e^{d} \Big(\frac{C}{\epsilon} \Big)\int_{B(x,\epsilon)} ( |F|^2 +
  |dF|^2 + |\Delta F|^2 ) \, dy \leq  \frac{C}{\epsilon} e^{d} \Vert F \Vert^2_{W^2(S_\Gamma)}\,.
  $$
The proof is complete.
\end{proof}

\noindent A function on $S_\Gamma$ is called {\em cuspidal}  if it has zero average along all translate of (cuspidal) closed (periodic) horocycle orbits. 

\begin{lemma} 
  \label{lemma:Sobembed_2}
  There exists a universal constant $C>0$ such that for any
 cuspidal function $F\in W^3(S_\Gamma)$, we have that $F$ is continuous and, for all $x \in S_\Gamma$
 such that $\pi(x) \in M_{\rm cusp}$, 
  $$
  |F(x)| \le C \, e^{-d_\Gamma(x)/2} \| F\|_{W^3(S_\Gamma)}\,.
  $$
\end{lemma}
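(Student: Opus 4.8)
The plan is to upgrade the conclusion of Lemma \ref{lemma:Sobembed_1} in the cusp from a bound by $e^{d_\Gamma(x)/2}$ to a bound by $e^{-d_\Gamma(x)/2}$, at the cost of one extra derivative ($W^3$ instead of $W^2$) and the cuspidality hypothesis. The key structural fact is that in a cusp $A_i\cong S^1\times\R^+$ the horocycle subgroup $\{h_t\}$ acts by translation in the $S^1$-factor, and the distance $d_\Gamma(x)$ from the compact part is (up to a bounded additive constant) the ``height'' coordinate in the cusp. Since $F$ is cuspidal, for every point $x$ deep in the cusp the average of $F$ over the closed horocycle through $x$ vanishes; this is precisely the statement that the zero Fourier mode of $F$ in the $S^1$-direction is identically zero on the part of $S_\Gamma$ lying inside $A_i$.

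First I would set up coordinates on the cusp: lift to the upper half-plane so that $A_i$ is represented by $\{x+\i y : 0\le x < \ell(y), y\ge y_0\}$ with the closed horocycle at height $y$ having length $\ell(y)\asymp 1/y$ (after conjugating the cusp to be at $\infty$ with stabiliser generated by $z\mapsto z+1$, so actually $\ell(y)=1$ and the cusp is $\{0\le x<1, y\ge y_0\}$ with hyperbolic area element $y^{-2}dxdy$). In these coordinates $d_\Gamma(x)=\log y + O(1)$, and the horocycle flow $h_t$ translates $x\mapsto x+t$. Expand $F$ in a Fourier series in the $S^1$-variable: $F(x,y,\theta)=\sum_{n\in\Z} a_n(y,\theta) e(nx)$, where $\theta$ is the remaining fibre coordinate in $S_\Gamma$. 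Cuspidality of $F$ means exactly $a_0\equiv 0$ on the cusp, so $F$ restricted to the cusp is supported on nonzero frequencies $n\neq 0$. The point is that a function with no zero-frequency component decays in the cusp: integrating the bound $|a_n(y,\theta)|\le (2\pi|n|)^{-1}\|\partial_x F(\cdot,y,\theta)\|_{L^1(S^1)}$ against the exponentially growing volume, or more cleanly applying the Sobolev embedding of Lemma \ref{lemma:Sobembed_1} to the function $\partial_x F$ (which is again cuspidal and controlled by $\|F\|_{W^3}$), yields $|\partial_x F(x)|\le C e^{d_\Gamma(x)/2}\|F\|_{W^3(S_\Gamma)}$; but then a second application, exploiting that $\|F\|_{L^2}$ over the slab of the cusp above height $y$ is itself bounded by $e^{-d_\Gamma(x)/2}$ times a Sobolev norm — because the $L^2$-mass of a high-frequency function is concentrated where the horocycles are short — gives the claimed gain of $e^{-d_\Gamma(x)/2}$.

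Concretely, the cleanest route I would take is: (i) reduce to a single cusp, conjugated to $\infty$; (ii) for $x$ at height $y\ge y_0$ in that cusp, apply the local Poincaré-plane Sobolev embedding (as in the proof of Lemma \ref{lemma:Sobembed_1}) at scale $\epsilon=\epsilon_0/2$, but now integrating over a ball in $SH$ that covers the corresponding ball in $S_\Gamma$ roughly $e^{d_\Gamma(x)}$ times — the same covering estimate already used there; (iii) observe that because $a_0\equiv 0$, on the relevant ball $F = \sum_{n\neq 0}a_n(y,\theta)e(nx)$, so by Parseval $\int_{\text{ball}}|F|^2 \le \int_{\text{ball}}|\partial_x F|^2$, and similarly for the derivatives appearing in the $W^3$-norm; (iv) combine: $|F(x)|^2 \le C\epsilon^{-1} e^{d_\Gamma(x)} \int_{B(x,\epsilon)}(|\partial_x F|^2 + |d\partial_x F|^2 + \cdots)\,dy$ — but now we are in a cusp, and the crucial extra input is that for a function of zero mean on circles of length $e^{-d_\Gamma}$ one has the improved estimate $\int_{B(x,\epsilon)\cap A_i}|F|^2 \le C e^{-2 d_\Gamma(x)}\|F\|^2_{W^1(S_\Gamma)}$, which comes from writing $F$ as the $x$-antiderivative of $\partial_x F$ over an interval of length $\le e^{-d_\Gamma(x)}$ and applying Cauchy–Schwarz; feeding this back (with $\partial_x F$ and one more derivative to absorb the loss) produces the stated bound $|F(x)|\le C e^{-d_\Gamma(x)/2}\|F\|_{W^3(S_\Gamma)}$.

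The main obstacle I anticipate is bookkeeping rather than a conceptual difficulty: one must be careful that the ball $B(x,\epsilon)\subset S_\Gamma$ on which we apply the mean-zero improvement is genuinely contained in the single cusp $A_i$ (true once $y\gtrsim y_0 e^{\epsilon_0}$, and the boundary layer $y\asymp y_0$ is handled by the compact-part bound of Lemma \ref{lemma:Sobembed_1} since there $d_\Gamma(x)=O(1)$), and that the frequency-expansion argument is applied to the lift in a way compatible with the $SO(2,\R)$-fibre direction — i.e. that ``no zero horocycle-average'' really does kill the $n=0$ term of the relevant function and not merely of its projection to $M_\Gamma$. Once these two points are checked, the estimate follows by exactly the scaling/covering argument already carried out in the proof of Lemma \ref{lemma:Sobembed_1}, with the single new ingredient being the Poincaré inequality on short horocycles that trades mean-zero for a factor $e^{-d_\Gamma(x)/2}$ at the cost of one derivative — which is why the hypothesis is $W^3$ and not $W^2$.
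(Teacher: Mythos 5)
Your plan is correct, and its engine is the same one the paper uses: cuspidality gives zero mean along the closed horocycles deep in the cusp, whose length at depth $d_\Gamma$ is $\asymp e^{-d_\Gamma}$, so one derivative along the horocycle buys a factor $e^{-d_\Gamma}$, which beats the $e^{+d_\Gamma/2}$ loss of the $W^2$-embedding of Lemma \ref{lemma:Sobembed_1} and costs exactly the extra derivative ($W^3$ instead of $W^2$). The implementations differ slightly. The paper argues pointwise: in canonical cusp coordinates $\exp(\theta\Theta)\exp(tX)\exp(uU)\Gamma_\infty$, the vanishing of $\int_0^1 F\,du$ yields a point $u_*$ on the closed horocycle where $F=0$; then $F(x)=\int_{u_*}^{u_0}\tfrac{d}{du}F\,du$, the $u$-derivative carries the factor $e^{-t_0}$, and Lemma \ref{lemma:Sobembed_1} is applied to the single $W^2$-function $\mathrm{Ad}_{\exp(\theta_0\Theta)}(U)F$, giving $e^{-t_0}e^{d_\Gamma(x)/2}\le e^{-d_\Gamma(x)/2}$. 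Your variant instead gains the factor in $L^2$ (Fourier expansion in the horocycle variable, or equivalently a Poincar\'e inequality on the short circles) and feeds it into the covering argument of Lemma \ref{lemma:Sobembed_1}. That works, but it requires two points you only partially flag: Parseval must be applied over full closed horocycles (so the ball has to be fattened to a union of circles before the mean-zero improvement is invoked), and the improvement must be applied not only to $F$ but to every derivative entering the local $W^2$-data on the ball, which requires knowing that these derivatives are again mean-zero along closed horocycles; this is true because the cuspidal subspace $H_o$ is $G$-invariant, hence invariant under the Lie algebra action, but it needs to be said. The paper's pointwise route sidesteps both issues, since no mean-zero property of $\mathrm{Ad}_{\exp(\theta_0\Theta)}(U)F$ is ever needed; with those two points added, your argument goes through and yields the same bound.
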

\begin{proof} This a version of Lemma 2.2 in \cite{Strom13} (which in turn follows Prop. 4.1 in 
\cite{BR}).  Each cusp $\mathcal A$ is diffeomorphic to a semi-infinite cylinder $ S^1 \times \R^+$ with boundary a closed
horocycle of length $\epsilon_0>0$.  After a conjugation we may assume that the cusp is in canonical form,
that is,  $\mathcal A = \{ z \in \C \vert   \text{Im}(z) \geq  \epsilon_0^{-1}\} / \Gamma_\infty$ with $\Gamma_\infty <SL(2, \Z)$
the cyclic group generated by a upper triangular Jordan block. 

\smallskip
\noindent  Let $X$, $U$ and $\Theta$ denote the generators of the geodesic flow, of the stable horocycle flow and 
 of the one-parameter rotation group $SO(2, \R)$ respectively.  The unit tangent bundle $S\mathcal A$ over $\mathcal A$ can then
 be parametrized by a map
$$
(t,u, \theta) \to   \exp (\theta \Theta) \exp (t X) \exp (u U)  \Gamma_\infty \,,   \quad (t,u,\theta) \in [\epsilon_0^{-1}, +\infty) \times \R \times S^1\,.
$$
The condition that $F$ is cuspidal means
\begin{equation}
\label{eq:cuspidal}
\int_0^1 F (\exp (\theta \Theta) \exp (t X) \exp (u U) \Gamma_\infty) du  =0  \,, \quad \text{ for all }  (t, \theta) \in \R^+ \times S^1\,.
\end{equation}
Let $x = \exp (\theta_0 \Theta) \exp (t_0 X) \exp (u_0 U) \Gamma_\infty$ with $u_0 \in [0, e^{-t}]$.  Then there exists 
$u_* \in [0, 1]$ such that  $F(\exp (\theta \Theta) \exp (t X) \exp (u_* U) \Gamma_\infty)=0$, hence
$$
F(x) = \int_{u_*}^{u_0} \frac{d}{du} F \Big(\exp (\theta_0 \Theta) \exp (t_0 X) \exp (u U) \Gamma_\infty\Big) du \,.
$$
Since
$$
 \frac{d}{du} F \Big(\exp (\theta_0 \Theta) \exp (t_0 X) \exp (u U) \Gamma_\infty \Big)  = $$$$ e^{-t_0}  
 \text{Ad}_{\exp(\theta_0 \Theta)} (U)F \Big( \exp (\theta_0 \Theta)   \exp (t_0 X)  \exp (u U) \Gamma_\infty\Big) 
$$
and, by Lemma  ~\ref{lemma:Sobembed_1}, 
$$
\vert  \text{Ad}_{\exp(\theta_0 \Theta)} (U) F \Big( \exp (\theta_0 \Theta)   \exp (t_0 X) \Gamma_\infty\Big)  \vert \leq
C e^{d_\Gamma(x)/2}  \Vert  \text{Ad}_{\exp(\theta_0 \Theta)} (U) F \Vert_{W^2(S_\Gamma)}\,,
$$
it follows that, since $d_\Gamma(x) \leq  t_0$, 
$$
\vert F(x) \vert \leq C e^{-d_\Gamma(x)/2} \Vert  F  \Vert_{W^3(S_\Gamma)} \,.
$$
\end{proof} 

\begin{lemma} 
  \label{lemma:Sobembed_3}
  There exists a universal constant $C>0$ such that for any function $F\in W^3(S_\Gamma)$, which belong to a complementary series component of Casimir parameter $\mu(\nu):=(1-\nu^2)/4$ with $\nu\in (0,1)$,  we have that $F$ is continuous and, for all $x \in S_\Gamma$ such that $\pi(x) \in M_{\rm cusp}$, 
  $$
  |F(x)| \le C \, e^{ \frac{1-\nu}{2} d_\Gamma(x)} \| F\|_{W^3(S_\Gamma)}\,.
  $$
\end{lemma}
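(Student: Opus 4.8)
The plan is to adapt the proof of Lemma~\ref{lemma:Sobembed_2}, splitting $F$ near a cusp into a part with vanishing closed‑horocycle averages, to which the argument of that lemma applies directly, and a constant‑term part, which is where the growth $e^{\frac{1-\nu}{2}d_\Gamma(x)}$ originates. First I would fix $x\in S_\Gamma$ with $\pi(x)$ in a cusp $\mathcal A$, put $\mathcal A$ in canonical form $\{\mathrm{Im}\,z\ge\epsilon_0^{-1}\}/\Gamma_\infty$, and use the parametrization $(t,u,\theta)\mapsto\exp(\theta\Theta)\exp(tX)\exp(uU)\Gamma_\infty$ of $S\mathcal A$ from the proof of Lemma~\ref{lemma:Sobembed_2}, so that $U$ generates the closed horocycle at the cusp and, writing $x=\exp(\theta_0\Theta)\exp(t_0X)\exp(u_0U)\Gamma_\infty$, one has $d_\Gamma(x)\le t_0$. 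I then decompose $F=F_{\mathcal A}+\widetilde F$ on $S\mathcal A$, where $F_{\mathcal A}$ is the zeroth Fourier coefficient of $F$ along the closed horocycle (the average over $u$, independent of $u$) and $\widetilde F:=F-F_{\mathcal A}$ has zero average over each horocycle period.

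For $\widetilde F$ the proof of Lemma~\ref{lemma:Sobembed_2} applies essentially verbatim: since $\widetilde F$ is continuous with vanishing $u$‑average, the intermediate value theorem produces a zero $u_\ast$ on the period, so $\widetilde F(x)=\int_{u_\ast}^{u_0}\partial_u\widetilde F\,du$ over an interval of length $\le 1$; then $\partial_u\widetilde F=\partial_uF=e^{-t_0}(\mathrm{Ad}_{\exp(\theta_0\Theta)}U)F$ by the renormalization identity, and bounding $(\mathrm{Ad}_{\exp(\theta_0\Theta)}U)F$ pointwise by Lemma~\ref{lemma:Sobembed_1}, together with $d_\Gamma(x)\le t_0$, gives $|\widetilde F(x)|\le C\,e^{-d_\Gamma(x)/2}\|F\|_{W^3(S_\Gamma)}$, which is stronger than required since $-\tfrac12\le\tfrac{1-\nu}{2}$ for $\nu\in(0,1)$. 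I would stress that this step uses only the vanishing of the constant term at the cusp $\mathcal A$ and the global $W^3$ bound on $F$ (through $UF$), not full cuspidality, so there is no circularity.

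The remaining and decisive step is to bound the constant term $F_{\mathcal A}$. I would expand it in $\Theta$‑weights, $F_{\mathcal A}=\sum_kF_{\mathcal A,k}$; since $F$ lies in the Casimir component of parameter $\mu(\nu)=(1-\nu^2)/4$ and each $F_{\mathcal A,k}$ is $U$‑invariant, restricting the Casimir eigenvalue equation to $U$‑invariant weight‑$k$ functions on the cusp yields an Euler‑type second‑order ODE in $t$ with indicial roots $-\tfrac{1-\nu}{2}$ and $-\tfrac{1+\nu}{2}$, i.e.\ the $X$‑eigenvalues of the invariant distributions $\mathcal D^\pm_\mu$ of~\cite{FlaFor} (equivalently, this is the standard constant‑term analysis of automorphic forms). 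Thus, in the coordinate $y\asymp e^{t}$ on the cusp, $F_{\mathcal A,k}$ is a combination of $y^{(1-\nu)/2}$ and $y^{(1+\nu)/2}$ times a fixed weight‑$k$ fibre function; because $F\in W^3(S_\Gamma)\subset L^2(S_\Gamma)$ the non square‑integrable solution $y^{(1+\nu)/2}$ is absent, so $|F_{\mathcal A,k}(x)|\le C_k\|F_k\|_{L^2}\,e^{\frac{1-\nu}{2}d_\Gamma(x)}$ with $C_k$ at most polynomial in $k$ (the weight‑$k$ constant‑term extraction constant for the normalized cusp, controlled by Maass--Selberg). Summing over $k$ and absorbing the polynomial factors into $\|F\|_{W^3}$ by Cauchy--Schwarz gives $|F_{\mathcal A}(x)|\le C\,e^{\frac{1-\nu}{2}d_\Gamma(x)}\|F\|_{W^3(S_\Gamma)}$, and adding the two bounds finishes the proof.

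I expect the hard part to be exactly this last paragraph: pinning down the indicial exponents $-(1\mp\nu)/2$ for the constant term and, above all, using $L^2$‑membership to discard the faster‑growing solution $y^{(1+\nu)/2}$ with a coefficient estimate uniform in $F$ up to a polynomial‑in‑weight loss, so that it is absorbed by the $W^3$ norm. This is where one genuinely uses that the complementary‑series parameter corresponds to a discrete Laplace eigenvalue and invokes Maass--Selberg type control of constant terms; the remaining steps are routine transcriptions of Lemmas~\ref{lemma:Sobembed_1} and~\ref{lemma:Sobembed_2}.
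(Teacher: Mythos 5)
Your proposal is correct and follows essentially the same route as the paper: the paper likewise isolates the constant term along the closed horocycle, shows via the Casimir equation $(-\tfrac{d^2}{dt^2}+\tfrac{d}{dt})\phi=\mu(\nu)\phi$ that it is a combination of $e^{\frac{1\pm\nu}{2}t}$, kills the growing mode and bounds the coefficient uniformly by $L^2$-finiteness on the cusp (done there for an orthonormal basis of $SO(2)$-weight vectors $u_n$, then summed with Cauchy--Schwarz in the weight, rather than directly on the weight components of $F$ as you do), and then repeats the derivative-over-one-period argument of Lemma~\ref{lemma:Sobembed_2} with this constant-term bound in place of cuspidality. Your appeal to ``Maass--Selberg'' is unnecessary: the uniform coefficient bound is the elementary estimate $|C'_k|^2\int_{\epsilon_0^{-1}}^{\infty}e^{-\nu t}dt\le\|F_{\mathcal A,k}\|^2_{L^2}$ with the integral bounded below uniformly for $\nu\in(0,1)$, exactly as in the paper.
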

\begin{proof} This a version of Lemma 2.3 in \cite{Strom13}.  As in the proof of Lemma~\ref{lemma:Sobembed_2} the argument is based on the remark that each cusp $\mathcal A$ is diffeomorphic to a semi-infinite cylinder $S^1 \times \R^+$ with boundary a closed
horocycle of length $\epsilon_0>0$ and that after a conjugation we may assume that the cusp is in canonical form. 
Each complementary series component $H_\mu$ has a orthonormal basis $\{u_n\}$ of eigenfunctions of the action of the circle group $SO(2,\R)$.  Since the Casimir operator $\square = -X^2  +X - \Theta U + U^2$ we have
$$
\square u_n =  \mu(\nu)  u_n \quad  \text {and} \quad  \Theta u_n = i n u_n \,, \quad \text{ for all } n \in \Z\,.
$$
We consider the functions
$$
\phi_n(g\Gamma_\infty) = \int_0^1   u_n ( g \exp (u U) \Gamma_\infty) du \,, \quad \text{ for all } n \in \Z.
$$
Since $\phi_n$ is by definition invariant under the horocycle flow,  and since
$$
\begin{aligned}
\phi_n( \exp(\theta \Theta) g\Gamma_\infty) &=  \int_0^1   u_n (  \exp(\theta \Theta) g \exp (u U) \Gamma_\infty) du  
\\ &=   i n \int_0^1   u_n ( g \exp (u U) \Gamma_\infty) du   = i n \phi_n  (g\Gamma_\infty) \,,
\end{aligned}
$$
it follows that
$$
\begin{aligned}
(\square \phi_n) (\exp(\theta\Theta) &\exp (t X) \exp (uU) \Gamma_\infty ) 
\\ &= (- \frac{d^2}{dt^2} + \frac{d}{dt})    \phi_n (\exp(\theta\Theta) \exp (t X) \exp (uU) \Gamma_\infty ) \\ &= \mu(\nu) \phi_n (\exp(\theta\Theta) \exp (t X) \exp (uU) \Gamma_\infty ) \,,
\end{aligned}
$$
which in turn, since the Casimir parameter is given by the identity $\mu(\nu):= (1-\nu^2)/4$, implies that there exist constants $C_n, C'_n \in \R$ such that
$$
\phi_n (\exp(\theta\Theta) \exp (t X) \exp (uU) \Gamma_\infty) = C_n e^{\frac{1+\nu}{2} t} + C'_n e^{\frac{1-\nu}{2}t}
$$
Since the basis $\{u_n\}$ is orthonormal we have
$$
\begin{aligned}
1&\geq \int_{S \mathcal A} \vert  u_n \vert ^2 d\text{vol} = \int_{0}^{2 \pi} \int_{\epsilon_0^{-1}} ^{+\infty}   \vert \phi_n (\exp(\theta\Theta) \exp (t X) \exp (uU) \Gamma_\infty)  \vert ^2  e^{-t} d\theta dt \\ & =  2\pi \int_{\epsilon_0^{-1}} ^{+\infty}   \vert \phi_n (\exp(\theta\Theta) \exp (t X) \exp (uU) \Gamma_\infty)  \vert ^2 e^{-t} dt = 2\pi  \int_{\epsilon_0^{-1}} ^{+\infty} ( C_n e^{\frac{1+\nu}{2} t} + C'_n e^{\frac{1-\nu}{2} t})^2  e^{-t} dt\,,
\end{aligned}
$$
hence, by taking into account that $\nu\in (0, 1)$, it follows that $C_n=0$ and $C'_n$ is bounded above, uniformly with respect to
$n\in \Z$, by a universal constant  $C>0$.

\smallskip
\noindent For any smooth function $F$ on $S\mathcal A$ we can write
$$
\int_0^1 F (\exp (\theta \Theta) \exp (t X) \exp (u U) \Gamma_\infty) du  =  \sum_{n\in\Z}  \langle F, u_n\rangle  \phi_n(\exp (\theta \Theta) \exp (t X) \Gamma_\infty) du \,,
$$
hence we conclude that there exists a universal constant $C''>0$ such that 
\begin{equation}
\label{eq:cuspidal_bound}
\begin{aligned}
\big\vert  &\int_0^1 F (\exp (\theta \Theta) \exp (t X) \exp (u U) \Gamma_\infty) du   \big\vert \\ & \leq  C e^{\frac{1-\nu}{2}t} 
\Big(\sum_{n\in \Z} (1+n^2)^{-1} \Big)^{1/2} \Big(\sum_{n\in \Z}   (1+n^2) \vert \langle F, u_n\rangle  \vert^2  \Big)^{1/2} 
\leq   C'' C e^{\frac{1-\nu}{2} t}  \Vert F \Vert_{W^1(S\mathcal A)} \,.
\end{aligned}
\end{equation}
At this point the argument proceeds exactly as in the the proof the previous Lemma  with the bound in formula
\eqref{eq:cuspidal_bound} in place of that in formula \eqref{eq:cuspidal}.  Since there exists a universal constant
$C^{(3)} >0$ such that 
$$
t \leq  C^{(3)} d_\Gamma (\exp (\theta \Theta) \exp (t X) \exp (u U) \Gamma_\infty) 
$$
the result follows.
\end{proof} 

Lemmas~\ref{lemma:Sobembed_1}, ~\ref{lemma:Sobembed_2} and ~\ref{lemma:Sobembed_3} allow 
us to derive the following upper bound for the uniform norm of components and remainder terms of horocycle
arcs. For every Casimir parameter $\mu \in \R$, let $H_\mu$ denote the isotypical 
component of  $L^2(S_\Gamma)$ and $W^s(H_\mu)$ for $s>0$ the corresponding
weighted Sobolv spaces.  Let $H_o$ be the component given by cuspidal functions, 
and $W^s(H_o)$ for $s>0$ the corresponding weighted Sobolev spaces

 Let 
$$
 {\mathcal B}^s _o := {\mathcal B}^s \cap W^s(H_o)   \quad \text{ and }   \quad   {\mathcal B}^s_\mu  :=   {\mathcal B}^s \cap W^s(H_\mu)  \,.
$$

\begin{corollary} (see \cite{FlaFor}, Corollary 5.4)
\label{cor:unifupperb}
For all $s\ge 2$, there exists a constant $C_4:=C_4(s)>0$ such that the following holds. Let~$\gamma_{x,T}$ denote
the horocycle arc with endpoints $x$ and $h_T(x)$, then 
\begin{equation}
\label{eq:unifupperb_1}
\begin{aligned}
\sum_{{\mathcal D}\in {\mathcal B}^s}
|c_{\mathcal D}(x,T)|^2  & \,+\, \|{\mathcal C}(x,T)\|^2_{-s}\,+\,
\|{\mathcal R}(x,T)\|^2_{-s}  \\  & \le C_4^2\, \max \{ \inj_\Gamma^{-1}, \max_{y\in \gamma_{x,T}} e^{d_\Gamma (y)/2} \}^2\,.
\end{aligned}
\end{equation}
For all $s \geq 2$ and for all Casimir parameters $\mu:=\mu(\nu) \in (0, 1/4)$ and for cuspidal components, we have 
\begin{equation}
\label{eq:unifupperb_2}
\begin{aligned}
&\sum_{{\mathcal D}\in {\mathcal B}^s_\mu}
|c_{\mathcal D}(x,T)|^2   \le C_4^2\, \max \{ \inj_\Gamma^{-1}, \max_{y\in \gamma_{x,T}} e^{\frac{1-\nu}{2} d_\Gamma (y)} \} ^2\,; \\
&\sum_{{\mathcal D}\in {\mathcal B}^s_o}
|c_{\mathcal D}(x,T)|^2   \le C_4^2\, \max \{ \inj_\Gamma^{-1}, \max_{y\in \gamma_{x,T}} e^{-d_\Gamma (y)/2} \}^2 \,; \\
\end{aligned}
\end{equation} 
\end{corollary}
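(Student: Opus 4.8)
The plan is to deduce the uniform bounds directly from the spectral splitting~\eqref{eq:gammasplit2}, the norm equivalence of Lemma~\ref{lemma:bounddist}, and the weighted Sobolev embedding Lemmas~\ref{lemma:Sobembed_1}, \ref{lemma:Sobembed_2} and \ref{lemma:Sobembed_3}. The key point is that the coefficients $c_{\mathcal D}(x,T)$, the continuous component $\mathcal C(x,T)$ and the remainder $\mathcal R(x,T)$ are all obtained by pairing the probability measure $\gamma_{x,T}$ against test functions, so each of them is controlled by the pointwise size of those test functions along the horocycle arc $\gamma_{x,T}$. First I would fix $s\geq 2$ and apply Lemma~\ref{lemma:bounddist}: the left-hand side of~\eqref{eq:unifupperb_1} is, up to the constant $C_2^2$, bounded by $\|\gamma_{x,T}\|_{-s}^2$. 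So the whole statement reduces to bounding the dual Sobolev norm $\|\gamma_{x,T}\|_{-s}$.

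To bound $\|\gamma_{x,T}\|_{-s}$ I would pair $\gamma_{x,T}$ with an arbitrary $F\in W^s(S_\Gamma)$ of unit norm: by definition
$$
|\gamma_{x,T}(F)| \;=\; \Big|\frac{1}{T}\int_0^T F(h_t(x))\,dt\Big| \;\leq\; \sup_{y\in \gamma_{x,T}} |F(y)|\,.
$$
Now Lemma~\ref{lemma:Sobembed_1} gives, for $s\geq 2$, the pointwise bound $|F(y)|\leq C\,\mathrm{inj}_\Gamma^{-1}\|F\|_{W^2}$ when $\pi(y)\in M_{\mathrm{cpt}}$ and $|F(y)|\leq C\,e^{d_\Gamma(y)/2}\|F\|_{W^2}$ when $\pi(y)\in M_{\mathrm{cusp}}$. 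Taking the maximum over $y\in\gamma_{x,T}$ yields
$$
\|\gamma_{x,T}\|_{-s} \;\leq\; C\,\max\Big\{\mathrm{inj}_\Gamma^{-1},\ \max_{y\in\gamma_{x,T}} e^{d_\Gamma(y)/2}\Big\}\,,
$$
and combining with Lemma~\ref{lemma:bounddist} gives~\eqref{eq:unifupperb_1} with $C_4 = C_2 C$. For~\eqref{eq:unifupperb_2} the argument is the same but restricted to the relevant isotypical subspace: when estimating $\sum_{\mathcal D\in\mathcal B^s_\mu}|c_{\mathcal D}(x,T)|^2$ one only pairs $\gamma_{x,T}$ with functions $F\in W^s(H_\mu)$, i.e.\ functions lying in a complementary series component with $\nu\in(0,1)$, for which Lemma~\ref{lemma:Sobembed_3} supplies the sharper cuspidal bound $|F(y)|\leq C\,e^{\frac{1-\nu}{2}d_\Gamma(y)}\|F\|_{W^3}$; this replaces the exponent $1/2$ by $\tfrac{1-\nu}{2}$ in the cusp. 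Similarly, for the cuspidal component $\mathcal B^s_o$ one pairs only against cuspidal functions and invokes Lemma~\ref{lemma:Sobembed_2}, giving the decaying bound $|F(y)|\leq C\,e^{-d_\Gamma(y)/2}\|F\|_{W^3}$ in the cusp (and the same $\mathrm{inj}_\Gamma^{-1}$ bound on the compact part from Lemma~\ref{lemma:Sobembed_1}).

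The one point requiring a little care — and the step I expect to be the main obstacle — is the passage from ``norm of $\gamma_{x,T}$ restricted to $W^s(H_\mu)$'' to ``sum of squared coefficients over $\mathcal B^s_\mu$'': one must use that the splitting~\eqref{eq:gammasplit} is compatible with the isotypical decomposition of $L^2(S_\Gamma)$ into Casimir components (this is exactly the refined, component-wise form of Lemma~\ref{lemma:bounddist} stated there for each $H_\mu$), so that $\sum_{\mathcal D\in\mathcal B^s_\mu}|c_{\mathcal D}(x,T)|^2 \leq C_2^2\,\|\pi_{H_\mu}\gamma_{x,T}\|_{-s}^2$, where $\pi_{H_\mu}$ is the orthogonal projection onto $W^{-s}(H_\mu)$; and then that $\|\pi_{H_\mu}\gamma_{x,T}\|_{-s} = \sup\{|\gamma_{x,T}(F)| : F\in W^s(H_\mu),\ \|F\|_s\leq 1\}$, which is the quantity controlled by the component-specific embedding lemma. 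Everything else is a routine combination of the displayed inequalities, with the final constant $C_4(s)$ absorbing the universal constants from the Sobolev embeddings and the distortion constant $C_2(s)$ from Lemma~\ref{lemma:bounddist}.
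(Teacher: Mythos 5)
Your proposal is correct and follows essentially the same route as the paper's proof: bound $|\gamma_{x,T}(f)|$ by the sup of $|f|$ along the arc, invoke Lemmas~\ref{lemma:Sobembed_1}, \ref{lemma:Sobembed_2}, \ref{lemma:Sobembed_3} for the pointwise bounds, and convert the resulting bound on the dual norm (respectively its restriction to $W^{-s}(H_\mu)$ and $W^{-s}(H_o)$) into the coefficient bounds via the component-wise form of Lemma~\ref{lemma:bounddist}. The point you flag as delicate — compatibility of the splitting with the isotypical decomposition — is exactly how the paper handles the second estimate, so nothing is missing.
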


\begin{proof}

By definition for $s\geq 2$ and  for any function~$f\in  W^s(S_\Gamma)$
$$
|\gamma_{x,T}(f)| \leq  \max \{ |f(y)|\,:\, y  \in \gamma_{x,T}(f)\}  \,,
$$
hence by Lemmas ~\ref{lemma:Sobembed_1}, for all $s\geq 2$, 
$$
\Vert \gamma_{x,T}  \Vert_{-s}  \le C_3\, \max \{ \inj_\Gamma^{-1}, \max_{y\in \gamma_{x,T}} e^{d_\Gamma (y)/2} \}\,.
$$
The estimate~\eqref{eq:unifupperb_1} then follows from Lemma~\ref
{lemma:bounddist}.  

\smallskip
The estimates in formula \eqref{eq:unifupperb_2} follows from Lemmas ~\ref{lemma:Sobembed_2} 
and ~\ref{lemma:Sobembed_3} since for every $s\geq 2$ and  for every Casimir parameter $\mu \in \sigma(\square)$,
$$
\sum_{{\mathcal D}\in {\mathcal B}^s_\mu}
|c_{\mathcal D}(x,T)|^2 \leq   \Vert  \gamma_{x,T}  \vert  W^{-s}(H_\mu) \Vert^2  = \sup_{f\in W^s(H_\mu)\setminus\{0\}} \Big( \frac{\vert \gamma_{x,T}  (f) \vert} { \Vert f \Vert_s } \Big)^2
$$
and, similarly,
$$
\sum_{{\mathcal D}\in {\mathcal B}^s_o}
|c_{\mathcal D}(x,T)|^2 \leq   \Vert   \gamma_{x,T}  \vert  W^{-s}(H_o) \Vert^2 = \sup_{f\in W^s(H_o)\setminus\{0\}} \Big( \frac{\vert \gamma_{x,T} (f) \vert} { \Vert f \Vert_s } \Big)^2 \,.
$$
The lemma is therefore proved.
\end{proof}

\subsection{Coboundaries}
Let $\{\phi_t\}$ be a measure preserving ergodic flow on a probability
space. We recall that a function $g$ is a {\it coboundary } for 
$\{\phi_t\}$ if it is a derivative of a function $f$ along this flow. The
Gottschalk-Hedlund Theorem, or rather its proof, yields upper bounds
for the uniform or the $L^2$ norm of ergodic averages of a coboundary
$g$ in terms of the uniform, or respectively, the $L^2$ norm of its
primitive $f$. A key consequence is that the uniform bound for the
remainder term ${\mathcal R}(x,T)$ proved in Corollary~\ref{cor:unifupperb}
can be significantly improved.

\begin{lemma} (see \cite{FlaFor}, Lemma 5.5)
  \label{lemma:remainder}
For every~$s>3$, there exists a constant~$C_5:=C_5(s)$ such that the following holds. Let~$\gamma_{x,T}$ denote
the horocycle arc with endpoints $x$ and $h_T(x)$,then 
  \begin{equation}
    \label{eq:unifRbound}
    \| {\mathcal R}(x,T)\|_{-s} \le \frac{C_5}{T} \,  \max \{ \inj_\Gamma^{-1}, e^{d_\Gamma (x)/2},  e^{d_\Gamma (h_T(x))/2} \}\,.
  \end{equation}
For every $s >4$ and for all Casimir parameters $\mu:=\mu(\nu) \in (0, 1)$ and for cuspidal components, we have 
\begin{equation}
\label{eq:unifupperb_2}
\begin{aligned}
&   \| {\mathcal R}(x,T) \vert W^{-s}(H_\mu)\|  \le \frac{C_5}{T} \, \max \{ \inj_\Gamma^{-1}, e^{\frac{1-\nu}{2} d_\Gamma (x)},
e^{\frac{1-\nu}{2} d_\Gamma (h_T(x))}  \} \,; \\
& \| {\mathcal R}(x,T) \vert W^{-s}(H_o)\|   \le  \frac{C_5}{T}\, \max \{ \inj_\Gamma^{-1}, e^{-d_\Gamma (x)/2},  e^{-d_\Gamma (h_T(x))/2}  \} \,.
\end{aligned}
\end{equation}

\end{lemma}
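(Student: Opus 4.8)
The plan is to run the classical \emph{coboundary argument} of Flaminio--Forni, keeping careful track of the dependence of all constants on the lattice $\Gamma$ so that it applies uniformly across a tower of congruence covers. Write $\mathcal L$ for the Lie derivative along the horocycle flow $\{h_t\}$. The starting point is the coboundary identity: for every $\tilde f\in W^{s+1}(S_\Gamma)$, the definition of the orbital measure $\gamma_{x,T}$ together with the fundamental theorem of calculus gives
$$\gamma_{x,T}(\mathcal L\tilde f)=\frac1T\int_0^T\frac{d}{dt}\bigl[\tilde f(h_t(x))\bigr]\,dt=\frac1T\bigl(\tilde f(h_T(x))-\tilde f(x)\bigr).$$
Since every horocycle-invariant distribution annihilates the range of $\mathcal L$, the invariant part $\sum_{\mathcal D}c_{\mathcal D}(x,T)\mathcal D+\mathcal C(x,T)$ of the splitting~\eqref{eq:gammasplit2} kills $\mathcal L\tilde f$, and hence $\mathcal R(x,T)(\mathcal L\tilde f)=\tfrac1T(\tilde f(h_T(x))-\tilde f(x))$.

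I would then convert this into a bound on $\|\mathcal R(x,T)\|_{-s}$ by duality. Recall from~\cite{FlaFor} that $\mathcal I^s(S_\Gamma)$ is exactly the annihilator in $W^{-s}(S_\Gamma)$ of the coboundary space $\mathcal L W^{s+1}(S_\Gamma)$; identifying $W^{-s}(S_\Gamma)$ with $W^{s}(S_\Gamma)$ by Riesz duality, the orthogonal complement $\mathcal I^s(S_\Gamma)^{\perp}$ corresponds to the closure $\overline{\mathcal L W^{s+1}(S_\Gamma)}$ in $W^{s}(S_\Gamma)$. Since $\mathcal R(x,T)\in\mathcal I^s(S_\Gamma)^{\perp}$ and the invariant part of $\gamma_{x,T}$ pairs to zero against that subspace, density of coboundaries and weak continuity of $\gamma_{x,T}$ yield
$$\|\mathcal R(x,T)\|_{-s}=\sup\bigl\{|\gamma_{x,T}(\mathcal L\tilde f)|:\ \tilde f\in W^{s+1}(S_\Gamma),\ \textstyle\int_{S_\Gamma}\tilde f=0,\ \|\mathcal L\tilde f\|_{s}\le1\bigr\}=\frac1T\sup_{\|\mathcal L\tilde f\|_s\le1}\bigl|\tilde f(h_T(x))-\tilde f(x)\bigr|.$$

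The heart of the matter is to bound the primitive uniformly. For this I would invoke the quantitative solution of the horocycle cohomological equation from~\cite{FlaFor}: a function $g\in W^{s}(S_\Gamma)$ lying in the closure of the coboundaries has a mean-zero primitive $\tilde g$ (with $\mathcal L\tilde g=g$) satisfying $\|\tilde g\|_{W^{2}(S_\Gamma)}\le C(s)\|g\|_{W^{s}(S_\Gamma)}$ for $s>3$; and, since $\mathcal L$ preserves each isotypical component, if $g\in H_\mu$ (resp.\ $g\in H_o$) the primitive can be taken in the same component, with $\|\tilde g\|_{W^{3}(S_\Gamma)}\le C(s)\|g\|_{W^{s}(S_\Gamma)}$ for $s>4$. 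On congruence covers the constants here are uniform in $\Gamma$: Selberg's bound $\nu_\Gamma\le\tfrac12$ (see~\cite{Selberg}) keeps the complementary-series part of the spectrum of $\mathcal L$ away from the degenerate regime, and the required uniform estimate rests on the Flaminio--Forni spectral model --- renormalization by the geodesic flow, the action $\phi^X_t(\mathcal D)=e^{\lambda_{\mathcal D}t}\mathcal D$ on the discrete spectrum, and the norm bound of Lemma~\ref{lemma:normbound} on the continuous spectrum. Combining this with the duality identity above and Lemma~\ref{lemma:Sobembed_1} applied at $y=x$ and $y=h_T(x)$, which gives $|\tilde g(y)|\le C\max\{\inj_\Gamma^{-1},e^{d_\Gamma(y)/2}\}\|\tilde g\|_{W^2}$, produces the first estimate. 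For the refined estimates one repeats the argument inside a single component: $\mathcal R(x,T)$ restricted to $W^{-s}(H_\mu)$ (resp.\ to $W^{-s}(H_o)$) is still orthogonal to the invariant distributions of that component, so the same identity holds with primitives taken in $H_\mu$ (resp.\ in $H_o$, where a primitive is automatically cuspidal); using $s>4$ and the $W^3$-bound one then applies Lemma~\ref{lemma:Sobembed_3} in the complementary-series case (giving the factor $e^{\frac{1-\nu}{2}d_\Gamma}$) and Lemma~\ref{lemma:Sobembed_2} in the cuspidal case (giving the factor $e^{-d_\Gamma/2}$).

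The step I expect to be the genuine obstacle is the uniform cohomological estimate just described: one needs (a) the loss of derivatives controlled at ``just above one'', so that the thresholds $s>3$ and $s>4$ really suffice --- which forces use of the full Flaminio--Forni spectral analysis rather than a soft Gottschalk--Hedlund argument --- and (b) a constant independent of $\Gamma$ throughout the congruence tower, which is precisely where Selberg's spectral-gap bound and the lower bound on $\inj_\Gamma$ coming from the Sobolev embedding lemmas of this appendix become indispensable.
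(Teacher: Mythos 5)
Your proposal is correct and follows essentially the same route as the paper's proof: the Gottschalk--Hedlund coboundary identity, the quantitative solution of the cohomological equation from \cite{FlaFor} (Theorem 1.2) with the loss-of-derivative thresholds $s>3$ and $s>4$, and the Sobolev embedding Lemmas~\ref{lemma:Sobembed_1}--\ref{lemma:Sobembed_3} applied at the endpoints $x$ and $h_T(x)$; the paper merely phrases the duality step by splitting a test function $g=g_1+g_2$ along $\text{Ann}(\mathcal I)\oplus\text{Ann}(\mathcal I^\perp)$ and estimating $\mathcal R(x,T)(g)=\gamma_{x,T}(g_1)$, which is equivalent to your sup over coboundaries.
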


\begin{proof}
  Let~${\mathcal I}:={\mathcal I}^s(S_\Gamma)$. The orthogonal
  splitting~\eqref{eq:orthsplitone} induces a dual orthogonal splitting
  \begin{equation}
    \label{eq:dualsplit}
    W^s(S_\Gamma)= \text{Ann}(\mathcal I) \oplus \text{Ann}({\mathcal I}^\perp)\,.
  \end{equation}
  Hence, any function~$g\in W^s(S_\Gamma)$ has a unique (orthogonal)
  decomposition~$g = g_1 + g_2$, where~$g_1\in \text{Ann}(\mathcal I)$
  and~$g_2 \in \text{Ann}({\mathcal I}^\perp)$. Since ${\mathcal R}(x,T)\in
  {\mathcal I}^\perp $, the function~$g_2 \in \text{Ann}({\mathcal I}^\perp)$
  and~$g_1\in \text{Ann}(\mathcal I)$, we have:
  \begin{equation}
    \label{eq:remainder}
    {\mathcal R}(x,T)(g)= {\mathcal R}(x,T)(g_1+g_2)= {\mathcal R}(x,T)(g_1)
    = \gamma_{x,T}(g_1)\,.
  \end{equation}
  
  The function~$g_1$ is a coboundary for the horocycle flow. In fact,
  it belongs to the kernel of all invariant distributions for the horocycle flow $h_\R$ of
  order~$\le s$; hence, by Theorem 1.2 of \cite{FlaFor},  there exists a
  function~$f_1\in W^t (S_\Gamma)$, with~$2<t<s-1$, solution of the cohomological
  equation 
  \begin{equation}
  \label{eq:CE_remainder}
   \frac{d}{dt}  (f_1 \circ h_t)  =g_1\circ h_t \,,
  \end{equation}
 such that~$\|f_1\|_t\le C \|g_1\|_s$.  Let~$d>0$ be such that $x$,
  $h_T(x)\in {\overline B}(x_0,d)$. By the Sobolev embedding
  Theorem, the function~$f_1$ is continuous and by
  Lemma~\ref{lemma:Sobembed_1}
  \begin{equation}
    \label{eq:Sobembed_1}
    \max \{|f_1(x)|, |f_1\big( h_T(x) \big)|  \}   \leq   C \max \{ \inj_\Gamma^{-1}, e^{d_\Gamma (x)/2}, e^{d_\Gamma (h_T(x))/2} \}  \| g_1\|_s\,.
  \end{equation}
  By the Gottschalk-Hedlund argument and the
  inequality~\eqref{eq:Sobembed_1},
  \begin{equation}
    \label{eq:cobest}
    |\gamma_{x,T}(g_1)|= \frac{1}{T}|f_1\circ h_T(x)-f_1(x)| \le 
    \frac{2C'_3}{T} \, \max \{ \inj_\Gamma^{-1}, e^{d_\Gamma (x)/2}, e^{d_\Gamma (h_T(x))/2} \}  \,\| g_1\|_s\,.
  \end{equation}
  Since the dual splitting~\eqref{eq:dualsplit} is orthogonal, by the
  estimates~\eqref{eq:remainder} and~\eqref{eq:cobest}, we get
  $$
  \begin{aligned}
  |{\mathcal R}(x,T)(g)|&\le \,\frac{2C'_3 }{T}\,  \max \{ \inj_\Gamma^{-1}, e^{d_\Gamma (x)/2}, e^{d_\Gamma (h_T(x))/2} \}   \| g_1\|_s
  \\ & \le
  \frac{2C'} {T}\,\max \{ \inj_\Gamma^{-1}, e^{d_\Gamma (x)/2}, e^{d_\Gamma (h_T(x))/2} \} \,\| g\|_s\,.
  \end{aligned} 
  $$
  An analogous argument holds for the projections of the distribution ${\mathcal R}(x,T)$ on cuspidal components or on irreducible sub-representations of the complementary series.  In fact, whenever  $g \in W^s(H_o)$ for $s>4$,  then the solution $f_1$ of the cohomological equation in formula \eqref{eq:CE_remainder}  is such that $f_1 \in W^3(H_o)$, hence by Lemma \ref{lemma:Sobembed_2}, we have
  \begin{equation}
    \label{eq:Sobembed_2}
    \max \{|f_1(x)|, |f_1\big( h_T(x) \big)|  \}   \leq   C \max \{ \inj_\Gamma^{-1}, e^{-d_\Gamma (x)/2}, e^{-d_\Gamma (h_T(x))/2} \}  \| g_1\|_s\,;
  \end{equation}
  whenever $g \in W^s(H_\mu)$ for $s>4$ and $\mu:=\mu(\nu) \in (0,1/4)$,  then the solution $f_1$ of the cohomological equation in formula \eqref{eq:CE_remainder}  is such that $f_1 \in W^3(H_\mu)$, hence by Lemma \ref{lemma:Sobembed_3}, we have 
  \begin{equation}
    \label{eq:Sobembed_2}
    \max \{|f_1(x)|, |f_1\big( h_T(x) \big)|  \}   \leq   C \max \{ \inj_\Gamma^{-1}, e^{\frac{1-\nu}{2}d_\Gamma (x)}, 
    e^{ \frac{1-\nu}{2} d_\Gamma (h_T(x))/2} \}  \| g_1\|_s\,.
  \end{equation}
The proof in the latter cases can then be completed by the Gottschalk-Hedlund argument and orthogonality as above. 
The lemma is therefore proved.
\end{proof}

\subsection{Iterative estimates}
Let $\{X, U, V\}$  denote the generators of the Lie algebra 
 $\mathfrak{sl}(2, \R)$, respectively, satisfying the commutations relations
 $$
 [X, U]=U\,, \quad  [X, V]=-V\,, \quad [U, V]= 2X \,.
 $$
By the commutation relations,  the geodesic
flow~$\{\phi^X\}$ expands the orbits of unstable horocycle
flow~$\{\phi^V_s\}$ by a factor~$e^t$ and it contracts the orbits of
stable horocycle flow~$\{h_s\}:= \{\phi^U_s\}$ by a factor~$e^{-t}$:
\begin{equation}\label{eq:gaone}
  \phi^X_t\circ\phi^U_s= \phi^U_{se^{-t}}\circ\phi^X_t,\qquad
   \phi^X_t\circ\phi^V_s=\phi^V_{se^t}\circ\phi^X_t\,.
\end{equation}
It follows that, in the distributional sense,
\begin{equation}
\label{eq:gatwo}
\phi^X_t(\gamma_{x,T})= \gamma_{\phi^X_{-t}(x), e^t\,T}\,.
\end{equation}

Let $x\in S_\Gamma$, $T>0$. It will be convenient to discretize the geodesic
flow time~$t\ge 1$ and to consider the push-forwards of the arc $\gamma_{x,T}$ 
by $\phi^X_{\ell h}$, where $h\in [1,2]$ and~$\ell\in \N$. Then the 
distribution (measure)~$\phi^X_{\ell h}(\gamma_{x,T})$ has a splitting
\begin{equation}
\label{eq:pushforwsplit}
  \phi^X_{\ell h}(\gamma_{x,T})=\sum_{{\mathcal D}\in {\mathcal B}^s}
  c_{\mathcal D}(x,T,\ell){\mathcal D} \,+ \,{\mathcal C}(x,T,\ell)\,
  +\,{\mathcal R}(x,T,\ell)\,.
\end{equation}
We prove below pointwise upper bounds on the sequences of
functions~$c_{\mathcal D}(\cdot,T,\ell)$, ${\mathcal C} (\cdot,T,\ell)$ and
${\mathcal R}_(\cdot,T,\ell)$.  By the identity~\eqref{eq:gatwo} and the
definition~\eqref{eq:gammasplit2}, we have:
\begin{equation}
\label{eq:coeffell}
\begin{aligned}
  c_{\mathcal D}(x,T,\ell)&=c_{\mathcal D}\left(\phi^X_{-\ell h}(x), e^{\ell
      h}\,T\right)\,,
  \\
  {\mathcal C}(x,T,\ell)&={\mathcal C} \left(\phi^X_{-\ell h}(x),e^{\ell
      h}\,T\right)\,,
  \\
  {\mathcal R}(x,T,\ell)&={\mathcal R}\left(\phi^X_{-\ell h}(x), e^{\ell
      h}\,T\right)\,.
\end{aligned}
\end{equation}
Uniform upper bounds on the functions~$c_{\mathcal D}(\cdot,
T,\ell)$, ${\mathcal C}(\cdot,T,\ell)$ and ${\mathcal R}(\cdot, T,\ell)$ are
clearly equivalent to uniform bounds on~$c_{\mathcal D}(\cdot,
e^{\ell h}\,T)$, ${\mathcal C}(\cdot,e^{\ell h}\,T)$ and~${\mathcal R}(\cdot,
e^{\ell h}\,T)$ respectively. Let
\begin{equation}
\label{eq:remainderdef}
\begin{aligned}
  {r}_{\mathcal D}(x,T,\ell)&:= c_{\mathcal D}\left(\phi^X_h{\mathcal R}
    (x,T,\ell)\right) \,\in\R\,\,,
  \\
  {\mathcal R}_{\mathcal C}(x,T,\ell)&:= {\mathcal C}\left(\phi^X_h{\mathcal R}
    (x,T,\ell)\right)\,\in  \,{\mathcal I}^s_{\mathcal C}\,\,.
\end{aligned} 
\end{equation}

By the identity~$\phi^X_{(\ell+1)h}= \phi^X_h\circ\phi^X_{\ell h}$,
since the distributions ${\mathcal D}\in{\mathcal B}^s\setminus{\mathcal B}_{1/4}$
are eigenvectors of the geodesic flow $\{\phi^X_t\}$ (see 
\eqref{eq:exponents}) and the space~${\mathcal I}^s_{\mathcal C}$ is
$\{\phi^X_t\}$-invariant, we obtain by projecting on 
${\mathcal D}$-components and on the continuous component:
\begin{equation}
\label{eq:diffeq}
\begin{aligned}
  c_{\mathcal D}(x,T,\ell+1) &= c_{\mathcal D}(x,T,\ell)\,e^{\lambda_{\mathcal D}
    h}\, + \, {r}_{\mathcal D}(x,T,\ell)\,;
  \\
  {\mathcal C}(x,T,\ell+1) &= \phi^X_h {\mathcal C}(x,T,\ell) \,+ \,
  {\mathcal R}_{\mathcal C}(x,T,\ell)\,.
\end{aligned}   
\end{equation}
If~$1/4 \in \sigma_{pp}$, for all pairs~$\{{\mathcal D}^+,{\mathcal D}^-\}\subset
{\mathcal B}_{1/4}$ we obtain by~\eqref{eq:JBtwo}:
\begin{equation}
\label{eq:Jdiffeq}
\begin{aligned}
  c_{{\mathcal D}^+}(x,T,\ell+1) &= [c_{{\mathcal D}^+}(x,T,\ell)-\frac{h}{2}\,
  c_{{\mathcal D}^-}(x,T,\ell)]\,e^{-h/2}\,+\,{r}_{{\mathcal D}^+}(x,T,\ell)\,;
  \\
  c_{{\mathcal D}^-}(x,T,\ell+1) &= c_{{\mathcal D}^-}(x,T,\ell)\,e^{-h/2}\,+\,
  {r}_{{\mathcal D}^-}(x,T,\ell)\,.
\end{aligned}
\end{equation}

Bounds on the solutions of the difference equations~\eqref{eq:diffeq}
and~\eqref{eq:Jdiffeq} can be derived from the following trivial lemma.

\begin{lemma} (see \cite{FlaFor}, Lemma 5.9)
  \label{lemma:opeq}
  Let $\Phi\in {\mathcal L}(E)$ be a bounded linear operator on a normed
  space~$E$.  Let~$\{ R_{\ell} \}$, $\ell\in \N$, be a sequence of
  elements of $E$. The solution~$\{x_{\ell}\}$ of the following
  difference equation in~$E$,
  \begin{equation} 
    \label{eq:opeq}
    x_{\ell+1} = \Phi (x_{\ell}) + R_{\ell}\,,  \quad \ell\in \N \,\,,
  \end{equation}
  has the form
  \begin{equation}
    \label{eq:opsol}
    x_{\ell} = \Phi^{\ell}(x_0) + \sum_{j=0}^{\ell-1} \Phi^{\ell-j-1}R_j\,.
   \end{equation}
\end{lemma}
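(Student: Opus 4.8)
The plan is to prove the closed form \eqref{eq:opsol} by induction on $\ell \in \N$, since \eqref{eq:opeq} is nothing but a first‑order inhomogeneous linear recursion in the normed space $E$ and \eqref{eq:opsol} is its explicit ``variation of constants'' solution. No clever idea is required; the statement is purely algebraic, and the only reason to write it down is that it will be applied repeatedly (to \eqref{eq:diffeq} and \eqref{eq:Jdiffeq}).

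First I would dispose of the base case $\ell = 0$: with the standard convention that the empty sum $\sum_{j=0}^{-1}\Phi^{-j-1}R_j$ is $0$ and that $\Phi^0 = \id_E$, the right‑hand side of \eqref{eq:opsol} reduces to $x_0$, which matches.

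For the inductive step, assume \eqref{eq:opsol} holds at level $\ell$. Applying $\Phi$ to both sides (using linearity of $\Phi$ to pull it through the finite sum) and then invoking \eqref{eq:opeq}, one obtains
\[
\begin{aligned}
x_{\ell+1} &= \Phi(x_\ell) + R_\ell = \Phi^{\ell+1}(x_0) + \sum_{j=0}^{\ell-1}\Phi^{\ell-j}R_j + R_\ell \\
&= \Phi^{\ell+1}(x_0) + \sum_{j=0}^{\ell}\Phi^{(\ell+1)-j-1}R_j\,,
\end{aligned}
\]
where in the last step the term $R_\ell = \Phi^{0}R_\ell$ is absorbed into the sum as the $j=\ell$ summand. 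This is exactly \eqref{eq:opsol} with $\ell$ replaced by $\ell+1$, closing the induction.

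There is no genuine obstacle here: the only points needing (trivial) care are the empty‑sum convention at $\ell=0$ and the reindexing of the sum in the inductive step. The hypotheses that $\Phi$ is bounded and $E$ normed play no role in the identity itself — it holds for any linear endomorphism of any vector space — but are recorded because in the intended applications one takes norms in \eqref{eq:opsol} and needs the operator norms $\|\Phi^{\ell-j-1}\|$ (i.e.\ the spectral data of the geodesic flow, via Lemma \ref{lemma:normbound} and the eigenvalue relations \eqref{eq:exponents}) to be under control.
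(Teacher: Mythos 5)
Your induction argument is correct, and it is exactly the routine verification the paper has in mind: the lemma is stated as a trivial variation-of-constants identity (borrowed from \cite{FlaFor}) and given no proof in the text, since the only content lies in how it is later applied with operator-norm bounds. Nothing further is needed.
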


By Lemma~\ref{lemma:opeq}, the proof of bounds on deviation of ergodic averagess 
is essentially reduced to estimates on the `remainder terms'~${r}_{\mathcal D}
(x,T,\ell)$ and ${\mathcal R}_{\mathcal C}(x,T,\ell)$. Such estimates can be derived
from Lemma~\ref{lemma:remainder}. For each $(x,T)\in S_\Gamma \times \R^+$ and each $\ell\in \N$, 
let $d_\Gamma(x,T,\ell)$ be the the maximum distance of the endpoints of the horocycle
arc~$\phi^X_{\ell h} (\gamma_{x,T})$ from the thick part:
\begin{equation}
\label{eq:distance}
d_\Gamma(x,T,\ell):= \max \{ d_\Gamma \Bigl(\phi^X_{-\ell h}(x)\Bigr), 
d_\Gamma \Bigl(\phi^X_{-\ell h}\circ\phi^U_T(x)\Bigr)\}\,\,.
\end{equation}

\begin{lemma} (see \cite{FlaFor}, Lemma 5.10)
\label{lemma:diffeq}
For every $s>3$ there exists a constant $C_6:=C_6(s)$ such that, for all $(x,T)\in
S_\Gamma\times \R^+$ and all $\ell\in \N$,
\begin{equation}
\label{eq:projremainder}
\sum_{{\mathcal D}\in {\mathcal B}^s} |{r}_{\mathcal D}(x,T,\ell)|^2 +
\|{\mathcal R}_{\mathcal C}(x,T,\ell)\|_{-s}^2\,\,\le\,\,
\left(\frac{C_6}{T}\right)^2  \max\{ \inj_\Gamma^{-2},  \exp \{d_\Gamma(x,T,\ell)-2\ell h \}\,.
\end{equation}  
For every $s>4$  there exists a constant $C'_6:=C'_6(s)$ such that, for all $(x,T)\in
S_\Gamma\times \R^+$ and all $\ell\in \N$, for every Casimir parameter $\mu:= \mu(\nu) \in (0,1/4)$, 
\begin{equation}
\label{eq:projremainder_bis}
\sum_{{\mathcal D}\in {\mathcal B}^s_\mu} |{r}_{\mathcal D}(x,T,\ell)|^2 \,\,\le\,\,
\left(\frac{C'_6}{T}\right)^2  \max\{ \inj_\Gamma^{-2},  \exp \{(1-\nu)d_\Gamma(x,T,\ell)-2\ell h \}\,.
\end{equation}  

\end{lemma}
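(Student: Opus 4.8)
The plan is to obtain both estimates by assembling the orthogonal–decomposition estimate of Lemma~\ref{lemma:bounddist} with the remainder estimate of Lemma~\ref{lemma:remainder}. Recall from \eqref{eq:remainderdef} that $r_{\mathcal D}(x,T,\ell)$ and $\mathcal R_{\mathcal C}(x,T,\ell)$ are precisely the $\mathcal D$-components and the continuous component of the single distribution $\phi^X_h{\mathcal R}(x,T,\ell)$. So I would apply the upper bound in \eqref{eq:estcoeff} to $\gamma=\phi^X_h{\mathcal R}(x,T,\ell)$ and discard the (non-negative) remainder term, getting
\[
\sum_{\mathcal D\in\mathcal B^s}|r_{\mathcal D}(x,T,\ell)|^2+\|\mathcal R_{\mathcal C}(x,T,\ell)\|_{-s}^2\ \le\ C_2^2\,\|\phi^X_h{\mathcal R}(x,T,\ell)\|_{-s}^2\,.
\]
Since $\{\phi^X_t\}$ is a one-parameter group of bounded operators on $W^{-s}(S_\Gamma)$, there is a constant $C=C(s)$ with $\|\phi^X_h\gamma\|_{-s}\le C\|\gamma\|_{-s}$ for all $h\in[1,2]$, so the right-hand side is $\le C_2^2 C^2\,\|{\mathcal R}(x,T,\ell)\|_{-s}^2$.

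It then remains to estimate $\|{\mathcal R}(x,T,\ell)\|_{-s}$. By \eqref{eq:coeffell}, ${\mathcal R}(x,T,\ell)={\mathcal R}\big(\phi^X_{-\ell h}(x),e^{\ell h}T\big)$ is the remainder term of the horocycle arc of length $S:=e^{\ell h}T$ starting at $y:=\phi^X_{-\ell h}(x)$, whose two endpoints are $y$ and $\phi^U_S(y)$. By the commutation relation \eqref{eq:gaone} (with $t=-\ell h$, $s=T$), $\phi^U_S(y)=\phi^U_{e^{\ell h}T}\circ\phi^X_{-\ell h}(x)=\phi^X_{-\ell h}\circ\phi^U_T(x)$, so the maximal distance of these endpoints from the thick part is exactly $d_\Gamma(x,T,\ell)$ as defined in \eqref{eq:distance}. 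Lemma~\ref{lemma:remainder}, inequality \eqref{eq:unifRbound}, then gives
\[
\|{\mathcal R}(x,T,\ell)\|_{-s}\ \le\ \frac{C_5}{e^{\ell h}T}\,\max\{\inj_\Gamma^{-1},\,e^{d_\Gamma(x,T,\ell)/2}\}\,.
\]
Squaring and absorbing the factor $e^{-2\ell h}\le 1$ (trivially into the $\inj_\Gamma^{-2}$ term, and into the exponent for the other term) yields \eqref{eq:projremainder} with $C_6:=C_6(s)$.

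For the sharper bound \eqref{eq:projremainder_bis} I would run the identical argument on a fixed isotypical component. Because the Casimir operator is central, the geodesic flow preserves each $W^{-s}(H_\mu)$, hence $\phi^X_h$ is bounded on it uniformly for $h\in[1,2]$; the component version of Lemma~\ref{lemma:bounddist} gives $\sum_{\mathcal D\in\mathcal B^s_\mu}|r_{\mathcal D}(x,T,\ell)|^2\le C\,\|{\mathcal R}(x,T,\ell)\,|\,W^{-s}(H_\mu)\|^2$, and the complementary-series case of Lemma~\ref{lemma:remainder} bounds the latter by $(C_5/e^{\ell h}T)^2\max\{\inj_\Gamma^{-2},\,e^{(1-\nu)d_\Gamma(x,T,\ell)}\}$; absorbing $e^{-2\ell h}$ as before gives \eqref{eq:projremainder_bis} (the cuspidal component $H_o$ is handled identically, with exponent $-d_\Gamma/2$). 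The only mildly delicate points — the closest thing to an obstacle here — are verifying the uniform-in-$h\in[1,2]$ operator bound for $\phi^X_h$ on $W^{-s}(S_\Gamma)$ and on each $W^{-s}(H_\mu)$, and the bookkeeping that identifies the endpoint distances of the pushed-forward arc with $d_\Gamma(x,T,\ell)$; everything else is a direct combination of Lemmas~\ref{lemma:bounddist} and \ref{lemma:remainder}.
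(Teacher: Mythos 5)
Your proposal is correct and follows essentially the same route as the paper: the paper likewise bounds the left-hand sides by $C_2^2\,C_X^2\,\|{\mathcal R}(x,T,\ell)\|_{-s}^2$ (respectively by the $W^{-s}(H_\mu)$ norm), with $C_X:=\max_{h\in[1,2]}\|\phi^X_h\|_{-s}$, via Lemma~\ref{lemma:bounddist} applied to $\phi^X_h{\mathcal R}(x,T,\ell)$, identifies ${\mathcal R}(x,T,\ell)$ as the remainder of the horocycle arc of length $e^{\ell h}T$ whose endpoints lie at distance at most $d_\Gamma(x,T,\ell)$ from the thick part, and concludes with Lemma~\ref{lemma:remainder}. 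Your explicit verification of the endpoint identification via the commutation relation and the absorption of the factor $e^{-2\ell h}$ is exactly the bookkeeping the paper leaves implicit.
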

 
\begin{proof}
  Let~$C_X(s):= \max_{h\in [1,2]}\|\phi^X_h\|_{-s}$. Then, by the
  definition \eqref{eq:remainderdef} and Lem\-ma~\ref
  {lemma:bounddist}, we have
  $$
  \sum_{{\mathcal D}\in {\mathcal B}^s} |{r}_{\mathcal D}(x,T,\ell)|^2 + \|{\mathcal
    R}_{\mathcal C}(x,T,\ell)\|_{-s}^2\,\,\le C^2_2 C_X^2 \|{\mathcal
    R}(x,T,\ell)\|^2_{-s}\,.
  $$
  But ${\mathcal R}(x,T,\ell)$ is the ``${\mathcal R}$'' component of an
  arc of horocycle of length $ e^{\ell h} T$ whose endpoints are at a
  distance at most $d_\Gamma(x,T,\ell)$ from the thick part (cf.
  \eqref{eq:gatwo},   \eqref{eq:pushforwsplit}, \eqref{eq:coeffell}, \eqref{eq:distance}). Thus by
  Lemma~\ref{lemma:remainder} we have $$ \|{\mathcal R}(x,T,\ell)\|_{-s} < C_5
  \max\{ \inj_\Gamma^{-1}, e^{d_\Gamma(x,T,\ell)/2}\} / e^{\ell h} T$$ and the lemma follows in this case.
  
  \smallskip
  \noindent Similarly,  by the definition \eqref{eq:remainderdef} and Lem\-ma~\ref
  {lemma:bounddist}, for all $\mu \in (0,1/4)$ we have
  $$
 \sum_{{\mathcal D}\in {\mathcal B}^s_\mu} |{r}_{\mathcal D}(x,T,\ell)|^2 \,\,\le C^2_2 C_X^2 \|{\mathcal
    R}(x,T,\ell)  \vert  W^{-s}(H_\mu) \|^2\,,
  $$
and by Lemma~\ref{lemma:remainder} we have $$ \|{\mathcal R}(x,T,\ell) W^{-s}(H_\mu) \| < C_5
  \max\{ \inj_\Gamma^{-1}, e^{\frac{1-\nu}{2} d_\Gamma(x,T,\ell)}\} / e^{\ell h} T$$
 hence the second statement follows and the lemma is completely proved.
\end{proof}

\subsection{Bounds on the components}
In the cuspidal case, the precision of our asymptotics of geodesic
push-forwards of a horocycle arc depends on the rate of escape into
the cusps of its endpoints. Let $d_\Gamma: S_\Gamma \to \R^+$ be the distance function
from thick part of $M_\Gamma$. For any given~$\sigma\in [0,1]$
and~$A\ge 0$ let
$$
V_{A,\sigma}:= \{ x\in S_\Gamma\,|\,  d_\Gamma\left( \phi^X_t(x)\right) \le 
A+\sigma\,|t|\,,\,\,\text{ for all }\,\, t\le 0\}\,.
$$
$$
V_{\sigma}:= \bigcup_{A\ge 0} V_{A,\sigma}
$$
The sets $V_{\sigma}$  are measurable as they can be written as countable unions 
of closed  sets (hence, they are $F_{\sigma}$ sets). Since the geodesic flow has unit 
speed $V_1=S_\Gamma$. On the other hand, by the logarithmic law of geodesics, 
$V_{\sigma}$ has full measure for any~$\sigma >0$. 

\begin{lemma} (\cite{FlaFor}, Lemma 5.12)
  \label{lemma:compbounds}
  For ~$s>3$ and for every~${\mathcal D}\in{\mathcal B}^s$ of order $ S_{\mathcal D}>0$,
  there exists a uniformly bounded sequence of positive bounded functions
  ~$\{K_{\mathcal D}(x,T, \ell)\}_{\ell\in \Z^+}$, $(x,T)\in V_{A,\sigma}
  \times \R^+$, such that the following estimates hold. For every
  horocycle arc~$\gamma_{x,T}$ having endpoints~$x$, $h_T(x)\in
  V_{A,\sigma}$ and for all~$\ell\in \Z^+$ we have, if~${\mathcal D}\in
  {\mathcal B}^s\setminus {\mathcal B}^+_{1/4}$,
  \begin{equation}
    \label{eq:Dcompbounds}
    |c_{\mathcal D}(x,T,\ell)|\le
    \begin{cases} K_{\mathcal D}(x,T,\ell)\, e^{-S_{\mathcal D}\ell h}\,,
    \quad &\text{ if } S_{\mathcal D}< 1-\frac{\sigma}{2}\,,
      \\
      K_{\mathcal D}(x,T,\ell)\, \ell\,e^{-S_{\mathcal D}\ell h}\,, \quad&\text{ if }
      S_{\mathcal D}=1-\frac{\sigma}{2}\,,
      \\
      K_{\mathcal D}(x,T,\ell)\, e^{-(1-\frac{\sigma}{2})\ell h}\,, \quad&\text{ if
        }S_{\mathcal D}> 1-\frac{\sigma}{2}\,.
     \end{cases}
  \end{equation}
  For $s\geq 4$ and for components of the complementary series the above estimates can be improved as follows.
   For every Casimir parameter $\mu:= \mu(\nu) \in (0,1/4)$, 
  \begin{equation}
    \label{eq:Dcompbounds_compl}
    |c_{\mathcal D^\pm_\mu}(x,T,\ell)|\le
    \begin{cases} K_{\mathcal D^\pm_\mu}(x,T,\ell)\, e^{- \frac{1\pm \nu}{2} \ell h}\,,
    \quad &\text{ if } \sigma <  \frac{1\mp \nu}{1-\nu}\,,
      \\
      K_{\mathcal D^\pm_\mu}(x,T,\ell)\, \ell\,e^{-\frac{1\pm \nu}{2}\ell h}\,, \quad&\text{ if }
      \sigma =  \frac{1\mp \nu}{1-\nu}\,,
      \\
      K_{\mathcal D^\pm_\mu}(x,T,\ell)\, e^{-(1- (\frac{1-\nu}{2}) \sigma)\ell h}\,, \quad&\text{ if
        }\sigma > \frac{1\mp \nu}{1-\nu}\,.
     \end{cases}
  \end{equation}

   If~$1/4\in\sigma_{pp}(\square)$ and~${\mathcal D}\in {\mathcal B}^+_{1/4}$, we have
  \begin{equation}
    \label{eq:JDcompbounds}
    |c_{\mathcal D}(x,T,\ell)|\le
    \begin{cases} K_{\mathcal D}(x,T,\ell)\, \ell\,e^{-\ell h/2}\,,
    \quad &\text{ if } \sigma<1\,,
      \\
      K_{\mathcal D}(x,T,\ell)\, \ell^2\,e^{-\ell h/2}\,, \quad&\text{ if }
      \sigma=1\,.
      \\
    \end{cases}
  \end{equation}

  There exists a uniformly bounded sequence of positive bounded functions
  $\{ K_{\mathcal C}(x,T, \ell)\}_{\ell \in \Z}$
  such that the following estimates hold. For every horocycle 
  arc~$\gamma_{x,T}$ as above and for all~$\ell\in\Z^+$, we have
  \begin{equation} 
    \label{eq:contcompbounds}
    \|{\mathcal C}(x,T,\ell)\|_{-s}\le
    \begin{cases} K_{\mathcal C}(x,T, \ell) \, \ell\,e^{-\ell h/2}\,,
      \quad&\text{ if } \sigma<1\,,
      \\
      K_{\mathcal C}(x,T, \ell) \, \ell^2\,e^{-\ell h/2}\,, \quad&\text{ if }
      \sigma=1\,.
    \end{cases}
  \end{equation}
  In addition, there exists a positive constant $K:=K (\sigma,T,s)$
  such that, for all $\gamma_{x,T}$ with endpoints belonging to 
  the set $V_{A,\sigma}$ and for all $\ell\in \Z^+$,
  
   \begin{equation}
    \label{eq:coeffbound}
    \sum_{{\mathcal D}\in {\mathcal B}^s} K^2_{\mathcal D}(x,T,\ell)\,\,+\,\,
    K^2_{\mathcal C}(x,T, \ell) \,\, \le \,\,K^2 \max\{ \inj_\Gamma^{-2}, \max_{y\in \gamma_{x,T}}  e^{A d_\Gamma(y)} \} \} \,\,.
  \end{equation}

\end{lemma}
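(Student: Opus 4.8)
\textbf{Proof proposal for Lemma~\ref{lemma:compbounds}.} The plan is to run the iterative scheme of Lemma~\ref{lemma:opeq} on the difference equations \eqref{eq:diffeq}, \eqref{eq:Jdiffeq} that govern the evolution of the components $c_{\mathcal D}(x,T,\ell)$ and ${\mathcal C}(x,T,\ell)$ under geodesic push-forward, feeding in the remainder estimates of Lemma~\ref{lemma:diffeq} and the $V_{A,\sigma}$-hypothesis to control the escape-into-cusps factor $e^{d_\Gamma(x,T,\ell)}$. First I would record that, since the endpoints $x, h_T(x)$ lie in $V_{A,\sigma}$, the definition \eqref{eq:distance} together with the definition of $V_{A,\sigma}$ gives the linear bound
\[
d_\Gamma(x,T,\ell) \le A + \sigma \ell h\,, \qquad \ell \in \Z^+\,,
\]
because $\phi^X_{-\ell h}$ and $\phi^X_{-\ell h}\circ \phi^U_T$ are backward geodesic translates of the endpoints by geodesic time $\ell h$. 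Plugging this into Lemma~\ref{lemma:diffeq} yields
\[
\sum_{{\mathcal D}\in{\mathcal B}^s} |r_{\mathcal D}(x,T,\ell)|^2 + \|{\mathcal R}_{\mathcal C}(x,T,\ell)\|_{-s}^2 \le \Big(\frac{C_6}{T}\Big)^2 \max\{\inj_\Gamma^{-2}, e^A e^{-(2-\sigma)\ell h}\}\,,
\]
and likewise with exponent $(1-\nu) - 2 = -(1+\nu)$ replaced by $(1-\nu)\sigma - 2$ for the complementary-series piece. This is the crucial input: the geometric decay rate of the remainders is $e^{-(1-\sigma/2)\ell h}$ (resp.\ $e^{-(1-(1-\nu)\sigma/2)\ell h}$), and it must be compared against the eigenvalue decay $e^{-S_{\mathcal D}\ell h}$ (resp.\ $e^{-(1\pm\nu)\ell h/2}$) in the homogeneous part.

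Next, for each ${\mathcal D}\in{\mathcal B}^s\setminus{\mathcal B}^+_{1/4}$, I would apply Lemma~\ref{lemma:opeq} with $\Phi$ the scalar $e^{\lambda_{\mathcal D}h}$ (so $|\Phi| = e^{-S_{\mathcal D}h}$) to the equation \eqref{eq:diffeq}, obtaining
\[
c_{\mathcal D}(x,T,\ell) = e^{\lambda_{\mathcal D}\ell h}\, c_{\mathcal D}(x,T,0) + \sum_{j=0}^{\ell-1} e^{\lambda_{\mathcal D}(\ell-j-1)h}\, r_{\mathcal D}(x,T,j)\,.
\]
The first term is bounded via Corollary~\ref{cor:unifupperb} (the $\ell=0$ case), which gives $|c_{\mathcal D}(x,T,0)| \le C_4 \max\{\inj_\Gamma^{-1}, \max_{y\in\gamma_{x,T}} e^{d_\Gamma(y)/2}\}$; the sum is estimated term-by-term using the above remainder bound, $|r_{\mathcal D}(x,T,j)| \ll (C_6/T) \max\{\inj_\Gamma^{-1}, e^{A/2} e^{-(1-\sigma/2)jh}\}$. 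The resulting geometric-type sum $\sum_{j<\ell} e^{-S_{\mathcal D}(\ell-j)h} e^{-(1-\sigma/2)jh}$ splits into exactly the three regimes of \eqref{eq:Dcompbounds}: when $S_{\mathcal D} < 1-\sigma/2$ the sum is dominated by $j$ near $\ell$ and is $\ll e^{-S_{\mathcal D}\ell h}$; when $S_{\mathcal D} = 1-\sigma/2$ one picks up a factor $\ell$; when $S_{\mathcal D} > 1-\sigma/2$ it is dominated by $j$ near $0$ and is $\ll e^{-(1-\sigma/2)\ell h}$. (Here one also uses that $\lambda_{\mathcal D}$ may have nonzero imaginary part, but only $|\lambda_{\mathcal D}| = S_{\mathcal D}$ enters the moduli.) The definition of $K_{\mathcal D}(x,T,\ell)$ is read off as the factor multiplying the appropriate power of $e^{-S_{\mathcal D}\ell h}$ (or $e^{-(1-\sigma/2)\ell h}$), and \eqref{eq:coeffbound} follows by squaring, summing over ${\mathcal D}\in{\mathcal B}^s$, and invoking Lemma~\ref{lemma:bounddist} / Corollary~\ref{cor:unifupperb} to control $\sum_{\mathcal D}|c_{\mathcal D}(x,T,0)|^2$ together with the $\ell^2$-summability of $\sum_j |r_{\mathcal D}(x,T,j)|^2$ across ${\mathcal D}$. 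The complementary-series refinement \eqref{eq:Dcompbounds_compl} is identical but uses the sharper Sobolev-embedding exponent $\frac{1-\nu}{2}$ from Lemmas~\ref{lemma:Sobembed_3} and the improved remainder bound \eqref{eq:projremainder_bis}, so the threshold shifts from $1-\sigma/2$ to $1-\frac{1-\nu}{2}\sigma$, which rearranges to the stated condition $\sigma \lessgtr \frac{1\mp\nu}{1-\nu}$.

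For the Jordan block ${\mathcal B}^+_{1/4}$ I would instead iterate the $2\times2$ system \eqref{eq:Jdiffeq}: here $\Phi = e^{-h/2}\begin{pmatrix}1 & -h/2\\ 0 & 1\end{pmatrix}$, so $\Phi^k = e^{-kh/2}\begin{pmatrix}1 & -kh/2\\ 0 & 1\end{pmatrix}$ and $\|\Phi^k\| \ll k e^{-kh/2}$. Lemma~\ref{lemma:opeq} then produces an extra linear factor from the homogeneous term and, in the convolution sum, the competition is between $k e^{-kh/2}$ and the remainder decay $e^{-(1-\sigma/2)jh}$; since $1-\sigma/2 \ge 1/2$ with equality iff $\sigma = 1$, one gets an additional factor $\ell$ for $\sigma<1$ and $\ell^2$ for $\sigma=1$, which is \eqref{eq:JDcompbounds}. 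The continuous component ${\mathcal C}(x,T,\ell)$ is handled the same way via \eqref{eq:diffeq}, now with $\Phi = \phi^X_h|{\mathcal I}^s_{\mathcal C}$, whose norm satisfies $\|\Phi^k\| = \|\phi^X_{kh}|{\mathcal I}^s_{\mathcal C}\|_{-s} \ll (1+kh)e^{-kh/2}$ by Lemma~\ref{lemma:normbound}; this is structurally the same $k e^{-kh/2}$ decay as the Jordan block, so the same $\ell$ / $\ell^2$ dichotomy \eqref{eq:contcompbounds} emerges. I expect the main obstacle to be \textbf{bookkeeping the borderline / resonant cases} ($S_{\mathcal D} = 1-\sigma/2$, $\sigma = 1$, and especially the simultaneous appearance of a nontrivial Jordan block at $1/4$ together with slow cuspidal escape), where one must carefully track polynomial-in-$\ell$ prefactors through the geometric sums and verify that the functions $K_{\mathcal D}(x,T,\ell)$ so defined remain uniformly bounded in $\ell$ and satisfy \eqref{eq:coeffbound} — i.e.\ that no hidden divergence in $\sum_{\mathcal D}$ is introduced by the infinitely many distributions of Sobolev order approaching the threshold. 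A secondary care point is the uniform dependence on the lattice: every constant $C_4, C_5, C_6$ must be the lattice-independent constant furnished by the refined Sobolev-embedding Lemmas~\ref{lemma:Sobembed_1}--\ref{lemma:Sobembed_3}, so that the final bound \eqref{eq:coeffbound} genuinely depends on $\Gamma$ only through $\inj_\Gamma$ and $d_\Gamma$.
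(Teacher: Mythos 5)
Your proposal is correct and takes essentially the same route as the paper's proof: you solve the difference equations \eqref{eq:diffeq} and \eqref{eq:Jdiffeq} via Lemma~\ref{lemma:opeq}, bound the initial data by Corollary~\ref{cor:unifupperb}, feed the $V_{A,\sigma}$ linear escape bound $d_\Gamma(x,T,\ell)\le A+\sigma\ell h$ into Lemma~\ref{lemma:diffeq} (and its complementary-series refinement), split the convolution sums into the same three regimes, and treat the Jordan block and the continuous component through the norms $\|\Phi^k\|\ll k e^{-kh/2}$ and Lemma~\ref{lemma:normbound}, exactly as the paper does, including the Cauchy--Schwarz summation that yields \eqref{eq:coeffbound}. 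Two minor remarks: the quantity controlling the homogeneous decay is $\Re(\lambda_{\mathcal D})=-S_{\mathcal D}$, so $|e^{\lambda_{\mathcal D}h}|=e^{-S_{\mathcal D}h}$ (not $|\lambda_{\mathcal D}|=S_{\mathcal D}$), and your concern about distributions with Sobolev order accumulating at the threshold is resolved because the set $\{S_{\mathcal D}\,:\,{\mathcal D}\in{\mathcal B}^s\}$ is finite, as the paper's proof notes.
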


\begin{proof}
  For all ${\mathcal D}\not\in {\mathcal B}_{1/4}$, by the first difference
  equation in formula~\eqref{eq:diffeq} and by Lemma~\ref{lemma:opeq} with 
  $E:=\C$ and $\Phi$ the multiplication operator by $e^{\lambda_{\mathcal
  D}h}\in \C$, we obtain
  \begin{equation}
    \label{eq:Dest}
    |c_{\mathcal D}(x,T,\ell)| \le |c_{\mathcal D}(x,T,0)|\,
    e^{-S_{\mathcal D}\ell h} \,+\, \Sigma_{\mathcal D}(x,T,\ell)\,,
  \end{equation}
  with
  $$
  \Sigma_{\mathcal D}(x,T,\ell):= \sum_{j=0}^{\ell-1} |{r}_{\mathcal
    D}(x,T,j)| e^{-S_{\mathcal D} h (\ell-j-1)}\,.
  $$
  We must therefore bound the terms~$|c_{\mathcal D}(x,T,0)|$ and~$e^{S_{\mathcal D}
  \ell h}\Sigma_{\mathcal D}(x,T,\ell)$ by~$K_{\mathcal D}(x,T,\ell)$,
  $K_{\mathcal D}(x,T,\ell)\ell$ or~$K_{\mathcal D}(x,T,\ell)e^{(S_{\mathcal D}-1+
  \frac{\sigma}{2})\ell h}$, according to the different values of~$S_{\mathcal D}$,
  for some uniformly bounded sequence of functions~$\{K_{\mathcal D}(x,T,\ell)\}
  _{\ell\in\Z^+}$ on $V_{A,\sigma}\times \R^+$.

  It follows from Corollary~\ref{cor:unifupperb}, taking into account the 
  fact that the endpoints of $\gamma_{x,T}$ belong to the set~$V_{A,\sigma}$, that for all $s>3$ there exists a constant 
    $C_s>0$ such that
  \begin{equation}
    \label{eq:czero}
     \sum_{\mathcal D\in{\mathcal B}^s} |c_{\mathcal D}(x,T,0)|^2 \le C_s \max \{ \inj_\Gamma^{-2}, e^{A}  \max_{y\in \gamma_{x,T}} e^{d_\Gamma (y)} \}      \,\,;
  \end{equation}
  for components of the complementary series, that is, for all $s>4$ there exists a constant 
    $C'_s>0$ such that for all Casimir parameters $\mu:=\mu(\nu) \in (0,1/4)$, 
   \begin{equation}
    \label{eq:czero_compl}
     \sum_{\mathcal D\in{\mathcal B}^s_\mu} |c_{\mathcal D}(x,T,0)|^2 \le C'_s \max \{ \inj_\Gamma^{-2}, e^{(1-\nu)A}  \max_{y\in \gamma_{x,T}}
     e^{(1-\nu)d_\Gamma (y)} \}      \,\,;
  \end{equation}
  thus the term $|c_{\mathcal D}(x,T,0)|$ in formula~\eqref{eq:Dest} satisfies
  estimates finer than~\eqref{eq:Dcompbounds} and~\eqref{eq:coeffbound}.
  
  Using again the fact that the endpoints of
  $\gamma_{x,T}$ belong to the set $V_{A,\sigma}$ and the
  estimate~\eqref{eq:projremainder}, a calculation based on the
  Cauchy-Schwartz inequality yields the following bounds on the
  remainder terms $\Sigma_{\mathcal D}(x,T,\ell)$ for ${\mathcal D}\not
  \in {\mathcal B}_{1/4}$. For all $S>0$,  $\sigma\in [0,1]$, and for all $s>3$,
  there exists a constant~$C':=C'(\sigma,S,s)>0$ such that, for all $A\geq 0$, 
  \begin{equation}
    \label{eq:Sigmaboundone}
    \begin{aligned} 
      \sum_{{\mathcal D}:S_{\mathcal D}\le S} \Sigma^2_{\mathcal D}(x,T,\ell) \,
      e^{2S_{\mathcal D}\ell h} & \le \quad\frac{C'}{T^2}  \max \{ \inj_\Gamma^{-2}, e^{A} \}   \,, \quad &\text{ if } S
      < 1-\frac{\sigma}{2}\,;
      \\
      \sum_{{\mathcal D}:S_{\mathcal D}=S} \Sigma^2_{\mathcal D}(x,T,\ell) \,
      e^{2S_{\mathcal D}\ell h} & \le \quad\frac{C' \,
        \ell^2}{T^2} \max \{ \inj_\Gamma^{-2}, e^{A} \} \,, \quad &\text{ if }S = 1-\frac{\sigma}{2}\,;
      \\
      \sum_{{\mathcal D}:S_{\mathcal D}\ge S} \Sigma^2_{\mathcal D}(x,T,\ell)
        \quad\quad\quad   &\le \quad\frac{C'}{T^2}\max \{ \inj_\Gamma^{-2} e^{A} \} 
        e^{ -2(1-\frac{\sigma}{2})\ell h}
         \quad &\text{ if } S> 1-\frac{\sigma}{2}\,.
    \end{aligned} 
  \end{equation} 
  It follows from~\eqref{eq:czero} and~\eqref{eq:Sigmaboundone} that, for
  each~${\mathcal D}\in {\mathcal B}^s$, the sequence of positive functions
  \begin{equation}
    \label{eq:Kappasigma1}
    K_{\mathcal D}(x,T,\ell):=
    \begin{cases}
      |c_{\mathcal D}(x,T,0)|+\Sigma_{\mathcal D}(x,T,\ell)
      e^{S_{\mathcal D}\ell h}\,, &\text{if } S_{\mathcal D}<
      1-\frac{\sigma}{2}\,,
      \\
      \Big( |c_{\mathcal D}(x,T,0)|+\Sigma_{\mathcal D}(x,T,\ell)
      e^{S_{\mathcal D}\ell h}\Big) \ell^{-1}\,, &\text{if } S_{\mathcal
        D}= 1-\frac{\sigma}{2}\,,
      \\
      \Big(|c_{\mathcal D}(x,T,0)|+\Sigma_{\mathcal D}(x,T,\ell)
      e^{S_{\mathcal D}\ell h}\Big) e^{-(S_{\mathcal
          D}-1+\frac{\sigma}{2})\ell h}\,, &\text{if } S_{\mathcal
        D}> 1-\frac{\sigma}{2}\,.
    \end{cases}
  \end{equation}
  is uniformly bounded for all $\gamma_{x,T}$ with endpoints belonging to 
  the set $V_{A,\sigma}$. In view of the bound~\eqref{eq:Dest}, this proves the 
  estimate~\eqref{eq:Dcompbounds} for each~${\mathcal D}\not\in {\mathcal B}_{1/4}$.
  In addition, since the set of real numbers~$\{S_{\mathcal D}\,|\,{\mathcal D}
  \in {\mathcal B}^s\}$ is finite, the inequalities~\eqref{eq:czero}
  and~\eqref{eq:Sigmaboundone} imply that, for all $\gamma_{x,T}$ with 
  endpoints belonging to the set $V_{A,\sigma}$ and for all $\ell\in \Z^+$,
  \begin{equation}
  \label{eq:cboundone}
  \sum_{{\mathcal D}\in {\mathcal B}^s\setminus{\mathcal B}_{1/4}} K^2_{\mathcal D}(x,T,\ell)
   \,\le\, C'' \max \{ \inj_\Gamma^{-2}, e^{A} \max_{y\in \gamma_{x,T}} e^{d_\Gamma (y)}  \} \,\,,
  \end{equation}
  for some constant $C'':=C''(\sigma,T)>0$, thereby proving the upper
  bound~\eqref{eq:coeffbound} over all ${\mathcal D}$-components with ${\mathcal D}
  \in {\mathcal B}^s \setminus {\mathcal B}_{1/4}$.
  
  \smallskip
  For components of the complementary series  the above estimates can be improved as follows. Since the endpoints of
  $\gamma_{x,T}$ belong to the set $V_{A,\sigma}$, by the estimate~\eqref{eq:projremainder_bis}, 
  for all $S>0$,  $\sigma\in [0,1]$,  and for all $s>4$, there exists a constant~$C^{(3)}:=C^{(3)}(\sigma,S,s)>0$ such that, 
  for all Casimir parameters $\mu:= \mu(\nu) \in (0, 1)$ and for all $A\geq 0$, 
  \begin{equation}
    \label{eq:Sigmaboundone_compl}
    \begin{aligned} 
      \Sigma_{\mathcal D^\pm_\mu }(x,T,\ell) \,
      e^{\frac{1\pm \nu}{2}\ell h} & \le \quad\frac{C^{(3)}}{T}  \max \{ \inj_\Gamma^{-1}, e^{\frac{1-\nu}{2} A} \}   \,, \quad &\text{ if } 
      \sigma < \frac{1 \mp \nu}{1-\nu} \,;
      \\
       \Sigma_{\mathcal D^\pm_\mu}(x,T,\ell) \,
      e^{ \frac{1\pm \nu}{2}  \ell h} & \le \quad\frac{C^{(3)} }\,
       \frac{\ell}{T} \max \{ \inj_\Gamma^{-1}, e^{\frac{1-\nu}{2}A} \} \,, \quad &\text{ if } \sigma = \frac{1 \mp \nu}{1-\nu}\,;
      \\
    \Sigma_{\mathcal D^\pm_\mu}(x,T,\ell)
        \quad\quad\quad   &\le \quad\frac{C^{(3)}}{T}\max \{ \inj_\Gamma^{-1} e^{\frac{1-\nu}{2}A} \} 
        e^{ -(1-\frac{\sigma}{2})\ell h}
         \quad &\text{ if }  \sigma > \frac{1 \mp \nu}{1-\nu}\,.
    \end{aligned} 
  \end{equation} 
  It follows from~\eqref{eq:czero_compl} and~\eqref{eq:Sigmaboundone_compl} that, for
  each~${\mathcal D}\in {\mathcal B}^s$, the sequence of positive functions
  \begin{equation}
    \label{eq:Kappasigma1_compl}
    K_{\mathcal D^\pm_\mu}(x,T,\ell):=
    \begin{cases}
      |c_{\mathcal D^\pm_\mu}(x,T,0)|+\Sigma_{\mathcal D^\pm_\mu}(x,T,\ell)
      e^{\frac{1\pm \nu}{2} \ell h}\,, &\text{if }  \sigma < \frac{1 \mp \nu}{1-\nu}\,,
      \\
      \Big( |c_{\mathcal D^\pm_\mu}(x,T,0)|+\Sigma_{\mathcal D^\pm_\mu}(x,T,\ell)
      e^{\frac{1\pm \nu}{2}\ell h}\Big) \ell^{-1}\,, &\text{if }  \sigma = \frac{1 \mp \nu}{1-\nu}\,,
      \\
      \Big(|c_{\mathcal D^\pm_\mu}(x,T,0)|+\Sigma_{\mathcal D^\pm_\mu}(x,T,\ell)
      e^{\frac{1\pm \nu}{2} \ell h}\Big) e^{(\frac{1\mp \nu}{2} -(\frac{1-\nu}{2})\sigma)\ell h}\,, &\text{if }  \sigma > \frac{1 \mp \nu}{1-\nu}\,.
    \end{cases}
  \end{equation}
is uniformly bounded for all $\gamma_{x,T}$ with endpoints belonging to 
  the set $V_{A,\sigma}$. In view of the bound~\eqref{eq:Dest}, this proves the 
  estimate~\eqref{eq:Dcompbounds} for each~${\mathcal D}\not\in {\mathcal B}_{1/4}$.
  In addition, since the set of real numbers~$\{S_{\mathcal D}\,|\,{\mathcal D}
  \in {\mathcal B}^s\}$ is finite, the inequalities~\eqref{eq:czero_compl}
  and~\eqref{eq:Sigmaboundone_compl} imply that, for all $\gamma_{x,T}$ with 
  endpoints belonging to the set $V_{A,\sigma}$ and for all $\ell\in \Z^+$,
  \begin{equation}
  \label{eq:cboundone_compl}
  \sum_{{\mathcal D}\in {\mathcal B}^s_\mu} K^2_{\mathcal D}(x,T,\ell)
   \,\le\, C^{(4)} \max \{ \inj_\Gamma^{-2}, e^{(1-\nu)A} \max_{y\in \gamma_{x,T}} e^{(1-\nu)d_\Gamma (y)}  \} \,\,,
  \end{equation}
  for some constant $C^{(4)} :=C^{(4)} (\sigma,s)>0$, thereby proving the refined upper
  bound~\eqref{eq:cboundone_compl} over all ${\mathcal D}$-components with
  $\mathcal D \in   {\mathcal B}^s_\mu$ for Casimir parameters $\mu \in (0,1/4)$.

  The proofs of the upper bounds for all pairs $\{{\mathcal D}^+, {\mathcal D}^-\}
  \subset {\mathcal B}_{1/4}$ (if $1/4\in \sigma_{pp}(\square)$) and for the continuous
  component are similar. In the first case, by formula~\eqref{eq:Jdiffeq} we 
  can apply Lemma~\ref{lemma:opeq} with~$E=\R^2$ and 
  \begin{equation}
  \label{eq:Jmatrix}
   \Phi:= e^{-h/2}\,\begin{pmatrix}  1 & -h/2 \\ 0 & 1 \end{pmatrix}\,\,.
  \end{equation}
  By formula~\eqref{eq:opsol}, we obtain
 \begin{equation}
  \label{eq:JDest}
 \begin{aligned}
 |c_{{\mathcal D}^+}(x,T,\ell)|&\le |c_{{\mathcal D}^+}(x,T,0)-\frac{\ell h}{2}
 \,c_{{\mathcal D}^-}(x,T,0)|\,e^{-\ell h/2}+\Sigma_{{\mathcal D}^+}(x,T,\ell)
  \,\,,
   \\
  |c_{{\mathcal D}^-}(x,T,\ell)|&\le |c_{{\mathcal D}^-}(x,T,0)|\,e^{-\ell h/2}+
   \Sigma_{{\mathcal D}^-}(x,T,\ell)\,\,,
\end{aligned}
\end{equation}
with
\begin{equation}
\begin{aligned}
\label{eq:JSigmabound}
\Sigma_{{\mathcal D}^+}(x,T,\ell) & := \sum_{j=0}^{\ell-1} |{r}_{{\mathcal D^+}}
(x,T,j)-\frac{(\ell-j-1) h}{2}\,{r}_{{\mathcal D}^-}(x,T,j)|\,
e^{-h(\ell-j-1)/2}\,,     
\\
\Sigma_{{\mathcal D}^-}(x,T,\ell) & := \sum_{j=0}^{\ell-1} |{r}_{{\mathcal D}^-}
(x,T,j)| \,e^{-h(\ell-j-1)/2}\,.
\end{aligned}
\end{equation}

Since the endpoints of the horocycle arc $\gamma_{x,T}$ belong to the set 
$V_{A,\sigma}$, by the estimates~\eqref{eq:czero} and~\eqref{eq:projremainder},
there exists a constant $K_{\mathcal D^+}:= K_{\mathcal D}^+
  (\sigma,s)>0$ such that the sequence of positive functions

\begin{equation}
    \label{eq:JKappasigma}
    K_{{\mathcal D}^+}(x,T,\ell):=
    \begin{cases}
    \left(|c_{{\mathcal D}^+}-\frac{\ell h}{2}\,c_{{\mathcal D}^-}|
     (x,T,0)\,+\,\Sigma_{{\mathcal D}^+}(x,T,\ell)\,e^{\ell h/2}\right)\,\ell^{-1},
    \quad&\text{ if } \sigma<1\,,
      \\
     \left(|c_{{\mathcal D}^+}-\frac{\ell h}{2}\,c_{{\mathcal D}^-}|(x,T,0)\,
    +\,\Sigma_{{\mathcal D}^+}(x,T,\ell)\,e^{\ell h/2}\right)\,\ell^{-2},
    \quad&\text{ if } \sigma=1\,.
    \end{cases}
  \end{equation}
  is uniformly bounded  as follows: 
  $$
  K_{{\mathcal D}^+}(x,T,\ell) \leq K_{\mathcal D^+} \max\{ \inj_\Gamma^{-1}, e^{A/2}  \max_{y\in \gamma_{x,T}} e^{d_\Gamma (y)/2}  \}\,.
  $$
  If~${\mathcal D}={\mathcal D}^-\in {\mathcal B}^-_{1/4}$, it follows from the second
  lines in~\eqref{eq:JDest} and~\eqref{eq:JSigmabound} that  there exists a
  constant $K_{\mathcal D}:=K_{\mathcal D}(\sigma,s) >0$ such that  the sequence of
  positive functions~$K_{\mathcal D}(x,T,\ell)$, defined as
  in~\eqref{eq:Kappasigma1} with~$S_{\mathcal D}=1/2 \le 1-\frac{\sigma}{2}$, is
  uniformly bounded  as folllows
  $$
  K_{{\mathcal D}^+}(x,T,\ell)  \leq  K_{\mathcal D} \max\{ \inj_\Gamma^{-1}, e^{A/2} \max_{y\in \gamma_{x,T}} e^{d_\Gamma (y)/2} \} \,.
  $$
  
Therefore, by the estimate~\eqref
  {eq:cboundone} there exists a constant $C^{(5)}:=C^{(5)}(s)>0$ such 
  that, for all $\gamma_{x,T}$ with endpoints belonging to 
  the set $V_{A,\sigma}$ and for all $\ell\in\Z^+$,
\begin{equation}
  \label{eq:cboundtwo}
  \sum_{{\mathcal D}\in {\mathcal B}^s} K^2_{\mathcal D}(x,T,\ell) \,\le\, C^{(5)} \max\{ \inj_\Gamma^{-2}, e^{A}  \max_{y\in \gamma_{x,T}} e^{d_\Gamma (y)} \}\,\,.
  \end{equation}

For the continuous component, we apply Lemma~\ref{lemma:opeq}, with 
$E= {\mathcal I}^s_{\mathcal C}$ and $\Phi=\phi^X_h$, to the second difference
equation in formula~\eqref{eq:diffeq}. We obtain 
  \begin{equation}
  \label{eq:Cest}
  \|{\mathcal C}(x,T,\ell)\|_{-s} \le \|\Phi^{\ell}\|_{-s}
  \|{\mathcal C}(x,T,0)\|_{-s} \,+\,\Sigma_{\mathcal C}(x,T,\ell)
  \end{equation}
  with
  $$
  \Sigma_{\mathcal C}(x,T,\ell):=\sum_{j=0}^{\ell-1} \|\Phi^{\ell-j-1}
  {\mathcal R}_{\mathcal C}(x,T,j)\|_{-s}\,.
  $$
  By Lemma~\ref{lemma:normbound} the norm of the operator
  $\phi^X_t$ on ${\mathcal I}^s_{\mathcal C}$ is bounded by $C_1(1+|t|)
  e^{-t/2}$.  Taking into account the fact that the endpoints of
  $\gamma_{x,T}$ belong to the set $V_{A,\sigma}$ we find: (1) by
  Lemmata~\ref{lemma:bounddist} and~\ref{lemma:Sobembed_1} we obtain
  that there exists a constant $C^{(6)}:= C^{(6)}(s)$ such that 
  $$
   \|{\mathcal C}(x,T,0)\|_{-s} \le C^{(6)} \max \{\inj_\Gamma^{-1}, \max_{y\in \gamma_{x,T}} e^{d_\Gamma(y) } \}  \,;
  $$ (2) using the
  estimate~\eqref{eq:projremainder} for $\| {\mathcal R}_{\mathcal
    C}(x,T,j)\|_{-s}$ we find that the sequence of positive functions 
   defined by
  \begin{equation}
    \label{eq:Kappasigma2}
    K_{\mathcal C}(x,T,\ell):=
    \begin{cases}
      \Big( \|{\mathcal C}(x,T,0)\|_{-s}\,+\,\Sigma_{\mathcal C}(x,T,\ell)\,
      e^{\ell h/2}\Big) \ell^{-1} \,,\quad&\text{ if } \sigma<1\,,
      \\
      \Big( \|{\mathcal C}(x,T,0)\|_{-s}\,+\,\Sigma_{\mathcal C} (x,T,\ell)\,e^{\ell
        h/2}\Big)\ell^{-2}\,, \quad&\text{ if } \sigma= 1\,.
    \end{cases}
  \end{equation}
  is uniformly bounded:  there exists a constant  $K_{\mathcal C}:= K_{\mathcal C}(\sigma,s)>0$
  such that
  $$
   K_{\mathcal C}(x,T,\ell) \leq K_{\mathcal C}  \max \{\inj_\Gamma^{-1}, \max_{y\in \gamma_{x,T}} e^{d_\Gamma(y) } \} \,.
  $$
  \end{proof}

For all $t\ge 0$, the push-forward probability measure~$\phi_t^X(\gamma_{x,T})$
is the uniformly distributed probability measure on a stable horocycle arc of 
length $T_t:=e^t\,T$. The following quantitative equidistribution result holds.
Let ${\mathcal I}^s_+(S_\Gamma)\subset {\mathcal I}^s(S_\Gamma)$ be the subspace of invariant
distributions orthogonal to the volume form.
  
\begin{theorem}  (\cite{FlaFor}, Theorem 5.14)
  \label{thm:sigmanoncpt}
  Let~$s>3$. Then there exists a constant $C^{(7)}:=C^{(7)}(\sigma,s)$ such
  that for any horocycle arc $\gamma_{x,T}$ with endpoints belonging
  to the set $V_{A,\sigma}$, for any $t\ge 1$ and for all~$f\in
  W^s(S_\Gamma)$, we have
  \begin{multline}
    \label{eq:sigmapushfwd}
    \phi_t^X(\gamma_{x,T})(f) = \int_{S_\Gamma} f\,d\text{vol} \,+\,\sum_{
      {\mathcal D}\in {\mathcal B}_+^{1-\frac{\sigma}{2}} } c^s_{\mathcal
      D}(x,T,t) \,{\mathcal D}(f)\,T_t^{-S_{\mathcal D}}\,+
    \\
    +\,{\mathcal C}^s(x,T,t)(f) T_t^{-\frac{1}{2}} \log^{\alpha_{\sigma}}
    T_t\,+\, {\mathcal R}^s(x,T,t)(f)\,T_t^{\frac{\sigma}{2}-1}
    \log^{\beta_{\sigma}} T_t\,\,.
  \end{multline}
  with~$c^s_{\mathcal D}(x,T,t)\in \C$,  ${\mathcal C}^s(x,T,t)\in {\mathcal
    I}_{\mathcal C}^s$ and~${\mathcal R}^s (x,T,t)\in W^{-s}(S_\Gamma)$
  satisfying the following upper bounds:
  \begin{align*} 
    \sum_{{\mathcal D}\in{\mathcal B}_+^{1-\frac{\sigma}{2}}} |c^s_{\mathcal
      D}(x,T,t)|^2 &\le C_7 \max\{ \inj_\Gamma^{-1}, e^{A/2} \max_{y\in \gamma_{x,T}} e^{ d_\Gamma(y)/2}\}\,,
    \\
    \|{\mathcal C}^s(x,T,t)\|_{-s} & \le  C_7 \max\{ \inj_\Gamma^{-1}, e^{A/2} \max_{y\in \gamma_{x,T}} e^{ d_\Gamma(y)/2}\}\,,
    \\
    \|{\mathcal R}^s(x,T,t)\|_{-s} & \le  C_7\max\{ \inj_\Gamma^{-1}, e^{A/2} \max_{y\in \gamma_{x,T}} e^{ d_\Gamma(y)/2}\}\,.
  \end{align*}
  In the above asymptotics, the exponent~$\alpha_{\sigma}$ is~$1$ if 
  $\sigma<1$ and equals~$2$ if $\sigma=1$; the exponent~$\beta_{\sigma}$ 
  is~$0$ if every ${\mathcal D}\in\mathcal B^s$ has Sobolev order~$S_{\mathcal D}\not=
  1-\frac{\sigma}{2}$ and equals~$1$ otherwise.
   
  In addition, for $s>4$ the estimate on the irreducible components of the complementary series (corresponding to
  Casimir parameters $\mu= (1-\nu^2)/4  \in (0,1/4)$ (that is $\nu (0,1)$) can be refined as follows:
  $$
  \sum_{{\mathcal D^\pm_\mu }\in{\mathcal B}_\mu} |c^s_{\mathcal
      D}(x,T,t)|^2 \le C_7 \max\{ \inj_\Gamma^{-1},  e^{\frac{1-\nu}{2}} \max_{y\in \gamma_{x,T}} e^{ \frac{1-\nu}{2} d_\Gamma(y)/2}\}\,.
  $$
  
  \end{theorem}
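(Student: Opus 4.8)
The plan is to follow the strategy of \cite{FlaFor}: decompose the horocycle--arc functional spectrally, push it forward by the geodesic flow, and control each piece by the iterative estimates already assembled in Lemmata \ref{lemma:remainder}, \ref{lemma:diffeq} and \ref{lemma:compbounds}, being careful throughout that every constant depends only on $(\sigma,s)$ (and universal Sobolev/embedding constants) and multiplies the single geometric quantity $\max\{\inj_\Gamma^{-1},\max_{y\in\gamma_{x,T}}e^{d_\Gamma(y)/2}\}$.

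First I would fix $h\in[1,2]$, $\ell\in\N$ and set $t=\ell h$, $T_t=e^{t}T$; by \eqref{eq:gatwo} and \eqref{eq:coeffell} the pushforward $\phi^X_{\ell h}(\gamma_{x,T})$ admits the splitting \eqref{eq:pushforwsplit}, whose coefficients are exactly the spectral components of a horocycle arc of length $T_t$ with endpoints in $V_{A,\sigma}$. Applying Lemma \ref{lemma:compbounds}, each coefficient $c_{\mathcal D}(x,T,\ell)$ with $S_{\mathcal D}<1-\sigma/2$ is bounded by $K_{\mathcal D}(x,T,\ell)\,e^{-S_{\mathcal D}\ell h}=K_{\mathcal D}(x,T,\ell)\,T_t^{-S_{\mathcal D}}$, while coefficients with $S_{\mathcal D}\ge 1-\sigma/2$ (including the $1/4$--Jordan pairs, via \eqref{eq:JDcompbounds}) and the continuous component $\mathcal C(x,T,\ell)$ are bounded by a uniformly bounded factor times $\ell^{a}T_t^{-(1-\sigma/2)}$ or $\ell^{a}T_t^{-1/2}$ with $a\le 2$; the refined complementary--series bound comes from \eqref{eq:Dcompbounds_compl}, replacing $1/2$ by $\tfrac{1-\nu}{2}$. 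Pairing a test function $f\in W^s(S_\Gamma)$ against the finitely many surviving distributional terms and using Cauchy--Schwarz together with the companion estimate \eqref{eq:coeffbound}, $\sum_{\mathcal D}K_{\mathcal D}^2+K_{\mathcal C}^2\le K^2\max\{\inj_\Gamma^{-2},\max_y e^{Ad_\Gamma(y)}\}$, yields uniform bounds on the coefficient functionals $c^s_{\mathcal D}(x,T,t)$ and $\mathcal C^s(x,T,t)$ of the claimed shape. For the remainder, Lemma \ref{lemma:remainder} gives $\|\mathcal R(x,T,\ell)\|_{-s}\ll T_t^{-1}\max\{\inj_\Gamma^{-1},e^{d_\Gamma(x,T,\ell)/2}\}$, and since the endpoints lie in $V_{A,\sigma}$ one has $d_\Gamma(x,T,\ell)\le A+\sigma t+O(1)$, which upgrades $T_t^{-1}$ to $T_t^{\sigma/2-1}$; this is the step producing the $\mathcal R^s(x,T,t)$ term.

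Then I would reassemble: the integral $\int_{S_\Gamma}f\,d\mathrm{vol}$ is the piece of the spectral decomposition that is $\phi^X$--invariant and does not decay; the distributions $\mathcal D$ with $S_{\mathcal D}<1-\sigma/2$ give the genuine secondary terms $c^s_{\mathcal D}(x,T,t)\,\mathcal D(f)\,T_t^{-S_{\mathcal D}}$; all $\mathcal D$ with $S_{\mathcal D}\ge 1-\sigma/2$ are absorbed into the $T_t^{\sigma/2-1}\log^{\beta_\sigma}T_t$ remainder, the factor $\log T_t$ appearing precisely when some $S_{\mathcal D}$ equals $1-\sigma/2$ (so $\beta_\sigma=1$, else $\beta_\sigma=0$); the continuous component contributes $\mathcal C^s(x,T,t)(f)\,T_t^{-1/2}\log^{\alpha_\sigma}T_t$ with $\alpha_\sigma=1$ for $\sigma<1$ and $\alpha_\sigma=2$ for $\sigma=1$. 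Finally, passage from the discrete times $\ell h$ to arbitrary $t\ge1$ is routine: write $t=\ell h$ with $h\in[1,2]$ and absorb the bounded factor between $T_t$ and $T_{\ell h}$, the uniformity in $h$ being already built into $\max_{h\in[1,2]}\|\phi^X_h\|_{-s}$ in Lemma \ref{lemma:diffeq}.

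I expect the main obstacle to be the bookkeeping rather than any new idea: one must decide, as a function of $\sigma$ and of the finite set $\{S_{\mathcal D}\}$, which spectral terms survive as explicit main terms and which get thrown into which remainder, read off the log exponents $\alpha_\sigma,\beta_\sigma$ correctly, and --- the genuinely new point over \cite{FlaFor} --- verify that every constant appearing is $(\sigma,s)$--dependent only and multiplies exactly the geometric factor $\max\{\inj_\Gamma^{-1},\max_{y\in\gamma_{x,T}}e^{d_\Gamma(y)/2}\}$ (with $e^{d_\Gamma(y)/2}$ sharpened to $e^{(1-\nu)d_\Gamma(y)/2}$ on complementary--series components), which is precisely what Lemmata \ref{lemma:Sobembed_1}--\ref{lemma:Sobembed_3} and the $\nu$--refined iterative estimates were designed to supply.
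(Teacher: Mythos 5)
Your proposal is correct and follows essentially the same route as the paper's own (very brief) proof: write $t=\ell h$ with $h\in[1,2]$, apply the splitting \eqref{eq:pushforwsplit} to $\phi^X_t(\gamma_{x,T})$, bound the $\mathcal D$- and $\mathcal C$-components via Lemma \ref{lemma:compbounds} (with the complementary-series refinement), control the remainder via Lemma \ref{lemma:remainder} using the $V_{A,\sigma}$ condition, and absorb all components of order $\geq 1-\tfrac{\sigma}{2}$ into the $T_t^{\frac{\sigma}{2}-1}\log^{\beta_\sigma}T_t$ term. Your bookkeeping of $\alpha_\sigma$, $\beta_\sigma$ and of the geometric factor matches the paper's argument.
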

  
\begin{proof}
  Let~$t\ge 1$. There exist $h\in [1,2]$ and $\ell \in \Z^+$ such that
  $t=\ell h$.  The distribution~$\phi^X_t(\gamma_{x,T})\in W^{-s}(S_\Gamma)$
  can be split as in~\eqref{eq:pushforwsplit}, hence the expansion~\eqref
  {eq:sigmapushfwd} follows. The pointwise upper bounds on the coefficients
  can be derived from Lemma~\ref{lemma:compbounds} for the ${\mathcal D}$-components
  and the ${\mathcal C}$-component and, by its definition in~\eqref{eq:coeffell},
  from Lemma~\ref{lemma:remainder} for the remainder term ${\mathcal R}(x,T,\ell)$
  of the splitting~\eqref{eq:pushforwsplit}. We remark that the term with
  coefficient ${\mathcal R}^s(x,T,t)(f)$ in~\eqref{eq:sigmapushfwd} includes the
  contributions of all ${\mathcal D}$-components with~${\mathcal D}\not\in {\mathcal
  B}^{1-\frac{\sigma}{2}}$ as well as the contribution of the remainder term 
  ${\mathcal R}(x,T,\ell)$ of the splitting~\eqref{eq:pushforwsplit}.
  All such estimates are uniform with respect  to $h\in [1,2]$.
\end{proof}

We conclude with the proof of Theorem~\ref{thm:SL2_equi}. 

\begin{proof} [Proof of Theorem~\ref{thm:SL2_equi}]

Let $T>1$ and let $\gamma_{x,T}$ be an orbit segment of the (stable) horocycle flow. Let $x_T := a_{\log T} (x) = \phi^X_{\log T} (x)$ and let $\gamma_{x_T}:=\gamma_{x_T,1}$ denote the stable horocycle orbit segment of unit length. Clearly we have (in distributional sense)
$$
\gamma_{x,T} = \phi_{\log T}^X ( \gamma_{x_T}) = a_{\log T}( \gamma_{x_T})\,.
$$
Let $A:= A_{x,T} = d_\Gamma (x_T) +1$. Clearly by construction $x_T \in V_{A, 1}$ and 
$$
\max_{y\in \gamma_{x_T}} d_\Gamma (y) \leq    d_\Gamma (x_T) + 1 \,.
$$
The result then follows from Theorem~\ref{thm:sigmanoncpt}   applied the horocycle orbit segment $\gamma_{x_T}$
with  $\sigma=1$ and $t = -\log T$.
\end{proof}

\end{document}